\definecolor{allrefcolors}{rgb}{0,0.2,0.5}
\title{Coulomb branch algebras via symplectic cohomology}
\author{Eduardo Gonz\'alez}
\author{Cheuk Yu Mak}
\author{Dan Pomerleano}
\newtheorem{lem}{Lemma}[section]
\newtheorem{prop}[lem]{Proposition}
\newtheorem{thm}[lem]{Theorem}
\newtheorem{cor}[lem]{Corollary}
\newtheorem{defn}[lem]{Definition}
\newtheorem{lemdefn}[lem]{Lemma/Definition}
\newtheorem{rem}[lem]{Remark}
\newtheorem{example}[lem]{Example}
\newcommand{\cman}{\operatorname{cMan}_K}
\newcommand{\enr}{\operatorname{cENR}_K}
\newcommand{\ienr}{\operatorname{ENR}_K^{\infty}}
\newcommand{\co}{\mathcal{C}(G;0)}
\newcommand{\cE}{\mathcal{C}(G;E)}
\newcommand{\rco}{\mathcal{C}^{\circ}(G;0)}
\newcommand{\rcE}{\mathcal{C}^{\circ}(G;E)}
\def\Spec{\operatorname{Spec}}
\def\grad{\operatorname{grad}}
\def\graph{\operatorname{graph}}
\def\Symp{\operatorname{Symp}}
\def\Ham{\operatorname{Ham}}
\def\critp{\operatorname{critp}}
\def\vdim{\operatorname{vdim}}
\def\ol{\overline}
\def\bH{\mathbf{H}}
\def\R{\mathbb{R}}
\def\ol{\overline}
\def\cS{\mathcal{S}}
\def\cM{\mathcal{M}}
\def\cC{\mathcal{C}}
\def\bfs{\mathbf{s}}
\definecolor{cym}{rgb}{0,0.2,1}\newcommand{\cym}{\color{cym}}
\begin{document}
\maketitle

\begin{abstract}
  Let $(\bar{M},  \omega)$ be a compact symplectic manifold with convex boundary and $c_1(T\bar{M})=0$.  Suppose that $(\bar{M},  \omega)$  is equipped with a convex Hamiltonian $G$-action for some connected, compact Lie group $G$.   We construct an action of the pure Coulomb branch of $G$ on the $G$-equivariant symplectic cohomology of $\bar{M}.$ Building on work of Teleman \cite{Teleman1},  we use this construction to characterize the Coulomb branches of Braverman-Finkelberg-Nakajima \cite{BFN} in terms of equivariant symplectic cohomology.      
\end{abstract}

\section{Introduction} 
\subsection*{Background}
Fix a compact,  connected Lie group $G$ and a quaternionic representation $E$ of $G$.  Physicists associate a 3D N = 4 supersymmetric gauge theory to such a pair.  This gauge theory defines various moduli of vacua,  one of which is called the Coulomb branch, $\mathcal{M}(G, E)$.  Physical considerations predict that $\mathcal{M}(G, E)$ should have a number of remarkable properties,  for example it should be a (possibly singular) hyper-K\"{a}hler manifold with an SU(2)-action.  However,  the physical definition of $\mathcal{M}(G, E)$ involves quantum corrections which are difficult to interpret mathematically.   In \cite{BFN}, Braverman-Finkelberg-Nakajima proposed a definition of $\mathcal{M}(G,E)$  for cotangent type representations $E$,  i.e.  those representations which are isomorphic to  $\mathbb{V}\oplus \mathbb{V}^{\vee}$ for some complex representation $\mathbb{V}$.  For these representations,  they define $\mathcal{M}(G,E)$ to be the spectrum $\operatorname{Spec}(\mathcal{C}(G;E))$ of certain Poisson algebras $\mathcal{C}(G;E)$. For our purposes,  it will be useful to note that the algebras $\mathcal{C}(G;E)$ naturally arise in a one-parameter family $\rcE$  over $\mathbb{C}[\mu]$ (of which $\mathcal{C}(G;E)$ is the zero fiber) which incorporates the central rescaling of the representation (i.e. the diagonal weight one $\mathbb{C}^*$ action on $\mathbb{V}$ and weight minus one action on $\mathbb{V}^{\vee}$).  

The primordial example of a Coulomb branch is the pure Coulomb branch,  $\mathcal{M}(G,0)$,  which occurs when the ``matter representation" $E$ is trivial. The ring  $\mathcal{C}(G;0)$ is by definition the ``semi-infinite" equivariant homology  of the based polynomial loop space of $G$,  $\hat{H}_*^G(\Omega_{poly} G)$,  equipped with its Pontryagin product (see Section \ref{s:basedloop} for the comparison between polynomial loop space and smooth loop space).  An important result (\cite{BFM})  identifies $\mathcal{M}(G,0)$ with the universal centralizer of the Langlands dual group $G^{\vee}.$ The variety $\mathcal{M}(G,0)$ is an affine group scheme over $H^*(BG,\mathbb{C})$ which acts on all of the other Coulomb branches.  Moreover,  each $\mathcal{M}(G,E)$ contains a dense free $\mathcal{M}(G,0)$-orbit.   When the matter representation is non-trivial,  the BFN construction extends and unifies many known constructions in geometric representation theory --- for example slices in affine Grassmannians. 

\subsection*{Pure Coulomb branches} 
If $(M,\omega)$ is a compact (monotone) symplectic manifold with a Hamiltionian $G$ action,  then $\mathcal{C}(G;0)$ acts on the equivariant quantum cohomology,  $QH_G^*(M)$ (\cite{Teleman2,  GMP22}).  Our first result is an analog of this construction for compact symplectic manifolds $(\bar{M},\omega)$ with convex boundary and vanishing first Chern class.  Recall that having convex boundary means that in a neighborhood of the boundary, there is a primitive $\theta$ of $\omega$ such that the $\omega$-dual of $\theta$ points outward along the boundary.  A Hamiltonian $G$-action on $(\bar{M},\omega)$ is convex if $\theta$ can be chosen $G$-invariant and also to have nowhere dense spectrum.  Given $(\bar{M},\omega)$  with a (convex) Hamiltonian $G$-action,  we can consider its equivariant symplectic cohomology,  $SH_G^*(\bar{M}),$ which is the direct limit of equivariant Hamiltonian Floer groups of cylindrical Hamiltonians of positive slope.  We prove:

\begin{thm}\label{t:SeidelMapEqintro}
Let $(\bar{M},  \omega)$ be a compact symplectic manifold with convex boundary and $c_1(T\bar{M})=0$.  Suppose further that $(\bar{M},  \omega)$  is equipped with a convex Hamiltonian $G$-action.  Then there is an algebra homomorphism:   
\begin{align} \label{eq:moduleactintro}
\mathcal{S}: \mathcal{C}(G;0)=\hat{H}_*^G(\Omega_{poly} G) \to SH_{G}^*(\bar{M}). 
\end{align} 
\end{thm}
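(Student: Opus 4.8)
Recall that $\mathcal{C}_3(G;0)$ is the $G$-equivariant ``semi-infinite'' homology $\hat{H}^G_*(\Omega G)$ of the based loop group, with the Pontryagin product coming from concatenation of loops. The plan is to realize $\mathcal{S}$ by a family of Seidel-type operations on the $G$-equivariant Floer complexes computing $SH^*_G(\bar{M})$, one for each (algebraic) based loop in $G$, assembled over a suitable chain model of $\Omega G$ and performed $G$-equivariantly via the Borel construction. The geometric input is that a based loop $\gamma\colon S^1\to G$ with $\gamma(0)=e$ produces, through the Hamiltonian action, a loop $\phi^\gamma_t$ of Hamiltonian symplectomorphisms of $\bar{M}$, and convexity of the action lets us arrange that $\phi^\gamma_t$ respects the convex end. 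Such a loop defines, by the standard Seidel mechanism, a ``twisted'' continuation-type operation on Hamiltonian Floer cohomology, and its Conley--Zehnder/Maslov index --- an honest integer precisely because $c_1(T\bar{M})=0$ --- is the semi-infinite degree, so that $\mathcal{S}$ becomes graded.

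\textbf{Step 1: Seidel operations for a single loop.} For a based loop $\gamma$ and a $G$-invariant cylindrical Hamiltonian $H$ of positive slope $s$, consider the Floer equation on the cylinder with a domain-dependent almost complex structure and Hamiltonian term twisted by a path of symplectomorphisms interpolating from the identity to $\phi^\gamma_1=\mathrm{id}$; counting its rigid solutions defines a degree-shifting endomorphism of $HF^*_G(H)$, and evaluating at the unit gives an element of $HF^*_G(H)$. One organizes these over $\Omega G$ using a polynomial-loop model $\Omega_{\mathrm{poly}}G \simeq \mathrm{Gr}_G$ stratified by dominant coweights $\lambda$ --- with basic loops $t\mapsto \lambda(e^{2\pi i t})$ in a fixed maximal torus playing the role of shift operators for the cocharacters of $G$ --- and choosing coherent Floer data stratum by stratum so that the operations are compatible with the closure relations.

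\textbf{Step 2: Equivariance and passage to the limit.} Upgrade the construction to $G$-equivariance by working with $G$-equivariant families of Floer data over finite-dimensional approximations to $BG$, using that $\Omega G$ carries the pointwise conjugation $G$-action; the output lies in $SH^*_G(\bar{M})=\varinjlim_s HF^*_G(H_s)$. One must then check that the Seidel operations intertwine the continuation maps that raise the slope $s$. This is where the hypotheses do their work: choosing the $G$-invariant primitive $\theta$ with nowhere-dense spectrum lets us select the slopes and all auxiliary data so that the $\gamma$-twisted Floer equations obey an integrated maximum principle, hence have moduli spaces with no escape into the convex end, compact after Gromov compactification and of the expected dimension.

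\textbf{Step 3: Multiplicativity, and the main obstacle.} Finally, to see that $\mathcal{S}$ is an algebra homomorphism one matches the Pontryagin product on $\hat{H}^G_*(\Omega G)$ with the pair-of-pants product on $SH^*_G(\bar{M})$: a pair-of-pants version of the construction of Step 1, with two incoming loops $\gamma_1,\gamma_2$ and outgoing loop $\gamma_1\cdot\gamma_2$, built compatibly with the one-loop operations, supplies the required chain homotopy. I expect the main difficulty to be the combination of (i) the non-compactness --- establishing the maximum principle for the \emph{moving}, $\gamma$-twisted Floer equations uniformly enough that the operations, the direct limit over slopes, and the products are all simultaneously well defined, which is exactly the role of convexity of the $G$-action together with the nowhere-dense spectrum condition --- and (ii) the coherence of the chain-level choices, both over the $BG$-approximations and over the loop-space model, needed to make $\mathcal{S}$ independent of all choices and multiplicative on cohomology. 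The semi-infinite/grading bookkeeping, while genuinely requiring $c_1(T\bar{M})=0$, is a comparatively routine accompaniment, and Teleman's treatment of the closed case \cite{Teleman1} furnishes the blueprint for organizing the algebraic structure.
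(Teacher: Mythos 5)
Your high-level strategy — realize $\mathcal{S}$ by Seidel-type operations indexed by a chain model of $\Omega G$, performed equivariantly via finite-dimensional approximations to the Borel construction, with multiplicativity via a pair-of-pants argument — is the right skeleton, and you correctly identify the maximum principle and coherence of choices as the central difficulties. However, two genuine gaps separate your proposal from a working proof.

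First, you propose to carry out the construction directly $G$-equivariantly. The paper does \emph{not} do this: it constructs a ring map $\mathcal{S}_T\colon \hat{H}_*^T(\Omega G)\to SH_T^*(\bar{M})$ for the maximal torus $T\subset G$, proves Weyl-equivariance, and only then passes to $W$-invariants to obtain \eqref{eq:moduleactintro}. This detour is not cosmetic. To keep the slope of the pulled-back Hamiltonian $f^*H$ bounded over the infinite-dimensional Borel base, the construction requires homotoping each equivariant cycle $f\colon B\to\Omega G$ so that $f$ is constant near the critical orbits of a suitable invariant Morse–Bott function (``good pairs'', Lemma \ref{l:TeqHomotope}). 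That homotopy is produced by observing that the critical orbits are of the form $T/H$, that $f|_{T/H}$ factors through $\Omega C_G^0(H)$, and that $\pi_1(T)\twoheadrightarrow\pi_1(C_G^0(H))$ — an argument that genuinely uses $T$ being a torus. You assert in Step 2 that the nowhere-dense-spectrum hypothesis ``lets us select the slopes and all auxiliary data so that the $\gamma$-twisted Floer equations obey an integrated maximum principle,'' but the real issue is whether the slope needed to dominate the twisted Hamiltonian stays bounded as one climbs the finite-dimensional approximations to $BG$, and the paper signals this is precisely what fails, or is much harder, without first reducing to $T$.

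Second, you propose organizing the Seidel operations over the stratified model $\Omega_{\mathrm{poly}}G\simeq\mathrm{Gr}_G$, ``choosing coherent Floer data stratum by stratum.'' The paper instead works with $T$-equivariant \emph{geometric homology}: classes in $\hat{H}_*^T(\Omega G)$ are represented by triples $(B,\alpha,f)$ with $B$ a closed smooth oriented $T$-manifold, $\alpha\in H_T^*(B)$, and $f\colon B\to\Omega G$ a smooth equivariant map, and a comparison theorem (Theorem \ref{thm:maincomp}) identifies this with semi-infinite homology. The operation $\mathcal{S}_f$ is then built as a composition of a pull-back from $H_T^*(B)\otimes SH_T^*(M)$ to the Floer cohomology of a bundle $P_B\to B_{\mathrm{borel}}$, a tautological Seidel-type isomorphism $\mathcal{C}_f$ obtained by conjugating the Floer data by $f$, and a push-forward back to $SH_T^*(M)$. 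All three steps use smoothness and compactness of $B$. Your stratified $\mathrm{Gr}_G$ route would instead have to manage the singular Schubert-cell closures and stratumwise Floer data — a genuinely different and, as far as the paper's methods are concerned, unaddressed technical problem.

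Your Step 3 sketch of multiplicativity is directionally correct, but the actual argument goes through a large commutative diagram comparing $\mathcal{S}_{f_2f_1}$ to $\mathcal{S}_{f_2}\circ\mathcal{S}_{f_1}$, which in turn relies on the good-pair machinery and careful slope bookkeeping from the $T$-equivariant setup — so it inherits the gap above.
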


In fact,  for technical reasons discussed below,  we first construct a ring homomorphism: 
\begin{align} \label{eq:moduleactintroT}  \mathcal{S}_T : \hat{H}_*^{T}(\Omega_{poly} G) \to SH_{T}^*(\bar{M}). \end{align}
We then show that this map is equivariant with respect to the natural Weyl group actions on both sides.  The map  \eqref{eq:moduleactintro} is then obtained by passing to Weyl-invariants.  

\subsection*{Outline of construction}
We now give a brief overview of the construction of  \eqref{eq:moduleactintroT} --- a more detailed overview is  provided in Section \ref{ss:overview}. The starting point for our work is Seidel's observation that for closed symplectic manifolds,  a loop $\gamma_t$ of Hamiltonian symplectomorphisms induces an action on Hamiltonian Floer theory.  Seidel's construction has been extended to include (convex) $S^1$-actions on convex symplectic manifolds in \cite{Ritter, LJ21}.  The new feature,  compared to the closed case,  is that a Hamiltonian loop can modify the behavior of the Hamiltonian at infinity.  This means that given a Hamiltonian $H$ of some fixed slope,  there may be no continuation map between the Floer cohomology of the twisted Hamiltonian, $\gamma_t^*H$,  and the Floer cohomology of $H$ due to the failure of the maximum principle for interpolating solutions.  However,  under suitable geometric assumptions,  there is a well-defined continuation map to a Hamiltonian $H'$ of higher slope,  giving rise to an action on symplectic cohomology.  

The map \eqref{eq:moduleactintroT} is a ``parameterized" or ``family" version of Seidel's construction over cycles in $\Omega G$, the space of {\it smooth} based loops of $G$. 
The idea is to represent cycles in  $\hat{H}_*^{T}(\Omega_{poly} G)$ ``geometrically" by a triple $(B,\alpha,f)$ where $B$ is a finite dimensional smooth closed $T$-manifold, $\alpha \in H_T^*(B)$ and  $f:B \to \Omega G$ is a $T$-equivariant map.  This is an equivariant version of Baum-Douglas' ``geometric homology"(see \cite{BaumDouglas, Jak98}),   the details of which are explained in Section \ref{subsection:geomhom} (see in particular Lemma \ref{l:compareloops}).  
As a consequence, we can use $\hat{H}_*^{T}(\Omega_{poly} G)$ and $\hat{H}_*^{T}(\Omega G)$ interchangeably from now on.
Given a $T$-equivariant map $f:B \to \Omega G$,  we construct a map (cf. \eqref{eq:interSeidel2}) \begin{align} \label{eq:intro:interSeidel2}
\mathcal{S}_{f}:H_T^*(B) \times SH_{T}^*(\bar{M}) \to SH_{T}^*(\bar{M}).
\end{align}
Plugging $\alpha$ into \eqref{eq:intro:interSeidel2}  defines an action of $(B,\alpha,f)$ on $SH_{T}^*(\bar{M})$ and the map  \eqref{eq:moduleactintroT} is then defined by sending: \begin{align*} [B, \alpha,f] \to \mathcal{S}_{f}(\alpha, e_M),  \end{align*}

where $e_M$ denotes the unit in $SH_T^*(\bar{M}).$ The map $\mathcal{S}_{f}$ is in turn obtained as a composition of three maps:
\begin{align*}
&H^*_T(B) \otimes SH_T^*(\bar{M}) \simeq SH_{T \times T}^*(B \times \bar{M}) \to SH_{T}^*(B \times \bar{M})\\
&SH_{T}^*(B \times \bar{M}) \to SH_{T}^*(B \times \bar{M})\\
&SH_{T}^*(B \times \bar{M}) \to SH_{T}^*(\bar{M}).
\end{align*}
The first map is a K\"unneth type isomorphism followed by a Floer theoretic pull-back,  and the last map is a Floer theoretic push-forward.  We develop the necessary preliminaries on pushforward and pullback for family Floer theory in Sections \ref{ss:pb} and  \ref{ss:pf}. The second map is a parametrized version of the Seidel morphism,  and its construction turns out to be most technical.  The main reason is that given a $T$-invariant $B \times ET$-family of Hamiltonian function $\tilde{H}$  of some fixed slope,  the associated twisted Hamiltonian $f^*\tilde{H}$ (see Section \ref{sss:identify}) has varying (non-constant) slopes over the base $B \times ET$.  As a result,  the comparison map between $\varinjlim HF_T(f^*(\tilde{H}_n))$($\tilde{H}_n$ is a sequence of Hamiltonians of increasing slope) and $SH^*_T(B \times \bar{M})$ will involve moduli spaces for which a $C^0$ a priori estimate cannot be proved by any of the standard means (e.g. \cite{Ritter}, \cite{MU19}, \cite{Groman23}).\footnote{We also note that we cannot use Hamiltonians of quadratic growth either; see Remark \ref{rem:quadratic}. } To overcome this issue, we first homotope $f$  (see Lemma \ref{l:TeqHomotope}) to another cycle for which we can prove the required $C^0$ a priori estimate (see Section \ref{ss:tauto}).  Lemma \ref{l:TeqHomotope} is the step which requires us to work $T$-equivariantly first and then pass to $G$-equivariant cohomology by taking the Weyl-invariant part.  

To show that \eqref{eq:moduleactintroT} is well-defined, we use that the relations in the equivariant geometric homology are given by bordisms and compatibility with Gysin pushforward.
We then establish the commutative diagram \eqref{eq:BigCom} to show that \eqref{eq:moduleactintroT} is an algebra homomorphism.  Both of these steps also require careful choice of auxiliary data so that the $C^0$ estimate can be achieved. 

\begin{rem} There is a second key difference between the proof of the ring homomorphism property in the present work and the proof of the analogous claim for closed monotone symplectic manifolds \cite[Lemma 4.7]{GMP22}.  There,  once the map $\hat{H}^{T}_*(\Omega G) \to QH_{T}^*(M)$ had been constructed,  we employed a simple algebraic trick (\cite[Lemma 4.6]{GMP22}) to reduce the argument that it is a ring homomorphism to the case where $G=T$.  This algebraic trick relied on the fact that $QH_{T}^*(M)$ is free as a module over $H^*(BT)$.   As this is no longer necessarily the case for symplectic cohomology,  we are forced to give a more lengthy (though very natural) argument in Section \ref{s:algebra}.   \end{rem}

\begin{rem} \label{rem:LiouvilleC} For the discussion which follows,  it will be relevant to note that above,  $SH_G^*(\bar{M})$ was defined over the Novikov field $\Lambda$ with ground field $\mathbb{C}$.  However,  when $(\bar{M},\omega, \theta)$ is a Liouville domain (meaning $\theta$ can be extended over all of $M$),  symplectic cohomology can be also be defined over $\mathbb{C}$.  The map \eqref{eq:moduleactintro} can also be defined for this version of symplectic cohomology,  which we denote by $SH_G^*(\bar{M}, \mathbb{C})$.  \end{rem}   

\subsection*{Coulomb branches with matter}

In \cite{BFN},  the Coulomb branch algebra is defined via a convolution diagram involving a certain infinite rank vector bundle over the affine Grassmannian of the complexification of $G$.  In \cite{Teleman1},  Teleman gave an elegant and direct ``GLSM" construction of $\rcE$ as the algebra of functions on a scheme obtained by gluing two copies of $\mathring{\mathcal{M}}(G, 0)=\operatorname{Spec}(\rco)$ by group multiplication with a certain easily described ``Euler" rational section $\epsilon_\mathbb{V}$ of the Toda integrable system (see Definition \ref{defn:eulerLag} below for the precise description).  

While Teleman's construction is purely algebraic,  it was motivated by heuristics concerning 2D boundary conditions for the gauge theory with matter $E$ \cite[\S 1, \S 2]{Teleman1}.  Our second result makes these heuristics precise,  thereby providing a conceptual underpinning for this construction.  Given a Liouville domain with convex $G$-action,  it is natural to search for actions on the ordinary cohomology as opposed to the symplectic cohomology.  There is an acceleration map \begin{align} \operatorname{ac}: H^*_G(\bar{M},\mathbb{C}) \to SH_G^*(\bar{M},\mathbb{C}). \end{align}  In cases where this map is injective,  it makes sense to look for subalgebras of $\mathcal{C}(G;0)$ which preserve its image.  The example relevant to Coulomb branches is the seemingly simple case where $\bar{M}=\bar{\mathbb{V}}$ is a unit ball inside of a complex vector space $\mathbb{V}$ equipped with a unitary representation of $G$.  In this case,  we consider the $G \times S^1$ equivariant symplectic cohomology $SH^*_{G\times S^1}(\bar{\mathbb{V}})$,  where the additional $S^1$-factor corresponds to the diagonal rotation action.   The algebra $\rco:=\hat{H}_{G}(\Omega G)[\mu]$ is naturally a subalgebra of $\hat{H}_{G \times S^1}(\Omega (G\times S^1))$ and so \eqref{eq:moduleactintro} therefore induces a map: 
\begin{align} \label{eq:moduleactintro2} \rco \to SH_{G \times S^1}^*(\bar{\mathbb{V}},\mathbb{C}).  \end{align}
  We give the following symplectic interpretation of the ingredients in Teleman's construction:
 
\begin{thm} \label{thm:moduleintro} The following hold: \begin{enumerate}
\item Let $\mathcal{O}_{\epsilon_\mathbb{V}}$ be the pushforward structure sheaf associated with the Euler Lagrangian $\epsilon_\mathbb{V}$. There is an algebra isomorphism
\[ \Gamma(\mathcal{O}_{\epsilon_\mathbb{V}}) \cong SH^*_{G\times S^1}(\bar{\mathbb{V}},\mathbb{C}) \]
under which the $\mathcal{C}^\circ(G;0)$-action on $\Gamma(\mathcal{O}_{\epsilon_\mathbb{V}})$ agrees with action induced by \eqref{eq:moduleactintro2}.

\item There is a commutative diagram: \begin{equation} \label{eq:diagramintro} \begin{tikzcd}
\rcE  \arrow{r}{\mathcal{S}} \arrow[swap]{d}{i} & H^*_{G \times S^1} (\bar{\mathbb{V}},\mathbb{C})  \arrow{d}{\operatorname{ac}} \\
\co[\mu] \arrow{r}{\mathcal{S}} & SH^*_{G \times S^1}(\bar{\mathbb{V}},\mathbb{C}). \end{tikzcd} \end{equation} \end{enumerate}  
\end{thm}

These calculations together with the GLSM construction of the Coulomb branch allow us to completely characterize $\mathring{\mathcal{M}}(G,E)$ in symplectic terms ---  $\mathring{\mathcal{M}}(G,E)$ is the universal equivariant,  affine partial compactification of the group scheme $\mathring{\mathcal{M}}(G,0)$ whose coordinate ring fits into \eqref{eq:diagramintro} (see Corollary \ref{cor:characterization2}).  

\begin{rem} Theorem \ref{thm:moduleintro} was anticipated by Teleman in \cite[Remark 2.4]{Teleman1}.  The universal property from Corollary \ref{cor:characterization2} is not explicitly mentioned in Teleman's paper.  However,  it follows easily from his description of the Coulomb branch and our Theorem \ref{thm:moduleintro}. \end{rem} 


We close the introduction with a short discussion of possible directions for future work:\begin{enumerate}[label=(\roman*)] \item Comparison with \cite{GMP22} suggests a natural (and potentially easy) extension of our results here.  Namely,  \cite{GMP22} also studies actions of the quantum Coulomb branch algebra $\mathcal{C}(G;0)_\hbar:= \hat{H}_*^{S^1 \times G}(\Omega G)$ on loop-equivariant $QH^*$.  This suggests that the module action induced by \eqref{eq:moduleactintro} should lift to an action of $\mathcal{C}(G;0)_\hbar$ on the version of equivariant symplectic cohomology which also incorporates loop equivariance,  $SH_{S^1_{\operatorname{rot}} \times G}^*(\bar{M})$.  Section 7 of \cite{Teleman1} provides a similar characterization of the quantum Coulomb branch algebras $\mathcal{C}(G;E)_\hbar$ as subalgebras of the pure quantum Coulomb branch algebra $\mathcal{C}(G;0)_\hbar$. 
We remark that if $M$ is a closed monotone symplectic manifold, we can also use equivariant Floer theory as in the current paper together with the equivariant PSS map to define the Coulomb branch algebra action on $QH_G(M)$. In other words, the approach given by the current paper is compatible with the construction in \cite{GMP22}. We note that action of the quantum Coulomb branch is used to show that the set-theoretic support of $QH_G^*(M)$ in $\Spec(\mathcal{C}(G;0))$ is an algebraic Lagrangian (\cite[Corollary 1.4]{GMP22}). The argument behind \cite[Corollary 1.4]{GMP22} crucially used the finiteness of $QH_G^*(M)$ over $H^*(BG)$. By contrast, we expect that the set-theoretic support of $SH_G^*(M)$ when $M$ is a $G$-Liouville domain is no longer generally Lagrangian (see Remark \ref{rem:notLagrangian} below). 

\vskip 5 pt \item In a different direction,  constructions of Coulomb branches for representations which are not of cotangent type have recently been proposed in \cite{Raskin, Teleman3}.  Theorem \ref{thm:moduleintro} raises the natural question of whether these constructions can be interpreted in terms of quantum/symplectic cohomology of some convex symplectic manifold associated to the representation (cf.  the discussion in \cite[pp. 2-3]{Teleman3}).  \vskip 5 pt   \item Somewhat more speculatively,  we also note that Teleman develops parallel results for $K$-theoretic Coulomb branches.  Correspondingly,  we expect an analog of Theorem \ref{t:SeidelMapEqintro} for (a suitable genuinely equivariant version of) the $K$-theoretic symplectic cohomology recently constructed by Large \cite{Large}.  \end{enumerate}

\begin{rem} \label{rem:notLagrangian} For a potential example where the set-theoretic support of $SH_G^*(M)$ is not Lagrangian, consider $T^*SU(2)$ with the natural Hamiltonian lift of the $S^1$-action on $SU(2)$ by a maximal torus. View $\mathcal{C}(S^1;0)=\mathbb{C}[z^{\pm},\tau]$, where $\tau$ is the equivariant variable and $z$ corresponds to the Seidel operator. Then we expect that the support of the module $SH_{S^1}^*(T^*SU(2))$ as a subset of $\operatorname{Spec}(\mathcal{C}(S^1;0))=T^*\mathbb{C}^*$ corresponds to the locus where $z=1, \tau=0$. The expectation that $\tau$ should act nilpotently on $SH_{S^1}^*(T^*SU(2))$ comes from the freeness of the $S^1$-action and the fact that the Seidel operator acts by the identity on $SH^*(T^*SU(2))$ (i.e. non-equivariantly) corresponds to the fact that the $S^1$-action extends to an $SU(2)$ action.  However, verifying this expected calculation rigorously would require an equivariant version of Viterbo's isomorphism which is suitably compatible with the Seidel representation and would take us too far afield. \end{rem}

\subsection*{Organization of the paper}

As noted earlier, the family Seidel morphism involves families of Hamiltonians with non-constant slopes, so special care is needed to achieve compactness of moduli spaces. Section \ref{s:functorialbasic} and \ref{s:equivariant} are devoted to explaining the Floer theory over families and the equivariant Floer theory, respectively, for a large enough class of Hamiltonian functions.
We construct the map \eqref{eq:moduleactintro} in Section \ref{s:EqSeiMor} and prove that it is an algebra homomorphism in Section \ref{s:algebra}.
Finally, the proof of Theorem \ref{thm:moduleintro} is given in Section \ref{s:Coulomb}.


\subsection*{Acknowledgements}
 D.P.  would like to thank Constantin Teleman for his generous and patient explanations of \cite{Teleman1}.  The authors would also like to thank Justin Hilburn for informative discussions about Coulomb branches.  C.M. was partly supported by the Simons Collaboration on Homological Mirror Symmetry, Award \#652236, the Royal Society University Research Fellowship, and the University of Southampton FSS ECR Career Development Fund while working on this project. D.P.  was partly supported by the Simons Collaboration in Homological Mirror Symmetry,  Award \# 652299 while working on this project.





\section{Floer theory over a family, pull-back and push-forward}\label{s:functorialbasic}

In this section, we explain how to define the Floer cohomology for a Hamiltonian fibre bundle with Liouvile fibres over a smooth finite dimensional base.  It is analogous to \cite{Hutchings} and similar ideas have been used to define equivariant Floer cohomology (\cite{SS10},  \cite{BO17},  \cite{SeidelGM},  \cite{LJ20}).

Let $(\bar{M},\omega)$ be a compact symplectic manifold with a convex boundary (i.e. $\bar{M}$ is a strong convex filling of its boundary).
Let $\theta$ be a primitive one form of $\omega$ near the boundary such that the $\omega$-dual of $\theta$ points outwards along the boundary.  We will denote by $M$ the symplectic completion of $\bar{M}$.
In other words,
\[
M=\bar{M} \cup_{\partial \bar{M}} [1,\infty) \times \partial \bar{M}  
\]
where the Liouville one form on the cylindrical end $[1,\infty) \times \partial \bar{M}$ is given by $r \alpha$ for $r \in [1,\infty) $ with $\alpha:=\theta|_{\partial \bar{M}}$.  Throughout this text,  we will assume that $c_1(TM)=0$ so that we can put a $\mathbb{Z}$-grading on the Floer complexes later on (see the paragraph after Definition \ref{d:FloerComplex}).

 The Reeb vector field $R_{\alpha}$ on $(\partial \bar{M},\alpha)$ is the unique vector field characterized by $\alpha(R_{\alpha})=1$ and $\iota_{R_{\alpha}} d\alpha=0$.
The flow of the Reeb vector field is called Reeb flow.
The action spectrum of $(\partial \bar{M},\alpha)$, denoted by $\Spec(\partial \bar{M},\alpha)$, consists of the period of the orbits of the Reeb flow.


\begin{defn}
A Hamiltonian function $H \in C^{\infty}(S^1 \times M)$ is {\bf cylindrical} if there is a function $\bfs: S^1 \times \partial \bar{M}  \to \mathbb{R}_{>0}$ that is invariant under the Reeb flow and a function $c:S^1 \to \mathbb{R}$ such that $H(t,(r,x))=r \bfs(t,x)+c(t)$ in the complement of a compact set for any $t \in S^1$, $r \in [1,\infty)$ and $x \in \partial \bar{M}$.
\end{defn}

The function $\bfs$ is called the {\it slope} of $H$.
The space of cylindrical Hamiltonian is a vector space over $\mathbb{R}$ and hence convex.
We define a partial ordering on the space of cylindrical Hamiltonians by
\begin{align}
H_1 \le_{\bfs} H_2 \text{ if }\bfs_{H_1}(t,x) \le \bfs_{H_2}(t,x) \text{ for all } t,x \label{eq:partialorder}
\end{align}
where $\bfs_{H_i}$ is the slope of $H_i$ for $i=1,2$. 
We say that a Hamiltonian function $H=(H_t)_{t \in S^1} \in C^{\infty}(S^1 \times M)$ is {\it mean-normalized} if $\int_{\partial \bar{M}} H_t \alpha \wedge \omega^{n-1}=0$ for all $t \in S^1$.
Unless otherwise stated,  all Hamiltonians in the rest of the paper are mean-normalized.

\begin{defn} Let $G$ be a compact connected Lie group.  A Hamiltonian $G$-action on $(\bar{M},\omega, \theta)$ is called {\it convex} if \begin{itemize} \item $\theta$ is $G$-invariant and \item the spectrum $\Spec(\partial \bar{M},\alpha:=\theta_{|\partial \bar{M}})$ is nowhere dense.  \end{itemize} \end{defn} 
The hypothesis that $\theta$ is $G$-invariant implies that $\omega=d\theta$ is $G$-invariant.
Therefore, the Liouville vector field $X_{\theta}$ (i.e. the $\omega$-dual of $\theta$) is $G$-equivariant. 
As a result, the flow of  $X_{\theta}$ is $G$-equivariant so the $G$-action on the collar $(1-\epsilon,1] \times \partial \bar{M}$, which is defined using the flow of $X_{\theta}$,
is the product of the trivial action on $(1-\epsilon,1]$ and a $G$-action on $\partial \bar{M}$.
Therefore, we can canonically extend the $G$ action to $M$ by extending this product action to $[1,\infty) \times \partial \bar{M}$.

\begin{lem}\label{l:cylindricalFun}
Suppose that $(\bar{M},\omega,\theta)$ is equipped with a convex Hamiltonian $G$-action.


Let $\gamma \in \Omega G \subset \Omega \Ham(M)$ and $K_{\gamma}$ be a generating Hamiltonian function of $\gamma$. Then $K_{\gamma}$ is a cylindrical Hamiltonian function.  Conversely, if $H$ is a cylindrical Hamiltonian function, then outside a compact set, the Hamiltonian flow of $H$ on the cylindrical end preserves $\theta$.
\end{lem}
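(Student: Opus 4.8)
\emph{Proof plan.} Both assertions reduce to computations on the cylindrical end $[1,\infty)\times\partial\bar{M}$, where $\omega=d(r\alpha)$, the Liouville vector field is $Z=r\partial_r$, and — since $\theta=r\alpha$ is $G$-invariant — the $G$-action is the product of the trivial action on $[1,\infty)$ with a $G$-action on $\partial\bar{M}$. For the first assertion, I would first note that any two generating Hamiltonians of $\gamma$ differ by a function $c\colon S^1\to\mathbb{R}$ (as $M$ is connected), so it is enough to check one of them is cylindrical. Writing $\xi_t:=\dot\gamma(t)\gamma(t)^{-1}\in\mathfrak{g}$ and differentiating the path $x\mapsto\gamma(t)\cdot x$ in $t$ shows it is generated by the time-dependent fundamental vector field $X_{\xi_t}$ of $\xi_t$; since the action is Hamiltonian with moment map $\mu$, the function $K_\gamma(t,\cdot)=\mu^{\xi_t}$ (suitably normalized) generates $\gamma$. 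On the end $X_{\xi_t}$ is tangent to $\partial\bar{M}$ and $\mathcal{L}_{X_{\xi_t}}(r\alpha)=0$, so Cartan's formula gives $\iota_{X_{\xi_t}}\omega=-d\bigl(r\,\alpha(X_{\xi_t})\bigr)$, hence $\mu^{\xi_t}=-r\,\alpha(X_{\xi_t})$ up to a constant. Thus $K_\gamma(t,r,x)=-r\,\alpha_x(X_{\xi_t})+c(t)$ near infinity, which is precisely the cylindrical form with slope $\bfs(t,x)=-\alpha_x(X_{\xi_t})$. It then remains to check $\bfs$ is Reeb-invariant: since $\alpha$ is $G$-invariant, each $X_{\xi_t}$ preserves $\alpha$, hence commutes with $R_\alpha$ (the vector field $[X_{\xi_t},R_\alpha]$ is annihilated by both $\alpha$ and $d\alpha$, and $\ker\alpha\cap\ker d\alpha=0$), and therefore $R_\alpha\bigl(\alpha(X_{\xi_t})\bigr)=0$.

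For the converse, let $H$ be cylindrical, say $H(t,r,x)=r\,\bfs(t,x)+c(t)$ for $r\ge R_0$ with $\bfs$ Reeb-invariant, and fix $t$. On the end $\iota_Z\omega=\theta$, so $\theta(X_H)=\omega(Z,X_H)=-dH(Z)=-Z(H)=-r\,\bfs=-(H-c)$; Cartan's formula then yields $\mathcal{L}_{X_H}\theta=\iota_{X_H}\omega+d\bigl(\theta(X_H)\bigr)=dH+d(-H+c)=0$, as $c$ is constant in the space variables. (One also sees, by expanding $X_H=X_{r\bfs}$ relative to the splitting $TM=\mathbb{R}\partial_r\oplus\mathbb{R}R_\alpha\oplus(\ker\alpha\cap\ker dr)$, that its $\partial_r$-component is proportional to $R_\alpha(\bfs)=0$; hence $X_H$ is tangent to the slices $\{r=\mathrm{const}\}$, so the flow stays on the end and the computation applies there.) This gives that the flow of $H$ preserves $\theta$ off a compact set.

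The hard part is the first assertion, specifically pinning down the generating Hamiltonian of the loop acting through the $G$-action — one must be careful with sign conventions, the left-versus-right logarithmic derivative, and the normalizing constant — and then verifying that the induced slope $-\alpha(X_{\xi_t})$ is genuinely invariant under the Reeb flow. This last point, which is the short contact computation above (a $G$-action preserving $\alpha$ commutes with its Reeb flow), is also exactly where membership in $\Omega G$ rather than $\Omega\Ham(M)$ is used: for a general Hamiltonian loop the twisted slope need not be Reeb-invariant, and the loop would not act within the cylindrical framework at all.
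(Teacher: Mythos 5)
Your proof is correct and, for the forward direction, follows the same skeleton as the paper's: use the $G$-invariance of $\theta$ on the end together with Cartan's formula to see that a generating Hamiltonian of the loop must equal $-r\,\alpha(X_{\xi_t})$ up to a time-dependent constant, and then verify that the resulting slope is Reeb-invariant. Your phrasing in terms of a moment map and the logarithmic derivative $\xi_t=\dot\gamma(t)\gamma(t)^{-1}$ is essentially cosmetic; the underlying computation is the same as the paper's derivation of $\iota_{X_v}\omega = -d(r\,\alpha(X_v|_{r=1}))$. Where you genuinely diverge from the paper is in two places. For the Reeb-invariance step, the paper computes directly that $R_\alpha(\bfs_v)=\omega(X_v,R_\alpha)|_{r=1}=\pm\, dr(X_v)=0$ because the $G$-action preserves the radial coordinate, while you argue more structurally that a vector field preserving a contact form commutes with its Reeb vector field (since the bracket lies in $\ker\alpha\cap\ker d\alpha=0$); both are short, with the paper's a line shorter and yours a bit more conceptual. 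For the converse, the paper simply cites Ritter's Lemma C.3, whereas you give a self-contained Cartan computation on the end, using $\iota_Z\omega=\theta$ to get $\theta(X_H)=-(H-c)$ and hence $\mathcal{L}_{X_H}\theta=0$, and you correctly supplement this with the check that the $\partial_r$-component of $X_H$ is proportional to $R_\alpha(\bfs)=0$, so flow lines starting on the end stay there. The self-contained converse is a genuine improvement over citing a reference, and your closing remark correctly identifies where the hypothesis $\gamma\in\Omega G$ rather than merely $\gamma\in\Omega\Ham(M)$ is used: without Reeb-invariance of the induced slope, the generating Hamiltonian would fail to be cylindrical in the paper's sense.
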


\begin{proof}

Let $v \in \mathfrak{g}$ and $X_v$ be the induced Hamiltonian vector field.
 Since $\theta$ is $G$-invariant, we have
\begin{align}\label{eq:Hamvf}
0=\mathcal{L}_{X_v} \theta=\iota_{X_v} \omega +d(r \alpha(X_v|_{r=1}))
\end{align}
The last equality uses that $X_v$ is independent of $r$ on the cylindrical end.
Let \begin{align*} \bfs_v:=-\alpha(X_v|_{r=1}): \partial \bar{M} \to \mathbb{R}. \end{align*} 
By \eqref{eq:Hamvf}, Hamiltonian vector field of the Hamiltonian function $r\bfs_v$ is $X_v$.
Let $R_{\alpha}$ be the Reeb vector field.
The function $\bfs_v$, which is the slope of the Hamiltonian function $r\bfs_v$, is invariant under Reeb flow because $R_{\alpha}(\bfs_v)=d\bfs_v(R_{\alpha})=\omega(X_v,R_{\alpha})|_{r=1}=-dr(X_{v})=0$.
Now, let $(X_t)_{t \in S^1}$ be the vector field which generates $\gamma$.
By the assumption that $\gamma \in \Omega G$, we know that for each $t \in S^1$, $X_t=X_{v_t}$ for some $v_t \in \mathfrak{g}$.
Therefore $(K_{\gamma})_t(r,y)=r\bfs_{v_t}(y)$ up to a constant so $K_{\gamma}$ is a cylindrical Hamiltonian function.  The converse is proved in \cite[Lemma C.3]{Ritter}.
\end{proof}



Let $B$ be a smooth finite dimensional closed manifold and $(M,\omega, \theta)$ has a convex Hamiltonian $G$-action.
\begin{defn}\label{d:admissible}
We say that $p:P \to B$ is an {\it admissible} bundle over $B$ if it is a symplectic fibre bundle with fiber $(M,\omega,\theta)$ and the structure group lies in $G$ (i.e. the clutching functions are in $G$).
\end{defn}
Since the $G$ action is convex, $p$ restricts to a subbundle $\bar{p}:\bar{P} \to B$ with fibres $\bar{M}$.
Moreover, there is a radial coordinate function $r:P \setminus \bar{P} \to [1,\infty)$ such that for any fiber $p^{-1}(b)$ , the symplectic form near the boundary equals to $\omega_b=d(r\theta_b)$.

Let $\eta:B \to \mathbb{R}$ be a Morse function, $g_{\eta}$ be a Riemannian metric on $B$ satisfying the Morse-Smale condition and $\nabla$ be a $G$-connection on $P$ such that $\nabla$ is flat near critical points of $\eta$. 
The space of such connections $\nabla$ is non-empty and contractible.
The set of critical points of $\eta$ is denoted by $\critp(\eta)$.
For every $c \in \critp(\eta)$, we can use the flatness of $\nabla$ near $c$ to obtain a neighborhood $N_c$ of $c$ together with a decomposition 
\begin{align}
p^{-1}(N_c) \simeq M \times N_c \label{eq:trivial} 
\end{align}
such that $\nabla$ is trivial with respect to the decomposition.


\begin{defn}
We call a triple $(\eta,g_{\eta},\nabla)$ as above an {\bf admissible base triple}.
\end{defn}

A Hamiltonian function of $P$ is a function $ H: S^1 \times P \to \mathbb{R}$.
For $t \in S^1$ and $b \in B$, we denote its restriction to $p^{-1}(b)$ by $H_b:S^1 \times p^{-1}(b) \to \mathbb{R}$.

\begin{defn}
A Hamiltonian function $H \in C^{\infty}(S^1 \times P)$ is {\bf cylindrical} if for every $b \in B$, $H_b:S^1 \times p^{-1}(b) \to \mathbb{R}$ is a cylindrical Hamiltonian.
\end{defn}

This is a well-defined notion because the structure group of $P$ is $G$ and $H_b$ being cylindrical is a property that is invariant under $G$-action.


\begin{defn}\label{d:eqgen}
 A cylindrical Hamiltonian $H$ is compatible with $(\eta,g_{\eta},\nabla)$ if: \begin{itemize} 
\item $H_c:=(H_{c,t})_{t \in S^1}$ is non-degenerate for every $c \in \operatorname{critp}(\eta)$
\item locally near each $c \in \operatorname{critp}(\eta)$, with respect to the decomposition \eqref{eq:trivial}, we have that $H_b$ is pulled back from the first factor.  
\item for any gradient trajectory\footnote{A gradient trajectory $\tau:\mathbb{R} \to B$ of $\eta$ is a solution to the negative gradient flow equation $ \frac{d \tau}{ds}=-\grad(\eta)(\tau(s))$.} $\tau$ of $\eta$ and two real numbers $s_1 \le s_2$, we have $H_{\tau(s_1)} \ge_{\bfs} H_{\tau(s_2)}$ with respect to the parallel transport map along $\tau$ induced by $\nabla$ (recall $\le_{\bfs}$ from \eqref{eq:partialorder}).
\item for any gradient trajectory $\tau$ of $\eta$ between two critical points $c_0$ and $c_1$, the parallel transport map does not map a Hamiltonian orbit of $H_{c_0}$ to a Hamiltonian orbit of $H_{c_1}.$
\end{itemize}   
A cylindrical Hamiltonian $H$ is compatible with $\eta$ if it is compatible with an admissible base triple $(\eta,g_{\eta},\nabla)$ for some $g_{\eta}$ and $\nabla$.
A cylindrical Hamiltonian $H$ is admissible if it is compatible with some $\eta$.
\end{defn}

\begin{lem}\label{l:existCom}
Given any admissible base triple $(\eta,g_{\eta},\nabla)$ and any constant $a \in \R_{>0} \setminus \Spec(\partial \bar{M},\alpha)$, 
there is a  cylindrical Hamiltonian $H$ that is compatible with $(\eta,g_{\eta},\nabla)$ with slope $\bfs_H=a$. 
\end{lem}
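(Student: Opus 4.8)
The plan is to assemble $H$ from per-critical-point data: for each $c\in\critp(\eta)$ I will choose a non-degenerate cylindrical Hamiltonian $H_c$ on the fiber $M$ whose slope is the constant $a$, and then glue these to a fixed background cylindrical Hamiltonian of slope $a$ on $P$ using cut-off functions on $B$ supported near the critical points. The given connection $\nabla$ enters only through its flat trivializations \eqref{eq:trivial} and through parallel transport along gradient trajectories; I will not modify $\nabla$, $g_\eta$, or $\eta$.

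\emph{Conditions (2) and (3) by construction.} Fix a mean-normalized cylindrical Hamiltonian $H_0$ on $P$ of constant slope $a$ (for instance set $H_0=ar$ near infinity, where $r\colon P\setminus\bar P\to[1,\infty)$ is the global radial coordinate, extend smoothly and fiberwise over $\bar P$, and subtract the fiberwise mean; this exists since all fibers are identified with $M$). Choose nested neighborhoods of the critical points $N_c''\subset N_c'\subset N_c$ with $\overline{N_c''}\subset N_c'$ and $\overline{N_c'}\subset N_c$, together with cut-offs $\beta_c\in C^\infty(B,[0,1])$ with $\beta_c\equiv1$ on $N_c''$ and $\operatorname{supp}\beta_c\subset N_c'$. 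Given mean-normalized cylindrical Hamiltonians $H_c$ on $M$ of constant slope $a$, let $\pi_c^*H_c$ denote the pull-back of $H_c$ to $p^{-1}(N_c)$ along the first projection of \eqref{eq:trivial}, and set
\[
H\;:=\;\sum_{c\in\critp(\eta)}\beta_c\,\pi_c^*H_c\;+\;\Bigl(1-\sum_{c\in\critp(\eta)}\beta_c\Bigr)H_0,
\]
where $\beta_c\,\pi_c^*H_c$ is extended by $0$ outside $p^{-1}(N_c)$. Since the $N_c$ are disjoint, at most one summand is active near each point, so $H$ is a smooth function on $S^1\times P$, and it is a fiberwise convex combination of mean-normalized Hamiltonians, hence mean-normalized. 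Parallel transport by the $G$-connection $\nabla$ preserves $\theta$ (as $\theta$ is $G$-invariant), hence preserves $r$ and the cylindrical end; consequently $\pi_c^*H_c$ and $H_0$ have slope the constant $a$ in each fiber, so $H_b$ is cylindrical with slope $a$ for every $b$, i.e.\ $\bfs_H=a$. On each $N_c''$ we have $H=\pi_c^*H_c$, so $H_b$ is pulled back from the first factor of \eqref{eq:trivial}, which is condition (2). For condition (3): for a gradient trajectory $\tau$ and $s_1\le s_2$, parallel transport along $\tau|_{[s_1,s_2]}$ maps the slope function of $H_{\tau(s_1)}$ (the constant $a$) to that of $H_{\tau(s_2)}$ (again the constant $a$), so $H_{\tau(s_1)}\ge_{\bfs}H_{\tau(s_2)}$ holds with equality. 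It remains to choose the $H_c$ so that conditions (1) and (4) hold.

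\emph{Conditions (1) and (4): genericity.} On the cylindrical end the Hamiltonian flow of a slope-$a$ cylindrical Hamiltonian is $aR_\alpha$, which has no $1$-periodic orbit because $a\notin\Spec(\partial\bar M,\alpha)$; hence every $1$-periodic orbit of such a Hamiltonian lies in a fixed compact subset of $M$, and by a standard transversality argument (perturbing by time-dependent functions supported in the interior, which preserves the slope, the cylindrical form, and mean-normalization) non-degeneracy is residual in the space $\mathcal{K}$ of mean-normalized cylindrical Hamiltonians on $M$ of slope $a$. Write $\mathcal{P}(K)\subset LM$ for the finite set of $1$-periodic orbits of a non-degenerate $K\in\mathcal{K}$. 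Enumerate $\critp(\eta)=\{d_1,\dots,d_N\}$ with $\eta(d_1)\ge\cdots\ge\eta(d_N)$ and choose $H_{d_1},\dots,H_{d_N}$ inductively. Having fixed non-degenerate $H_{d_1},\dots,H_{d_{k-1}}$, note that a gradient trajectory from $d_i$ to $d_k$ forces $\eta(d_i)>\eta(d_k)$, hence $i<k$; thus
\[
S_{d_k}\;:=\;\bigl\{\,P_\tau(x)\ :\ i<k,\ \tau\in\mathcal{M}(d_i,d_k),\ x\in\mathcal{P}(H_{d_i})\,\bigr\}\;\subset\;LM
\]
is already determined, where $\mathcal{M}(d_i,d_k)$ is the finite-dimensional (by the Morse--Smale condition) moduli space of gradient trajectories and $P_\tau$ is parallel transport along $\tau$, which is realized by an element of $G$ (via the trivializations \eqref{eq:trivial} near the endpoints) and depends smoothly on $\tau$. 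So $S_{d_k}$ is the image of a finite-dimensional manifold under a smooth map. For a prescribed loop $y$ the condition ``$y$ is a $1$-periodic orbit of $K$'' amounts to prescribing $dK_t|_{y(t)}$ for every $t\in S^1$, so the corresponding subset of $\mathcal{K}$ is of infinite codimension; hence by a Sard--Smale argument applied to the universal moduli space $\{(z,K):\psi(z)\in\mathcal{P}(K)\}$ over a parametrization $\psi$ of $S_{d_k}$, a residual set of $K\in\mathcal{K}$ has $\mathcal{P}(K)\cap S_{d_k}=\varnothing$. Intersecting with the residual set of non-degenerate Hamiltonians, choose such an $H_{d_k}$ and continue. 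The resulting $H$ satisfies all four conditions of Definition \ref{d:eqgen} with respect to $(\eta,g_\eta,\nabla)$ and has slope $\bfs_H=a$.

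\emph{Main obstacle.} The delicate point is the last genericity claim: one must realize the universal moduli space $\{(z,K):\psi(z)\in\mathcal{P}(K)\}$ as a Banach manifold (using $C^\varepsilon$-type completions of the space of interior perturbations, plus Taubes' trick to descend to the $C^\infty$ setting), verify that $0$ is a regular value of the defining section — which holds because varying $K$ alone already surjects onto the relevant space of covector fields along the loop $\psi(z)$, including the degenerate case when $\psi(z)$ is a constant loop — and check that the projection to $\mathcal{K}$ is Fredholm of strictly negative index, so that Sard--Smale makes its fibers over a residual set empty. A secondary point to record is that $P_\tau$ stays within the allowed class of identifications of $M$: being the holonomy of a $G$-connection it preserves $\theta$, hence $r$, the slope, and the interior/cylindrical-end splitting, and it behaves well as $\tau$ ranges over $\mathcal{M}(d_i,d_k)$ and its compactification because flatness of $\nabla$ near the critical points makes transport along the long necks trivial.
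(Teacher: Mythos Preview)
Your proposal is correct and follows essentially the same approach as the paper: choose non-degenerate slope-$a$ Hamiltonians $H_c$ near each critical point via the flat trivializations \eqref{eq:trivial}, extend over $B$ as a convex combination of slope-$a$ cylindrical Hamiltonians so that conditions (2) and (3) are automatic, and arrange conditions (1) and (4) by genericity. The paper's proof is much terser on the last point, simply asserting that ``by a generic perturbation of each individual $H_c$ over a compact set'' condition (4) can be arranged; your inductive Sard--Smale argument (ordering critical points by decreasing $\eta$-value and avoiding the finite-dimensional family $S_{d_k}$ of transported orbits at each step) is a correct way to flesh this out, though more elaborate than strictly necessary.
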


\begin{proof}

Let $a \in  \R_{>0} \setminus \Spec(\partial \bar{M},\alpha)$.
For each critical point $c$ of $\eta$, we first choose a non-degenerate cylindrical Hamiltonian function $H_c$ on $M$ with slope $a$.
Using the product decomposition \eqref{eq:trivial}, we can pull it back to a cylindrical Hamiltonian over a neighborhood $\cup_{c \in \critp(\eta)} N_c$ of the critical points of $\eta$ such that both the first and the second bullets are satisfied.
The fact that $H_c$ is non-degenerate means that all Hamiltonian orbits are lying inside a compact set.
Since there are only finitely many critical points of $\eta$ (because $B$ is compact and $\eta$ is Morse), by a generic perturbation of each individual $H_c$ over a compact set, we can further assume that the fourth bullet is also satisfied.

We can then smoothly extend the cylindrical Hamiltonian to the rest of $P$ (i.e. over the complement of $\cup_{c \in \critp(\eta)} N_c$) such that $H_{b}$ has slope $a$ for all $b \in B$.
Such an extension is always possible because the space of cylindrical Hamiltonian with a fixed slope is convex.
Since the slope is the same for all $H_{b}$, the third bullet is automatically satisfied.

\end{proof}

A pair $\vec{x}=(c,x)$ consisting of a critical point $c$ of $\eta$ and a $1$-periodic orbit $x$ of $H_{c}$ in $p^{-1}(c)$ is called a generator of $(\eta,H)$ if for any loop $g:S^1 \to G$, the torus $g \cdot x: T^2 \to p^{-1}(c)$ defined by $(r,t) \mapsto g(r) \cdot x(t)$ satisfies
\begin{align}\label{eq:zeroarea}
\int_{T^2} (g \cdot x)^*\omega =0.
\end{align}
This is true, for example, (i) when $x$ is contractible in $M$ (i.e. admits a capping), or (ii) $G$ is simply connected (or more generally, $G$ can be extended to a group action $G'$ on $M$ such that $G'$ is simply connected), or (iii) $\omega$ is exact.
Equation \eqref{eq:zeroarea} will be used to control the topological energy of Floer solutions later on (see Lemma \ref{l:zeroarea}).
 
Let $\Lambda$ be the Novikov field
\[
\Lambda :=\left \{ \sum_{i=0}^{\infty} b_i q^{r_i} | r_i \in \mathbb{R}, \lim_{i \to \infty} r_i=\infty, b_i \in \mathbb{C}  \right \}
\]


\begin{defn}\label{d:FloerComplex} The Floer cochain complex of $(\eta,H)$ is defined to be 
\begin{align} \label{eq:Z2graded} 
CF^*(P,H,\eta, \Lambda):= \oplus \Lambda \cdot \vec{x} 
\end{align} 
where the sum is taken over all generators $\vec{x}=(c,x)$ of $(\eta,H)$.
\end{defn} 
 We often suppress the coefficient field and denote the group by $CF^*(P,H,\eta)$.
We can define a $\mathbb{Z}$-grading on $CF^*(P,H,\eta)$ as follows.
Since we assume that $c_1(TM)=0$,  we can and will fix once and for all a homotopy class of trivialization of the canonical bundle of $M$.  We then choose a symplectomorphism identifying $M$ with a reference fibre of $P$. 
The structure group of the bundle $P \to B$ is $G$, which is connected,  so it acts trivially on the homotopy class of the trivialization of the canonical bundle.
Therefore,  by taking parallel transport from the reference fibre,  we get a well-defined homotopy class of trivialization of the canonical bundle on all other fibres.
We define the grading $|\vec{x}|$ of $\vec{x}=(c,x)$ to be  
\[
|\vec{x}|:=|c|_{Morse}+|x|_{CZ}
\]
where the right hand side is the sum of the Morse grading of $c$ and the Floer grading  of $x$, where the Floer grading of $x$ is the {\it cohomological} Conley-Zehnder index of $x$ with respect to the homotopy class of trivialization of the canonical bundle of the fibre $M$ over $c$ (see \cite[Definition 1.4.3 and Remark 1.4.4]{Ab15}).  Finally,  the Novikov variable $q$ will be of degree zero.

\begin{rem} The $\mathbb{Z}$-grading on $CF^*(P,H,\eta)$ depends (only) on the homotopy class of trivialization of the canonical bundle of $M$ as well as the symplectomorphism identifying $M$ with the reference fibre of $P$.  \end{rem}

Choose an $S^1$-dependent fiberwise almost complex structure $(J_b)_{b \in B}$ which is compatible with $(\omega_b)_{b \in B}$ and is of {\it contact type} in each fiber.
Recall that $J_b$ is of contact type if $\theta_b \circ J_b=dr$ outside a compact set of $p^{-1}(b)$.  
Define the moduli space $\widetilde{\mathcal{M}}(\vec{x}_0,\vec{x}_1,(J_b)_{b \in B})$ to be the space of pairs $(\tau, u)$ where \begin{itemize} \item  $\tau : \R \to B$ is  a gradient trajectory of $\eta$ such that $\lim_{s \to -\infty} \tau(s)=c_0$ and $\lim_{s \to \infty} \tau(s)=c_1$
\item $u: \mathbb{R} \times S^1 \to P$ is a smooth map which lifts $\tau$ and solves the Floer equation 
\begin{align} \label{eq:Floer}
\left\{
 \begin{array}{ll}
  (du^{vert}-X_{H_{\tau(s)}}\otimes dt)_{J_{\tau(s)}}^{0,1}=0. \\
  \lim_{s \to -\infty} u(s,t)=x_0(t)  \\
  \lim_{s \to \infty} u(s,t)=x_1(t)
 \end{array}
\right.
\end{align}
where $du^{vert}$ denotes the projection of $du$ to the fibrewise direction $T^{vert}P$ under the splitting $TP=T^{vert}P \oplus T^{hor}P$ by $\nabla$.
\end{itemize}  

We let $$ \mathcal{M}(\vec{x}_0,\vec{x}_1):= \widetilde{\mathcal{M}}(\vec{x}_0,\vec{x}_1)/\mathbb{R}. $$ 
By trivializing $P$ over $\tau$ using the connection $\nabla$,  a solution  $(\tau,u) \in \mathcal{M}(\vec{x}_0,\vec{x}_1)$ can be recast as either: \begin{itemize} \item Floer trajectories in the fiber over $c_0=c_1$ (when $\tau$ is constant) or \item continuation solutions for a Hamiltonian $(H_{s,t})_{(s,t) \in \mathbb{R} \times S^1}$ with respect to an $\omega$-compatible almost complex structure of contact type $(J_{s,t})_{(s,t) \in \mathbb{R} \times S^1}$ on $M$ (when $\tau$ is non constant).   \end{itemize} 
The moduli space $\mathcal{M}(\vec{x}_0,\vec{x}_1)$ is the zero locus of a Fredholm section 
and has a well-defined virtual dimension given by
\[
\vdim \mathcal{M}(\vec{x}_0,\vec{x}_1)=|\vec{x}_0|-|\vec{x}_1|-1.
\]


For $(\tau,u) \in \mathcal{M}(\vec{x}_0,\vec{x}_1)$, 
we define its {\it topological energy} to be
\begin{align}\label{eq:topE}
E(\tau,u):= \int_{\mathbb{R} \times S^1} (du^{vert})^*\omega + \int_{0}^1 (H_{c_0})_t(x_0(t)) dt - \int_{0}^1 (H_{c_1})_t(x_1(t)) dt.
\end{align}
The term $(du^{vert})^*\omega$ is defined to be the pull-back of the fibrewise $2$-form along the composition $T(\mathbb{R} \times S^1) \to TP \to T^{vert}P$.
In other words, if we trivialize $P$ along $\tau$ by $\nabla$ to define the symplectomorphisms
$p_{\tau,s}:P_{\tau(s)} \to P_{c_1}$, we can identify $u$ with the map $u^{triv}(s,t):=p_{\tau,s}(u(s,t))$ to $P_{c_1}$ and $E(\tau,u)$ is nothing but the topological energy of $u^{triv}$ in the literature (cf. \cite[Equation (148)]{AbSe}).\footnote{The first term of \eqref{eq:topE} is the area of $u^{triv}$.}
To justify the name of topological energy in our context, we need to understand the (in)dependence of $E(\tau,u)$ on $(\tau,u)$. If $(\tau,u') \in \mathcal{M}(\vec{x}_0,\vec{x}_1)$ is another element such that $u^{triv}_*[\Sigma]=(u')^{triv}_*[\Sigma]\in H_2(P_{c_1},p_{\tau, -\infty}(x_0) \cup x_1)$, then it is clear that $E(\tau, u)=E(\tau, u')$. However, when we vary $\tau$ in the moduli space $\mathcal{M}(c_0,c_1)$ of gradient trajectory from $c_0$ to $c_1$ modulo $\mathbb{R}$, the symplectomorphism $p_{\tau, -\infty}:P_{c_0} \to P_{c_1}$ may change up to an element of $g \in G$. To resolve this problem, we make the following observation.

\begin{lem}\label{l:zeroarea}
For any generator $\vec{x}$ and any path $g:[0,1] \to G$, the cylinder $g\cdot x:[0,1] \times S^1 \to P_c$ given by $(r,t)\mapsto g(r) \cdot x(t)$ has $\omega$-area $0$. 
\end{lem}

\begin{proof}
Let
\[
c_{\max}(\vec{x}):=\max \left\{ \int (g \cdot x)^*\omega | g:[0,1] \to G \right\}.
\]
The quantity $c_{\max}(\vec{x})$ exists because, by Equation \eqref{eq:zeroarea}, the integral  $\int (g \cdot x)^*\omega $ depends only on the points $g(0),g(1)$ and $G$ is compact.
Suppose the contrary that $c_{\max}(\vec{x})>0$ and $c_{\max}(\vec{x})$ is achieved by $g:[0,1] \to G$.
Let $\tilde{g}:[0,1] \to G$ be the concatenation of $g$ and $g(1)g(0)^{-1}g$ (so $\tilde{g}(0)=g(0)$ and $\tilde{g}(1)=g(1)g(0)^{-1}g(1)$).
Clearly, $\int (\tilde{g} \cdot x)^*\omega=2\int (g \cdot x)^*\omega=2c_{\max}(\vec{x})>c_{\max}(\vec{x})$ so we get a  contradiction.
\end{proof}

\begin{cor}\label{c:energyconstant}
The topological energy $E:\mathcal{M}(\vec{x}_0,\vec{x}_1) \to \mathbb{R}$ is a locally constant function. 
\end{cor}
\begin{proof}
Suppose we are given a one parameter family $(\tau_r,u_r)_{r \in [0,1]}$. We wish to identify the $\omega$-area of $u_0^{triv}$ and $u_1^{triv}$, where $u_i^{triv}$ is defined using the trivialization along $\tau_i$.  Note first that for any path $g(r): [0,1] \to G$ (with $g(0)=\operatorname{id}$), Lemma \ref{l:zeroarea} implies that we have an equality of $\omega$-areas for the cylinders \begin{align} (g \cdot p_{\tau_0, -\infty}(x_0)) \# u_0^{triv} \text{ and } u_0^{triv}, \end{align} where $\#$ denotes concatenation. The symplectomorphisms $p_{\tau_r, -\infty}$ differ by elements of $G$, giving rise to such a path $g(r).$ To finish the argument, we note that \begin{align} [(g \cdot p_{\tau_0, -\infty}(x_0)) \# u_0^{triv}]=[u_1^{triv}] \in H_2(P_{c_1},p_{\tau_1, -\infty}(x_0) \cup x_1). \end{align} To see this, it suffices by compactness to subdivide the interval $[0,1]$ into arbitrarily small intervals so that $g$ is an arbitrarily short path, in which case the claim is obviously true.

\end{proof}

\begin{prop}\label{p:moduliD}
Let $H$ be a cylindrical Hamiltonian function compatible with $\eta$.
\begin{itemize} \item For a generic choice of compatible $J$ that is of contact type, the space $\mathcal{M}(\vec{x}_0,\vec{x}_1)$ is a manifold of the expected dimension for any pair of generators $(\vec{x}_0,\vec{x}_1)$.
\item Moreover, if $\vdim \mathcal{M}(\vec{x}_0,\vec{x}_1) \le 1$, then for any $E \in\mathbb{R}$, the moduli space
\[
\mathcal{M}_{\le E}(\vec{x}_0,\vec{x}_1) :=  \left\{  (\tau,u) \in \mathcal{M}(\vec{x}_0,\vec{x}_1) | E(\tau,u) \le E \right\}
\]
admits a 
Gromov compactification $ \ol{\mathcal{M}}_{\le E}(\vec{x}_0,\vec{x}_1)$ making it a compact manifold with boundary.  \end{itemize} 
\end{prop}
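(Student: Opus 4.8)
The plan is to establish the two bullet points of Proposition~\ref{p:moduliD} by reducing everything, via the connection $\nabla$, to standard statements about Floer continuation solutions on a single Liouville manifold $M$, together with a Morse-theoretic bookkeeping over the base $B$.

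\textbf{Transversality.} For the first bullet, I would set up the universal moduli space of pairs $(\tau,u)$ with $u$ solving the Floer equation \eqref{eq:Floer}, parametrized by the fiberwise contact-type almost complex structures $(J_b)_{b\in B}$, and apply the Sard--Smale theorem. The key point is that $\tau$ is a gradient trajectory for the \emph{fixed} Morse--Smale pair $(\eta, g_\eta)$, so the space of such $\tau$ (with fixed asymptotics $c_0, c_1$) is already cut out transversally and has the expected dimension equal to the Morse index difference; one only needs to achieve transversality of the fiberwise Floer equation lying over a given $\tau$. Trivializing $P$ along $\tau$ by parallel transport, $u$ becomes an honest continuation solution on $M$ (or, when $\tau$ is constant, a Floer trajectory), and the classical argument of Floer--Hofer--Salamon applies: somewhere-injectivity of solutions holds because solutions are non-constant in the fiber direction (this uses the fourth bullet of Definition~\ref{d:eqgen}, which rules out the degenerate case where parallel transport carries an orbit of $H_{c_0}$ to one of $H_{c_1}$ and a ``constant'' continuation solution could occur), so generic domain-dependent $J$ achieves transversality. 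Since $B$ is finite dimensional and closed and there are finitely many critical points, a single generic choice works simultaneously for all finitely many pairs $(\vec x_0, \vec x_1)$ and all gradient trajectories between them. The resulting dimension count is the sum of the Morse-index contribution and the fiberwise Fredholm index, which is exactly $|\vec x_0| - |\vec x_1| - 1$ after quotienting by $\mathbb{R}$.

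\textbf{Compactness.} For the second bullet, fix $E$ and restrict to classes $A$ with $E(A) < E$. I would run Gromov--Floer compactness in two coupled directions. On the base, gradient trajectories of the Morse--Smale function $\eta$ converge (after breaking) to broken trajectories through intermediate critical points; this is the classical Morse compactness and the number of breakings is bounded by the index difference. In the fibers, one needs a uniform $C^0$-bound and a uniform energy bound. The energy bound is automatic: by \eqref{eq:topE} the topological energy equals $E(A)$, which is bounded by $E$ by hypothesis, and since $J$ is of contact type and $H$ is cylindrical with slopes decreasing along gradient trajectories (third bullet of Definition~\ref{d:eqgen}), the geometric energy is controlled by the topological energy. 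The $C^0$-bound — confining all solutions to a fixed compact subset of each fiber — is the crucial input: it follows from the maximum principle for solutions of the continuation equation with $J$ of contact type and $H$ cylindrical, exactly because the slope is monotone \emph{non-increasing} along $\tau$ with respect to parallel transport, which is precisely what the third bullet guarantees; the flatness of $\nabla$ near critical points ensures there is no horizontal curvature contribution to spoil the estimate near the ends. With uniform energy and $C^0$ bounds in hand, bubbling is excluded by the vanishing $c_1(TM)=0$/exactness considerations together with the energy filtration (sphere bubbles would carry positive area, and we are in the Liouville fiber setting so the relevant count in low virtual dimension is unaffected), and Gromov--Floer convergence produces the compactification $\ol{\mathcal{M}}(\vec x_0, \vec x_1; A)$ by broken trajectories. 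When $\vdim \le 1$, the standard gluing analysis (again reduced fiberwise to the classical case via $\nabla$, and on the base to Morse gluing) shows this compactification is a compact manifold with boundary, the boundary being the once-broken configurations.

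\textbf{Main obstacle.} I expect the heart of the matter to be the uniform $C^0$ a priori estimate confining solutions to a compact set in the total space: one must combine the fiberwise maximum principle (valid because of contact-type $J$ and the slope-monotonicity condition along gradient trajectories) with the behavior of the connection $\nabla$, checking that the horizontal part of $du$ contributes no term that violates the maximum principle on the cylindrical end — which is why admissibility requires $\nabla$ to be flat near $\critp(\eta)$ and why the action on the cylindrical end splits off the trivial $[1,\infty)$-factor (Lemma~\ref{l:cylindricalFun} and the remark following the definition of convex action). Everything else is a careful but routine packaging of known Floer-theoretic and Morse-theoretic compactness and transversality results.
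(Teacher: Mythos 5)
Your overall strategy matches the paper's: trivialize $P$ along the gradient trajectory $\tau$ via $\nabla$ to reduce the analysis to continuation solutions on $M$, use the fourth bullet of Definition~\ref{d:eqgen} to rule out solutions with non-constant $\tau$ and $s$-independent $u$ (enabling transversality), and get compactness from a $C^0$ a priori bound via the maximum principle for contact-type $J$ together with cylindrical $H$ whose slope is monotone non-increasing along $\tau$ (the third bullet). All of that is in line with what the paper does.

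There is, however, a genuine gap in your exclusion of sphere bubbling. You write that bubbling is excluded by ``$c_1(TM)=0$/exactness considerations'' and that ``we are in the Liouville fiber setting,'' but in Proposition~\ref{p:moduliD} the fiber $M$ is \emph{not} assumed to be Liouville or exact: $\bar M$ is only a compact symplectic manifold with convex boundary, and $\theta$ is a primitive of $\omega$ only near $\partial\bar M$. (This is precisely why the Floer complex is defined over the Novikov field $\Lambda$; the exact case is noted separately in Remark~\ref{rem:LiouvilleC}.) So the ``sphere bubbles would carry positive area'' argument does not exclude them here. What the paper actually uses is the dimension count made available by $c_1(TM)=0$: for generic $J_{s,t}$ the image of the evaluation map from $1$-pointed $J_{s,t}$-holomorphic spheres has real codimension $4$, so moduli spaces of virtual dimension at most $1$ (equivalently, parametrized over $\mathbb{R}\times S^1$, dimension at most $3$) generically avoid all holomorphic spheres, and bubbling cannot occur. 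Your appeal to exactness should be replaced by this transversality/codimension argument.

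A second, smaller point: you call the uniform geometric energy bound ``automatic'' from $E(A)<E$. The bound on geometric energy in terms of topological energy, obtained after trivializing over a given $\tau$, depends on the Hamiltonian along $\tau$ and hence on $\tau$ itself. The paper makes the uniformity explicit by observing that the space of gradient trajectories between fixed critical points compactifies (by adding broken trajectories), so the constant in the estimate is uniformly bounded. You should include this step rather than treat it as immediate.
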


\begin{proof}
Since $M$ is a symplectic Calabi-Yau manifold with a convex end,  the strategy of proof for both transversality (see e.g. \cite{MSJbook}, \cite{HoferSalamon}) and compactness (\cite{Viterbo},  \cite{Ritter}) is standard.

To run the transversality argument, we need to avoid solutions $(\tau,u)$ such that $\tau$ is non-constant but $u$ is $s$-independent with respect to the parallel transport map along $\tau$ induced by $\nabla$. 
These solutions are ruled out exactly by the fourth bullet of Definition \ref{d:eqgen}.

To run the argument for compactness, first note that when $\vdim \mathcal{M}(\vec{x}_0,\vec{x}_1) \le 1$, then by genericity, we can assume that $u(s,t)$ misses all $J_{s,t}$-holomorphic spheres for all 
$(s,t) \in \mathbb{R} \times S^1$ and $u \in \mathcal{M}(\vec{x}_0,\vec{x}_1)$ because the image of the evaluation map of a $1$-pointed $J_{s,t}$-holomorphic sphere is of real codimension $4$.\footnote{For each $J_{s,t}$, the moduli space of $1$-poined $J_{s,t}$-holomorphic spheres in the class $A$ has virtual dimension $2(n-3)+c_1(A)+2=2n-4$.}
In particular, we can avoid sphere bubblings.
It remains to show that we have a $C^0$  a priori estimate and a uniform geometric energy bound of solutions $(\tau,u)$.
The $C^0$ a priori estimate for $u$ is proved in \cite[Theorem C.11]{Ritter}.
More precisely, the fact that $H$ is a cylindrical Hamiltonian implies that conditions (2) and (3) in \cite[Theorem C.11]{Ritter} are satisfied (cf. \cite[Theorem C.6]{Ritter}). The condition that $(H_z,J_z)$ becomes independent of $s$ for large $|s|$ follows from the second bullet of Definition \ref{d:eqgen}.
The condition $\partial_s H_z \le 0$ in \cite[Theorem C.11]{Ritter} translates to $\partial_s H_{\tau(s)} \le 0$ in our setting, which is not necessarily true on the nose because there is a constant term $H_{\tau(s)}$.
But the Floer equation does not depend on the constant term in $H_{\tau(s)}$ so the Theorem remains valid as long as $\partial_s \bfs_{H_{\tau(s)}} \le 0$, which is precisely the third bullet of Definition \ref{d:eqgen}.

To obtain the uniform geometric energy bound on $u$,  we suppose that $u$ lies over a given gradient trajectory $\tau$ and trivialize $P$ along this trajectory.  We can then apply the standard energy estimate to give an upper bound to the geometric energy of $u^{triv}$ in terms of the topological energy of $u^{triv}$  (i.e. $E(\tau,u)$) and the Hamiltonian $H$.  
\end{proof}




\begin{rem}
The topological energy extends to a locally constant function on  $ \ol{\mathcal{M}}_{\le E}(\vec{x}_0,\vec{x}_1)$ and is additive under breaking (i.e. if $((\tau_0,u_0),(\tau_1,u_1))$ is a boundary point of $\ol{\mathcal{M}}_{\le E}(\vec{x}_0,\vec{x}_1)$, then its topological energy is the sum $E(\tau_0,u_0)+E(\tau_1,u_1)$).
\end{rem}

Choose a generic $J$ and define a differential on $CF(P,H, \eta)$ by
 \begin{align} 
\partial_{CF}(\vec{x}_1):= \sum_{\vec{x}_{0}, |\vec{x}_{0}|=|\vec{x}_{1}|+1} \sum_{(\tau,u) \in \mathcal{M}_0(\vec{x}_0,\vec{x}_1)} s(u)q^{E(\tau,u)} \vec{x}_0   \label{eq:FloerDiff} 
\end{align} 
where $s(u) \in \{-1,1\}$ is the sign of $(\tau,u)$.
The sum is well-defined because of the compactness of the moduli (Proposition \ref{p:moduliD}).
For a discussion of how the signs are determined, see \cite[Section 3 and 6]{Hutchings}.


\begin{lem}\label{l:Floercohomology}
 We have that $$\partial_{CF}^2=0$$  
Moreover, $HF(P,H, \eta):=H(CF(P,H, \eta), \partial_{CF})$ is independent of the choice of $J$.
\end{lem}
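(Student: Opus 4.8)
The plan is to prove the two assertions of Lemma~\ref{l:Floercohomology} by the standard Floer-theoretic arguments, adapted to the fibred setting already set up above. For $\partial_{CF}^2 = 0$, I would study the compactified one-dimensional moduli spaces $\ol{\mathcal{M}}(\vec{x}_0,\vec{x}_2;A)$ for generators with $|\vec{x}_0| = |\vec{x}_2| + 2$. By Proposition~\ref{p:moduliD} (applied with $\vdim = 1$ and a large energy cutoff $E$, which is harmless since each fixed relative class $A$ has a fixed energy $E(A)$), this is a compact $1$-manifold with boundary. The key step is to identify its boundary via a gluing/compactness analysis: a sequence of solutions in $\mathcal{M}(\vec{x}_0,\vec{x}_2)$ can degenerate either by breaking of the gradient trajectory $\tau$ in the base $B$, or by Floer-trajectory breaking in the fibre; in either case the broken configuration passes through an intermediate generator $\vec{x}_1$ with $|\vec{x}_1| = |\vec{x}_2| + 1$. (Sphere bubbling is excluded as in the proof of Proposition~\ref{p:moduliD} by the codimension-$4$ argument, and there is no disc bubbling since the fibres are closed-string.) Conversely, every such broken configuration $(\vec{x}_0 \to \vec{x}_1 \to \vec{x}_2)$ arises as a unique end of $\ol{\mathcal{M}}(\vec{x}_0,\vec{x}_2)$ by a standard gluing theorem. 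Counting boundary points of a compact $1$-manifold with signs then gives $0 = \sum_{\vec{x}_1} \#\mathcal{M}_0(\vec{x}_0,\vec{x}_1)\cdot \#\mathcal{M}_0(\vec{x}_1,\vec{x}_2)$ for each relative class, which assembled over all classes (weighted by $q^{E(A)}$, using additivity of $E$ under concatenation) is precisely the coefficient of $\vec{x}_0$ in $\partial_{CF}^2(\vec{x}_2)$.

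For the independence of $HF(P,H,\eta)$ on the choice of almost complex structure, I would use the usual continuation-map argument. Given two generic choices $J^0 = (J^0_b)$ and $J^1 = (J^1_b)$ of fibrewise contact-type almost complex structures, pick a generic homotopy $(J^\lambda_b)_{\lambda \in [0,1]}$ through contact-type data, and for each $\lambda$ keep the same Hamiltonian $H$, base triple $(\eta,g_\eta,\nabla)$, and generators fixed. One defines a continuation chain map $\Phi\colon CF(P,H,\eta,J^0) \to CF(P,H,\eta,J^1)$ by counting rigid solutions of an $s$-dependent version of the Floer equation~\eqref{eq:Floer} in which $J_{\tau(s)}$ is replaced by a homotopy $J^{\beta(s)}_{\tau(s)}$ with $\beta$ a cutoff function interpolating from $0$ to $1$; since the Hamiltonian slope is unchanged along this homotopy, the $C^0$-estimates of \cite[Theorem C.11]{Ritter} invoked in Proposition~\ref{p:moduliD} still apply (indeed $\partial_s \bfs_H = 0$ here), so the relevant moduli spaces are compact. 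The chain-map equation $\partial_{CF}^{J^1}\Phi - \Phi\partial_{CF}^{J^0} = 0$ follows from the boundary analysis of the corresponding $1$-dimensional moduli spaces, exactly as above. A further parametrised moduli space (a homotopy of homotopies) gives a chain homotopy showing $\Phi$ is independent of the choice of homotopy $J^\lambda$ up to chain homotopy, and composing the continuation maps for $J^0 \rightsquigarrow J^1$ and $J^1 \rightsquigarrow J^0$ yields maps chain-homotopic to the identity; hence $\Phi$ is a quasi-isomorphism.

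The main obstacle, and the only point where genuine care beyond the closed case is needed, is the compactness input: one must be sure that no solution escapes to infinity along the convex end, neither for the differential nor for the continuation maps. This is exactly what Proposition~\ref{p:moduliD} provides for $\partial_{CF}$, via the cylindricity of $H$ and the monotonicity condition (the third bullet of Definition~\ref{d:eqgen}) which translate into hypotheses (2), (3) of \cite[Theorem C.11]{Ritter}; and the continuation-map version works because the homotopy $(J^\lambda)$ leaves the Hamiltonian — and in particular its slope — untouched, so $\partial_s \bfs_{H} \le 0$ holds trivially. Everything else — transversality for the continuation and homotopy-of-homotopy moduli, gluing, and the sign bookkeeping (for which we follow \cite[Sections 3 and 6]{Hutchings}) — is standard and identical to the closed-string case, so I would state these steps and refer to the literature rather than carry out the computations.
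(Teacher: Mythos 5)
The paper states Lemma~\ref{l:Floercohomology} without proof, treating it as a standard consequence of the compactness and transversality established in Proposition~\ref{p:moduliD} (with signs deferred to \cite[Sections 3 and 6]{Hutchings}). Your argument is correct and is exactly the standard boundary-of-$1$-manifold plus continuation-map argument that the paper implicitly invokes; the only point needing care in this fibred convex setting --- that the maximum principle survives a homotopy of $J$ while $H$ and $(\eta,g_\eta,\nabla)$ are held fixed, since $\partial_s \bfs_H = 0 \le 0$ --- is one you address explicitly.
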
 


\subsection{Continuation and invariance}

We want to discuss the natural maps arising from varying $H$ and $(\eta,g_{\eta},\nabla)$.

Let $H'$ be another cylindrical Hamiltonian on $P$ compatible with $(\eta,g_{\eta},\nabla)$ and such that the slope $\bfs_{H'_b} \ge \bfs_{H_b}$ for all $b \in B$.
In this case, we can define the continuation map 
\begin{align}
HF(P,H,\eta) \to HF(P,H',\eta) \label{eq:Gcont2}
\end{align}
as follows.
A {\it monotone homotopy} from $H$ to $H'$ is a one-parameter family of cylindrical Hamiltonian $(H_s)_{s \in \mathbb{R}}$
such that $H_s=H'$ for $s \ll 0$, $H_s=H$ for $s \gg 0$ and for every $b \in B$, we have $\partial_s \bfs_{(H_s)_b} \le 0$.
We say that a monotone homotopy is {\it compatible with $(\eta,g_{\eta},\nabla)$} if $\partial_s \bfs_{(H_s)_{\tau(s)}} \le 0$ along any gradient trajectory $\tau:\mathbb{R} \to B$ of $\eta$ (again, with respect to the parallel transport map along $\tau$ induced by $\nabla$).
Note that for a monotone increasing smooth function $\rho: \R \to [0,1]$ such that $\rho(s)=0$ for $s \ll0$ and $\rho(s)=1$ for $s \gg 0$, the family \begin{align*} H_s=\rho(s)H+(1-\rho(s))H', \quad (\text{for s} \in \R) \end{align*} is a monotone homotopy that is compatible with $(\eta,g_{\eta},\nabla)$ if both $H$ and $H'$ are compatible with $(\eta,g_{\eta},\nabla)$.

Let $\vec{x}=(c,x)$ and $\vec{x}'=(c',x')$ be a generator of $CF(P,H,\eta)$ and $CF(P,H',\eta)$ respectively.
Choose a one-parameter family of $S^1$-dependent fiberwise compatible almost complex structures $(J_s)_{s \in \mathbb{R}}$ that are of contact type.
Define the moduli space $\mathcal{M}(\vec{x}',\vec{x},(H_s)_{s \in \mathbb{R}})$ to be the space of pairs $(\tau, u)$ where \begin{itemize} \item  $\tau : \R \to B$ is  a gradient trajectory of $\eta$ such that $\lim_{s \to -\infty} \tau(s)=c'$ and $\lim_{s \to \infty} \tau(s)=c$
\item $u: \mathbb{R} \times S^1 \to P$ is a smooth map which lifts $\tau$ and solves the Floer equation 
\begin{align} \label{eq:Floer2}
\left\{
 \begin{array}{ll}
  (du^{vert}-X_{(H_s)_{\tau(s)}}\otimes dt)_{(J_s)_{\tau(s)}}^{0,1}=0. \\
  \lim_{s \to -\infty} u(s,t)=x'(t)  \\
  \lim_{s \to \infty} u(s,t)=x(t)
 \end{array}
\right.
\end{align}
\end{itemize}  
The virtual dimension of $\mathcal{M}(\vec{x}',\vec{x},(H_s)_{s \in \mathbb{R}})$ is $|\vec{x}'|-|\vec{x}|$. 
We can define the topological energy of $(\tau,u)$ using the same formula \eqref{eq:topE} with $x_0$ and $x_1$ being replaced with $x'$ and $x$, respectively.

\begin{prop}
Let $(H_s)_{s \in \mathbb{R}}$ be a monotone homotopy from $H$ to $H'$ that is compatible with $(\eta,g_{\eta},\nabla)$.
\begin{itemize} \item For a generic choice of $(J_s)_{s \in \mathbb{R}}$, the space $\mathcal{M}(\vec{x}',\vec{x},(H_s)_{s \in \mathbb{R}})$ is a manifold of the expected dimension for any pair of generators $(\vec{x},\vec{x}')$.

\item Moreover,  if its virtual dimension is $\le 1$, then for any $E \in\mathbb{R}$,
$$\mathcal{M}_{\le E}(\vec{x}',\vec{x},(H_s)_{s \in \mathbb{R}}):=\left\{ (\tau,u) \in \mathcal{M}(\vec{x}',\vec{x},(H_s)_{s \in \mathbb{R}}) | E(\tau,u ) \le E    \right\}$$
admits a 
Gromov compactification $\ol{\mathcal{M}}_{\le E}(\vec{x}',\vec{x},(H_s)_{s \in \mathbb{R}})$ making it a compact manifold with boundary.   \end{itemize} 
\end{prop}

The proof is similar to Proposition \ref{p:moduliD}. The compatibility with $(\eta,g_{\eta},\nabla)$ provides us a $C^0$ a priori estimate of the solutions as above. Therefore, we can define a linear map
\begin{align}
 \begin{array}{ll}
\kappa_{(H_s)_{s \in \R}}:CF(P,H,\eta) \to CF(P,H',\eta)\\
\kappa_{(H_s)_{s \in \R}}(\vec{x})= \sum_{\vec{x}', |\vec{x}'|=|\vec{x}|} \sum_{(\tau,u) \in \mathcal{M}(\vec{x}',\vec{x},(H_s)_{s \in \mathbb{R}})} s(\tau,u)q^{E(\tau,u)} \vec{x}'
\end{array}
\end{align}
where $s(u) \in \{-1,1\}$ is the sign of $(\tau,u)$.

\begin{lem}\label{l:kappaIndchoice}
The linear map $\kappa_{(H_s)_{s \in \R}}$ is chain map.
Moreover, the induced map on cohomology is independent of the choice of the $(\eta,g_{\eta},\nabla)$-compatible monotone homotopy and the family of cylindrical almost complex structure.
\end{lem}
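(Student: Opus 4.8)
The plan is to run the standard Floer-theoretic boundary analysis, taking the transversality and compactness statements already established (the proposition stated just above, together with Proposition~\ref{p:moduliD}) as inputs; the only genuinely non-formal point will be that the $C^0$ a priori estimates persist over the auxiliary parameter spaces. \emph{Chain map property.} Fix generators $\vec{x}$ of $CF(P,H,\eta)$ and $\vec{x}'$ of $CF(P,H',\eta)$ with $|\vec{x}'|=|\vec{x}|+1$, fix a relative class $A$, and consider the compactified moduli space $\ol{\mathcal{M}}(\vec{x}',\vec{x},(H_s)_{s\in\R};A)$, which by the proposition above is a compact $1$-manifold with boundary. Its codimension-one boundary consists of the once-broken configurations; since the continuation profile $(H_s)$ has no $s$-translation symmetry, breaking in the fibre direction occurs only at the two cylindrical ends, while breaking in the base direction is over broken gradient trajectories of $\eta$, bookkept exactly as in \cite[Sections~3 and 6]{Hutchings}. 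At $s\to-\infty$, where $H_s=H'$, one splits off a Floer trajectory of $H'$ from $\vec{x}'$ to some $\vec{x}''$ with $|\vec{x}''|=|\vec{x}|$ concatenated with a continuation solution from $\vec{x}''$ to $\vec{x}$; at $s\to+\infty$, where $H_s=H$, one splits off a continuation solution from $\vec{x}'$ to some $\vec{x}''$ with $|\vec{x}''|=|\vec{x}|+1$ concatenated with a Floer trajectory of $H$ from $\vec{x}''$ to $\vec{x}$. In each case the two relative classes add to $A$, so the energies add to $E(A)$ and the contribution sits in the single power $q^{E(A)}$; summing the signed count of boundary points over all $A$ with $E(A)<E$ in each energy window $E$ --- legitimate by Gromov compactness with bounded energy, just as in the definition of $\partial_{CF}$ --- yields $\partial_{CF}\circ\kappa_{(H_s)}-\kappa_{(H_s)}\circ\partial_{CF}=0$, where the two copies of $\partial_{CF}$ are the differentials of $CF(P,H',\eta)$ and $CF(P,H,\eta)$.

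\emph{Independence of the choices.} Let $((H_s^0)_{s\in\R},(J_s^0)_{s\in\R})$ and $((H_s^1)_{s\in\R},(J_s^1)_{s\in\R})$ be two choices of $(\eta,g_\eta,\nabla)$-compatible monotone homotopy from $H$ to $H'$ together with contact-type fiberwise almost complex structures, the base triple $(\eta,g_\eta,\nabla)$ being fixed. Choose a path $(H_s^\lambda)_{\lambda\in[0,1]}$ of monotone homotopies from $(H_s^0)$ to $(H_s^1)$: this is possible because the set of $(\eta,g_\eta,\nabla)$-compatible monotone homotopies from $H$ to $H'$ is convex --- a convex combination still equals $H'$ for $s\ll0$, equals $H$ for $s\gg0$, and satisfies $\partial_s\bfs_{(H_s)_{\tau(s)}}\le 0$ along every gradient trajectory $\tau$ of $\eta$. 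Choose a generic path $(J_s^\lambda)_{\lambda\in[0,1]}$ of contact-type fiberwise almost complex structures from $(J_s^0)$ to $(J_s^1)$. The parametrized moduli spaces of triples $(\lambda,\tau,u)$ with $(\tau,u)\in\mathcal{M}(\vec{x}',\vec{x},(H_s^\lambda)_{s\in\R})$ for the data at $\lambda$ have virtual dimension $|\vec{x}'|-|\vec{x}|+1$; a signed, $q^{E(u)}$-weighted count of their rigid elements defines a degree $-1$ operator $K:CF(P,H,\eta)\to CF(P,H',\eta)$ (well-defined by the same compactness). The compactified one-dimensional parametrized moduli spaces (those with $|\vec{x}'|=|\vec{x}|$) are compact $1$-manifolds with boundary whose strata are: $\lambda\in\{0,1\}$, contributing $\kappa_{(H_s^1)}-\kappa_{(H_s^0)}$; a breaking off an $H'$-Floer trajectory at $s\to-\infty$; and a breaking off an $H$-Floer trajectory at $s\to+\infty$. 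Reading off the signed count of the boundary as before gives the chain-homotopy relation $\kappa_{(H_s^1)}-\kappa_{(H_s^0)}=\partial_{CF}\circ K+K\circ\partial_{CF}$, so the two maps induced on $HF$ coincide.

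\emph{The main obstacle.} The point that requires real attention is that transversality and, above all, the compactness inputs --- the $C^0$ confinement estimate and uniform geometric energy bound of \cite[Theorem C.11]{Ritter}, plus the absence of holomorphic sphere bubbles --- must hold uniformly over the compact parameter interval $[0,1]$ in the homotopy of homotopies. For the confinement estimate this is exactly what the convexity observation buys: every member $(H_s^\lambda)$ of the interpolating family is again a $(\eta,g_\eta,\nabla)$-compatible monotone homotopy, so Ritter's maximum principle applies fiberwise and over every gradient trajectory with constants uniform in $\lambda$, and the geometric energy bound follows by trivializing over the (compactified) space of gradient trajectories exactly as in the proof of Proposition~\ref{p:moduliD}. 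Sphere bubbling is excluded as there: $M$ is symplectic Calabi--Yau, and even after adjoining the single parameter $\lambda$ the evaluation image of a one-pointed $J$-holomorphic sphere still has real codimension $4$. Finally, the $s$-independent-solution-over-a-non-constant-$\tau$ configurations that would obstruct transversality are ruled out by the fourth bullet of Definition~\ref{d:eqgen}, a condition on $H$ and $\eta$ alone that is untouched by the interpolation.
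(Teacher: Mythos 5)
Your proof is correct and follows essentially the same route as the paper: the chain map property from the boundary of the compactified one-dimensional moduli space, and independence from a parametrized-moduli chain homotopy, with the key observation (which the paper singles out) being that convexity of the space of $(\eta,g_\eta,\nabla)$-compatible monotone homotopies supplies the interpolating path. The paper leaves the analytic inputs (uniform maximum principle, energy bounds, bubbling exclusion, transversality) implicit by reference to Proposition~\ref{p:moduliD}; your ``main obstacle'' paragraph simply spells out why those inputs persist over the extra parameter $\lambda$, which is consistent with the paper's reasoning.
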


\begin{proof}
The fact that  $\kappa_{(H_s)_{s \in \R}}$ is a chain map follows from looking at the boundary of the one dimensional moduli of 
$\ol{\mathcal{M}}(\vec{x}',\vec{x},(H_s)_{s \in \mathbb{R}})$.
The induced map on cohomology being independent of  $J$ can be proved by the standard homotopy argument \cite{MSJbook}.
To run the argument for the proof of the independence of choice of monotone homotopy, we need to show that the space of monotone homotopies that are compatible with $(\eta,g_{\eta},\nabla)$ is path connected.
Indeed, the space is convex so it is path connected.
\end{proof}

It is more tricky to vary $\eta$.
Let $H$ and $H'$ be cylindrical Hamiltonians that are compatible with $(\eta,g_{\eta},\nabla)$ and $(\eta',g_{\eta}',\nabla')$ respectively.
Let $(\eta_s)_{s \in \R}: B \to \mathbb{R}$ be a family of functions such that $\eta_s=\eta'$ for $s \ll 0$ and $\eta_s=\eta$ for $s \gg 0$, $(\nabla_s)_{s \in \R}$ be a family of connections for $P \to B$ such that $\nabla_s=\nabla'$ for $s \ll 0$ and $\nabla_s=\nabla$  for $s \gg 0$,
and $(H_s)_{s \in \R}: P \to \mathbb{R}$ be a family of cylindrical Hamiltonians 
such that $H_s=H'$ for $s \ll 0$ and $H_s=H$  for $s \gg 0$.
We can use $\eta_s$ to define a chain map between the Morse cochain of $\eta$ and $\eta'$ by counting solutions $\tau : \R \to B$ of
\[
\frac{d}{ds}\tau(s)=-\operatorname{grad}(\eta_s,g_{\eta_s})|_{\tau(s)}.
\]
Suppose that $(H_s)_{s \in \R}$ is a family of cylindrical Hamiltonian on $P$ which is compatible with $((\eta_s)_{s \in \R},(g_{\eta_s})_{s \in \R}, (\nabla_s)_{s \in \R})$ in the sense that
\begin{align}
\partial_s \bfs_{H_{\tau(s)}} \le 0 \text{ along  any gradient trajectory $\tau$ of $\eta_s,g_{\eta_s}$ with respect to $(\nabla_s)_{s \in \R}$.}\label{eq:non-incre-eta}
\end{align}
Then, by choosing a generic family of $S^1$-dependent cylindrical almost complex structure of contact type $(J_s)_{s \in \mathbb{R}}$ on $P$, we can define a chain map (the continuation map)
\begin{align}
 \begin{array}{ll}
\kappa_{(H_s)_{s \in \R}, (\eta_s)_{s \in \R}}: CF(P,H,\eta) \to CF(P,H',\eta')\\
\kappa_{(H_s)_{s \in \R},(\eta_s)_{s \in \R}}(\vec{x})= \sum_{\vec{x}', |\vec{x}'|=|\vec{x}|} \sum_{(\tau,u) \in \mathcal{M}(\vec{x}',\vec{x},(H_s)_{s \in \mathbb{R}},(\eta_s)_{s \in \R})} s(u)q^{E(\tau,u)} \vec{x}'
\end{array}
\end{align}
where $\mathcal{M}(\vec{x}',\vec{x},(H_s)_{s \in \mathbb{R}},(\eta_s)_{s \in \R})$ is the moduli of rigid solutions $(\tau,u)$ such that $\tau$ is a gradient trajectory of $(\eta_s,g_{\eta_s})_{s \in \R}$ and $u$ is a solution of \eqref{eq:Floer2} covering $\tau$. The term $E(u)$ and $s(u) \in \{-1,-1\}$ are again the topological energy \eqref{eq:topE} and the sign of $u$, respectively.
When  $(\eta_s,g_{\eta_s},\nabla_s)_{s \in \R}$ are independent of $s$, it recovers $\kappa_{(H_s)_{s \in \R}}$.

In general, the condition \eqref{eq:non-incre-eta} is implicit and hard to verify so we introduce the following defintion.
\begin{defn}
The relation $\le_{P}$ on the space of cylindrical Hamiltonians on $P$ is defined by
\begin{align}
H' \ge_{P} H \text{ if there is } a \notin \Spec(\partial \bar{M},\alpha) \text{ such that }\min \bfs_{H'} \ge a \ge \max \bfs_{H} \label{eq:partialorderP}
\end{align}
\end{defn}
To be crystal clear, the definitions are $\min \bfs_{H'} :=\min_{b\in B} \min_{\partial \bar{p}^{-1}(b)}\bfs_{H'_b}$
and $\max \bfs_{H}:=\max_{b \in B} \max_{\partial \bar{p}^{-1}(b)}\bfs_{H_b}$.
Since $\Spec(\partial \bar{M},\alpha)$ is nowhere dense, we can rewrite the condition \eqref{eq:partialorderP} as 
$\min \bfs_{H'} > \max \bfs_{H}$ or $\min \bfs_{H'} = \max \bfs_{H} \notin \Spec(\partial \bar{M},\alpha)$.
Note that this relation is transitive but not reflexive (cf. \eqref{eq:dir} and Corollary \ref{c:directSystem}).

\begin{lem}\label{l:inde_kappa}
Let $H$ and $H'$ be cylindrical Hamiltonians that are compatible with $(\eta,g_{\eta},\nabla)$ and $(\eta',g_{\eta}',\nabla')$ respectively.
Suppose that $H' \ge_{P} H$.
Then there exist $(\eta_s,g_{\eta_s},\nabla_s)_{s \in \R}$ and $(H_s)_{s \in \R}$ as above such that $(H_s)_{s \in \R}$ is compatible with $(\eta_s,g_{\eta_s},\nabla_s)_{s \in \R}$.
Moreover, the induced map on cohomology
\begin{align}
\kappa_{(H_s)_{s \in \R}, (\eta_s)_{s \in \R}}: HF(P,H,\eta) \to HF(P,H',\eta') \label{eq:kappaCoh}
\end{align}
is independent of the choice of $(\eta_s,g_{\eta_s},\nabla_s)_{s \in \R}$, $(H_s)_{s \in \R}$ and $(J_s)_{s \in \mathbb{R}}$.
\end{lem}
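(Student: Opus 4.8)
The plan is to prove Lemma \ref{l:inde_kappa} in two stages: first establish \emph{existence} of the interpolating data $(\eta_s, g_{\eta_s}, \nabla_s)_{s \in \R}$ and $(H_s)_{s \in \R}$ satisfying the compatibility condition \eqref{eq:non-incre-eta}, and then prove \emph{independence} of the induced map on cohomology via a homotopy-of-homotopies argument.

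For existence, the key point is to exploit the hypothesis $H' \ge_P H$, which by definition gives a constant $a \notin \Spec(\partial \bar{M}, \alpha)$ with $\min \bfs_{H'} \ge a \ge \max \bfs_H$. The strategy is to factor the continuation through an auxiliary cylindrical Hamiltonian $H_a$ of constant slope $a$: first interpolate from $H$ to $H_a$ keeping $\eta$ fixed (this is a monotone homotopy in the sense of the previous subsection, since $\bfs$ is non-increasing along the homotopy, so Lemma \ref{l:kappaIndchoice} applies), then interpolate from $(H_a, \eta)$ to $(H_a, \eta')$ where only the base data $(\eta_s, g_{\eta_s}, \nabla_s)$ changes while the fiber Hamiltonian stays at constant slope $a$ --- here \eqref{eq:non-incre-eta} holds trivially because $\bfs_{H_a}$ is constant, so $\partial_s \bfs_{H_{\tau(s)}} = 0$ --- and finally interpolate from $H_a$ to $H'$ keeping $\eta'$ fixed. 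Concatenating the three families (with a long enough neck between the pieces, using Lemma \ref{l:existCom} to produce $H_a$ compatible with whatever base triple is needed at the relevant endpoint) produces the desired $(H_s)_{s \in \R}$, $(\eta_s, g_{\eta_s}, \nabla_s)_{s \in \R}$ with \eqref{eq:non-incre-eta} satisfied throughout. One should also note the subtlety flagged in the proof of Proposition \ref{p:moduliD} that only $\partial_s \bfs_{H_{\tau(s)}} \le 0$ is needed, not $\partial_s H_{\tau(s)} \le 0$, so additive constants $c(s)$ in the Hamiltonians cause no trouble.

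For independence, I would run the standard parametrized moduli argument. Given two choices of interpolating data $(\eta_s^0, g^0, \nabla_s^0, H_s^0, J_s^0)$ and $(\eta_s^1, \dots)$, the goal is to connect them by a two-parameter family $(\eta_s^\lambda, g^\lambda, \nabla_s^\lambda, H_s^\lambda, J_s^\lambda)_{\lambda \in [0,1]}$ such that at every $\lambda$ the compatibility condition \eqref{eq:non-incre-eta} still holds. Counting rigid points in the resulting $\vdim = 0$ parametrized moduli space, whose compactified $1$-dimensional strata have boundary consisting of the two endpoint continuation maps together with $\lambda$-interior breakings, shows the two chain maps are chain homotopic. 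The contractibility statements already in hand do most of the work: the space of Morse--Smale pairs $(\eta, g_\eta)$ is connected, the space of $G$-connections flat near $\critp(\eta)$ is contractible (stated after the definition of admissible base triple), the space of contact-type compatible $J$ is contractible, and for the Hamiltonian component one again factors through a constant-slope $H_a$ so that the space of admissible interpolations between any two such datasets is convex (hence connected) once the slope profile is pinned down. The main obstacle is precisely the bookkeeping that \eqref{eq:non-incre-eta} --- a condition on slopes along gradient trajectories of a \emph{varying} Morse function with respect to a \emph{varying} connection --- is preserved along the $\lambda$-homotopy; this is where the factorization through constant slope $a$ pays off, since on the middle segment the slope is literally constant and the condition is vacuous, and on the outer segments $\eta$ and $\nabla$ are not changing so the earlier monotone-homotopy theory (Lemma \ref{l:kappaIndchoice}) applies verbatim. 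With these pieces assembled, concatenation and a neck-stretching/gluing argument identify the composite with $\kappa_{(H_s)_{s \in \R},(\eta_s)_{s \in \R}}$ and establish the claimed independence.
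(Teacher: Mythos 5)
Your proposal is correct and follows essentially the same approach as the paper: both factor the continuation through constant-slope Hamiltonians so that the slope-monotonicity condition \eqref{eq:non-incre-eta} decouples from the varying base data, then use convexity/homotopy arguments on the remaining pieces together with a gluing/neck-stretching step. The only cosmetic difference is that the paper uses two intermediate Hamiltonians $\check{H},\check{H}'$ of the same constant slope $\check{\bfs}$ (compatible respectively with $(\eta,g_\eta,\nabla)$ and $(\eta',g_{\eta'}',\nabla')$) rather than a single $H_a$, and frames independence as equality with a fixed three-step composition rather than a direct two-parameter homotopy; you implicitly acknowledge both points, so there is no real gap.
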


\begin{proof}

For fixed $(\eta_s,g_{\eta_s},\nabla_s)_{s \in \R}$, the space of $(H_s)_{s \in \R}$ from $H$ to $H'$ that is compatible with $(\eta_s,g_{\eta_s},\nabla_s)_{s \in \R}$ is convex so by a homotopy argument as in the proof of Lemma \ref{l:kappaIndchoice}, we conclude that it is independent of the choice of $(H_s)_{s \in \R}$ and $(J_s)_{s \in \mathbb{R}}$.

To argue that it is independent of the choice of $(\eta_s,g_{\eta_s},\nabla_s)_{s \in \R}$, we first consider the case that the slopes are constant functions that are independent of $b \in B$.
In other words, we have two real numbers $\bfs', \bfs$ such that $\bfs_{H'_b}=\bfs' $ and $\bfs_{H_b}=\bfs $ for all $b \in B$.
Moreover, we have  $\bfs' \ge \bfs$ and $\bfs', \bfs \notin \Spec(\partial \bar{M},\alpha)$.
In this case, we can choose a monotone decreasing function $f_{\bfs}:\R \to [\bfs,\bfs']$ such that $f_{\bfs}(s)=\bfs'$ for $s \ll0$, $f_{\bfs}(s)=\bfs$ for $s \gg0$ and a family
$(H_s)_{s \in \R}$ from $H$ to $H'$  such that $\bfs_{(H_s)_b}=f_{\bfs}(s)$ for all $b \in B$.
This choice of $(H_s)_{s \in \R}$ is compatible with any family $(\eta_s,g_{\eta_s},\nabla_s)_{s \in \R}$ from $(\eta,g_{\eta},\nabla)$ to $(\eta',g_{\eta}',\nabla')$.
Therefore, we can apply a homotopy argument with fixed $(H_s)_{s \in \R}$ and varying $(\eta_s, g_{\eta_s},\nabla_s)_{s \in \R}$ to conclude that \eqref{eq:kappaCoh} is independent of $(\eta_s, g_{\eta_s},\nabla_s)_{s \in \R}$ when $\bfs_{H'_b}=\bfs' $ and $\bfs_{H_b}=\bfs $ for all $b \in B$.

Now, we consider the general case.
Let $\check{\bfs} \in \R_{>0} \notin \Spec(\partial \bar{M},\alpha)$ be such that $\min \bfs_{H'} \ge \check{\bfs} \ge \max \bfs_{H}$.
Let $\check{H}$ and $\check{H}'$ be a cylindrical Hamiltonian such that $\bfs_{\check{H}_b}=\check{\bfs}=\bfs_{\check{H}'_b}$ for all $b \in B$ and is compatible with $(\eta,g_{\eta},\nabla)$ and $(\eta',g_{\eta}',\nabla')$, respectively.

We are going to show that \eqref{eq:kappaCoh} equals to the composition
\begin{align}
HF(P,H,\eta) \to HF(P,\check{H},\eta) \to HF(P,\check{H}',\eta') \to HF(P,H',\eta') \label{eq:compo3}
\end{align}
where the first and last maps are independent of choices by Lemma \ref{l:kappaIndchoice} and the second map is independent of choices by above.

First of all, by gluing the auxiliary data defining these three maps, we can show the existence of $(H_s)_{s \in \R}$ that is compatible with $(\eta_s, g_{\eta_s},\nabla_s)_{s \in \R}$.

Conversely, let $(\eta_s, g_{\eta_s},\nabla_s)_{s \in \R}$ and $(H_s)_{s \in \R}$ be a choice of data used to define \eqref{eq:kappaCoh}. By the independent of choice of $(H_s)_{s \in \R}$, we can assume that it satisfies the following property:
there exists $R_1 > R_2 > 0$ such that 
\begin{itemize}
\item $\eta_s=\eta'$ and $\nabla_s=\nabla'$ for $s<-R_2$, and $\eta_s=\eta$ and $\nabla_s=\nabla$ for $s>R_2$
\item $H_{-R_1}=\check{H}'$, $H_{R_1}=\check{H}$
\item for $s \in [-R_1,R_1]$, $\bfs_{(H_{s})_b}$ is a constant independent of $b \in B$ and $\partial_s \bfs_{H_s} \le 0$
\end{itemize}
We have shown that the map $HF(P,\check{H},\eta) \to HF(P,\check{H}',\eta')$ does not depend on auxiiliary choices. In particular, we can use $(\eta_s, g_{\eta_s},\nabla_s)_{s \in \R}$ to define it.
Therefore, by choosing $R_1 \gg R_2 \gg 0$, we can make $(\eta_s, g_{\eta_s},\nabla_s)_{s \in \R}$ and $(H_s)_{s \in \R}$ coincide with the glued data coming the composition of the three maps in \eqref{eq:compo3}, where the firs map corresponds to $s<-R_1$, the second map corresponds to $-R_1<s<R_1$ and the third map corresponds to $s>R_1$.
It shows that \eqref{eq:kappaCoh} equals to \eqref{eq:compo3}.
\end{proof}



Even though we denote the Floer cohomology as $HF(P,H,\eta)$, it also depends on $(g_{\eta},\nabla)$.
For the collection of data $\{(H,\eta,g_{\eta},\nabla)\}$ such that $H$ is cylindrical and compatible with $(\eta,g_{\eta},\nabla)$, we define a reflexive and transitive relation on it by
\begin{align}\label{eq:dir}
(H',\eta',g_{\eta}',\nabla') \ge_P (H,\eta,g_{\eta},\nabla) \text{ if } H' \ge_P H \text{ or the quadruples are identical}
\end{align}
Clearly, there is an upper bound for any two elements so it forms a directed set.

\begin{cor}\label{c:directSystem}
The collection of cohomology groups 
\[
\{HF(P,H,\eta): H \text{ is cylindrical and compatible with } (\eta,g_{\eta},\nabla)\}
\]
together with the continuation maps forms a direct system with respect to $\ge_{P}$.

In particular, when $\bfs_{H_b}$ is a constant independent of $b \in B$,  then $HF(P,H,\eta)$ is independent of the choice of $\eta$ such that $H$ is compatible with $\eta$.
\end{cor}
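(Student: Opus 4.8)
The plan is to reduce the first assertion to the functoriality of the continuation maps $\kappa$, since the underlying index preorder $\ge_P$ of \eqref{eq:dir} is already reflexive and transitive, and directedness is clear: given two quadruples, pick $a\in\R_{>0}\setminus\Spec(\partial\bar M,\alpha)$ strictly above both maximal slopes and, by Lemma \ref{l:existCom}, a cylindrical Hamiltonian of constant slope $a$ compatible with, say, the first base triple; this quadruple dominates both. The well-definedness of each individual $\kappa$ — its independence of the auxiliary interpolating data $(\eta_s,g_{\eta_s},\nabla_s)$, $(H_s)$, $(J_s)$ — is precisely Lemma \ref{l:inde_kappa}. So what remains is to verify $\kappa_{x\to x}=\mathrm{id}$ and the composition law $\kappa_{y\to z}\circ\kappa_{x\to y}=\kappa_{x\to z}$ whenever $x\le_P y\le_P z$; the last sentence of the statement will then follow formally.

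For the identity law: when $x\le_P x$ holds because the two quadruples are literally equal, $\kappa_{x\to x}$ is defined to be the identity. The only other way $x=(H,\eta,g_\eta,\nabla)\le_P x$ can occur is if the slope of $H$ is a constant $\bfs\notin\Spec(\partial\bar M,\alpha)$ (so that $\min\bfs_H\ge\bfs\ge\max\bfs_H$), and then one would evaluate $\kappa_{x\to x}$ — using the independence statement of Lemma \ref{l:inde_kappa} — on the constant interpolating data $\eta_s\equiv\eta$, $g_{\eta_s}\equiv g_\eta$, $\nabla_s\equiv\nabla$, $H_s\equiv H$, $J_s\equiv J$. With $s$-independent data the only rigid solutions of \eqref{eq:Floer2} covering a constant $\tau$ are the constant cylinders (the $\R$-translation acts freely on the non-constant ones), so $\kappa_{x\to x}=\mathrm{id}$, consistent with the other description.

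For the composition law I would concatenate interpolating data. If either of $x\le_P y$, $y\le_P z$ is the ``identical'' case there is nothing to prove, so assume $H'\ge_P H$ and $H''\ge_P H'$; transitivity of $\ge_P$ (already noted in the text) gives $H''\ge_P H$, so $\kappa_{x\to z}$ is defined. Fixing choices of data defining $\kappa_{x\to y}$ and $\kappa_{y\to z}$, glue them along a neck of length $2R$, matching the two copies of $H'$ at the junction. For $R\gg 0$ the concatenation is admissible interpolating data from $x$ to $z$: the condition \eqref{eq:non-incre-eta} ($\partial_s\bfs_{H_{\tau(s)}}\le 0$ along gradient trajectories of the glued family of base triples) is preserved because it holds on each piece and both pieces carry the same slope at the junction; hence by Lemma \ref{l:inde_kappa} this glued data computes $\kappa_{x\to z}$. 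A standard neck-stretching/gluing bijection for rigid trajectories in a family — of exactly the kind already carried out in the proof of Lemma \ref{l:inde_kappa} to identify \eqref{eq:kappaCoh} with the triple composition \eqref{eq:compo3} — then puts the rigid solutions for the glued data, for $R\gg 0$, in sign- and energy-preserving bijection with pairs of rigid solutions for the two pieces, so the glued count equals $\kappa_{y\to z}\circ\kappa_{x\to y}$. This proves the composition law, and hence that $\{HF(P,H,\eta)\}$ with the maps $\kappa$ is a direct system.

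For the final sentence: if $\bfs_{H_b}\equiv\bfs\notin\Spec(\partial\bar M,\alpha)$ is constant and $H$ is compatible with both $(\eta,g_\eta,\nabla)$ and $(\eta',g'_\eta,\nabla')$, then $\min\bfs_H=\bfs=\max\bfs_H$ forces $H\ge_P H$, so the quadruples $(H,\eta,g_\eta,\nabla)$ and $(H,\eta',g'_\eta,\nabla')$ are mutually $\ge_P$-comparable. The associated structure maps $\kappa\colon HF(P,H,\eta)\to HF(P,H,\eta')$ and $\kappa'\colon HF(P,H,\eta')\to HF(P,H,\eta)$ therefore both exist, and by the functoriality just established $\kappa'\circ\kappa=\kappa_{(H,\eta,g_\eta,\nabla)\to(H,\eta,g_\eta,\nabla)}=\mathrm{id}$ and likewise $\kappa\circ\kappa'=\mathrm{id}$; so $\kappa$ is an isomorphism and $HF(P,H,\eta)$ is canonically independent of the compatible $\eta$. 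The only non-formal ingredient in the whole argument is the gluing step in the composition law, and I expect it to be the main obstacle — though it is routine here, since all the needed $C^0$ a priori estimates and uniform energy bounds were already established in (the proof of) Lemma \ref{l:inde_kappa}, so only the standard gluing bijection is required; the one point to watch is that the interpolating data for the composite must be chosen with a sufficiently long neck that it qualifies as an admissible choice for $\kappa_{x\to z}$, which is exactly where the independence-of-choices in Lemma \ref{l:inde_kappa} is invoked.
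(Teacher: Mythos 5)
Your proof is correct and follows the same strategy as the paper's sketch: verify the composition law by gluing interpolating data along a long neck and invoking the independence of choices from Lemma~\ref{l:inde_kappa}, and derive the second sentence from mutual comparability under $\ge_P$. You spell out the identity law and the directedness check a bit more explicitly than the paper does, but these are formal points and the core gluing argument is the same.
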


\begin{proof}[Sketch of proof of Corollary \ref{c:directSystem}]
For $i=0,1,2$, let $\bH_i:=HF(P,H_i,\eta_i)$ be elements in the collection such that $H_0 \le_P H_1 \le_P H_2$.
Let $\kappa_{ij}$ be the continuation map from $\bH_i$ to $\bH_j$ for $i \le j$.
By gluing the auxiliary data defining $\kappa_{01}$ and $\kappa_{12}$, we can express the composition $\kappa_{12} \circ \kappa_{01}$ as a map given by counting rigid solutions with respect to the glued auxiliary data.
Since the map $\bH_0 \to \bH_2$ is independent of choices (Lemma \ref{l:inde_kappa}), it shows that $\kappa_{01} \circ \kappa_{12}=\kappa_{02}$.

\end{proof}

As a consequence of Corollary \ref{c:directSystem}, for each $a \in \R_{>0} \setminus \Spec( \partial \bar{M},\alpha)$, the Floer cohomology
\[
HF(P,a):=HF(P,H,\eta)
\] 
is independent of admissible $H$ having $\bfs_{H_b}=a$ up to natural isomorphisms.
Let $$SH^*(P):= \varinjlim_{a} HF(P,a).$$

\subsection{Product structure on $SH^*(P)$}\label{s:product_structure}

We are going to define a product structure on $SH^*(P)$.
Roughly speaking, it comes from counting Floer pair of pants lying over a Morse gradient tree with two inputs and one output.
To ensure that the perturbation term in the Floer equation is smooth, we need to pay special attention at the trivalent point of the Morse gradient tree. The details are given in the section.

Let  $T_{2,1}$ be the unique trivalent Stasheff tree with three external edges, two of which are incoming and one
 which is outgoing. We parametrize the outgoing edge  by $e_0:(-\infty, 0] \to T_{2,1}$, and 
the two incoming edges by $e_1:[0,\infty) \to T_{2,1}$  and $e_2: [0,\infty) \to T_{2,1}$, respectively.
We identify the edges with  $(-\infty,0]$ and  $[0,\infty)$ respectively using the parametrizations.

 The classical approach to achieving transversality for these operations involves equipping each edge with a different Morse function.  Rather than doing this, we achieve transversality of gradient flow solutions for maps from $T_{2,1}$ by 
 perturbing the gradient flow equation (for a single Morse function). The main definition is the following \cite[Definition 2.6]{Abouzaidtop}:  
 \begin{defn}
    A gradient flow perturbation datum on $T_{2,1}$ is a choice, for each edge
    $ e \in E(T_{2,1})$ of a smoothly varying family of vector fields, 
    $$X_e: e \to C^{\infty}(TB) $$ 
    vanishes away from a bounded subset of $e.$
\end{defn}  

Given a gradient flow perturbation datum as above, for each edge $e \in
E(T_{2,1})$ and any map $\tau: e \to B$, one can ask for $\tau$ to solve
the perturbed gradient flow equation (for $\eta$ with respect to $X_e$):
\begin{align}\label{eq:pertgradflow} 
    \frac{d}{ds}\tau(s)= (-\grad(\eta) + X_e(s))|_{\tau(s)} \text{ for all } s \in e.
\end{align}

\begin{defn} \label{defn:Ymoduli} 
    Let $\eta :B \to \mathbb{R}$, be a Morse function and fix gradient flow
    perturbation data $\{X_e\}_{i = 0,1,2}$ as above. Suppose that $c_0, c_1, c_2$ lie in
    $\critp(\eta)$.
With respect to this data, let 
    \begin{equation}\mathcal{M}(c_0,c_1,c_2)\end{equation} 
    denote the moduli space of continuous maps $\tau: T_{2,1} \to B$ whose
    restriction to each edge is  a solution to \eqref{eq:pertgradflow}.
\end{defn}

 Note that near the vertex, the perturbation data can be arbitrary.
 It is not difficult to show that for a generic choice of
 perturbation data, our moduli spaces are cut out transversally. Somewhat
 informally, this corresponds to the fact that infinitesimally, solutions to
 the perturbed gradient flow equations correspond to intersections of the
 unstable and stable manifolds under perturbations by the diffeomorphisms
 $\phi_e$ given by integrating the
 vector fields $X_e$. As the vector fields $X_e$ can be chosen arbitrarily,
 these diffeomorphisms are essentially arbitrary (for a complete proof, see
 \cite[Section 7]{Abouzaidtop}). Futhermore, when our data is chosen generically,
 the zero dimensional components of the moduli spaces above induce maps between
 orientation lines as before, hence an operation on Morse complexes.

Let $\Sigma=\mathbb{P}^1 \setminus \{0,1,2\}$.
For $i=1,2$,  let $\epsilon_i:[0,\infty) \times S^1 \to \Sigma$ be a holomorphic embedding such that $\lim_{s \to \infty} \epsilon_i(s,t)=i \in \mathbb{P}^1$.
Also let $\epsilon_0:(-\infty,0] \times S^1 \to \Sigma$ be a holomorphic embedding such that $\lim_{s \to -\infty} \epsilon_i(s,t)=0 \in \mathbb{P}^1$.
In other words, $\epsilon_i$ is a positive cylindrical end and $\epsilon_0$ a negative cylindrical end.
We require that the images of $\epsilon_i$ are pairwise disjoint. 
Let $\pi_{\Sigma}: \Sigma \to T_{2,1}$ be the continuous map such that $\pi_{\Sigma}(\epsilon_i(s,t))=e_i(s)$ for all $i=0,1,2$, and it maps the complement of the cylindrical ends of $\Sigma$ to the trivalent vertex of $T_{2,1}$.
Let $a_0,a_1,a_2 \in \mathbb{R}_{>0} \setminus \Spec(\partial \bar{M},\alpha)$ be such that $a_0> a_1+a_2$.
Let  $(\eta,g_{\eta},\nabla)$ be an admissible base triple as before, and $H_i $ be a cylindrical  Hamiltonian of $P$  that is compatible with $(\eta,g_{\eta},\nabla)$ and has a constant slope $a_i$  (cf. Lemma \ref{l:existCom}).
We choose a perturbation data which consists of a domain and time dependent cylindrical Hamiltonian $H=(H_z)_{z \in \Sigma}$ of $P$ (i.e. $H_z$ is a $S^1$-dependent cylindrical Hamiltonian of $P$) and $\beta \in \Omega^1(\Sigma)$ such that  
\begin{itemize}
\item there is a constant $a'\in (a_1+a_2,a_0)$ such that if $z \notin \epsilon_i([1,\infty) \times S^1)$ for all $i$, then $H_z$ the slope of $(H_z)_b$ is $a'$ for all $b \in B$ 
\item for each $i=0,1,2$, there is $R \gg 0$ such that $H_{\epsilon_i(s,t)}=H_i$ when $|s|>R$
\item for all $z \in \Sigma$, $\bfs(z):=\bfs_{H_z}$ is a constant function on $\partial \bar{M}$
\item for each $i=0,1,2$, we have $\partial_s \bfs_{H_{\epsilon_i(s,t)}} \le 0$ over all $(s,t)$
\item $\beta=dt$ over the cylindrical ends, and
\item $d (\bfs(z) \beta) \le 0$
\end{itemize}
The condition that $a_0> a_1+a_2$ guarantees the existence of $H$ and $\beta$ satisfying all the conditions.
We can view $H \otimes \beta \in \Omega^1(\Sigma,C^{\infty}(S^1 \times P))$ and it will give us a perturbation term in the upcoming Floer equation we consider.

We also choose a domain and time dependent contact type fibrewise compatible almost complex structure $J=(J_z)_{z \in \Sigma}$ of $P$ such that 
\begin{itemize}
\item for each $i=0,1,2$, there is $R \gg 0$ such that $J_{\epsilon_i(s,t)}=J_i$ when $|s|>R$, where $J_i$ is an almost complex structure defining $CF(P,H_i, \eta)$
\end{itemize}
For $i=0,1,2$, let $\vec{x}_i:= (c_i, x_i)$ a generator in $CF(P,\bfs_i)$. 
We define $\mathcal{M}(\vec{x}_0,\vec{x}_1, \vec{x}_2)$ to be the moduli space of pairs $(\tau,u)$ such that
$\tau \in \mathcal{M}(c_{0}, c_{1}, c_{2})$ and $u:\Sigma  \to P$ satisfies the following statements
\begin{itemize}
\item $u(z) \in P_{\tau(\pi_{\Sigma}(z))}$
\item  $(du^{vert}-X_{(H_z)_{\tau(\pi_{\Sigma}(z))} }\otimes \beta)^{0,1}_{(J_s)_{\tau(\pi_{\Sigma}(z))}}=0$  
\item $\lim_{|s| \to \infty} u(\epsilon_i(s,t))=x_i(t)$,
\end{itemize}
In the second bullet, $du^{vert}$ is defined with respect to the connection on $P \to B$ along $\tau$.
The virtual dimension of $\mathcal{M}(\vec{x}_0,\vec{x}_1, \vec{x}_2)$ is $|\vec{x}_0|-|\vec{x}_1|-|\vec{x}_2|$.

\begin{rem}
If $x_1, x_2$ satisfies Equation \eqref{eq:zeroarea} and $u$ is a pair of pants asymptotic to $x_1, x_2$ and another loop $x_0'$ in a fibre of $P$, then it will imply that $x_0'$ also satisfies Equation \eqref{eq:zeroarea}. In other words, the condition \eqref{eq:zeroarea} is compatible with the pair of pants product.
\end{rem}

For $(\tau,u) \in \mathcal{M}(\vec{x}_0,\vec{x}_1, \vec{x}_2)$, we define its topological energy as follow (cf. \eqref{eq:topE} and the paragraph afterwards).
For each $e_i \in T_{2,1}$ and each $(s,t)$, we apply the parallel transport from $\tau(e_i(s))$ to $\tau(e_i(0))$ along $\tau(e_i)$ to move $u(\epsilon_i(s,t))$ to the fibre over $\tau(e_0(0))=\tau(e_1(0))=\tau(e_2(0))$. This gives us a map $u^{triv}:\Sigma \to P_{\tau(e_i(0))}$ with asymptotes being the parallel transport of $x_i$ from $P_{c_i}$ to $P_{\tau(e_i(0))}$ along $\tau(e_i)$. The topological energy of $(\tau,u)$ is defined to be the topological energy of $u^{triv}$, which is
\begin{align*}
E(\tau,u)
&=\int_{\Sigma} (du^{vert})^*\omega+ \int_{0}^1 (H_{c_0})_t(x_0(t)) dt - \sum_{i=1}^2 \int_{0}^1 (H_{c_i})_t(x_i(t)) dt
\end{align*}
Note that the first term equals to $\int_{\Sigma} (du^{triv})^*\omega$. Reasoning as in Corollary \ref{c:energyconstant} implies that the topological energy $E: \mathcal{M}(\vec{x}_0,\vec{x}_1, \vec{x}_2) \to \mathbb{R}$ is a locally constant function.


\begin{prop}\label{p:moduliD} The following statements hold: 
\begin{itemize} \item For a generic choice of compatible $J$ that is of contact type, the space $\mathcal{M}(\vec{x}_0,\vec{x}_1,\vec{x}_2)$ is a manifold of the expected dimension for any triple of generators $(\vec{x}_0,\vec{x}_1,\vec{x}_2)$. 

\item Moreover, if the virtual dimension is $\le 1$, then for any $E \in\mathbb{R}$, the moduli space
\[
\mathcal{M}(\vec{x}_0,\vec{x}_1,\vec{x}_2):= \left\{ (\tau,u) \in \mathcal{M}(\vec{x}_0,\vec{x}_1,\vec{x}_2) |E(\tau, u) \le E \right\}
\]
admits a 
Gromov compactification $\ol{\mathcal{M}}_{\le E}(\vec{x}_0,\vec{x}_1,\vec{x}_2)$ making it a compact manifold with boundary.  \end{itemize}
\end{prop}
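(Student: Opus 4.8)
The plan is to run the same two-step argument --- transversality followed by compactness --- that was used for the Floer differential and the continuation maps earlier in this section, in each case reducing to the standard (non-parametrized) picture by trivializing $P$ along a gradient tree $\tau$ using the connection $\nabla$.

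\textbf{Transversality.} The moduli space $\mathcal{M}(\vec{x}_0,\vec{x}_1,\vec{x}_2)$ fibres over the space $\mathcal{M}(c_0,c_1,c_2)$ of perturbed gradient trees, and the latter is cut out transversally for generic gradient flow perturbation data by \cite[Section 7]{Abouzaidtop}. Fixing such a tree $\tau$ and trivializing $P|_\tau$ by $\nabla$, the equation for $u$ becomes a Floer pair-of-pants equation on $M$ with a domain-dependent Hamiltonian perturbation $H\otimes\beta$ and a domain-dependent almost complex structure $J_z$. Since $\Sigma=\mathbb{P}^1\setminus\{0,1,2\}$ has trivial automorphism group, there are no multiply-covered issues for the principal equation, and the universal moduli space is regular once $J_z$ is permitted to vary freely on the part of $\Sigma$ where $H_z$ and $\beta$ are already pinned down; the Sard--Smale theorem then gives a comeager set of regular $J$ (cf. \cite{MSJbook}, \cite{HoferSalamon}). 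As in the proof for the Floer differential, the fourth bullet of Definition \ref{d:eqgen} (applied on the cylindrical ends, where the data becomes $s$-independent) excludes the potentially non-regular solutions that are constant in the fibre direction over a non-constant segment of $\tau$. The only new point is that over the vertex $0\in T_{2,1}$ the identification of the fibre with $M$ is ambiguous up to $G$, but since $H_z$ and $J_z$ were chosen $G$-invariant there, neither the equation nor its linearization sees this ambiguity.

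\textbf{Compactness.} When the virtual dimension is $\le 1$, genericity of $J$ forces $u$ to miss the image of the evaluation map of every $J_z$-holomorphic sphere (real codimension $4$), so sphere bubbling is excluded. It then remains to establish a $C^0$ a priori estimate and a uniform geometric energy bound for the components with $E(A)<E$. Working over a fixed $\tau$ and trivializing $P$ along it, the $C^0$ estimate follows from the maximum principle on the cylindrical end of $M$: the facts that $\bfs(z):=\bfs_{H_z}$ is fibrewise constant, that $\partial_s\bfs_{H_{\epsilon_i(s,t)}}\le 0$ on each cylindrical end, that $d(\bfs(z)\beta)\le 0$ on $\Sigma$, and that $J_z$ is of contact type, together rule out solutions escaping to infinity, exactly as in \cite[Theorem C.11]{Ritter} and \cite[Equation (148)]{AbSe}. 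Since the moduli of gradient trees is itself compact up to breaking, this estimate is uniform in $\tau$. The geometric energy is then bounded by $E(A)$ plus a constant depending only on $\sup_z\|H_z\|$, hence by a constant depending only on $E$, on the relevant components. The only degenerations left are breaking of Floer cylinders along the cylindrical ends $\epsilon_i$ together with breaking of the underlying gradient tree, which contribute boundary strata of codimension one, so $\cup_{A,E(A)<E}\ol{\mathcal{M}}(\vec{x}_0,\vec{x}_1,\vec{x}_2;A)$ is a compact manifold with boundary.

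The main obstacle is the $C^0$ estimate. Unlike a cylinder, the thrice-punctured sphere forces the perturbation one-form $H\otimes\beta$ to be genuinely two-dimensional near the vertex, so one must verify that the sub-closedness $d(\bfs(z)\beta)\le 0$ --- which is arranged precisely thanks to the inequality $a_0>a_1+a_2$ --- is strong enough to run the maximum principle, and that the resulting bound is uniform over the non-compact (but Gromov-compactifiable) family of gradient trees. Beyond this, the argument is a routine adaptation of the proofs already given for the Floer differential and for Lemma \ref{l:kappaIndchoice}.
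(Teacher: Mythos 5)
Your argument matches the paper's proof in all essentials: both reduce the key difficulty to the $C^0$ a priori estimate obtained from the maximum principle in \cite{Ritter}, using that $J_z$ is of contact type, that $\bfs(z)$ is fibrewise constant and non-increasing in $s$ on the ends, and that $d(\bfs(z)\beta)\le 0$. One small slip: for a surface with more than two punctures the relevant statement is \cite[Remark C.10]{Ritter}, not \cite[Theorem C.11]{Ritter} (which treats cylinders); the paper invokes Remark C.10 after the normalization trick of replacing the Hamiltonian $(H_z)_{\tau(\pi_\Sigma(z))}$ by $(H_z)_{\tau(\pi_\Sigma(z))}/\bfs(z)$, which equals $r$ on the cylindrical end, and absorbing the slope into the one-form as $\bfs(z)\beta$.
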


\begin{proof}
The main point is again to employ a maximum principle to give a priori $C^0$ estimates of the solutions.
The maximum principle we need is in \cite[Remark C.10]{Ritter}.
The $H$ in \cite[Remark C.10]{Ritter} corresponds to $\frac{(H_z)_{\tau(\pi_{\Sigma}(z))}}{\bfs(z)}$, which equals to $r$ (up to adding a constant) over the cylindrical end, and the $\beta$ in \cite[Remark C.10]{Ritter} corresponds to our $\bfs(z) \beta$.
Since $H_z$ is time independent away from the cylindrical end,  $\partial_s \bfs_{H_{\epsilon_i(s,t)}} \le 0$ over all $(s,t)$ and $d (\bfs(z) \beta) \le 0$, \cite[Remark C.10]{Ritter} applies.
\end{proof}

  For any pair of generators, define a product
\begin{align} \vec{x}_1 \cdot \vec{x}_2:= \sum_{\vec{x}_{0}, |\vec{x}_{0}|=|\vec{x}_{1}|+|\vec{x}_{2}|} \sum_{(\tau,u) \in \mathcal{M}(\vec{x}_0,\vec{x}_1, \vec{x}_2)} s(u)q^{E(u)} \vec{x}_0    \end{align}
where $s(u) \in \{-1,1\}$ is the sign of $(\tau,u)$.
We extend it multi-linearly to the Floer complexes.

\begin{prop}\label{p:prodProperty}
Let $a_0> a_1+a_2$.
\begin{itemize} \item The product structure  $CF(P,H_1, \eta) \times CF(P,H_2, \eta) \to CF(P,H_0, \eta)$ descends to a 
product structure on cohomology.
\item The cohomological level product structure is independent of the auxiliary choices made in the construction, namely, cylindrical ends, $H=(H_z)_{z \in \Sigma}$, $J=(J_z)_{z \in \Sigma}$, and the perturbation vector field $X_e$ on $T_{2,1}$.

\item Moreover, it is compatible with continuation maps so it induces a (graded-)commutative and associative product structure on $SH^*(P)$. \end{itemize} 
\end{prop}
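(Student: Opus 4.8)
The plan is to establish the three bullets in order, in each case by analyzing the boundary of one-dimensional moduli spaces of the type introduced above, together with the now-standard gluing and homotopy arguments. For the first bullet, I would consider, for generators $\vec{x}_1, \vec{x}_2$ and an output $\vec{x}_0$ with $|\vec{x}_0| = |\vec{x}_1|+|\vec{x}_2|+1$, the compactified one-dimensional moduli space $\ol{\mathcal{M}}(\vec{x}_0,\vec{x}_1,\vec{x}_2;A)$ from Proposition \ref{p:moduliD}. Its codimension-one boundary strata are of three types: breaking of a Floer trajectory at one of the three ends (the input ends $\vec{e}_1,\vec{e}_2$ and the output end $\vec{e}_0$), and breaking of the underlying Morse gradient tree $\tau$ in $B$ at an internal edge. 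The Floer-breaking contributions assemble into the expression $\partial_{CF}(\vec{x}_1\cdot\vec{x}_2) - (\partial_{CF}\vec{x}_1)\cdot\vec{x}_2 - (-1)^{|\vec{x}_1|}\vec{x}_1\cdot(\partial_{CF}\vec{x}_2)$; the Morse-tree breakings, which split off a rigid gradient flow line of $\eta$ on one edge, are absorbed because such a broken configuration factors through the Morse differential, which is part of $\partial_{CF}$ (the Floer differential on $CF(P,H,\eta)$ simultaneously counts Morse trajectories in $B$ and fiberwise Floer trajectories; cf. the two bullets after Definition \ref{d:FloerComplex} and the differential \eqref{eq:FloerDiff}). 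Since the signed count of boundary points of a compact one-manifold vanishes, this gives the Leibniz rule, so the product descends to cohomology.

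For the second bullet, I would run a parametrized (homotopy-of-homotopies) argument. Given two choices of auxiliary data $(\{\epsilon_i\}, H=(H_z)_{z\in\Sigma}, J=(J_z)_{z\in\Sigma}, \{X_e\})$ and $(\{\epsilon_i'\}, H', J', \{X_e'\})$ — both subject to the six bulleted constraints on $(H,\beta)$ and the two on $J$, and all with the same constant slopes $a_0 > a_1+a_2$ — one notes that the space of such data is contractible: the cylindrical ends, the perturbation vector fields $X_e$, the almost complex structures, and the Hamiltonian terms $H_z$ subject to the (convex, slope-constrained) conditions each form convex or at least connected spaces, since the key inequalities $\partial_s\bfs_{H_{\epsilon_i(s,t)}} \le 0$ and $d(\bfs(z)\beta) \le 0$ are preserved under convex combinations once the slope profile $\bfs(z)$ is fixed, and the slope profiles form a convex set. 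Choosing a generic path of data interpolating the two choices and looking at the resulting one-dimensional parametrized moduli space, whose boundary consists of the two endpoint product-chain-maps plus Floer/Morse breakings, produces a chain homotopy between the two chain-level products; hence the cohomology-level product is independent of all choices. The same applies to show the product is independent of the admissible base triple $(\eta,g_\eta,\nabla)$, interpolating as in Lemma \ref{l:inde_kappa}.

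For the third bullet, compatibility with continuation maps is the assertion that for $a_i' \ge_P a_i$ the square relating the products at levels $(a_0,a_1,a_2)$ and $(a_0',a_1',a_2')$ commutes up to chain homotopy; this is again a one-dimensional-moduli-space argument, gluing the pair-of-pants data to continuation data at the three ends and observing that the glued moduli space interpolates between the two composites (this is exactly the mechanism used in Lemma \ref{l:inde_kappa} and Corollary \ref{c:directSystem}). Passing to the direct limit $SH^*(P)=\varinjlim_a HF(P,a)$ then yields a well-defined product. Commutativity follows from the existence of an involution of $\Sigma=\mathbb{P}^1\setminus\{0,1,2\}$ swapping the two input punctures $1$ and $2$ (and correspondingly swapping $\vec{e}_1,\vec{e}_2$ in $T_{2,1}$), which by the independence-of-choices just proven identifies $\vec{x}_1\cdot\vec{x}_2$ with $(-1)^{|\vec{x}_1||\vec{x}_2|}\vec{x}_2\cdot\vec{x}_1$. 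Associativity follows from the standard comparison of the two degenerations of a four-punctured sphere (equivalently, the two ways of building a genus-zero surface with three inputs and one output from pairs of pants), realized here over the corresponding degenerations of a Morse tree with three inputs; one builds a one-parameter family of domains (and compatible perturbation data, using $a_0 > a_1+a_2+a_3$ to keep the slope inequalities feasible) whose two ends are the two triple products, and whose boundary analysis yields the associativity relation up to homotopy.

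The main obstacle I expect is \emph{not} the combinatorics of the boundary strata — which is classical — but rather verifying at each stage that the auxiliary data can be chosen (and interpolated) while simultaneously respecting all of the slope and monotonicity constraints ($\partial_s \bfs \le 0$ along the ends, $d(\bfs(z)\beta)\le 0$, constancy of $\bfs(z)$ on $\partial\bar M$, $G$-invariance near the vertex, and the $\ge_P$-type inequalities $a_0 > a_1+a_2$, resp. $a_0 > a_1+a_2+a_3$ for associativity) that are needed for the maximum principle of \cite[Remark C.10]{Ritter} to apply and give the $C^0$ a priori estimates. In particular, for associativity one must check that a single one-parameter family of domain-dependent Hamiltonians can be built over the entire degenerating family of four-punctured domains while keeping $d(\bfs(z)\beta)\le 0$ throughout; this is where the strict inequality on slopes is used, and it is the one genuinely new point compared to the closed or exact non-equivariant settings.
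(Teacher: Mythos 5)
Your argument mirrors the paper's sketch exactly: both deduce the Leibniz rule from the boundary of the one-dimensional compactified moduli spaces of Proposition \ref{p:moduliD}, prove independence of all auxiliary choices via a parametrized-moduli (cobordism) argument resting on the (weak) contractibility of the space of auxiliary data subject to the maximum-principle constraints, and obtain continuation-compatibility, graded-commutativity, and associativity from the standard gluing argument together with that independence. The extra material you supply — the explicit Floer/Morse breaking taxonomy, the convexity reasoning for contractibility, the involution of $\Sigma$ swapping inputs, and the slope bookkeeping ($a_0>a_1+a_2+a_3$) for associativity — simply fleshes out steps the paper leaves implicit rather than taking a different route.
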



\begin{proof}[Sketch of Proof]

Denote the product structure by $$\mu^2:CF(P,H_1, \eta) \times CF(P,H_2, \eta) \to CF(P,H_0, \eta).$$
By considering the boundary of $1$-dimensional strata of $\ol{\mathcal{M}}(\vec{x}_0,\vec{x}_1,\vec{x}_2)$, we get $$\mu^2(d(\vec{x}_1),\vec{x}_2)+\mu^2(\vec{x}_1,d(\vec{x}_2))=d\mu^2(\vec{x}_1,\vec{x}_2)$$ so $\mu^2$ descends to cohomology $[\mu^2]:HF(P,H_1, \eta) \times HF(P,H_2, \eta) \to HF(P,H_0, \eta)$.

To see that $[\mu^2]$ is independent of choice, we apply a cobordism argument.
In other words, we choose a one-parameter family of auxiliary data connecting the two ends (it is possible because the space of auxiliary data is weakly contractible) and form the corresponding parametrized moduli space.
Counting the rigid elements in the parametrized moduli space gives us a chain homotopy we need.
It is important that the  one-parameter family of auxiliary data is chosen such that maximum principle applies as in the proof of Proposition \ref{p:moduliD}.

Once we know that it is independent of auxiliary choices, together with the standard gluing argument we can prove the (graded-)commutativity and associativity because they correspond to interpolating two different choices.

The compatibility with continuation map means that 
\[
\kappa_{a_0,a_0'}([\mu_{a_0,a_1,a_2}](\kappa_{a_1',a_1}(\vec{x}_1),\kappa_{a_2',a_2}(\vec{x}_2)))=[\mu_{a_0',a_1',a_2'}](\vec{x}_1,\vec{x}_2)
\]
where $\kappa_{j,k}:HF(P,j) \to HF(P,k)$ is the continuation map when $j \le k$
and $[\mu_{l,j,k}]:HF(P,j) \times HF(P.k) \to HF(P,l)$ is the product structure when $l>j+k$.
This compatibility follows from the gluing argument and independence of auxiliary choices.
\end{proof}

The product operation can be defined not only for cylindrical Hamiltonians with constant slopes, it can also be defined as long as the maximum principle can be achieved. The analog of Lemma \ref{l:inde_kappa} for the product operation is as follow.

\begin{lem}\label{l:inde_prod}
For $i=0,1,2$, let $H_i$ be a cylindrical Hamiltonians that is compatible with $\eta_i$.
Suppose that $\max {\bfs_{H_1}}+ \max {\bfs_{H_2}} \le \min \bfs_{H_0}$.
After making an appropriate auxiliary choice, we can define a product operation
\[
CF(P,H_1, \eta_1) \times CF(P,H_2, \eta_2) \to CF(P,H_0, \eta_0)
\]
Moreover, the induced map on cohomology
\begin{align}
HF(P,H_1, \eta_1) \times HF(P,H_2, \eta_2) \to HF(P,H_0, \eta_0) \label{eq:prodCoh}
\end{align}
is independent of the auxiliary choice in the construction.
\end{lem}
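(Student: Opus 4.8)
The plan is to reduce the statement to the constant-slope product of Proposition \ref{p:prodProperty} by sandwiching that product between continuation maps, in exact analogy with the way Lemma \ref{l:inde_kappa} reduces an arbitrary continuation map to one between Hamiltonians of constant slope. First I would use the nowhere-density of $\Spec(\partial\bar M,\alpha)$ together with the hypothesis $\max\bfs_{H_1}+\max\bfs_{H_2}\le\min\bfs_{H_0}$ to pick constants $\check a_1,\check a_2,\check a_0\in\R_{>0}\setminus\Spec(\partial\bar M,\alpha)$ with $\check a_i\ge\max\bfs_{H_i}$ for $i=1,2$, with $\check a_0\le\min\bfs_{H_0}$, and with $\check a_0>\check a_1+\check a_2$. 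Fixing one admissible base triple $(\eta,g_\eta,\nabla)$ and, by Lemma \ref{l:existCom}, cylindrical Hamiltonians $\check H_i$ compatible with it of constant slope $\check a_i$, we get $\check H_i\ge_P H_i$ for $i=1,2$ and $H_0\ge_P\check H_0$. Lemma \ref{l:inde_kappa} then provides continuation maps $\kappa_i\colon HF(P,H_i,\eta_i)\to HF(P,\check H_i,\eta)$ for $i=1,2$ and $\kappa_0\colon HF(P,\check H_0,\eta)\to HF(P,H_0,\eta_0)$, each independent of auxiliary choices, while Proposition \ref{p:prodProperty} gives the constant-slope product $[\mu^2]$ on the groups $HF(P,\check H_i,\eta)$, also independent of choices. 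The candidate for the operation in the statement is then $\kappa_0\circ[\mu^2]\circ(\kappa_1\times\kappa_2)$.

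Next I would build the cochain-level product of the statement directly, for an appropriate auxiliary choice, and show it induces this composition. As auxiliary datum I would take the gluing, with large but finite neck lengths, of the continuation data for $\kappa_1$ and $\kappa_2$, the pair-of-pants data for $[\mu^2]$, and the continuation data for $\kappa_0$: a domain-dependent cylindrical Hamiltonian $H=(H_z)_{z\in\Sigma}$, a one-form $\beta$, an almost complex structure $J=(J_z)$, and gradient-flow perturbation data on $T_{2,1}$ interpolating between the four pieces. The key analytic input is that each piece satisfies the maximum principle of \cite[Remark C.10]{Ritter}, and that the inequality $\check a_0>\check a_1+\check a_2$ leaves enough room to keep the slope profile $\bfs(z)$ non-increasing along the cylindrical ends and to arrange $d(\bfs(z)\beta)\le 0$ globally on $\Sigma$; granting this, the compactness and transversality arguments behind Proposition \ref{p:prodProperty} apply verbatim and the usual signed count defines a chain map descending to cohomology. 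For the neck lengths sufficiently large, a standard gluing argument identifies the rigid part of this moduli space with the fibre product of the rigid parts of the four constituent moduli problems, so the induced cohomology map is exactly $\kappa_0\circ[\mu^2]\circ(\kappa_1\times\kappa_2)$.

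Finally, for independence of the auxiliary choice, I would argue that the space of auxiliary data for which the maximum principle holds is weakly contractible: once the constant slopes of the ends are fixed the constraints on $\bfs(z)$ and $\beta$ are convex, and the choice of the constants $\check a_i$ themselves can be moved by a further continuation, exactly as in the last paragraph of the proof of Lemma \ref{l:inde_kappa}. Hence any two valid choices of auxiliary data are joined by a one-parameter family of valid choices; counting the rigid elements of the associated parametrized moduli space produces the required chain homotopy, proving that \eqref{eq:prodCoh} is independent of the auxiliary choice and equals $\kappa_0\circ[\mu^2]\circ(\kappa_1\times\kappa_2)$.

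The step I expect to be the main obstacle is the middle one: simultaneously arranging the interpolating slope profile on the glued thrice-punctured sphere so that $d(\bfs(z)\beta)\le 0$ and $\partial_s\bfs_{H_{\epsilon_i(s,t)}}\le 0$ along all ends hold at once — this is precisely where the hypothesis $\max\bfs_{H_1}+\max\bfs_{H_2}\le\min\bfs_{H_0}$ and the choice $\check a_0>\check a_1+\check a_2$ enter — and then carrying out the gluing and compactness analysis that identifies the glued count with the triple composition. Both are routine given \cite[Remark C.10]{Ritter} and standard gluing theory, but the bookkeeping of slopes across the glued surface is where care is required.
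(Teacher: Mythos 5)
Your proposal is essentially the paper's intended proof: the paper omits the argument, declaring the lemma an ``analogue of Lemma~\ref{l:inde_kappa},'' and your reduction to the constant-slope Proposition~\ref{p:prodProperty} via continuation maps, together with the gluing and convexity arguments, shadows the proof of Lemma~\ref{l:inde_kappa} exactly as intended. One point deserves flagging: your opening step, choosing $\check a_0, \check a_1, \check a_2 \in \R_{>0}\setminus\Spec(\partial\bar{M},\alpha)$ with $\check a_i \ge \max\bfs_{H_i}$ for $i=1,2$, $\check a_0 \le \min\bfs_{H_0}$, and $\check a_0 > \check a_1 + \check a_2$, is only possible when the lemma's hypothesis holds \emph{strictly}. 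If $\max\bfs_{H_1} + \max\bfs_{H_2} = \min\bfs_{H_0}$, then one would need $\check a_1 + \check a_2 < \check a_0 \le \min\bfs_{H_0} = \max\bfs_{H_1} + \max\bfs_{H_2} \le \check a_1 + \check a_2$, a contradiction. Since Proposition~\ref{p:prodProperty} also assumes the strict inequality $a_0 > a_1 + a_2$, the borderline case is genuinely not reachable by this reduction. This reflects an imprecision in the lemma's hypothesis rather than a flaw in your argument (compare the equality clause built into the relation $\ge_P$, which demands the common value avoid the Reeb spectrum); to handle it one would either strengthen the hypothesis to a strict inequality or extend Proposition~\ref{p:prodProperty} to allow $a_0 = a_1 + a_2$, which forces $d(\bfs(z)\beta)\equiv 0$ by the Stokes identity and so requires a more careful choice of $\beta$ and $\bfs(z)$ on $\Sigma$.
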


\subsection{Pull-back}\label{ss:pb}

We want to discuss the functorality between two admissible bundles with the same fibre
$(M,\omega,\theta)$ but different bases $B$ and $B'$ (both $B$ and $B'$ are smooth closed manifolds).
Let $h:B' \to B$ be a smooth map and $p':P' \to B'$ be the fiber bundle obtained by pulling back $p$ along $h$.
Let $\tilde{h}:P' \to P$ be the induced map covering $h$.
Let $\eta$ and $\eta'$ be Morse functions on $B$ and $B'$ respectively
and assume that $H$ and $H'$ are Hamiltonians on $P$ and $P'$ that are compatible with $(\eta,g_{\eta},\nabla)$ and $(\eta',g_{\eta}',\nabla')$ respectively.

Let $(H_s)_{s \in \mathbb{R}_{\ge 0}}$ and $(H'_s)_{s \in \mathbb{R}_{\le 0}}$ be two families of cylindrical Hamiltonians, on $P$ and $P'$ respectively,
such that $H_s=H$ for $s \gg 0$, $H'_s=H'$ for $s \ll 0$ and $\tilde{h}^*H_s=\tilde{h}^*H_0=H'_0=H'_s$ for $s$ in an open neighborhood of $0$.
Choose one-parameter families of $S^1$-dependent fiberwise compatible almost complex structures $(J_s)_{s \in \mathbb{R}_{\ge 0}}$ and $(J'_s)_{s \in \mathbb{R}_{\le 0}}$ that are of contact type on $P$ and $P'$, respectively.
They are chosen such that $J_s$ is independent of $s$ for $s \gg 0$, $J'_s$ is independent of $s$ for $s \ll 0$, 
and $\tilde{h}^*J_s=\tilde{h}^*J_0=J'_0=J'_s$ for $s$ in an open neighborhood of $0$.
Note that $\tilde{h}^*J_s$ is well-defined because $J_s$ is fiberwise and the restriction of $\tilde{h}$ to fibres are isomorphisms.

Given a generator $\vec{x}=(c,x)$ of $H$ and $\vec{x}'=(c',x')$ of $H'$, we can consider the moduli space $\cM_{pb}(\vec{x}',\vec{x},(H'_s)_{s \in \mathbb{R}_{\le 0}}, (H_s)_{s \in \mathbb{R}_{\ge 0}})$ which consists of $(\tau^-,\tau^+,u^-,u^+)$ such that
\begin{itemize}
\item $\tau^-:(-\infty,0] \to B'$ is a gradient trajectory of $\eta'$ with $\lim_s \tau^-(s)=c'$,
\item $\tau^+:[0,\infty) \to B$ is a gradient trajectory of $\eta$ with $\lim_s \tau^+(s)=c$,
\item $\tau^+(0)=h(\tau^-(0))$,
\item $u^-:(-\infty,0] \times S^1  \to P'$ is  covering $\tau^-$ and satisfies the Floer equation  \[(du^{vert}-X_{(H'_s)_{\tau^-(s)}}\otimes dt)^{0,1}_{(J'_s)_{\tau^-(s)}}=0\] 
with $\lim_{s \to -\infty} u^-(s,t)=x'(t)$,
\item $u^+:[0,\infty) \times S^1  \to P$ is covering $\tau^+$ and satisfies the Floer equation 
\[(du^{vert}-X_{(H_s)_{\tau^+(s)}}\otimes dt)^{0,1}_{(J_s)_{\tau^+(s)}}=0\]
 with $\lim_{s \to \infty} u^+(s,t)=x(t)$,
\item $u^+(0,t)=\tilde{h}(u^-(0,t))$,
\end{itemize}


Notice that $\tau(s):=(\tau^-(-s),\tau^+(s))$ for $s \in \mathbb{R}_{\ge 0}$ is a gradient trajectory of the function $B' \times B \to \mathbb{R}$ given by $(b',b) \mapsto \eta(b)-\eta'(b')$ starting at a point on $\graph(h) \subset B' \times B$.
Similarly, let $\pi_P:P' \times P \to P$ and $\pi_{P'}:P' \times P \to P'$ be the projection to the factors, then
$u(s,t):=(u^-(-s,t),u^+(s,t)) \in P' \times P$  for $(s,t) \in \mathbb{R}_{\ge 0} \times S^1$ is a solution to the Floer equation 
\begin{align}\label{eq:Folding}
(du^{vert}-X_{\pi_P^*(H_s)_{\tau^+(s)}-\pi_{P'}^*(H'_{-s})_{\tau^-(-s)}}\otimes dt)^{0,1}_{-(J_{-s})_{\tau^-(-s)} \oplus (J_s)_{\tau^+(s)}}=0
\end{align}
covering $\tau$ with fiberwise Lagrangian boundary condition $\graph(\tilde{h}) \subset P' \times P$ over $\tau(0)$ along $\{0\} \times S^1 \subset \mathbb{R}_{\ge 0} \times S^1$.

From this perspective (so-called `folding' in \cite{WWQuilt}), we can apply the standard Fredholm theory package to study $\cM_{pb}(\vec{x}',\vec{x},(H'_s)_{s \in \mathbb{R}_{\le 0}}, (H_s)_{s \in \mathbb{R}_{\ge 0}})$.

\begin{lem}\label{l:regularpb}
For generic choice of $(J_s)_{s \in \mathbb{R}_{\ge 0}}$ and $(J'_s)_{s \in \mathbb{R}_{\le 0}}$ as above,
every element in the moduli space $\cM_{pb}(\vec{x}',\vec{x},(H'_s)_{s \in \mathbb{R}_{\le 0}}, (H_s)_{s \in \mathbb{R}_{\ge 0}})$ is regular so it is a manifold of the expected dimension.
\end{lem}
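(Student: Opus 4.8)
The plan is to reduce the statement to a standard transversality argument for a moduli space cut out by a Fredholm section, using the ``folding'' picture already set up. First I would observe that, via the identification $(\tau,u)=((\tau^-,\tau^+),(u^-,u^+))$ described just before the statement, the moduli space $\cM_{pb}(\vec{x}',\vec{x},(H'_s)_{s \in \mathbb{R}_{\le 0}}, (H_s)_{s \in \mathbb{R}_{\ge 0}})$ is the zero locus of a smooth Fredholm section of a Banach bundle over an appropriate Banach manifold of maps from $\mathbb{R}_{\ge 0} \times S^1$ to $P' \times P$ with the fiberwise Lagrangian boundary condition $\graph(\tilde h)$ over $\tau(0)$ along $\{0\}\times S^1$ and the asymptotic conditions $x'$ at $-\infty$ and $x$ at $+\infty$. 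The Fredholm and index theory here is the standard package for Floer-type equations with a moving Lagrangian boundary condition (cf. the folding technique of \cite{WWQuilt}), so the content of the lemma is purely the genericity (surjectivity of the linearization) for a suitable choice of the almost complex structures.

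The key steps, in order, are as follows. (1) Set up the universal moduli space over the space of admissible pairs $((J_s)_{s\ge 0},(J'_s)_{s\le 0})$ satisfying the stated constraints: contact type at infinity, $s$-independence for $|s|$ large, and agreement $\tilde h^*J_s=\tilde h^*J_0=J'_0=J'_s$ near $s=0$. The relevant observation is that the almost complex structures are still allowed to vary freely away from the cylindrical ends and away from the neighborhood of $s=0$ where they are constrained by the pullback condition. (2) Show the universal moduli space is a Banach manifold by the usual argument: at a solution $(\tau^-,\tau^+,u^-,u^+)$, the linearized operator together with the variation in $J$ is surjective. This is where one uses that every Floer solution here is somewhere injective in the fiber direction on the region $|s|$ large (after trivializing $P,P'$ along $\tau^{\pm}$ using $\nabla,\nabla'$, these are genuine continuation-type solutions with a non-constant perturbation, so the usual unique-continuation argument produces a point where $u$ is injective and where we may freely perturb $J$), so the standard Aronszajn/somewhere-injectivity argument applies. (3) Invoke the Sard--Smale theorem to conclude that for a generic such pair $((J_s),(J'_s))$ every element of $\cM_{pb}$ is regular, hence the moduli space is a manifold of dimension equal to the Fredholm index, i.e.\ the expected dimension. (4) Finally, note that the $s$-independent-at-infinity condition ensures the asymptotic operators are non-degenerate (since $\vec{x},\vec{x}'$ are non-degenerate orbits of the non-degenerate Hamiltonians $H,H'$), so the linearized operator is genuinely Fredholm and the index computation is the standard one.

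The main obstacle I expect is carrying out step (2) in the presence of the two constraints on $J$: near $s=0$ the almost complex structure is pinned down by the pullback relation $\tilde h^* J_s = J'_s$, and over the cylindrical ends it is pinned down to agree with the data defining $CF(P,H_i,\eta_i)$. So one must check that a generic Floer solution has a point of injectivity \emph{in the region where $J$ is still free to vary}, namely in $\{|s|>\varepsilon\}$ away from the portions of the ends where $J$ has been fixed. Since the perturbation terms $(H_s)$ are genuinely $s$-dependent on a bounded region, the solution $u$ cannot be $s$-translation-invariant there, so standard arguments (as in \cite{MSJbook}) show the set of injective points of $u$ is open and dense in the free region; this suffices. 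One should also remark that there are no sphere bubbles to worry about in the relevant dimensions because $M$ is symplectic Calabi--Yau with convex end (as already used in Proposition \ref{p:moduliD}), though bubbling is really a compactness issue rather than a regularity one and so does not enter this particular lemma. The remaining verifications---exponential convergence at the ends, the Banach manifold charts, the index formula matching $|\vec x'|-|\vec x|$ plus the contributions from $B'$, $B$, and the diagonal constraint---are routine and follow the references.
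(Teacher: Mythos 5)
Your proposal rehearses the textbook transversality argument carefully, but it skips the one observation that the paper treats as the actual content of this lemma. The issue, for a Floer equation on a half-cylinder with a Hamiltonian perturbation and a Lagrangian boundary condition, is not primarily whether one can vary $J$ freely enough; it is whether the perturbation term is compatible with the boundary condition at all, so that the standard elliptic/Fredholm package you invoke in step (1) actually applies. In the folded picture, the perturbation is generated by $\pi_P^*H_{s,t}-\pi_{P'}^*H'_{-s,t}$ and the boundary condition at $s=0$ is the fiberwise Lagrangian $\graph(\tilde{h})$. The adaptedness hypothesis in the references you cite (Seidel's $(8.6)$, Wehrheim--Woodward \S 5, Cazassus $(4.70)$) is precisely that this Hamiltonian be constant on the Lagrangian, so that its flow preserves $\graph(\tilde{h})$ and the linearized boundary problem has the standard form. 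The paper's proof consists of checking exactly this: since $\tilde{h}^*H_s=\tilde{h}^*H_0=H'_0=H'_s$ near $s=0$, the function $\pi_P^*H_{s,t}-\pi_{P'}^*H'_{-s,t}$ vanishes identically on $\graph(\tilde{h})$ near the boundary, and only then does ``the standard transversality proof applies'' become a legitimate invocation.

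Your step (2) and the ``main obstacle'' discussion---finding somewhere-injective points where $J$ is still free to vary---are correct and are indeed part of the standard proof; the paper simply absorbs them into the citation. But you declare in step (1) that ``the content of the lemma is purely the genericity,'' and that is where the gap lies: before genericity can be addressed one must verify the Hamiltonian--Lagrangian compatibility at $s=0$, without which the Banach-manifold and Fredholm setup you take for granted is not available. Add the explicit check that $(\pi_P^*H_{s,t}-\pi_{P'}^*H'_{-s,t})|_{\graph(\tilde{h})}\equiv 0$ for $s$ near $0$ at the start; with that in place the rest of your write-up is fine.
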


\begin{proof}
By our assumptions on $H_s$ and $H'_s$, when $s$ is close to $0$ and $(p,p') \in \graph(\tilde{h})$, we have $H_{s,t}(p)-H_{-s,t}'(p')=0$ because $\tilde{h}^*H_s=\tilde{h}^*H_0=H'_0=H'_s$ for $s$ in an open neighborhood of $0$.
In other words, the Hamiltonian perturbation term $$X_{\pi_P^*H_{s,t}-\pi_{P'}^*H_{-s,t}'}dt|_{T^{vert}\graph(\tilde{h})}$$ is zero along the fiberwise Lagrangian boundary condition $\graph(\tilde{h})$ so it falls into the standard Floer theory package (see  \cite[(8.6)]{SeidelBook}, \cite[Section 5]{WWQuilt}, \cite[(4.70)]{Cazassus}). Therefore, the standard transversality proof applies.
\end{proof}

Next, we want to impose conditions on $(H_s)_{s \in \mathbb{R}_{\ge 0}}$ and $(H'_s)_{s \in \mathbb{R}_{\le 0}}$ to ensure compactness of moduli.
As before, we define the topological energy $E(u^-,u^+)$ of $(\tau^-,\tau^+,u^-,u^+)$ to be 
\begin{align}
E(u^-,u^+)&=E(\tau^-,u^-)+E(\tau^+,u^+)  \label{eq:Eplusminus}\\
E(\tau^-,u^-)&= \int_{(-\infty,0] \times S^1} ((du^-)^{vert})^*\omega + \int_{0}^1 (H_{c'})_t(x'(t)) dt \nonumber\\
E(\tau^+,u^+)&=\int_{[0,\infty) \times S^1} ((du^+)^{vert})^*\omega - \int_{0}^1 (H_{c})_t(x(t)) dt \nonumber
\end{align}
It only depends on the relative homology class of its folding $u:[0,\infty) \times S^1 \to P' \times P$, relative to the fibrewise Lagrangian boundary condition and the asymptotic orbit (cf. \eqref{eq:Folding}).

\begin{lem}\label{l:Gromovpb}
Suppose that $\partial_s \bfs_{(H_s)_{\tau^+(s)}} \le 0$ and $\partial_s \bfs_{(H'_s)_{\tau^-(s)}} \le 0$ for all $s$, and for all gradient trajectory $\tau^+$ and $\tau^-$ of $\eta^+$ and $\eta^-$, respectively.
If the virtual dimension of the moduli space $\cM_{pb}(\vec{x}',\vec{x},(H'_s)_{s \in \mathbb{R}_{\le 0}}, (H_s)_{s \in \mathbb{R}_{\ge 0}})$ is $\le 1$, then for any $E \in\mathbb{R}$, the subspace of solutions with topological energy at most $E$
admits a Gromov compactification to a compact manifold with boundary.
\end{lem}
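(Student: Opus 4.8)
The plan is to deduce the statement from the `folding' description recorded around \eqref{eq:Folding}. Trivialising $P' \times P$ along the half gradient trajectory $\tau(s) = (\tau^-(-s), \tau^+(s))$, $s \in \mathbb{R}_{\ge 0}$, by means of the connections $\nabla'$ and $\nabla$, an element of $\cM_{pb}(\vec{x}',\vec{x},(H'_s)_{s \in \mathbb{R}_{\le 0}}, (H_s)_{s \in \mathbb{R}_{\ge 0}})$ becomes a single map $u(s,t) = (u^-(-s,t), u^+(s,t))$ solving the Floer equation \eqref{eq:Folding} on $\mathbb{R}_{\ge 0} \times S^1$ into $M^- \times M$, with asymptotics $x' \times x$ at $s = +\infty$ and boundary values on $\{0\} \times S^1$ lying on the graph of $\tilde h$ restricted over $\tau(0) \in \graph(h)$, which is canonically identified with the diagonal $\Delta \subset M^- \times M$. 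From this point of view we are in the standard Floer-with-Lagrangian-boundary situation (\cite{SeidelBook, WWQuilt, Cazassus}), and Gromov compactness of the energy-$\le E$ part of the moduli in virtual dimension $\le 1$ reduces to three inputs: a uniform $C^0$ a priori estimate on $u$, a uniform bound on the geometric energy, and the absence of sphere and disc bubbling.

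For the $C^0$ estimate we argue fibrewise as in Proposition \ref{p:moduliD}. Outside a compact subset of each fibre $M$ the Hamiltonians $H_s$, $H'_s$ are cylindrical, and by hypothesis $\partial_s \bfs_{(H_s)_{\tau^+(s)}} \le 0$, $\partial_s \bfs_{(H'_s)_{\tau^-(s)}} \le 0$ along the gradient trajectories, so the maximum principle of \cite[Theorem C.11]{Ritter} (equivalently \cite[Remark C.10]{Ritter}) applies to $u^+$ and to $u^-$ separately and rules out an interior escape to infinity in $M$. Along the boundary $\{0\} \times S^1$ we use that $\tilde h$ is an isomorphism of Hamiltonian $G$-bundles and that $\theta$ is $G$-invariant, so on the cylindrical ends $\tilde h$ intertwines the radial coordinates; hence $\graph(\tilde h)$ is conical at infinity, the two functions $r \circ u^+$ and $r \circ u^-$ agree along $\{0\}\times S^1$, and (by the computation in the proof of Lemma \ref{l:regularpb}) the Hamiltonian perturbation term vanishes along $\graph(\tilde h)$ near $s = 0$. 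This places us in the range of validity of the maximum principle for conical Lagrangian boundary conditions (cf.\ \cite{AbSe, Ritter}), yielding the desired uniform $C^0$ bound. Given this, the uniform geometric energy bound follows from the hypothesis $E(u^-,u^+) \le E$ by the usual estimate, which bounds the geometric energy by the topological energy plus a term controlled by $H_s, H'_s$ and their (non-positive) $s$-derivatives; as in Proposition \ref{p:moduliD}, this estimate a priori depends on $\tau$, but the space of half gradient trajectories emanating from $\graph(h)$ is compact once broken configurations are adjoined, so the bound is uniform over the moduli space.

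It remains to exclude bubbling. When the virtual dimension is $\le 1$, genericity of $J$ ensures (exactly as in Proposition \ref{p:moduliD}, using $c_1(TM) = 0$) that $u$ avoids all once-marked $J_z$-holomorphic spheres in the fibres, whose evaluation image has real codimension $4$; so no sphere bubbles occur. A disc bubble can only form along the boundary $\{0\} \times S^1$, where --- since $\tilde h^* J_0 = J'_0$ --- it has boundary on the diagonal $\Delta$ for a genuine product almost complex structure; by Schwarz reflection such a disc doubles to a non-constant $J$-holomorphic sphere in $M$, which is again excluded by the codimension-$4$ estimate (equivalently, $c_1(TM)=0$ forces Maslov index $0$ and the standard dimension count excludes it). Consequently the only degenerations of a sequence of solutions with topological energy $\le E$ are breakings of the gradient trajectories $\tau^{\pm}$ and of the Floer half-cylinders $u^\pm$ at their cylindrical ends; adjoining these broken configurations produces the Gromov compactification, a compact manifold with boundary whose boundary consists of the once-broken configurations (and is therefore empty, i.e.\ the moduli is a finite set, when the virtual dimension is $0$). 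The \emph{main obstacle} is the $C^0$ estimate along the moving Lagrangian boundary: one must check carefully that the conical structure of $\graph(\tilde h)$ at infinity, the vanishing of the perturbation term there, and the two separate slope-monotonicity hypotheses genuinely combine to make the maximum principle applicable to the folded equation \eqref{eq:Folding}.
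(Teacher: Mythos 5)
Your argument takes a genuinely different route from the paper's. You stay in the folded picture, treating $(u^-,u^+)$ as a half-cylinder in $M^-\times M$ with boundary on the conical Lagrangian $\graph(\tilde h)$, and handle the $C^0$ estimate via a boundary maximum principle while excluding disc bubbling by Schwarz reflection. The paper instead \emph{unfolds}: after trivialising along $\tau^\pm$ and using $\tilde h$ to identify the fibre over $\tau^-(0)$ with the fibre over $\tau^+(0)$, the matching conditions $\tilde h^* H_s = H'_s$ and $\tilde h^* J_s = J'_s$ near $s=0$ mean that the concatenation $u^-\#u^+:\mathbb{R}\times S^1\to M$ solves a single Floer equation with smooth auxiliary data across $s=0$. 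A priori $u^-\#u^+$ is only Lipschitz, but that supplies a first weak derivative (\cite[5.8.2b]{EvansBook}), and elliptic bootstrapping (\cite[Theorem B.4.1]{MSJbook}) then upgrades it to a genuine smooth Floer cylinder. The required $C^0$ bound is therefore the ordinary interior maximum principle \cite[Theorem C.11]{Ritter} on $\mathbb{R}\times S^1$ with no Lagrangian boundary at all, and the remaining compactness points are as in Proposition \ref{p:moduliD}. This unfolding trick eliminates precisely the two extra ingredients your proof relies on---the maximum principle at a moving Lagrangian boundary and the doubling argument for disc bubbles. Both are plausible (your observations that $\tilde h$ intertwines radial coordinates and that $(-\theta)\oplus\theta$ vanishes on $\graph(\tilde h)$ are the correct inputs for a Neumann condition on $r\circ u$), but they require more care than the sketch supplies: the Hamiltonian perturbation in the folded equation vanishes only \emph{on} $\graph(\tilde h)$ rather than in a neighbourhood of it, and the cited references do not state a Lagrangian boundary maximum principle for Floer solutions in the form you invoke, so that step would need to be spelled out. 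The paper's gluing-plus-elliptic-regularity observation is what lets one avoid all of this.
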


\begin{proof}
It suffices to prove an a priori $C^0$ bound for the solutions.
By trivializing $P'$ over $\tau^-$ using $\nabla'$, trivializing $P$ over $\tau^+$ using $\nabla$
and identifying fibres of $P'$ to fibres of $P$ using $\tilde{h}$, we 
can regard $u^-$ and $u^+$ as smooth maps to a fibre $(M,\omega,\theta)$, say the fibre $P'_{\tau^-(0)} \simeq P_{\tau^+(0)}$. 
The map $u^- \# u^+: \R \times S^1 \to M$ defined by
\begin{align}
(u^- \# u^+)(s,t):=\left\{
 \begin{array}{ll}
  u^-(s,t)  &\text{ for }s \le 0\\
  u^+(s,t)  &\text{ for }s \ge 0
 \end{array}
\right.
\end{align}
is piecewise smooth and hence Lipschitz. 
Therefore, the first weak derivative exist \cite[5.8.2b]{EvansBook}.

Recall that $\tilde{h}^*H_s=\tilde{h}^*H_0=H'_0=H'_s$ and $\tilde{h}^*J_s=\tilde{h}^*J_0=J'_0=J'_s$ for $s$ in an open neighborhood of $0$.
Therefore, $u^- \# u^+$ satisfies a Floer equation with smooth auxiliary data (both $H$ and $J$).
The existence of the first weak derivative of $u^- \# u^+$ allows us to apply elliptic bootstrapping \cite[Theorem B.4.1]{MSJbook}, so it is actually smooth.
The required $C^0$ estimate now follows from maximum principle \cite[Theorem C.11]{Ritter} (cf. proof of Proposition \ref{p:moduliD}). 
\end{proof}




The virtual dimension of  $\cM_{pb}(\vec{x}',\vec{x},(H'_s)_{s \in \mathbb{R}_{\le 0}}, (H_s)_{s \in \mathbb{R}_{\ge 0}})$ is $|\vec{x}'|-|\vec{x}|$.
To see this, since we are using cohomological convention, the virtual dimension of the moduli of pairs of gradient trajectory $(\tau^-,\tau^+)$ with $h(\tau^-(0))=\tau^+(0)$ is precisely $|c'|-|c|$, where $|\cdot|$ is the Morse index.
Therefore, by trivializing over $\tau^-$ and $\tau^+$, we see that the virtual dimension of $(u^-,u^+)$ for a given $(\tau^-,\tau^+)$ is $|x'|-|x|$.  
Therefore, the virtual dimension is $|c'|-|c| +|x'|-|x|=|\vec{x}'|-|\vec{x}|$.

\begin{lem}\label{l:pull-back}
Suppose that $\partial_s \bfs_{(H_s)_{\tau^+(s)}} \le 0$ and $\partial_s \bfs_{(H'_s)_{\tau^-(s)}} \le 0$ for all $s$, and for all gradient trajectory $\tau^+$ and $\tau^-$ of $\eta^+$ and $\eta^-$, respectively.
The pull-back defined by 
\begin{align}
CF^*(P,H,\eta) &\to CF^*(P',H',\eta')  \nonumber \\
\vec{x} &\mapsto \sum_{\vec{x}', |\vec{x}'|=|\vec{x}|}  \sum_{(u^-,u^+) \in \cM_{pb}(\vec{x}',\vec{x},(H'_s)_{s \in \mathbb{R}_{\le 0}}, (H_s)_{s \in \mathbb{R}_{\ge 0}})} s(u^-,u^+)q^{E(u^-,u^+)}\vec{x}' \label{eq:pullback}
\end{align}
is a degree preserving chain map, where $s(u^-,u^+) \in \{-1,1\}$ is the sign.
\end{lem}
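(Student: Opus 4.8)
The plan is to show that the map defined in \eqref{eq:pullback} is well-defined (i.e. the counts are finite) and is a chain map of degree zero, by the standard boundary-of-one-dimensional-moduli argument, using the ``folding'' description of the pull-back moduli spaces introduced above. First I would address \emph{well-definedness}: the count in \eqref{eq:pullback} is over rigid elements, so by the dimension computation preceding the lemma ($\vdim \cM_{pb}=|\vec{x}'|-|\vec{x}|$), the relevant moduli space is $0$-dimensional when $|\vec{x}'|=|\vec{x}|$. By Lemma~\ref{l:regularpb} it is cut out transversally for generic almost complex structures, and by Lemma~\ref{l:Gromovpb} (whose hypothesis $\partial_s \bfs_{(H_s)_{\tau^+(s)}}\le 0$ and $\partial_s \bfs_{(H'_s)_{\tau^-(s)}}\le 0$ is precisely what we assume here) each energy-truncated piece is compact; since a rigid $0$-dimensional compact manifold is a finite set of points, the sum over each energy window is finite, and the Novikov-completeness of $\Lambda$ then makes the full sum $\sum_{\vec{x}'}(\cdots)q^{E(u^-,u^+)}\vec{x}'$ a legitimate element of $CF^*(P',H',\eta')$. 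The degree statement is immediate from $\vdim \cM_{pb}=|\vec{x}'|-|\vec{x}|$ together with the (fixed, transported) choice of trivialization of the canonical bundle used to define the $\mathbb{Z}$-grading on both complexes, as discussed after Definition~\ref{d:FloerComplex}.

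Next I would prove the \emph{chain map} property. Fix generators $\vec{x}$ of $H$ and $\vec{x}'$ of $H'$ with $|\vec{x}'|=|\vec{x}|+1$, so $\vdim \cM_{pb}(\vec{x}',\vec{x})=1$. By Lemma~\ref{l:regularpb} and Lemma~\ref{l:Gromovpb} (applied to each energy window, then assembled using Novikov-completeness as above) this moduli space is a compact $1$-manifold with boundary, and I would identify its boundary. The ends of a $1$-parameter family of solutions $(\tau^-,\tau^+,u^-,u^+)$ arise from breaking of one of the two half-gradient-trajectories together with its accompanying half-Floer-solution: either $(\tau^-,u^-)$ breaks at an intermediate critical point/orbit of $(\eta',H')$ on the $B'$ side, or $(\tau^+,u^+)$ breaks at an intermediate generator of $(\eta,H)$ on the $B$ side. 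In the folded picture of \eqref{eq:Folding}, where $(u^-\#u^+):\R\times S^1\to M$ solves a single Floer-type equation with the fibrewise Lagrangian boundary condition $\graph(\tilde h)$ over $s=0$, these are exactly the two families of strip-breakings at $\pm\infty$ allowed by the boundary condition sitting in the interior at $s=0$ — there is no breaking \emph{at} the Lagrangian seam because the Hamiltonian perturbation vanishes there (this is the content of the computation in the proof of Lemma~\ref{l:regularpb}). Counting these boundary points with signs (following the sign conventions of \cite{Hutchings} and the folding conventions of \cite{WWQuilt,SeidelBook}) yields
\[
0=\partial_{CF}\circ(\text{pull-back})-(\text{pull-back})\circ\partial_{CF},
\]
the Novikov powers matching because topological energy is additive under gluing (cf. \eqref{eq:topE}, \eqref{eq:Eplusminus}), which is precisely the chain map relation.

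The main obstacle I expect is the compactness/gluing analysis at the seam: one must be sure that in a $1$-parameter family no \emph{new} degenerations occur beyond ordinary strip-breaking at $\pm\infty$. In particular one needs (i) no bubbling — handled exactly as in Proposition~\ref{p:moduliD} since $c_1(TM)=0$ and holomorphic spheres have codimension-$4$ evaluation image, so generic data avoids them, and disk bubbles off $\graph(\tilde h)$ are excluded because the fibrewise symplectomorphism graph is exact/monotone with the perturbation vanishing there — and (ii) no degeneration of the domain configuration $(\tau^-(0)=h(\tau^-(0))$ colliding with a critical point, which is ruled out generically by the transversality of the fibre-product of (un)stable manifolds with $\graph(h)$. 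The $C^0$ a priori bound that underlies all of this is the one already extracted in Lemma~\ref{l:Gromovpb} via the elliptic-bootstrapping-plus-maximum-principle argument of \cite[Theorem C.11]{Ritter}, so once (i) and (ii) are dispatched the remainder is the routine gluing dictionary for Lagrangian boundary value problems \cite[Section~5]{WWQuilt}. I would therefore present the proof by citing Lemmas~\ref{l:regularpb} and \ref{l:Gromovpb}, noting the degree count, and then running the standard boundary-of-the-$1$-dimensional-moduli argument in the folded picture.
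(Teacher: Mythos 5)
Your proposal is correct, and it reconstructs exactly the standard argument the paper has in mind: the authors state Lemma~\ref{l:pull-back} without proof, since it follows routinely from the transversality (Lemma~\ref{l:regularpb}) and compactness (Lemma~\ref{l:Gromovpb}) results together with the boundary-of-the-one-dimensional-moduli dictionary, and this is precisely what you have written out. One small imprecision: your item~(ii) about the matching point $\tau^+(0)=h(\tau^-(0))$ ``colliding with a critical point'' is not a separate degeneration to exclude --- the seam sits at the fixed domain time $s=0$, so the only codimension-one boundary strata in a one-dimensional $\cM_{pb}$ are the two strip/trajectory breakings at $s\to\pm\infty$ that you have already identified, and these give $\partial_{CF'}\circ\Phi$ and $\Phi\circ\partial_{CF}$ respectively; likewise your glued map $u^-\#u^+$ has domain $\R\times S^1$ with no boundary (that is the picture used for bootstrapping in Lemma~\ref{l:Gromovpb}), whereas the Lagrangian boundary condition $\graph(\tilde h)$ lives in the folded picture on $\R_{\ge 0}\times S^1$ --- you slightly conflate the two, but this does not affect the argument.
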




\begin{lem}\label{l:inde_pb}
Suppose that $H' \ge_{P'} \tilde{h}^*H$.
Then there exist $(H'_s)_{s \in \mathbb{R}_{\le 0}}$ and $ (H_s)_{s \in \mathbb{R}_{\ge 0}}$ as above such that 
the assumption of Lemma \ref{l:pull-back} is satisfied (and hence the pull-back map \eqref{eq:pullback} is well-defined).
Moreover, the induced map on cohomology
is independent of the choice of $(H'_s)_{s \in \mathbb{R}_{\le 0}}$, $ (H_s)_{s \in \mathbb{R}_{\ge 0}}$,  $(J_s)_{s \in \mathbb{R}_{\ge 0}}$ and $(J'_s)_{s \in \mathbb{R}_{\le 0}}$.
\end{lem}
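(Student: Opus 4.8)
The plan is to follow the proof of Lemma~\ref{l:inde_kappa} closely, using the ``folding'' description \eqref{eq:Folding}: a pull-back solution $(\tau^-,\tau^+,u^-,u^+)$ becomes an ordinary Floer cylinder in $P'\times P$ with fibrewise Lagrangian boundary condition $\graph(\tilde h)$ over the gradient trajectory $\tau(s)=(\tau^-(-s),\tau^+(s))$ of $\eta(b)-\eta'(b')$ on $B'\times B$ emanating from $\graph(h)$, so the standard transversality (Lemma~\ref{l:regularpb}), Gromov compactness (Lemma~\ref{l:Gromovpb}) and chain-homotopy machinery apply. The crucial simplification compared with Lemma~\ref{l:inde_kappa} is that here $h$, $\eta$, $\eta'$ and the base triples are all fixed: only the interpolating families $(H_s)_{s\ge0}$, $(H'_s)_{s\le0}$ and the almost complex structures vary, so the delicate ``varying $\eta$'' part of Lemma~\ref{l:inde_kappa} is absent, and what replaces it is bookkeeping of slopes along the two trajectories $\tau^{\pm}$.

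\emph{Existence.} Since $H'\ge_{P'}\tilde h^*H$, fix $a\in\R_{>0}\setminus\Spec(\partial\bar M,\alpha)$ with $\min\bfs_{H'}\ge a\ge\max\bfs_{\tilde h^*H}$; note that $\bfs_{\tilde h^*H}$ equals $\bfs_{H}\circ h$ under the fibrewise identifications, so $a$ dominates $\bfs_H$ on $h(B')$. Choose $H_0$ on $P$ of constant slope $a$ compatible with $(\eta,g_\eta,\nabla)$ (Lemma~\ref{l:existCom}) and put $H'_0:=\tilde h^*H_0$, of constant slope $a$ on $P'$. Extend to $(H_s)_{s\ge0}$ with $H_s=H$ for $s\gg0$ and $H_s=H_0$ near $s=0$, and to $(H'_s)_{s\le0}$ with $H'_s=H'$ for $s\ll0$ and $H'_s=H'_0$ near $s=0$, in such a way that the slopes are monotone in $s$ along the relevant gradient trajectories; this is possible by the same reasoning as in the monotone-homotopy discussion preceding Lemma~\ref{l:inde_kappa}, because the third bullet of Definition~\ref{d:eqgen} makes $\bfs_H$ nonincreasing along trajectories of $\eta$ and every $\tau^+$ occurring in $\cM_{pb}$ starts in $h(B')$, where $\bfs_H\le a$, so $\bfs_H\le a$ along all of such $\tau^+$; symmetrically for $\tau^-$ using $a\le\min\bfs_{H'}$. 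Thus $\partial_s\bfs_{(H_s)_{\tau^+(s)}}\le0$ and $\partial_s\bfs_{(H'_s)_{\tau^-(s)}}\le0$, which is exactly the hypothesis of Lemma~\ref{l:pull-back}, so \eqref{eq:pullback} is a well-defined chain map.

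\emph{Independence.} For fixed $(H_s),(H'_s)$ the space of admissible families of contact-type almost complex structures is contractible, so the induced map does not depend on $(J_s),(J'_s)$ by the usual homotopy argument. Keeping the constant-slope plateau near $s=0$ fixed, the set of admissible $(H_s)_{s\ge0}$ (resp. $(H'_s)_{s\le0}$) satisfying the slope monotonicity along $\tau^{\pm}$ is convex; a path of such data produces, via folding, a parametrized moduli space whose rigid elements give the desired chain homotopy, with the folded maximum principle of Lemma~\ref{l:Gromovpb} applying uniformly along the path precisely because the slope conditions do. To compare two different plateaus --- a second choice of intermediate slope $a$ or of $H_0$ --- insert on $P$ and on $P'$ auxiliary constant-slope continuation cobordisms between the two plateaus and glue them onto the pull-back data with widely separated transition regions; as in the reduction to \eqref{eq:compo3} in the proof of Lemma~\ref{l:inde_kappa}, the glued count equals the composition of the pull-back through one plateau with continuation maps, and the latter are choice-independent by Lemmas~\ref{l:kappaIndchoice} and \ref{l:inde_kappa}. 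Combining, the cohomology-level pull-back is independent of all auxiliary data.

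\textbf{The main obstacle} is maintaining the a priori $C^0$ estimate uniformly over the one-parameter families used in the chain-homotopy arguments. In the folded picture the estimate is the one of Lemma~\ref{l:Gromovpb}, and it is driven entirely by the monotonicity $\partial_s\bfs\le0$ along $\tau^{\pm}$; a homotopy of homotopies must therefore be kept inside this monotone class along every relevant trajectory simultaneously, while still agreeing with $\tilde h^*H_s$ on a neighbourhood of $s=0$ so that the Hamiltonian perturbation term vanishes along $\graph(\tilde h)$ and Lemma~\ref{l:regularpb} transversality survives. Reconciling these two constraints (compounded by the fact that the base is now $B'\times B$ with a boundary condition on $\graph(h)$) is the only genuine subtlety; once it is arranged, the rest is routine.
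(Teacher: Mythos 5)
The paper's proof of this lemma is just the one-liner ``the arguments are in parallel with the proof of Lemma~\ref{l:inde_kappa}, so we will omit,'' and your proposal is a correct and faithful unwinding of exactly that: you build the plateau at constant slope $a\in[\max\bfs_{\tilde h^*H},\min\bfs_{H'}]\setminus\Spec(\partial\bar M,\alpha)$, verify the monotonicity $\partial_s\bfs\le 0$ along $\tau^\pm$ using the third bullet of Definition~\ref{d:eqgen} and the fact that the $\tau^+$ appearing in $\cM_{pb}$ all start on $h(B')$, and then get independence via parametrized moduli spaces. One remark: the separate ``compare two plateaus'' step via inserting constant-slope continuation cobordisms is not actually needed, because the whole space of admissible pairs $\bigl((H_s)_{s\ge0},(H'_s)_{s\le0}\bigr)$ satisfying the matching condition $\tilde h^*H_s=H'_s$ near $s=0$ and the slope-monotonicity along $\tau^\pm$ is already convex (slopes, $\partial_s$, and $\tilde h^*$ are all linear, and linear interpolation of plateaus preserves these constraints), so a single linear homotopy between any two choices gives the chain homotopy directly; this mirrors only the first paragraph of the proof of Lemma~\ref{l:inde_kappa}, the later $\eta$-varying reduction being, as you observe, absent here.
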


\begin{proof}
The arguments are in parallel with the proof of Lemma \ref{l:inde_kappa} so we will omit them.
\end{proof}

The following is the analog of Corollary \ref{c:directSystem}.

\begin{cor}\label{c:natural}
Suppose that $h':B'' \to B'$ is another smooth map between smooth compact manifolds, and $P'' \to B''$ is the pull-back of $P' \to B'$ along $h'$.
Let $\tilde{h}':P'' \to P'$ be the induced map covering $h'$.
Suppose that $H'' \ge_{P''} (\tilde{h}')^*H'$ and $H' \ge_{P'} \tilde{h}^*H$.
Then the composition of the pull-back maps $HF^*(P,H,\eta) \to HF^*(P',H', \eta')$ and $HF^*(P',H',\eta') \to HF^*(P'',H'',\eta'')$ is the pull-back map
$HF^*(P,H,\eta) \to HF^*(P'',H'',\eta'')$.
\end{cor}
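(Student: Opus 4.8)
The plan is to prove this by the same ``glued auxiliary data'' strategy used for Corollary \ref{c:directSystem}, adapted to the folding picture of Section \ref{ss:pb}. The pull-back map $HF^*(P,H,\eta) \to HF^*(P',H',\eta')$ is defined by counting rigid solutions to a Floer-type equation on $[0,\infty) \times S^1$ with a fibrewise Lagrangian boundary condition $\graph(\tilde{h})$ over the ``corner'' point $\tau(0)$, where the gradient trajectory $\tau$ lives in $B' \times B$ and starts on $\graph(h)$. Iterating this, a composition of two pull-back maps is computed by a pair of such configurations glued along their common factor over $B'$ (resp.\ $P'$). First I would form the glued auxiliary data: choose intermediate cylindrical Hamiltonians and almost complex structures on $P'$ of constant slope $a'$ with $\min \bfs_{H''} \ge a' \ge \max \bfs_{\tilde h^* H}$ (using that $H' \ge_{P'} \tilde h^* H$ and $H'' \ge_{P''} (\tilde h')^* H'$ and nowhere-density of the spectrum), and interpolate so that the resulting one-parameter family on $P$ has non-increasing slope along the relevant gradient trajectories, hence satisfies the hypothesis of Lemma \ref{l:pull-back}. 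The same maximum-principle input as in Lemma \ref{l:Gromovpb} (namely \cite[Theorem C.11]{Ritter} applied fibrewise after trivializing over the gradient trajectory and identifying fibres via $\tilde h$ and $\tilde h'$) gives the $C^0$ a priori bound, so the glued moduli space is compact of the expected dimension.

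Next I would run the standard neck-stretching / gluing argument: inserting a long neck of length $R$ into the intermediate region and letting $R \to \infty$ exhibits the rigid points of the glued moduli space as in bijection (with matching signs and energies) with pairs of rigid solutions, one contributing to $HF^*(P,H,\eta) \to HF^*(P',\check H,\eta')$ and one to $HF^*(P',\check H,\eta') \to HF^*(P'',H'',\eta'')$, where $\check H$ is the intermediate Hamiltonian. This shows the glued count computes the composition of two pull-back maps. On the other hand, by Lemma \ref{l:inde_pb} the map defined by the glued data is independent of all auxiliary choices and equals the pull-back map $HF^*(P,H,\eta) \to HF^*(P'',H'',\eta'')$ directly associated to the data $H'' \ge_{P''} (\tilde h \circ \tilde h')^* H = (\widetilde{h \circ h'})^* H$ (using that pulling back $P$ along $h \circ h'$ agrees with iterated pull-back, and $\tilde h \circ \tilde h'$ covers $h \circ h'$). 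Comparing the two descriptions of the same map yields the claimed equality of compositions.

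The main obstacle I anticipate is bookkeeping the boundary conditions and the parametrized gluing in the folded picture: unlike the closed-string continuation maps of Corollary \ref{c:directSystem}, here the domain carries a fibrewise Lagrangian boundary constraint $\graph(\tilde h)$ along $\{0\} \times S^1$, and the composition involves two such constraints (one for $\graph(\tilde h)$ over $B'\times B$, one for $\graph(\tilde h')$ over $B''\times B'$) meeting at a common $P'$ factor. One must check that the glued domain — morally a strip-with-two-seams degenerating, via breaking at the intermediate fibre, into two strips-with-one-seam — has a well-behaved compactification with no extra boundary strata beyond the expected ones (in particular, that the seam configurations cannot bubble off or collide, which follows from the non-degeneracy of all asymptotics and the codimension count, exactly as in Proposition \ref{p:moduliD}). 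Once the moduli-theoretic setup is in place, the argument is formally identical to the sketch of Corollary \ref{c:directSystem}, so I would keep the exposition brief and refer back to Lemma \ref{l:inde_pb} and the gluing formalism of \cite{WWQuilt}.
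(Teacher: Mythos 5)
Your proposal follows the same strategy the paper implicitly intends: Corollary \ref{c:natural} is introduced as ``the analogue of Corollary \ref{c:directSystem},'' whose sketch proceeds exactly by gluing the auxiliary data defining the two maps and then invoking the independence-of-choices statement (here Lemma \ref{l:inde_pb}) to identify the glued count with the direct map. Your accounting of the slopes (chaining $\max\bfs_{(\tilde h\circ\tilde h')^*H}\le\max\bfs_{\tilde h^*H}\le\min\bfs_{H'}\le\min\bfs_{(\tilde h')^*H'}\le\max\bfs_{(\tilde h')^*H'}\le\min\bfs_{H''}$) and your appeal to the folded/quilted compactness of Lemma \ref{l:Gromovpb} match what the paper's setup requires, and the issue you flag at the end (comparing the two-seam glued data to the one-seam direct data) is precisely the point being glossed over in the paper's one-line reference to Corollary \ref{c:directSystem}, so no gap beyond the paper's own level of rigor.
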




\begin{example}\label{e:tautquotient}
Let $h:B' \to B$ (and hence $\tilde{h}:P' \to P$) be an embedding.
Let $(\eta,g_{\eta},\nabla)$ be chosen such that there is no gradient trajectory $\tau$ of $\eta$ connecting two critical points with $\lim_{s \to -\infty} \tau(s) \in B'$ and $\lim_{s \to \infty} \tau(s) \notin B'$, and every gradient trajectory of $\eta$ connecting two critical points in $B'$ is completely lying inside $B'$.
Let $H$ be a cylindrical Hamiltonian that is compatible with $(\eta,g_{\eta},\nabla)$.
Suppose that $(\eta',g_{\eta}',\nabla')$ and $H'$ are the restriction of $(\eta,g_{\eta},\nabla)$ and $H$ to $P'$ and $B'$.
Then $CM(B',\eta',g_{\eta}')$ is naturally a quotient complex of $CM(B,\eta,g_{\eta})$ and the induced map on cohomology coincide with the Morse theoretic pull-back.
Similarly, $CF(P',H',\eta')$ is naturally a quotient complex of $CF(P,H,\eta)$ and the induced map on cohomology coincide with the Floer theoretic pull-back introduced above.
\end{example}

\begin{lem}\label{l:trivialFib}
Let $E_1 \subset E_2 \subset \dots$ be a sequence of smooth compact manifolds.
Let $P_n:=E_n \times M$ and view the projection to the first factor as an admissible bundle over $E_n$.
Then $\varprojlim_n HF^*(P_n,a)=\varprojlim_n (H^*(E_n) \otimes HF^*(M,a))=(\varprojlim_n H^*(E_n)) \otimes HF^*(M,a)$.
\end{lem}

\begin{proof}
Let $H_M$ be a non-degenerate cylindrical Hamiltonian of $M$ with constant slope $a$.
Let $\pi_M:P_n \to M$ be the projection so $\pi_M^*H_M$ is a cylindrical Hamiltonian of $P_n$.
Choose a Morse function $\eta:E_n \to \R$ and take $\nabla$ to be the trivial connection.
Let $H$ be a $C^2$ small perturbation of $\pi_M^*H_M$ in a compact set so that it is a still cylindrical Hamiltonian of $P_n$ with constant slope $a$ and it is compatible with $(\eta,g_{\eta},\nabla)$ such that $H_c=H_M$ for every critical point $c$ of $\eta$.

Suppose that $(\tau,u)$ is a solution contributing to the differential.
Then $\pi_M \circ u$ satisfies the Floer equation
\[
(d(\pi_M \circ u) -X_{H_{\tau(s)}}\otimes dt)^{0,1}_{J_{\tau(s)}}=0
\]
 When the perturbation is sufficiently small (so that $H_{\tau(s)}$ is very close to $H_M$ for all $s$), 
the virtual dimension of $\pi_M \circ u$ must be at least $0$, and when it is $0$, the input and the output are the same orbit.
This identifies $CF^*(P_n,H,\eta)$ with $CM^*(E_n,\eta) \otimes CF^*(M,H_M)$ as a cochain complex so 
$HF^*(P_n,a)=H^*(E_n) \otimes HF^*(M,a)$.

When we consider $P_n \subset P_{n+1}$, we can choose $\eta$ on $E_{n+1}$ such that the critical points of $\eta$ outside $E_n$ forms a subcomplex, and any gradient trajectory connecting two critical points in $E_n$ is contained in $E_n$.
Then the pull-back map $CF( P_{n+1},H,\eta) \to CF( P_{n},H|_{P_n},\eta|_{E_n})$ can be identified with restricting to the quotient complex obtained by killing the generators outside $P_n$.
When the perturbation is sufficiently small, it can in turn be identified with the natural map $CM^*(E_{n+1},\eta) \otimes CF^*(M,H_M) \to CM^*(E_n,\eta|_{E_n}) \otimes CF^*(M,H_M)$ so the result follows.
\end{proof}

\begin{lem}\label{l:pullbackprod}
The pull-back map $SH^*(P) \to SH^*(P')$ is an algebra map.
\end{lem}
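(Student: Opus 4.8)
The plan is to follow the strategy used throughout this section: realize both composites $\operatorname{pb}\circ\mu^2_{P}$ and $\mu^2_{P'}\circ(\operatorname{pb}\otimes\operatorname{pb})$ as counts of rigid solutions to a single ``quilted pair of pants'' moduli problem, and then interpolate between the two by a cobordism argument as in Proposition \ref{p:prodProperty} and Lemma \ref{l:inde_kappa}. Fix constant slopes $a_0>a_1+a_2$ in $\R_{>0}\setminus\Spec(\partial\bar M,\alpha)$, pick cylindrical Hamiltonians $H_i$ on $P$ of slope $a_i$ compatible with an admissible base triple on $B$, and set $H_i':=\tilde h^*H_i$; since $a_i\notin\Spec(\partial\bar M,\alpha)$ we have $H_i'\ge_{P'}\tilde h^*H_i$, so the pull-back maps of Lemma \ref{l:pull-back} are defined (with the trivial interpolating families). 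It then suffices to exhibit a chain homotopy between the two composite cochain maps $CF(P,H_1,\eta)\otimes CF(P,H_2,\eta)\to CF(P',H_0',\eta')$; passing to cohomology and then to the direct limit over slopes --- using that $\mu^2$ and $\operatorname{pb}$ both commute with continuation maps (Proposition \ref{p:prodProperty}, Lemma \ref{l:inde_pb}) --- gives the assertion for $SH^*$.

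The combined moduli lives over the pair-of-pants domain $\Sigma=\mathbb P^1\setminus\{0,1,2\}$ together with its Morse gradient tree $T_{2,1}$, decorated by a \emph{seam}: a disjoint union of embedded circles in $\Sigma$, each a small loop around one puncture, which cuts $\Sigma$ into a part mapping to $P$ and a part mapping to $P'$; along the seam one imposes the fibrewise Lagrangian correspondence $\graph(\tilde h)$ and along the corresponding edges of $T_{2,1}$ the matching condition $\tau^{+}=h\circ\tau^{-}$, exactly as in the folding picture behind Lemmas \ref{l:regularpb}--\ref{l:pull-back}, while away from the seam one uses the pair-of-pants perturbation data $(H_z,\beta,J_z)$ of Section \ref{s:product_structure}. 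Taking the seam to be one circle around the output puncture $0$ (the whole pair-of-pants part and both inputs map to $P$, only a half-cylinder near $0$ maps to $P'$) computes $\operatorname{pb}\circ\mu^2_{P}$; taking the seam to be two circles, one around each input puncture, (the two input ``fingers'' map to $P$ and all the rest of $\Sigma$, including the pair-of-pants part and the output, maps to $P'$) computes $\mu^2_{P'}\circ(\operatorname{pb}\otimes\operatorname{pb})$. A one-parameter family of such decorations joining the two seam configurations --- sliding the seam across $\Sigma$ and passing through the codimension-one degeneration in which an annulus is stretched and one seam circle splits into two --- has a parametrized moduli space whose rigid points provide the chain homotopy. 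Alternatively, and more cheaply, one may factor $h$ through the graph embedding $j:B'\hookrightarrow B'\times B$ and the projection $q:B'\times B\to B$, so that $\operatorname{pb}=\operatorname{pb}_j\circ\operatorname{pb}_q$ by Corollary \ref{c:natural}; pull-back along $q$ is $x\mapsto 1\otimes x$ under a relative form of the isomorphism $SH^*(B'\times P)\cong H^*(B')\otimes SH^*(P)$ of Lemma \ref{l:trivialFib}, and pull-back along the graph embedding is the quotient-complex map of Example \ref{e:tautquotient} for suitably adapted Morse data --- both visibly algebra maps, since the pair-of-pants moduli over $\graph(h)$ sit inside those over $B'\times B$.

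Transversality in all of these moduli is standard: away from the seam it is a generic choice of $J_z$ together with the gradient-tree perturbations of \cite[Section 7]{Abouzaidtop}, and near the seam it follows as in Lemma \ref{l:regularpb} because the Hamiltonian perturbation term vanishes along $\graph(\tilde h)$. The real work, as usual, is the $C^0$ a priori estimate guaranteeing Gromov compactness uniformly over the family, and here both ingredients are already in place: on any region mapping to a single bundle one applies Ritter's maximum principle \cite[Theorem C.11, Remark C.10]{Ritter} to $\tfrac{H_z}{\bfs(z)}$ and $\bfs(z)\beta$ as in Proposition \ref{p:moduliD} (this is why we keep the slopes constant, impose $d(\bfs(z)\beta)\le 0$, and arrange the interpolating family to preserve the inequalities $a_0>a_1+a_2$ and $\partial_s\bfs\le 0$), while across a seam one unfolds the two pieces into a single piecewise-smooth, hence Lipschitz, map to a fibre $M$ which by elliptic bootstrapping \cite[Theorem B.4.1]{MSJbook} is smooth and solves a Floer equation with smooth data, so the same maximum principle applies, exactly as in Lemma \ref{l:Gromovpb}. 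Ensuring that this uniform maximum principle survives the seam-splitting degeneration --- i.e.\ choosing the path of decorations so that slope domination is never lost --- is the main obstacle; granting it, the usual cobordism argument together with the independence statements already proved shows that $\operatorname{pb}$ intertwines the product structures on $SH^*(P)$ and $SH^*(P')$, which is the claim.
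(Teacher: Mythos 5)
Your main construction --- a one-parameter family of ``quilted'' pair-of-pants moduli interpolating between the two seam configurations --- coincides with the paper's sketch, which parameterizes the family by a cut level $r\in\mathbb{R}$ on the gradient tree $T_{2,1}$: your sliding seam is precisely $\pi_\Sigma^{-1}(\{s=r\})$, which is one circle around the output puncture for $r<0$ and two circles around the input punctures for $r>0$. You are, if anything, more explicit than the paper in flagging the codimension-one seam-splitting degeneration as $r$ crosses the trivalent vertex as the place where slope domination has to be checked; the paper's sketch simply lets $r$ run over $\mathbb{R}$.

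Your ``cheaper'' alternative --- factoring $h=q\circ j$ through the graph embedding $j\colon B'\hookrightarrow B'\times B$ and the projection $q\colon B'\times B\to B$ and invoking Corollary \ref{c:natural} to write $\operatorname{pb}=\operatorname{pb}_j\circ\operatorname{pb}_q$ --- is a genuinely different route not taken in the paper. It buys a reduction to two more elementary statements (that $q^*$ is the unital inclusion $x\mapsto 1\otimes x$ under a K\"unneth-type isomorphism, and that $j^*$ for a closed embedding is realized by a quotient complex as in Example \ref{e:tautquotient}) at the cost of some material not yet in the paper: Lemma \ref{l:trivialFib} proves a K\"unneth isomorphism only for trivial bundles over $E_n$ and would need to be extended to $SH^*(B'\times P)\cong H^*(B')\otimes SH^*(P)$ for general admissible $P$, and the multiplicativity of the quotient-complex map $j^*$ still requires arranging the Stasheff-tree perturbation data so that trees with inputs on $B'$ remain in $B'$ and then identifying the restricted pair-of-pants counts with those defining the product on $SH^*(P')$. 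Both steps are plausible but neither is immediate, so the alternative is a legitimate shortcut that would need a short argument of its own.
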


\begin{proof}[Sketch of proof]
The proof of Lemma \ref{l:pullbackprod} combines the ideas in Section \ref{s:product_structure} and this section.
The moduli space we need to construct consists of solutions $(\tau,u)$ of a coupled equation.
The domain of $\tau$ is $T_{2,1}$ and the target is $B \sqcup B'$.
There is $r \in \mathbb{R}$ such that $\tau(\{ s \le r\}) \subset B'$ and $\tau(\{ s \ge r\}) \subset B$, where $s:T_{2,1} \to \mathbb{R}$ is the coordinate function on the edges of $T_{2,1}$.
The map $\tau$ is a negative ($X_e$-perturbed) gradient trajectory over any edge and it satisfies the matching condition $h(\tau_{B'}(\{s=r\}))=\tau_B(\{s=r\})$, where $\tau_{B'}:=\tau|_{\{ s \le r\}}$ and $\tau_B:=\tau|_{\{ s \ge r\}}$.
On the other hand, $u: \Sigma=\mathbb{P}^1 \setminus\{0,1,2\} \to P' \sqcup P$ is a lift of $\tau$ that satisfies an appropriate Floer equation.
We can compactify this moduli space.
If we consider the boundary of the compactification of the one dimensional moduli space, then the limit when $r$ goes to $-\infty$ corresponds to taking the product in $SH^*(P)$ and then pulling back to $SH^*(P')$.
Similarly, the limit when $r$ goes to $\infty$ corresponds to pulling back two classes from $SH^*(P)$ to $SH^*(P')$ and then taking the product in $SH^*(P')$.
\end{proof}


\begin{lem}\label{l:rationallyconnectedfibre}
Suppose that $h:B' \to B$ is a fibre bundle with fibres $F$ such that $H^0(F;\mathbb{C})=\mathbb{C}$, and $H^*(F;\mathbb{C})=0$ otherwise.
Then the pull-back map induces an isomorphism $HF^*(P, a) \simeq HF^*(P', a)$.
\end{lem}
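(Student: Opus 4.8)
The strategy is to present both $HF^*(P,a)$ and $HF^*(P',a)$ as the limits of Leray--Serre-type spectral sequences coming from the Morse-index filtration, and to check that the pull-back of Lemma~\ref{l:pull-back} induces an isomorphism on their $E_2$-pages.

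\emph{The spectral sequence for $P\to B$.} By Corollary~\ref{c:directSystem} we may compute $HF^*(P,a)$ with any convenient admissible $H$. Fix an admissible base triple $(\eta,g_\eta,\nabla)$ and, as in the proof of Lemma~\ref{l:trivialFib}, choose a cylindrical $H$ of constant slope $a$ compatible with it which, in a compact set, is a $C^2$-small perturbation of a Hamiltonian pulled back fibrewise from a fixed non-degenerate $H_M$ on $M$, so that $H_c=H_M$ for every $c\in\critp(\eta)$ under the decomposition \eqref{eq:trivial}. Filter $CF^*(P,H,\eta)$ decreasingly by $F^p:=\operatorname{span}\{(c,x):|c|\ge p\}$, with $|c|$ the Morse index. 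Then $\partial_{CF}$ preserves $F^\bullet$: its constant-$\tau$ part is the fibrewise Floer differential and preserves $|c|$, while for non-constant $\tau$ the genericity in Definition~\ref{d:eqgen} together with the $C^2$-smallness force $|c_0|=|c_1|+1$, exactly as in Lemma~\ref{l:trivialFib}. Hence the $E_0$-differential is the fibrewise Floer differential, so
\begin{equation*}
E_1^{p,\bullet}\;\cong\;\bigoplus_{|c|=p}HF^\bullet(P_c,a),
\end{equation*}
and trivialising $P$ along gradient arcs by $\nabla$ (and using $H_c=H_M$ and the constant slope, as in Lemma~\ref{l:trivialFib}) identifies the $d_1$-differential with the Morse differential of $(\eta,g_\eta)$ with coefficients in the local system $\mathcal H$ on $B$ given by $b\mapsto HF^*(P_b,a)$, whose parallel transport is induced by $\nabla$ and is well defined because $G$ is connected. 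Thus $E_2\cong H^*(B;\mathcal H)$. Since $B$ is closed and $c_1(TM)=0$, the complex $CF^*(P,H,\eta)$ is finitely generated over $\Lambda$ and $F^\bullet$ takes finitely many values, so the spectral sequence converges to $HF^*(P,a)$.

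\emph{Comparison.} The same construction over $B'$ yields a convergent spectral sequence with $E_2'\cong H^*(B';\mathcal H')$ for the analogous local system $\mathcal H'$, which satisfies $\mathcal H'=h^*\mathcal H$ because $P'=h^*P$. By generic transversality (Lemma~\ref{l:regularpb}) the pull-back moduli spaces $\cM_{pb}(\vec x',\vec x,\dots)$ are empty unless $|c'|\ge|c|$, since their gradient-trajectory factor has virtual dimension $|c'|-|c|$; so the pull-back chain map of Lemma~\ref{l:pull-back} preserves $F^\bullet$ and induces a morphism of the two spectral sequences which, after the identifications above and by Lemma~\ref{l:inde_pb}, is the topological pull-back $H^*(B;\mathcal H)\to H^*(B';h^*\mathcal H)$ on $E_2$. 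Since $H^0(F;\mathbb C)=\mathbb C$ (so $F$ is connected) and $H^{>0}(F;\mathbb C)=0$, the Leray--Serre spectral sequence of $h$ with coefficients in $\mathcal H$ is concentrated in the row $q=0$ and collapses, so $h^*\colon H^*(B;\mathcal H)\to H^*(B';h^*\mathcal H)$ is an isomorphism. Therefore our morphism of spectral sequences is an isomorphism on $E_2$, hence on $E_r$ for all $r\ge2$ and on $E_\infty$; as the filtrations are bounded, the pull-back $HF^*(P,a)\to HF^*(P',a)$ is an isomorphism.

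I expect the main obstacle to be the two structural identifications used above: that the Morse--Floer filtration spectral sequence really has $E_2$-page $H^*(B;\mathcal H)$ for a bona fide local system $\mathcal H$ --- which requires the $\nabla$-parallel transport on fibrewise Floer cohomology to be homotopy invariant, equivalently the $G$-action on $HF^*(M,a)$ to be trivial, and this is where connectedness of $G$ is used --- and that the pull-back map induces the topological pull-back on $E_2$, i.e.\ that the moduli of Section~\ref{ss:pb}, restricted to fibres, compute the correspondence defined by $\graph(h)\subset B'\times B$. Granting these, the rest --- convergence, the comparison theorem for bounded spectral sequences, and the Leray--Serre collapse for a $\mathbb C$-acyclic fibre --- is routine.
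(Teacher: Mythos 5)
Your overall strategy — compare the two family Floer groups via the Morse-index filtration spectral sequences — is the same as the paper's, and your proposal is essentially correct, but the two arguments differ in how the filtration on $CF^*(P',a)$ is set up, and this makes the paper's argument noticeably cleaner. The paper chooses $\eta'$ as a Morsification of $h^*\eta$ supported near $h^{-1}(\critp(\eta))$ and then filters $CF^*(P',a)$ by the \emph{base} Morse index $|h(c')|$ rather than by the intrinsic index $|c'|$. This coarser filtration groups all the fibre critical points over a given $c\in\critp(\eta)$ together, so the $E_1$-page of $CF^*(P',a)$ becomes $\oplus_{c}H^*(F)\otimes HF^{*-|c|}(M,a)$ (by Lemma \ref{l:trivialFib}), and the map of $E_1$-pages from $CF^*(P,a)$ is simply the inclusion $H^*(\text{pt})\otimes HF(M,a)\hookrightarrow H^*(F)\otimes HF(M,a)$, which is an isomorphism since $H^*(F;\mathbb{C})=H^*(\text{pt};\mathbb{C})$. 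The comparison is done already at $E_1$, with no need to identify the induced map with the topological $h^*$ on $E_2$.

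Your version uses independent Morse data, filters $CF^*(P',a)$ by the intrinsic $|c'|$, and works one page further, comparing $E_2\cong H^*(B;\mathcal H)$ with $E_2'\cong H^*(B';h^*\mathcal H)$ and then invoking Leray--Serre for $h$. That route does work — the filtration is preserved because the base gradient-trajectory moduli has dimension $|c'|-|c|$, and the Leray--Serre collapse over the $\mathbb{C}$-acyclic fibre gives the $E_2$-isomorphism — but it forces you to prove the two ``structural identifications'' you flag at the end, in particular that the induced $E_2$-map really is the topological pull-back $h^*:H^*(B;\mathcal H)\to H^*(B';h^*\mathcal H)$. That is exactly the step the paper's adapted choice of $\eta'$ is designed to sidestep: with $\eta'$ a perturbation of $h^*\eta$, there is no genuine $E_2$-comparison to make because the $E_1$-pages already match. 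If you do not impose any compatibility between $\eta'$ and $h$, verifying the $E_2$-statement amounts to re-proving (in the family Floer/Morse model) that the Floer-theoretic pull-back over a fibration realizes the topological pull-back with local coefficients — a true statement, but a genuine lemma in its own right rather than something that is immediate. So your proof is correct in outline, but the paper's choice of $\eta'$ replaces the heaviest step of your argument with a trivial observation.
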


\begin{proof}
We pick an admissible base triple $(\eta,g_{\eta},\nabla)$ for $P \to B$.
Let $g'$ be a Riemannian metric on $B'$ such that $h$ is a Riemannian submersion.
Let $N$ be a neighborhood of the critical points of $\eta$ such that $\nabla$ is flat over $N$.
So the bundle $P'$ over $h^{-1}(N)$ can be identified with $P'|_{h^{-1}(N)}=M \times h^{-1}(N)=M \times F \times N$.
Let $\eta':B' \to \mathbb{R}$ be a Morse function obtained by perturbing $h^*\eta$ inside $h^{-1}(N)$. 
We pick $\eta'$ such that $(\eta', g', h^*\nabla)$ is an admissible base triple.

We define a descending filtration on the chain complex $CF^*(P, a)$ and $CF^*(P', a)$ by taking $F^pCF^*(P, a)$ and $F^pCF^*(P', a)$ to be the subcomplex generated by $\vec{x}=(c,x)$ such that $|c| \ge p$, and the subcomplex generated by $\vec{x}'=(c',x')$ such that $|h(c')| \ge p$, respectively.
By our choice of admissible base triples, the pull-back map respects the fibration (i.e. it maps $F^pCF^*(P, a)$ to $F^pCF^*(P', a)$).
It induces a homomorphism of the corresponding spectral sequences.

The $E_1$-page of $CF^*(P, a)$ is given by $(\oplus_{ c \in \critp(\eta)} HF^{*-|c|}(M,a),d_1)$, where the restriction of $d_1$ to the group $HF(M,a)$ over $c$ is the sum of the continuation map over each rigid gradient trajectory with input $c$.
On the other hand, by Lemma \ref{l:trivialFib}, the $E_1$-page of $CF^*(P', a)$ is given by $(\oplus_{ c \in \critp(\eta)} H^*(F) \otimes HF^{*-|c|}(M,a),d_1')$.
By our assumption on $H^*(F;\mathbb{C})$, we have $H^*(F) \otimes HF^{*-|c|}(M,a)=HF^{*-|c|}(M,a)$.
The homomorphism between the components of the $E_1$-pages can be identified with  
$H^*(\text{point}) \otimes HF^{*-|c|}(M,a) \to H^*(F) \otimes HF^{*-|c|}(M,a)$ so it is an isomorphism.
This finishes the proof.
\end{proof}

\subsection{Push-forward}\label{ss:pf}

We now explain how to define the push-forward map in this setup by simply reversing the $s$-direction.  
To clarify,  we follow the notation set up in the first paragraph of the previous section.

This time we let $(H_s)_{s \in \mathbb{R}_{\le 0}}$ and $(H'_s)_{s \in \mathbb{R}_{\ge 0}}$ be two families of cylindrical Hamiltonians, on $P$ and $P'$ respectively,
such that $H_s=H$ for $s \ll 0$, $H'_s=H'$ for $s \gg 0$ and $\tilde{h}^*H_s=\tilde{h}^*H_0=H'_0=H'_s$ for $s$ in an open neighborhood of $0$.
Choose one-parameter families of $S^1$-dependent fiberwise compatible almost complex structures $(J_s)_{s \in \mathbb{R}_{\le 0}}$ and $(J'_s)_{s \in \mathbb{R}_{\ge 0}}$ that are of contact type on $P$ and $P'$, respectively.
They are chosen such that $J_s$ is independent of $s$ for $s \gg 0$, $J'_s$ is independent of $s$ for $s \ll 0$, 
and $\tilde{h}^*J_s=\tilde{h}^*J_0=J'_0=J'_s$ for $s$ in an open neighborhood of $0$.

Given a generator $\vec{x}=(c,x)$ of $H$ and $\vec{x}'=(c',x')$ of $H'$, we can consider the moduli space $\cM_{pf}(\vec{x};\vec{x}',(H_s)_{s \in \mathbb{R}_{\le 0}}, (H'_s)_{s \in \mathbb{R}_{\ge 0}})$ which consists of $(\tau^-,\tau^+,u^-,u^+)$ such that
\begin{itemize}
\item $\tau^-:(-\infty,0] \to B$ is a gradient trajectory of $\eta$ with $\lim_s \tau^-(s)=c$,
\item $\tau^+:[0,\infty) \to B'$ is a gradient trajectory of $\eta'$ with $\lim_s \tau^+(s)=c'$,
\item $h(\tau^+(0))=\tau^-(0)$,
\item $u^-:(-\infty,0] \times S^1  \to P$ is  covering $\tau^-$ and satisfies the Floer equation  \[(du^{vert}-X_{(H_s)_{\tau^-(s)}}\otimes dt)^{0,1}_{(J_s)_{\tau^-(s)}}=0\] 
with $\lim_{s \to -\infty} u^-(s,t)=x(t)$,
\item $u^+:[0,\infty) \times S^1  \to P'$ is covering $\tau^+$ and satisfies the Floer equation 
\[(du^{vert}-X_{(H'_s)_{\tau^+(s)}}\otimes dt)^{0,1}_{(J'_s)_{\tau^+(s)}}=0\]
 with $\lim_{s \to \infty} u^+(s,t)=x'(t)$,
\item $\tilde{h}(u^+(0,t))=u^-(0,t)$,
\end{itemize}

The analog of Lemma \ref{l:regularpb} and \ref{l:Gromovpb} are true.
Moreover, the virtual dimension of the moduli space  $\cM_{pf}(\vec{x};\vec{x}',(H_s)_{s \in \mathbb{R}_{\le 0}}, (H'_s)_{s \in \mathbb{R}_{\ge 0}})$
is $|\vec{x}|-|\vec{x}'|-\dim(B)+\dim(B')$
because the virtual dimension of $(\tau^-,\tau^+)$ is $|c|-\dim(B)+|c'|-\dim(B')$ this time.
It leads to the following analog of Lemma \ref{l:pull-back}, \ref{l:inde_pb} and Corollary \ref{c:natural}.

\begin{lem}\label{l:push-forward}
Suppose that $\partial_s \bfs_{(H'_s)_{\tau^+(s)}} \le 0$ and $\partial_s \bfs_{(H_s)_{\tau^-(s)}} \le 0$ for all $s$, and for all gradient trajectory $\tau^+$ and $\tau^-$ of $\eta^+$ and $\eta^-$, respectively. 
The push-forward defined by
\begin{align}
&CF^*(P',H',\eta') \to CF^{*+\dim(B)-\dim(B')}(P,H,\eta) \nonumber \\
&\vec{x}' \mapsto \sum_{\vec{x}, |\vec{x}|=|\vec{x}'|+\dim(B)-\dim(B')}  \sum_{(u^-,u^+) \in \cM_{pf}(\vec{x},\vec{x}',(H_s)_{s \in \mathbb{R}_{\le 0}}, (H'_s)_{s \in \mathbb{R}_{\ge 0}})} s(u^-,u^+)q^{E(u^-,u^+)} \vec{x} \label{eq:pushforward}
\end{align}
is a chain map of degree $\dim(B)-\dim(B')$, where $E(u^-,u^+)$ is defined by \eqref{eq:Eplusminus} and $s(u^-,u^+)$ is the sign.
\end{lem}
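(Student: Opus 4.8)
The plan is to mirror the proof of the pull-back map (Lemma \ref{l:pull-back}), with the two cylindrical ends of the matched configurations interchanged, so I will only indicate the changes. First I would recast an element $(\tau^-,\tau^+,u^-,u^+)$ of $\cM_{pf}$ in the ``folded'' form of \cite{WWQuilt}: the matched pair of gradient half-trajectories becomes a single negative gradient half-trajectory $s \mapsto (\tau^-(-s),\tau^+(s))$ on $[0,\infty)$ for the function $(b,b') \mapsto \eta'(b') - \eta(b)$ on $B \times B'$, starting on the graph $\{(h(b'),b') : b' \in B'\}$, and $(u^-,u^+)$ becomes a single Floer solution on $[0,\infty) \times S^1$ lifting it, with fibrewise Lagrangian boundary condition $\graph(\tilde h) \subset P \times P'$ along $\{0\} \times S^1$ and asymptotic to $(x,x')$ as $s \to +\infty$. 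With this reformulation the asserted analogues of Lemmas \ref{l:regularpb} and \ref{l:Gromovpb} are in force: for generic $(J_s)_{s \le 0}, (J'_s)_{s \ge 0}$ the moduli are cut out transversally, because the matching condition $\tilde h^* H_s = \tilde h^* H_0 = H'_0 = H'_s$ near $s=0$ makes the Hamiltonian perturbation vanish along $\graph(\tilde h)$ and the problem becomes a standard Floer problem with Lagrangian boundary \cite[(8.6)]{SeidelBook}; and the monotonicity hypotheses $\partial_s \bfs_{(H_s)_{\tau^-(s)}} \le 0$, $\partial_s \bfs_{(H'_s)_{\tau^+(s)}} \le 0$ give, via the maximum principle \cite[Theorem C.11]{Ritter} applied to the elliptically-bootstrapped concatenation $u^- \# u^+$, the $C^0$ estimate needed for Gromov compactness of the $\vdim \le 1$ components, with sphere and disk bubbling excluded by dimension and by exactness of $\graph(\tilde h)$ as in Proposition \ref{p:moduliD}. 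Consequently the signed count in \eqref{eq:pushforward} is finite and the push-forward map is well-defined; its degree is read off from the identity $\vdim \cM_{pf}(\vec{x};\vec{x}',\ldots) = |\vec{x}| - |\vec{x}'| - \dim B + \dim B'$ established above, which forces $|\vec{x}| = |\vec{x}'| + \dim B - \dim B'$ for nontrivial contributions.

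Second, to prove the chain-map identity I would analyze the Gromov boundary of the one-dimensional moduli spaces $\ol{\cM_{pf}}(\vec{x};\vec{x}',\ldots)$, namely those with $|\vec{x}| = |\vec{x}'| + \dim B - \dim B' + 1$. Since the conformal structure on $[0,\infty) \times S^1$ is fixed and there is no width parameter, the only codimension-one degenerations are Floer breakings at the puncture $s \to +\infty$ together with breakings of the underlying matched gradient half-trajectory; there is no seam degeneration at $s=0$ (the perturbation vanishes there) and no bubbling (excluded as in Proposition \ref{p:moduliD} and by exactness). Unfolding, these strata fall into two families: those in which the $P$-side breaks --- a rigid Floer trajectory of $H$ splitting off the $x$-end of $u^-$, or a rigid $\eta$-gradient trajectory splitting off the $c$-end of $\tau^-$ --- which, by the standard gluing argument, are in bijection with the terms of $\partial_{CF}^P \circ (\text{push-forward})$; and those in which the $P'$-side breaks, likewise in bijection with the terms of $(\text{push-forward}) \circ \partial_{CF}^{P'}$. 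Equating the signed count of all boundary points to zero then gives $\partial_{CF}^P \circ (\text{push-forward}) = \pm(\text{push-forward}) \circ \partial_{CF}^{P'}$, which is the assertion.

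The main obstacle I anticipate is the orientation bookkeeping rather than any new analysis: one must fix coherent orientations on the spaces of matched gradient half-trajectories (a fibre product against the graph of $h$) and on the Floer half-cylinder moduli with boundary on $\graph(\tilde h)$, and then check that the two identifications of boundary strata above are orientation-compatible up to a single global sign. I would carry this out using the conventions of \cite[Sections 3 and 6]{Hutchings} for the Floer factors and the quilt/folding orientation conventions of \cite{WWQuilt}, observing that the only structural difference from the pull-back case of Lemma \ref{l:pull-back} is the extra term $-\dim B + \dim B'$ in the virtual dimension formula, which produces the claimed degree shift $\dim B - \dim B'$ of the map; the analytic input is identical.
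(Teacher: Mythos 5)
Your proposal is correct and follows the same route the paper has in mind: the paper leaves Lemma~\ref{l:push-forward} without an explicit proof, stating it as the immediate analogue of Lemma~\ref{l:pull-back} once the analogues of Lemmas~\ref{l:regularpb} and~\ref{l:Gromovpb} and the virtual dimension count are in place, and you reproduce precisely that folding, maximum-principle, and broken-boundary argument with the $s$-directions reversed. The only minor point worth flagging is that the paper's intermediate dimension computation for the gradient part contains a typo (it should read $|c|-|c'|+\dim(B')-\dim(B)$, and the Floer factor is $|x|-|x'|$), but the final formula $\vdim=|\vec{x}|-|\vec{x}'|-\dim(B)+\dim(B')$, which is what you invoke, is correct.
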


\begin{lem}\label{l:inde_pf}
Suppose that $\tilde{h}^*H \ge_{P'} H'$.
Then there exist $(H_s)_{s \in \mathbb{R}_{\le 0}}$ and $ (H'_s)_{s \in \mathbb{R}_{\ge 0}}$ as above such that 
the assumption of Lemma \ref{l:push-forward} is satisfied (and hence the push-forward map \eqref{eq:pushforward} is well-defined).
Moreover, the induced map on cohomology
is independent of the choice of $(H_s)_{s \in \mathbb{R}_{\le 0}}$, $ (H'_s)_{s \in \mathbb{R}_{\ge 0}}$,  $(J_s)_{s \in \mathbb{R}_{\le 0}}$ and $(J'_s)_{s \in \mathbb{R}_{\ge 0}}$.
\end{lem}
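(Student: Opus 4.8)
The plan is to mirror the proof of Lemma~\ref{l:inde_pb} (hence of Lemma~\ref{l:inde_kappa}), with the sign of the $s$-coordinate reversed exactly as in the passage from Section~\ref{ss:pb} to Section~\ref{ss:pf}. To build compatible interpolating families, I would first use $\tilde{h}^*H\ge_{P'}H'$ to fix $a\notin\Spec(\partial\bar M,\alpha)$ with $\min\bfs_{\tilde{h}^*H}\ge a\ge\max\bfs_{H'}$; then choose a cylindrical Hamiltonian $H_0$ on $P$ with $\bfs_{H_0}\le\bfs_H$ everywhere, with $\bfs_{H_0}\equiv a$ near $h(B')$, and agreeing with $H$ near $\critp(\eta)$ (possible by Lemma~\ref{l:existCom}), and set $H'_0:=\tilde{h}^*H_0$; finally interpolate $(H_s)_{s\le0}$ from $H$ (at $s\ll0$) to $H_0$ (at $s=0$) and $(H'_s)_{s\ge0}$ from $H'_0$ (at $s=0$) to $H'$ (at $s\gg0$) with $\partial_s\bfs\le0$ fibrewise. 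By construction these families satisfy the near-$s=0$ matching $\tilde{h}^*H_s=\tilde{h}^*H_0=H'_0=H'_s$ and the fibrewise slope monotonicity, which is stronger than the along-trajectory hypothesis of Lemma~\ref{l:push-forward}; choosing matched $(J_s),(J'_s)$ as in Section~\ref{ss:pf}, Lemmas~\ref{l:regularpb} and \ref{l:Gromovpb} then make the push-forward \eqref{eq:pushforward} well defined.

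For the independence assertion, with the base triples held fixed I would observe that the set of admissible quadruples $((H_s)_{s\le0},(H'_s)_{s\ge0},(J_s)_{s\le0},(J'_s)_{s\ge0})$ satisfying the monotonicity conditions of Lemma~\ref{l:push-forward}, the near-$s=0$ matching of Hamiltonians and of almost complex structures, and the prescribed asymptotics, is convex, hence path-connected. A homotopy-of-homotopies (cobordism) argument over the associated parametrized moduli spaces then yields a chain homotopy between any two push-forward chain maps, giving independence on cohomology; compactness of the parametrized moduli is supplied, as before, by the folding description together with the maximum principle \cite[Theorem C.11]{Ritter}. If one also wants to identify push-forwards built at two slope levels $a\le a'$ (both off the spectrum), I would factor each through a continuation map on $P$ and one on $P'$ and invoke Lemma~\ref{l:inde_kappa} together with the level-$a$ case, exactly as in the last step of the proof of Lemma~\ref{l:inde_kappa}; since $\ge_P$ and $\ge_{P'}$ only constrain slopes, this is formally identical to the pull-back case.

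The one delicate point --- shared with every continuation-type statement in this section --- is that each one-parameter family of auxiliary data used in a cobordism argument must be chosen so that the fibrewise slope monotonicity and the near-$s=0$ matching persist all along it, so that \cite[Theorem C.11]{Ritter} delivers a uniform $C^0$ bound and the parametrized moduli remain compact. Because these constraints carve out a convex (hence path-connected) subset of the space of auxiliary data, such families always exist, so I do not expect a genuine obstacle here; the remaining transversality, gluing, orientation and dimension bookkeeping --- in particular the degree shift $\dim B-\dim B'$ in \eqref{eq:pushforward} --- is identical to that in Lemmas~\ref{l:pull-back} and \ref{l:inde_pb} and Corollary~\ref{c:natural}, and I would simply appeal to those.
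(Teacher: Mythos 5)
Your overall route matches the paper's implicit one: the paper itself gives no proof of Lemma~\ref{l:inde_pf}, presenting it (via Lemma~\ref{l:inde_pb}) as the $s$-reversed analogue of Lemma~\ref{l:inde_kappa}, and your blueprint---pass through a constant-slope intermediate on either side of the matching seam, argue independence by convexity plus a parametrized-moduli cobordism, and compose with continuation maps to handle the choice of intermediate slope---is exactly the blueprint of Lemma~\ref{l:inde_kappa}.

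However, one claim in your first paragraph is wrong as stated: \emph{fibrewise} slope monotonicity ($\partial_s\bfs_{(H_s)_b}\le 0$ at a fixed base point $b$) is \emph{not} stronger than the along-trajectory hypothesis $\partial_s\bfs_{(H_s)_{\tau(s)}}\le 0$ of Lemma~\ref{l:push-forward}. By the chain rule, the along-trajectory derivative decomposes into the fibrewise $\partial_s$-derivative \emph{plus} the derivative of $\bfs_{H_{s_0}}$ (at frozen $s_0$) along $\tau$, and the second term is controlled only if each $H_s$ is itself compatible with the base triple (third bullet of Definition~\ref{d:eqgen}). This is automatic when you interpolate by a cutoff-weighted convex combination of two Hamiltonians both compatible with $(\eta,g_\eta,\nabla)$---the mechanism the paper flags right after defining monotone homotopies, and which your construction implicitly uses---but it does not follow from the fibrewise bound alone. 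The distinction is exactly why the paper introduces ``monotone homotopy'' and ``compatible with $(\eta,g_\eta,\nabla)$'' as two separate conditions, so the ``stronger than'' assertion should be replaced by ``together with the compatibility of each $H_s$ with the base triples yields''.

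A smaller point: the explicit $H_0$ you prescribe is over-constrained. Demanding simultaneously $\bfs_{H_0}\le\bfs_H$ everywhere, $\bfs_{H_0}\equiv a$ on a neighbourhood of $h(B')$, and $H_0=H$ near $\critp(\eta)$ can conflict (for instance if a critical point of $\eta$ sits near $h(B')$ with $\bfs_H>a$ there, or if $\bfs_H$ dips below $a$ just off $h(B')$---the hypothesis $\min\bfs_{\tilde h^*H}\ge a$ controls $\bfs_H$ only \emph{on} $h(B')$); and Lemma~\ref{l:existCom} produces constant-slope Hamiltonians, not this bespoke $H_0$. The clean fix, which you already gesture at in your final sentences, is to follow the two-step structure of Lemma~\ref{l:inde_kappa} from the outset: first pass, by fibrewise continuation maps on $P$ and $P'$, to Hamiltonians of \emph{constant} slope on either side of the seam, where the needed $H_0$ and interpolations exist trivially, and only then run the convexity/cobordism argument.
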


\begin{cor}\label{c:natural_pf}
Suppose that $h':B'' \to B'$ is another smooth map between smooth compact manifolds, and $P'' \to B''$ is the pull-back of $P' \to B'$ along $h'$.
Let $\tilde{h}':P'' \to P'$ be the induced map covering $h'$.
Suppose that $(\tilde{h}')^*H' \ge_{P''} H''$ and $ \tilde{h}^*H \ge_{P'} H'$.
Then the composition of the push-forward maps $HF^*(P'',H'',\eta'') \to HF^{*+\dim(B')-\dim(B'')}(P',H', \eta')$ and $HF^*(P',H',\eta') \to HF^{*+\dim(B)-\dim(B')}(P,H,\eta)$ is the push-forward map
$HF^*(P'',H'',\eta'') \to HF^{*+\dim(B)-\dim(B'')}(P,H,\eta)$.
\end{cor}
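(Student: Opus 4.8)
The approach I would take mirrors the proof sketch of Corollary \ref{c:directSystem}---equivalently, the proof of its pull-back analogue Corollary \ref{c:natural} with the $s$-coordinate reversed, an operation that interchanges pull-back with push-forward and reverses every instance of $\ge_P$---so only two points genuinely need checking. First I would verify that the right-hand side of the asserted identity is even defined. By functoriality of pull-back, $P''\to B''$ is the pull-back of $P\to B$ along $h\circ h':B''\to B$ and $\tilde{h}\circ\tilde{h}':P''\to P$ is the induced map; hence, by Lemma \ref{l:inde_pf}, the direct push-forward $HF^*(P'',H'',\eta'')\to HF^*(P,H,\eta)$ is defined as soon as $(\tilde{h}\circ\tilde{h}')^*H\ge_{P''}H''$. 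I would deduce this from the two hypotheses by the slope bookkeeping that makes $\ge_P$ the directed-set relation of Corollary \ref{c:directSystem}: the slope of $\tilde{h}^*H$, of $(\tilde{h}')^*H'$, and of $(\tilde{h}\circ\tilde{h}')^*H$ at a base point equals---via the fibrewise identifications, which preserve Reeb-invariant functions on $\partial\bar{M}$---the slope of $H$, of $H'$, of $H$ at the image point; since $h'(B'')\subseteq B'$ and $h(h'(B''))\subseteq h(B')$, this gives $\min\bfs_{(\tilde{h}\circ\tilde{h}')^*H}\ge\min\bfs_{\tilde{h}^*H}$ and $\max\bfs_{(\tilde{h}')^*H'}\le\max\bfs_{H'}$, and choosing witnesses $a,a'\notin\Spec(\partial\bar{M},\alpha)$ for the two hypotheses yields $\min\bfs_{(\tilde{h}\circ\tilde{h}')^*H}\ge a\ge a'\ge\max\bfs_{H''}$, so $a'$ witnesses $(\tilde{h}\circ\tilde{h}')^*H\ge_{P''}H''$.

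Next I would identify the composition with this direct push-forward by the usual gluing argument. I would fix admissible auxiliary data $((H_s),(J_s),\dots)$ computing the push-forward $HF^*(P'',H'',\eta'')\to HF^*(P',H',\eta')$ and, separately, data computing $HF^*(P',H',\eta')\to HF^*(P,H,\eta)$, then concatenate them with a long neck inserted in $P'$ over $B'$ near each intermediate generator $\vec{x}'=(c',x')$ of $(\eta',H')$. The standard Floer gluing theorem then expresses the composition as the count of rigid solutions for the glued data. Because each homotopy already satisfies the monotonicity $\partial_s\bfs\le 0$ of Lemma \ref{l:push-forward} and the inserted neck has constant slope, so does the concatenation, so the maximum principle \cite[Theorem C.11]{Ritter} keeps all relevant $0$- and $1$-dimensional moduli compact; here the slope inequality of the previous paragraph is exactly what makes the glued data admissible for the direct push-forward, whose single fibrewise Lagrangian boundary $\graph(\tilde{h}\circ\tilde{h}')$ over $h\circ h'$ is recovered once the $P'$-neck---which threads $\graph(\tilde{h}')$ and then $\graph(\tilde{h})$---is collapsed, a strip-shrinking which is elementary here because these correspondences are graphs of maps and $P''$ is the pull-back (cf. \cite{WWQuilt}). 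Since, by Lemma \ref{l:inde_pf}, the direct push-forward is independent of this choice, it equals the composition.

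The hard part will be this last gluing step: I must check that along a one-parameter family of admissible data interpolating between the glued data and a ``standard'' choice for the direct push-forward no energy escapes to infinity along the cylindrical ends---so that compactness, hence invariance, persists throughout---and that the only degeneration of the relevant $1$-dimensional moduli is breaking at the intermediate orbits in $P'$. I expect to handle both exactly as in the proofs of Lemmas \ref{l:inde_kappa}, \ref{l:inde_pb} and \ref{l:inde_pf}, using convexity (weak contractibility) of the space of admissible auxiliary data together with the maximum principle.
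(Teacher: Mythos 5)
Your proof is correct and follows the paper's implicit approach (which, as for the pull-back Corollary \ref{c:natural}, the paper leaves unproved): glue the two push-forwards' auxiliary data and invoke Lemma \ref{l:inde_pf}, exactly as in the proof of Corollary \ref{c:directSystem}. Your slope bookkeeping is right, and you correctly flag the one step that goes beyond the continuation-map case---shrinking the $P'$-neck so that the glued data falls into the single-matching class to which Lemma \ref{l:inde_pf} applies---which is indeed harmless here because $\graph(\tilde{h})$ and $\graph(\tilde{h}')$ are graphs of fibrewise diffeomorphisms and the relevant bundles are pull-backs.
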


The analog of Lemma \ref{l:pullbackprod} is the following.

\begin{lem}\label{l:pfprod}
Denote the pushfoward and pullback by $h_*:SH^*(P') \to SH^{*+\dim(B)-\dim(B')}(P)$ and $h^*:SH^*(P) \to SH^*(P')$, respectively. Then we have $h_*(\vec{x}' h^*(\vec{x}))=h_*(\vec{x}')\vec{x}$ for all $\vec{x}' \in SH^*(P')$ and $\vec{x} \in SH^*(P)$.
\end{lem}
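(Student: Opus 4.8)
The plan is to combine the pair-of-pants-over-a-Morse-tree construction of Section~\ref{s:product_structure} with the folding construction of Sections~\ref{ss:pb}--\ref{ss:pf}, in the same spirit as the proof of Lemma~\ref{l:pullbackprod}. Working at finite slopes (and refining later, via Corollary~\ref{c:directSystem}, to pass to the direct limits), I would build a one-parameter family, indexed by $r \in \R$, of moduli spaces of pairs $(\tau,u)$ in which $\tau$ maps the trivalent tree $T_{2,1}$ into $B \sqcup B'$ and $u$ maps $\Sigma = \mathbb{P}^1 \setminus \{0,1,2\}$ into $P \sqcup P'$, lifting $\tau$. Equip $T_{2,1}$ with a coordinate $s$ which is $-(\text{edge coordinate})$ on the incoming edge carrying the $CF(P')$-input and on the outgoing edge, and $+(\text{edge coordinate})$ on the incoming edge carrying the $CF(P)$-input, so that $\{s=r\}$ is one point on the first incoming edge when $r<0$, the trivalent vertex when $r=0$, and one point on each of the remaining two edges when $r>0$. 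We require $\tau$ to map $\{s<r\}$ into $B'$ and $\{s>r\}$ into $B$, impose the matching condition $h(\tau|_{\{s=r\}}) = \tau|_{\{s=r\}}$ across the crossing circle(s), and ask $u$ to solve the fibrewise Floer equation on each piece with seam conditions along the fibrewise Lagrangian $\graph(\tilde h) \subset P' \times P$, exactly as in the folded descriptions \eqref{eq:Folding}; the incoming punctures are asymptotic to generators $\vec x' \in CF(P')$ and $\vec x \in CF(P)$, and the outgoing puncture to a generator $\vec y \in CF(P)$.

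For $r \ll 0$ the only crossing circle is a single one on the $CF(P')$-input edge, receding to its puncture, so the configuration factors as a push-forward neck applied to $\vec x'$ followed by a pair of pants carried by $P$, and the count equals $h_*(\vec x') \cdot \vec x$ computed in $SH^*(P)$. As $r$ increases this crossing circle slides inward; at $r=0$ it reaches the trivalent vertex, where --- as in Section~\ref{s:product_structure} --- the perturbation datum and almost complex structure must be $G$-invariant so the equation over the vertex is unambiguous; for $r>0$ it has split into two crossing circles, one on the $CF(P)$-input edge and one on the outgoing edge, both receding to their punctures as $r \to +\infty$. In that limit all of the $CF(P')$-input edge, the vertex, and the inner portions of the other two edges lie over $P'$, and the configuration factors as: pull back $\vec x$ into $P'$ along an incoming neck, multiply with $\vec x'$ in $P'$, then push forward the output along an outgoing neck; the count equals $h_*(\vec x'\,h^*(\vec x))$. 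So the two ends of the $r$-line compute the two sides of the claimed identity.

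Transversality for generic auxiliary data is standard: away from the crossing circles it follows from Proposition~\ref{p:moduliD} together with the contact-type argument of Section~\ref{s:functorialbasic}, and along the crossing circles from the folding picture as in Lemma~\ref{l:regularpb}, the Hamiltonian term vanishing along $\graph(\tilde h)$ because of the normalization $\tilde h^*H_s = H'_s$ near each seam. Gromov compactness of the one-dimensional parametrised moduli space then yields a chain homotopy: its boundary consists of the two ends $r \to \pm\infty$, contributing $h_*(\vec x')\,\vec x$ and $h_*(\vec x'\,h^*(\vec x))$, together with the usual codimension-one degenerations --- Floer breaking at the punctures and breaking of the underlying gradient tree --- which assemble into $\partial \circ K + K \circ \partial$ for the homotopy operator $K$. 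Choosing the slopes with the strict inequalities of Section~\ref{s:product_structure} and Lemma~\ref{l:inde_prod}, and refining them compatibly with continuation maps (as in Proposition~\ref{p:prodProperty}), the homotopy descends to $SH^*$, giving $h_*(\vec x'\,h^*(\vec x)) = h_*(\vec x')\,\vec x$.

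The main obstacle, as in Proposition~\ref{p:moduliD} and Lemma~\ref{l:Gromovpb}, is the a priori $C^0$ estimate: the domain-dependent slope function must be arranged so that the maximum principle of \cite[Remark~C.10]{Ritter} over the pair-of-pants region and that of \cite[Theorem~C.11]{Ritter} over the cylindrical seam pieces both apply, uniformly over $r \in [-\infty,\infty]$ --- in particular through the transition at $r=0$, where a single seam splits in two and the relevant solution must be smoothed across the seam by the elliptic bootstrapping used in the proof of Lemma~\ref{l:Gromovpb}. Since each crossing circle always separates a lower-slope region from a higher-slope region, and the outgoing slope dominates the sum of the incoming slopes, such a slope function exists; the remaining compatibility with continuation maps is handled by a further cobordism argument along the lines of Proposition~\ref{p:prodProperty} and Lemma~\ref{l:inde_prod}.
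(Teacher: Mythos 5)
Your proof is correct and follows the same one-parameter cobordism approach the paper uses for Lemma~\ref{l:pullbackprod}, with the seam locus $\{s=r\}$ swept from the $\vec{x}'$-input puncture ($r\to-\infty$) across the vertex to the $\vec{x}$-input and output punctures ($r\to+\infty$). The paper's own proof of Lemma~\ref{l:pfprod} is just the one-line remark that it is obtained from Lemma~\ref{l:pullbackprod} by swapping one input with the output, which is precisely the variant you have spelled out.
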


\begin{proof}
It is the same as Lemma \ref{l:pullbackprod} except that one of the two inputs and the output of the elements in the moduli space are swapped. 
\end{proof}



\section{Equivariant Floer theory}\label{s:equivariant}


Let $K$ be a closed subgroup of a compact connected Lie group $G$.\footnote{We mainly consider $K$ to be a maximal torus $T$ or $K=G$ or $K=N(T)$ the normalizer of a maximal torus in the paper.} 
Let $EK_1 \subset EK_2 \subset \dots $ be a $K$-equivariant smooth finite dimensional approximation of a classifying space $EK$.
In particular, each $EK_n$ is a compact smooth manifold, $\cup_n EK_n=EK$ and the connectivity of $EK_n$ goes to infinity as $n$ goes to infinity.
Let $BK_n:=EK_n/K$.
For a manfiold $X$ with a $K$ action, we denote $(X \times EK_n)/K$ by $X_{borel,n}$ and $(X \times EK)/K$ by $X_{borel}$.

Let $P \to B$ be an admissible bundle as in the previous section (see Definition \ref{d:admissible}).
We say that a $K$-action on $P$ is {\it  compatible with the admissible structure} if it covers a $K$-action on $B$ and the induced map $P_{borel,n} \to B_{borel,n}$ is an admissible bundle (so the structure group lies in $G$) for all $n$.
Notice that a choice of a symplectomorphism between $M$ and a reference fibre of $P \to B$ induces a symplectomorphism between $M$ and a reference fibre of $P_{borel,n} \to B_{borel,n}$ that is well-defined up to an element in $G$.
As a result, it induces a well-defined homotopy class of trivialization of the canonical bundle of the reference fibre so a choice of $\mathbb{Z}$-grading on $CF(P)$ determines a choice of $\mathbb{Z}$-grading on $CF(P_{borel,n})$.
For the rest of the paper, we will only use $K$-manifolds $B$ that admit $K$-orientations\footnote{In other words, $B$ admits a $K$-invariant volume form.}.

\begin{example}\label{e:P=M}
If $P=M$ and $B$ is a point, then a $K$-action on $P$ is compatible with the admissible structure if and only if 
$M_{borel,n} \to BK_n$ is admissible for all $n$.  This will hold if and only if the $K$ action on $P(=M)$ factors through the convex $G$ action on $M$.
\end{example}



\begin{defn}\label{d:equivariantFloer}
Let $a \in \R_{> 0} \setminus \Spec(\partial \bar{M},\alpha)$. Suppose that $P$ has a $K$-action that is compatible with the admissible structure.
Then we define
\begin{align}\label{eq:inverselimiT}
HF^*_{K}(P,a):=\varprojlim_n HF^*(P_{borel,n},a)
\end{align}
where 
the inverse limit is defined with respect to pull-back induced by the inclusion $P_{borel,n} \to P_{borel,n+1}$ for all $n$.
\end{defn}

This definition is well-defined thanks to Corollaries \ref{c:directSystem} and \ref{c:natural}.

\begin{lem}
The collection $\{HF^*_{K}(P,a): a \in \R_{> 0} \setminus \Spec(\partial \bar{M},\alpha)\}$ forms a direct system.
\end{lem}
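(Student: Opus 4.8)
The plan is to build, for each pair $a \le a'$ in $\R_{>0} \setminus \Spec(\partial \bar{M},\alpha)$, a continuation map $HF^*_K(P,a) \to HF^*_K(P,a')$ by assembling the finite-dimensional continuation maps of Corollary \ref{c:directSystem} over the tower $EK_1 \subset EK_2 \subset \cdots$, and then to verify the direct-system axioms by reducing them to the corresponding statements at each finite level. First note that for Hamiltonians of constant slope the relation $\ge_P$ of \eqref{eq:partialorderP} says precisely that $a' \ge a$ (both avoiding the nowhere-dense set $\Spec(\partial \bar{M},\alpha)$), so for every $n$ Corollary \ref{c:directSystem} supplies a canonical continuation map $\kappa^{(n)}_{a a'}\colon HF^*(P_{borel,n},a) \to HF^*(P_{borel,n},a')$, independent of all auxiliary choices; for $a=a'$ we take the identity.

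To pass to the inverse limits defining $HF^*_K(P,a)$ and $HF^*_K(P,a')$ it suffices to check that the maps $\kappa^{(n)}_{a a'}$ commute with the pull-back maps $HF^*(P_{borel,n+1},\cdot) \to HF^*(P_{borel,n},\cdot)$ induced by the inclusions $B_{borel,n} \hookrightarrow B_{borel,n+1}$, i.e.\ that the square
\[\begin{tikzcd}
HF^*(P_{borel,n+1},a) \arrow[r] \arrow[d] & HF^*(P_{borel,n+1},a') \arrow[d] \\
HF^*(P_{borel,n},a) \arrow[r] & HF^*(P_{borel,n},a')
\end{tikzcd}\]
commutes. I would identify both composites with a single map, namely the ``pull-back with increasing slope'' construction of Lemma \ref{l:inde_pb} applied to $h\colon B_{borel,n} \hookrightarrow B_{borel,n+1}$, a Hamiltonian of slope $a$ on $P_{borel,n+1}$ and one of slope $a'$ on $P_{borel,n}$ (the hypothesis $H' \ge_{P'}\tilde h^* H$ holds because $a' \ge a$ and $a'\notin\Spec(\partial\bar{M},\alpha)$). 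By gluing auxiliary data, exactly as in the sketch of Corollary \ref{c:directSystem} and the factorization argument inside the proof of Lemma \ref{l:inde_kappa}, this map equals both ``continuation on $P_{borel,n+1}$ followed by pull-back'' and ``pull-back followed by continuation on $P_{borel,n}$'', the two agreeing by the choice-independence in Lemma \ref{l:inde_pb}. Thus the square commutes, and functoriality of $\varprojlim$ produces $\kappa_{a a'}\colon HF^*_K(P,a) \to HF^*_K(P,a')$.

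It then remains to see that $\kappa_{a a} = \mathrm{id}$ and that $\kappa_{a' a''}\circ \kappa_{a a'} = \kappa_{a a''}$ for $a \le a' \le a''$; both may be checked after composing with the structure projections to each $HF^*(P_{borel,n},\cdot)$, where they reduce to the finite-level identities from Corollary \ref{c:directSystem} (whose composition law is itself proved by gluing the data for the two continuation maps). This exhibits $\{HF^*_K(P,a)\}_{a}$ as a direct system over $\R_{>0}\setminus\Spec(\partial\bar{M},\alpha)$. The one real point is the commutativity of the square above; this is not new analysis but the gluing-plus-choice-independence mechanism already in play for Corollary \ref{c:directSystem} and Lemmas \ref{l:inde_kappa}, \ref{l:inde_pb}, the only subtlety being that one should work throughout with constant slopes lying outside $\Spec(\partial\bar{M},\alpha)$ so that every map in sight is defined and choice-independent.
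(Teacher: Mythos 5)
Your proof is correct and takes essentially the same approach as the paper's, which simply cites Corollary \ref{c:directSystem} and Corollary \ref{c:natural}; your commutative-square argument via gluing and the choice-independence of Lemma \ref{l:inde_pb} is precisely the content of Corollary \ref{c:natural} specialized to the inclusions $B_{borel,n} \hookrightarrow B_{borel,n+1}$ and the identity map with increasing slope.
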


\begin{proof}
It is a consequence of Corollaries \ref{c:directSystem} and \ref{c:natural}.
\end{proof}

\begin{defn} \label{defn:shkm}
The $K$-equivariant symplectic cohomology of $P$ is defined to be
\[
SH^*_{K}(P) :=\varinjlim_{a} HF^*_{K}(P,a) 
\]
\end{defn}

Continuing with Example \ref{e:P=M}, if $P=M$, then $SH^*_{K}(M)$ is the $K$-equivariant symplectic cohomology of $M$.
It is the main object of interest in Section \ref{s:EqSeiMor}.

For our later applications, we mainly consider the case $P=B \times M$ and $K$ is the diagonal action.
However, some results we need use $P$ that are not of product type (e.g. Proposition \ref{p:freeaction} and Lemma \ref{l:indepClassifyingModel}).

\begin{rem} \label{rem:quadratic} An alternative possible definition of equivariant symplectic cohomology of $M$ would be to use families of Hamiltonians of quadratic growth as opposed to the directed system considered in Definition \ref{defn:shkm}.  We caution the reader that this would produce an inequivalent theory from the version considered here --- the difference essentially arising from the algebraic fact that limits and colimits do not commute in general.  This difference is crucial for the set of examples considered in Section \ref{s:Coulomb}.  The ``quadratic version" of symplectic cohomology vanishes identically for these examples,  whereas the definition we employ is interesting enough to allow us to construct Coulomb branches.  Closely related phenomena are considered in \cite{Zhao}.  \end{rem}

\subsection{Useful properties}



\subsubsection{Free actions}

\begin{prop}\label{p:freeaction}
Suppose that $K$ acts on $P$ in a way compatible with the admissible structure such that its action on $B$ is free.
Then $P':=P/K$ is an admissible bundle over $B':=B/K$.
Moreover, for any $a \in \R_{>0} \setminus \Spec(\partial \bar{M},\alpha)$, there is an isomorphism
\begin{align}
HF^*(P', a) \simeq HF^*_{K}(P,a)
\end{align}
\end{prop}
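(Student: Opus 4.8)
The plan is to realise $P':=P/K$ as an honest admissible bundle over $B':=B/K$, to identify each Borel approximation $P_{borel,n}$ with a pull-back of $P'$ along a highly connected fibration, and then to deduce the statement from a mild extension of Lemma \ref{l:rationallyconnectedfibre} together with a passage to the inverse limit. First I would check that $P'\to B'$ makes sense as an admissible bundle. Since $K$ is compact and acts freely on the compact manifold $B$, the quotient $B'$ is a compact smooth manifold and $B\to B'$ is a principal $K$-bundle. Because the $K$-action on $P$ covers the free action on $B$, a fixed point $k\cdot p=p$ over $b\in B$ would force $k\cdot b=b$, hence $k=e$; so $K$ acts freely on $P$ as well, and $P'\to B'$ is admissible (its transition functions, inherited from those of $P\to B$, still take values in $G$, and the fibrewise convex structure descends). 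The chosen identification of $M$ with a reference fibre of $P$ induces one for $P'$, so $CF(P',\cdot)$ inherits a $\mathbb{Z}$-grading.

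Next I would identify the Borel approximations as pull-backs. The map $h_n\colon B_{borel,n}=(B\times EK_n)/K \to B'$, $[(b,e)]\mapsto[b]$, is a fibre bundle with fibre $EK_n$ (over a trivialising open set $U\subset B'$ for $B\to B'$ one has $B_{borel,n}|_U\cong U\times EK_n$), and freeness of the $K$-action identifies the fibre of $P_{borel,n}$ over $[(b,e)]$ with $P|_b=P'|_{[b]}$, so $P_{borel,n}\cong h_n^*P'$ as admissible bundles. Under these identifications the inclusions $P_{borel,n}\hookrightarrow P_{borel,n+1}$ cover maps $B_{borel,n}\to B_{borel,n+1}$ over $B'$, so by Corollary \ref{c:natural} the pull-back maps $HF^*(P',a)\to HF^*(P_{borel,n},a)$ are compatible with the structure maps of the inverse system defining $HF^*_K(P,a)$.

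The core of the argument is a spectral sequence comparison, obtained by running the proof of Lemma \ref{l:rationallyconnectedfibre} with $B'$ in the role of its base and $B_{borel,n}$ in the role of its total space. The fibre $F=EK_n$ is not cohomologically a point, but it is $c_n$-connected with $c_n\to\infty$, so $H^0(EK_n;\mathbb{C})=\mathbb{C}$ and $H^i(EK_n;\mathbb{C})=0$ for $0<i\le c_n$. Filtering $CF^*(P',a)$ and $CF^*(P_{borel,n},a)$ by Morse index, the induced map on $E_1$-pages is, in each cohomological degree, the inclusion $\bigoplus_{c\in\critp(\eta)}HF^{*-|c|}(M,a)\hookrightarrow\bigoplus_{c\in\critp(\eta)}H^*(EK_n)\otimes HF^{*-|c|}(M,a)$ induced by $H^*(\mathrm{pt})\to H^*(EK_n)$ (using Lemma \ref{l:trivialFib} over the flat neighbourhoods $N_c$). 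Since $HF^*(M,a)$ is bounded --- a fixed-slope non-degenerate cylindrical Hamiltonian has only finitely many $1$-periodic orbits --- and $\dim B'$ is finite, this $E_1$-level map is an isomorphism in all degrees below a bound growing linearly with $c_n$, so the pull-back $HF^*(P',a)\to HF^*(P_{borel,n},a)$ is an isomorphism in a range of degrees tending to $\infty$ with $n$. Passing to the limit, in each fixed degree the inverse system $\{HF^*(P_{borel,n},a)\}_n$ is then eventually constant with all transition maps isomorphisms (by Corollary \ref{c:natural}), whence $HF^*_K(P,a)=\varprojlim_n HF^*(P_{borel,n},a)\cong HF^*(P',a)$.

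I expect the main obstacle to be the degree bookkeeping in the spectral sequence comparison: one must verify that an isomorphism of $E_1$-pages in a range of total degrees yields an isomorphism of abutments in a (slightly smaller) range, and that this range is unbounded as $n\to\infty$. This is precisely where the finiteness of the orbit set of a fixed-slope Hamiltonian, hence the boundedness of $HF^*(M,a)$, and the compactness of $B$ (bounding the Morse indices on $B'$) enter.
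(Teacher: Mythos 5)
Your proof is correct and takes essentially the same route as the paper. The only organizational difference is that the paper extracts the spectral-sequence comparison as a standalone statement (Proposition \ref{p:contractiblefibre}, which is reused in Lemma \ref{l:indepClassifyingModel}) and then deduces Proposition \ref{p:freeaction} from it in one line, whereas you run that comparison inline; the degree bookkeeping you flag at the end is resolved in the paper by observing that once $H^k(EK_n)=0$ for $1\le k\le 2N+\dim B'$, the $H^0(EK_n)$-summand of the $E_1$-page splits off as a subcomplex (the differentials preserve total degree, and the complementary summand lives in total degrees bounded below by a quantity tending to $\infty$), so the pull-back is an isomorphism through a range of degrees that grows without bound.
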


Proposition \ref{p:freeaction} is a consequence of the following:

\begin{prop}\label{p:contractiblefibre}
Let $P' \to B'$ be an admissible bundle. Let $B_1 \subset B_2 \subset \dots$ be a sequence of closed smooth manifolds such that there is a sequence $h_n:B_n \to B'$ making $B_n$ a $EK_n$-bundle over $B'$ and $h_{n+1}|_{B_n}=h_n$.
Let $P_n \to B_n$ be the admissible bundle obtained by pulling back along $h_n$.
Then for any $a \in \mathbb{R}_{>0} \setminus \Spec(\partial \bar{M},\alpha)$,  the pull-back map $HF^*(P',a) \to HF^*(P_n,a)$ induces an isomorphism
\begin{align}\label{eq:EGntrivial}
HF^*(P',a) \simeq  \varprojlim_n HF^*(P_n,a).
\end{align} 
\end{prop}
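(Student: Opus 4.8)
The plan is to run the spectral-sequence comparison from the proof of Lemma~\ref{l:rationallyconnectedfibre}, but with the fibre $EK_n$ now only highly connected rather than $\mathbb{C}$-acyclic, and then to exploit the inverse limit over $n$.

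First I would fix $a$ and an admissible base triple $(\eta,g_\eta,\nabla)$ on $B'$ with $\nabla$ flat near $\critp(\eta)$, together with a neighbourhood $N$ of $\critp(\eta)$ over which $P'$ trivializes as $M\times N$. For each $n$, exactly as in Lemma~\ref{l:rationallyconnectedfibre}, I would equip $B_n$ with the connection $h_n^*\nabla$, a metric for which $h_n$ is a Riemannian submersion, and a Morse function $\eta_n$ obtained by perturbing $h_n^*\eta$ inside $h_n^{-1}(N)\cong EK_n\times N$; over $h_n^{-1}(N)$ the bundle $P_n$ reads $M\times EK_n\times N$. This produces descending filtrations by the base Morse index $|h_n(c')|$ on $CF^*(P_n,a)$ and on $CF^*(P',a)$, with respect to which the pull-back $CF^*(P',a)\to CF^*(P_n,a)$ is filtered, and by Lemma~\ref{l:trivialFib} applied over $N$ the associated spectral sequences satisfy
\[
E_1(P',a)=\bigoplus_{c\in\critp(\eta)}HF^{*-|c|}(M,a),\qquad
E_1(P_n,a)=\bigoplus_{c\in\critp(\eta)}H^*(EK_n;\mathbb{C})\otimes HF^{*-|c|}(M,a),
\]
the pull-back inducing on $E_1$ the inclusion $\mathbb{C}=H^0(\mathrm{pt})\hookrightarrow H^*(EK_n;\mathbb{C})$ in each summand. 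Note that both filtrations take values in the fixed finite set $\{\,|c| : c\in\critp(\eta)\,\}$, hence are bounded uniformly in $n$.

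The key point is a degreewise stabilization: since $M$ carries only finitely many $1$-periodic orbits of slope $a$, the groups $HF^j(M,a)$ vanish outside a fixed finite range of $j$, so in any fixed total degree $d$ (and in $d\pm1$) only cohomological degrees $q\le C_d$ of $EK_n$ contribute to $E_1(P_n,a)$, with $C_d$ independent of $n$. As the connectivity of $EK_n$ tends to infinity, for $n\ge N_d$ we have $H^q(EK_n;\mathbb{C})=0$ for $1\le q\le C_d$, so the pull-back is an isomorphism on $E_1$-pages in degrees $d-1,d,d+1$; the standard convergence/comparison theorem for bounded filtered complexes then gives that the pull-back $HF^d(P',a)\to HF^d(P_n,a)$ is an isomorphism for every $n\ge N_d$. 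Finally, Corollary~\ref{c:natural} identifies $HF^d(P',a)\to HF^d(P_n,a)$ with the composite $HF^d(P',a)\to HF^d(P_{n+1},a)\to HF^d(P_n,a)$; for $n\ge N_d$ the first map and the composite are isomorphisms, forcing the transition map $HF^d(P_{n+1},a)\to HF^d(P_n,a)$ to be an isomorphism as well. Hence $\{HF^d(P_n,a)\}_n$ is eventually constant with isomorphism transition maps, so $\varprojlim^1=0$ and $\varprojlim_n HF^d(P_n,a)\cong HF^d(P',a)$ via the pull-back maps; assembling over $d$ yields \eqref{eq:EGntrivial}.

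The hard part will be the bookkeeping that makes the comparison of Lemma~\ref{l:rationallyconnectedfibre} uniform in $n$: one must check that the filtration stays bounded independently of $n$ and that, in each fixed range of degrees, the fibre $EK_n$ contributes only its $H^0$ once $n$ is large, and then that the resulting isomorphisms are compatible with the inverse system --- which is exactly what Corollary~\ref{c:natural} supplies. All of the analysis (transversality, Gromov compactness, and the maximum principle furnishing $C^0$ bounds) is inherited verbatim from Section~\ref{s:functorialbasic}, so no new Floer-theoretic input is needed.
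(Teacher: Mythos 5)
Your proposal is correct and follows essentially the same route as the paper: both reduce to the spectral-sequence comparison of Lemma~\ref{l:rationallyconnectedfibre}, use the boundedness of $HF^*(M,a)$ to cap the relevant cohomological degrees of $EK_n$, and then invoke the growing connectivity of $EK_n$ so that only the $H^0(EK_n)$-summand contributes to the inverse limit. Your extra paragraph spelling out the degreewise stabilization and the eventual-isomorphism of the transition maps (hence vanishing of $\varprojlim^1$) is a useful elaboration of what the paper leaves implicit in its final sentence.
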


\begin{proof}[Proof of Proposition \ref{p:freeaction} assuming Proposition \ref{p:contractiblefibre}]
For each $n$, $P_{borel,n}$ is the pull-back of $P' \to B'$ along the $EK_n$-bundle map $h_{n}:B_{borel,n} \to B'$.
The result directly follows from Proposition \ref{p:contractiblefibre}.
\end{proof}

\begin{proof}[Proof of Proposition \ref{p:contractiblefibre}]
The strategy of proof is similar to Lemma \ref{l:rationallyconnectedfibre}.
We can set up the admissible base triples $(\eta_n,g_n,\nabla_n)$ and $(\eta',g',\nabla')$ for $P_n \to B_n$  and $P' \to B'$ as in the proof of Lemma \ref{l:rationallyconnectedfibre} so we get a homomorphism of the respective $E_1$-page
\begin{align}\label{eq:e1p}
\oplus_{c' \in \critp(\eta')} HF^{*-|c'|}(M,a) \to \oplus_{c' \in \critp(\eta')} H^*(EK_n) \otimes HF^{*-|c'|}(M,a)
\end{align}
Recall from the paragraph after Defintion \ref{d:FloerComplex} that we have a $\mathbb{Z}$-grading on the Floer complexes.
With $a$ being fixed, there is $N>0$ such that $HF^{k}(M,a)=0$ for all $|k|>N$ and all $c' \in \critp(\eta')$.
We can take $n$ large enough such that $H^k(EK_n)=0$ for $1 \le k \le 2N+\dim(B')$. 
It ensures that as a spectral sequence, the RHS of \eqref{eq:e1p} has a direct summand given by $H^0(EK_n) \otimes HF^{*-|c'|}(M,a)$.
The map \eqref{eq:e1p} gives an isomorphism between the LHS and this direct summand.
When we take inverse limit over $n$ (cf. Lemma \ref{l:trivialFib}), only this summand survives so it gives the desired isomorphism \eqref{eq:EGntrivial}.
\end{proof}

\subsubsection{Classifying models}

\begin{lem}\label{l:indepClassifyingModel}
The definition $HF^*_{K}(P,a)$ is independent of the choice of the classifying space $EK$ and the finite smooth approximation $\{EK_n\}_n$.
\end{lem}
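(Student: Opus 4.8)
The plan is to reduce the statement to a comparison between any two finite-dimensional approximation sequences, and then to the standard argument that any two $K$-equivariant approximations of $EK$ are cofinal in each other. Let $\{EK_n\}_n$ and $\{EK_n'\}_n$ be two $K$-equivariant smooth finite-dimensional approximations, with associated Borel bundles $P_{borel,n} \to B_{borel,n}$ and $P_{borel,n}' \to B_{borel,n}'$. First I would form the product approximation $EK_n'' := EK_n \times EK_n'$, which is again a $K$-equivariant smooth finite-dimensional approximation (the diagonal action is free, the connectivity of $EK_n''$ is at least the minimum of the two connectivities, and $\cup_n EK_n'' = EK$). The two projections $EK_n'' \to EK_n$ and $EK_n'' \to EK_n'$ are $K$-equivariant fibre bundles whose fibres are $EK_n'$ and $EK_n$ respectively; these are compact smooth manifolds whose connectivity tends to infinity with $n$. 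Passing to the Borel quotients, we obtain fibre bundle maps $B_{borel,n}'' \to B_{borel,n}$ with fibre $EK_n'$ (and likewise for the other projection), covered by the corresponding admissible bundle maps $P_{borel,n}'' \to P_{borel,n}$.

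Next I would fix a slope $a \in \R_{>0}\setminus \Spec(\partial \bar M,\alpha)$ and invoke the $\mathbb{Z}$-grading on the Floer complexes: for fixed $a$ there is $N>0$ with $HF^k(M,a)=0$ for $|k|>N$. This is precisely the finiteness that was used in the proof of Proposition \ref{p:contractiblefibre}. Since $EK_n'$ is highly connected, for each fixed $n$ we may choose $m=m(n)$ large enough that $H^k(EK_m')=0$ in the range $1\le k \le 2N + \dim(B_{borel,n})$, so that the hypothesis of Proposition \ref{p:contractiblefibre} is met for the bundle $B_{borel,m}'' \to B_{borel,n}$ with highly-acyclic-in-the-relevant-range fibre $EK_m'$. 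Strictly, Proposition \ref{p:contractiblefibre} is phrased for a fibre with $H^0=\mathbb C$ and $H^{>0}=0$; here I would use the spectral-sequence argument from its proof directly (the relevant cohomology of the fibre vanishes in the range $1\le k\le 2N+\dim B_{borel,n}$, which is all that the argument of Proposition \ref{p:contractiblefibre} actually consumes). This yields, after passing to the inverse limit over $n$, a pull-back isomorphism
\begin{align}
\varprojlim_n HF^*(P_{borel,n},a) \;\xrightarrow{\ \sim\ }\; \varprojlim_n HF^*(P_{borel,n}'',a),
\end{align}
and symmetrically $\varprojlim_n HF^*(P_{borel,n}',a) \xrightarrow{\sim} \varprojlim_n HF^*(P_{borel,n}'',a)$. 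Since the naturality of pull-back (Corollary \ref{c:natural}) guarantees that these maps are compatible with the transition maps in $n$ and with the two projections, composing one isomorphism with the inverse of the other gives a canonical isomorphism $HF^*_K(P,a)_{\{EK_n\}} \cong HF^*_K(P,a)_{\{EK_n'\}}$. Finally, taking the direct limit over $a$ and using compatibility of all these maps with continuation (Corollary \ref{c:directSystem}) upgrades this to an isomorphism of the full $SH^*_K$, and the same reasoning applied to a third common refinement shows the isomorphisms are independent of the auxiliary choice of product model, hence canonical.

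The main obstacle I expect is bookkeeping rather than conceptual: one must be careful that the integer $m(n)$ can be chosen monotone in $n$ so that the ``mixed'' sequence $\{B_{borel,m(n)}''\}_n$ is itself a legitimate exhausting sequence of closed manifolds mapping compatibly to both $\{B_{borel,n}\}$ and a subsequence of $\{B_{borel,n}'\}$, and that passing to a cofinal subsequence does not change the inverse limit (which holds, since the transition maps form a directed system by Corollary \ref{c:natural}). A secondary point requiring care is that Proposition \ref{p:contractiblefibre} as stated assumes the fibre is (rationally) contractible, whereas here the fibre is only highly connected; I would either re-state and prove a ``range-restricted'' version of Proposition \ref{p:contractiblefibre} or, more economically, observe that its proof only uses vanishing of $H^k$ of the fibre in a bounded range determined by $N$ and $\dim(B')$, exactly as in the argument already given there.
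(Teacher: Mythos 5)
Your basic strategy is the same as the paper's: pass through the product model $EK \times E'K$ and compare via Proposition~\ref{p:contractiblefibre}. However, you have misread what Proposition~\ref{p:contractiblefibre} actually says, and this causes you to re-derive machinery that is already in hand. The hypothesis ``the fibre has $H^0 = \mathbb{C}$ and $H^{>0}=0$'' belongs to Lemma~\ref{l:rationallyconnectedfibre}, not to Proposition~\ref{p:contractiblefibre}. The proposition is stated precisely for a \emph{sequence} of $EK_n$-bundles $B_n \to B'$ with compatible inclusions, and concludes $HF^*(P',a)\simeq \varprojlim_n HF^*(P_n,a)$ with no assumption that any single fibre is rationally contractible; its proof already carries out the ``range-restricted'' spectral-sequence argument you propose to supply. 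Consequently, the diagonal reindexing $m = m(n)$ you introduce to force high connectivity of the fibre relative to $\dim B_{borel,n}$ is unnecessary, and in fact adds genuine bookkeeping that your sketch does not fully resolve (you would need to verify that the sequence $\{EK_n \times EK_{m(n)}'\}_n$ together with the implicit transition maps is a legitimate smooth approximation of $EK$ and that the resulting inverse system is cofinal in the standard one).

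The paper's route avoids all of this by working with the full doubly-indexed system $P_{borel,n,n'} := (P\times EK_n \times E'K_{n'})/K$: for each fixed $n'$, the sequence $B_{borel,n,n'} \to B_{borel,0,n'}$ (indexed by $n$) is exactly a compatible sequence of $EK_n$-bundles, so Proposition~\ref{p:contractiblefibre} applies verbatim to give $HF^*(P_{borel,0,n'},a)\simeq \varprojlim_n HF^*(P_{borel,n,n'},a)$; one then takes $\varprojlim_{n'}$ on both sides and uses symmetry in $n\leftrightarrow n'$ to conclude. This buys a cleaner argument with no diagonal, no cofinality check, and no appeal to the internals of Proposition~\ref{p:contractiblefibre}'s proof. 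Your version would reach the same conclusion once the bookkeeping you flag is carried out, but I would encourage you to reread Proposition~\ref{p:contractiblefibre} and adopt the double-index formulation instead.
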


\begin{proof}
Let $E'K$ be another classifying space with finite smooth approximation $\{E'K_{n'}\}_{n'}$.
For a $K$-manifold $M$, we denote $(M \times EK_n \times E'K_{n'})/K$ by $M_{borel,n,n'}$.
We use the convention that $M_{borel,0,n'}=(M \times E'K_{n'})/K$ and $M_{borel,n,0}=(M \times EK_n)/K$.

In particular, we have the projection map $h_n:B_{borel,n,n'} \to B_{borel,0,n'}$.
The admissible bundle $P_{borel,n,n'} \to B_{borel,n,n'}$ is precisely the pull-back of the admissible bundle $P_{borel,0,n'} \to B_{borel,0,n'}$ along $h_n$.
Therefore, by Proposition \ref{p:contractiblefibre}, we have
\[
HF^*(P_{borel,0,n'},a) \simeq  \varprojlim_n HF^*(P_{borel,n,n'},a)
\]
for any $n'$.
As a result, we have
\[
\varprojlim_{n'} HF^*(P_{borel,0,n'},a) =\varprojlim_{n'} \varprojlim_n HF^*(P_{borel,n,n'},a)=\varprojlim_{n} HF^*(P_{borel,n,0},a)
\]
showing the independence of the choice of the classifying space and finite smooth approximation.
\end{proof}

\subsubsection{Product structure}

Another important property of $SH^*_K(P)$ is that it admits a product structure.
To see this, note that we have the following commutative diagram
\begin{equation}\label{eq:commuProd}
\begin{tikzcd}
HF(P_{n+1}, a_1) \times HF(P_{n+1}, a_2) \arrow{r} \arrow{d}
& HF(P_{n+1}; a_0) \arrow{d}
\\
HF(P_{n}, a_1) \times HF(P_{n}, a_2) \arrow{r} 
& HF(P_{n}; a_0)
\end{tikzcd}
\end{equation}
when $a_0 >a_1+a_2$, 
where the rows are the product operations from Proposition \ref{p:prodProperty}. 
The commutativity of \eqref{eq:commuProd} comes from the compatibility between pull-back maps and the product structure, which can be proved in the same way as Lemma \ref{l:pullbackprod} and we leave the details to readers. 
Therefore, by taking the inverse limit in $n$, we get a product operation
\[
HF_K(P, a_1) \times HF_K(P, a_2) \to HF_K(P; a_0)
\]
The compatibility with continuation maps (see Proposition \ref{p:prodProperty}) imply that we can take direct limit and get a product structure on $SH^*_K(P)$.
The product structure on  $SH^*_K(P)$ is graded-commutative and associative.


\subsubsection{Weyl group action}


Recall that $G$ is a connected compact Lie group which acts on $M$ (see  Section \ref{s:functorialbasic}).
Let $T \subset G$ be a maximal torus and $N(T)$ be the normalizer of $T$.
Let $W=N(T)/T$ be the Weyl group.
Let $P \to B$ be an admissible bundle with a compatible $G$ action.

\begin{lem}\label{l:normal}
For any constant $a \in \R_{>0} \setminus \Spec(\partial \bar{M},\alpha)$, we have
\begin{align}
HF^*_{G}(P,a) \simeq HF^*_{N(T)}(P,a)
\end{align}
\end{lem}

\begin{proof}
For each $n$, the map $(P \times EG_n)/N(T) \to (P \times EG_n)/G$ is a fibre bundle with fibre $G/N(T)$.
Since $H^*(G/N(T);\mathbb{C})=H^*(\text{point};\mathbb{C})$, we can apply Lemma \ref{l:rationallyconnectedfibre} to get the isomorphism $HF^*((P \times EG_n)/G,a) \simeq HF^*((P \times EG_n)/N(T),a)$. The result follows from passing to inverse limit.
\end{proof}

\begin{lem}\label{l:W-inv}
For any constant $a \in  \R_{>0} \setminus \Spec(\partial \bar{M},\alpha)$, we have
\begin{align}
HF^*_{T}(P,a)^{W} \simeq HF^*_{N(T)}(P,a)
\end{align}
\end{lem}

\begin{proof}
The map
\[
(P\times EG_n)/T \to (P \times EG_n)/N(T)
\]
is a covering map which covers the corresponding covering map of their bases $(B \times EG_n)/T \to (B \times EG_n)/N(T)$.
We can use an admissible base triple and Floer data on $(P \times EG_n)/T \to (B \times EG_n)/T$ that come from pulling back an admissible base triple and a compatible Floer data on $(P \times EG_n)/N(T) \to (B \times EG_n)/N(T)$.
The compatibility of Floer data on $(P \times EG_n)/N(T) \to (B \times EG_n)/N(T)$ implies that the pull-back data on  $(P \times EG_n)/T \to (B \times EG_n)/T$  is also compatible.
Moreover, there is a isomorphism of
\begin{align}
CF^*((P \times EG_n)/T,a)^{W} \simeq CF^*((P \times EG_n)/N(T),a)
\end{align}
coming from sending $(c,x) \in CF^*((P \times EG_n)/N(T),a)$ to the sum over its lifts devided by $|W|$.
It induces an isomorphism on the cohomology $H(CF^*((P \times EG_n)/T,a)^{W}) \simeq HF^*((P\times EG_n)/N(T),a)$.
Moreover, since $\Lambda$ has characteristic $0$, we have 
\[
H(CF^*((P \times EG_n)/T,a)^{W})=HF^*((P \times EG_n)/T,a)^{W}
\]
By passing to the inverse limit, we get the result.

\end{proof}



\section{Equivariant Seidel morphism}\label{s:EqSeiMor}

Let $T \subset G$ be a maximal torus.  In this section,  we are going to construct an equivariant Seidel map: \begin{align}
\cS: \hat{H}_*^{T}(\Omega_{poly} G) \otimes  SH^*_{T}(M) \to SH^*_{T}(M).  \label{eq:eqSeidelMap}
\end{align}
  The papers \cite{Sav08}, \cite{ChiHong3},  \cite{GMP22} consider similar constructions in the context of quantum cohomology for closed monotone,  symplectic manifolds, and \cite{CL22} uses an algebraic approach to give a similar construction for quantum $K$-theory of $G/P$.  As explained in the introduction,  a crucial technical difference for $SH^*_{T}(M)$ is to achieve the maximum principle for the cylindrical Hamiltonians obtained by pulling back along a family of loops (cf. the discussion after \eqref{eq:correlation} below).

\subsection{Overview}\label{ss:overview}

As the construction of \eqref{eq:eqSeidelMap} involves a number of steps,  we devote this subsection to providing an overview of our work.  The actual construction is obtained by a smooth finite dimensional approximation of the overview here.


We work with an alternative model for $\hat{H}_*^{T}(\Omega_{poly} G)$,  namely $H_*^{geo,T}(\Omega G)$,  which represents cycles as tuples $(B,\alpha,f)$ such that $B$ is a smooth oriented,  closed manifold with $T$-action,  $\alpha \in H_T^*(B,\mathbb{Z}),$ and $f:B \to \Omega G$ is a smooth,  $T$-equivariant map (see \S \ref{subsection:geomhom}).  These cycles are considered up to an appropriate notion of equivalence.  To construct \eqref{eq:eqSeidelMap},  it suffices to construct,  for any $T$-equivariant smooth map $f:B \to \Omega G$,  a map:  \begin{align} 
\mathcal{S}_{f}:H_T^*(B) \times SH_{T}^*(M) \to SH_{T}^*(M) \label{eq:interSeidel2}
\end{align}
 which respects the various equivalence relations imposed in geometric homology.  Let $$P_B:=(B \times M)_{borel}=(B \times ET \times M)/T.$$
For a cylindrical Hamiltonian $H \in C^{\infty}( S^1 \times P_B) $, let $\tilde{H}: S^1 \times B \times ET \times M \to \mathbb{R}$ be its lift.
We define
\begin{align}\label{eq:Fstar}
\begin{split}
&f^*\tilde{H}: S^1 \times B \times ET \times M \to \mathbb{R}\\
&f^*\tilde{H}_t(b,y,m)=\tilde{H}_t(b,y,(f(b)(t)) \cdot m)-K_{f(b),t}((f(b)(t)) \cdot m) 
\end{split}
\end{align}
where $K_{f(b)}:S^1 \times M \to \mathbb{R}$ is the mean-normalized Hamiltonian (see the definition after \eqref{eq:partialorder}) generating the based loop $f(b) \in \Omega G \subset \Omega \Ham(M)$.
The Hamiltonian $f^*\tilde{H}$ is $T$-invariant and it descends to a cylindrical Hamiltonian, denoted by $f^*H$, on $P_B$.





The key ingredient (beyond those already introduced) needed for \eqref{eq:interSeidel2} is the tautological isomorphism
\begin{align}
\begin{split}
\mathcal{C}: &HF^*(P_B; f^*H)  \simeq HF^*(P_B; H).  \label{eq:correlation}
\end{split}
\end{align}
A priori, even if $H$ is compatible with $\eta:B_{borel} \to \mathbb{R}$,  this doesn't imply that $f^*H$ is compatible with $\eta$.  This is because even if $H$ has a constant slope $\bfs_{H_b} \equiv a \in \R_{\ge 0} \setminus \Spec(\partial \bar{M}, \alpha)$ for all $b \in B_{borel}$, $f^*H$ does not have to have constant slope due to the second term in \eqref{eq:Fstar}.
However,  we will show in \S \ref{ss:GoodClass} and \S \ref{ss:tauto} that there are sufficiently many $H$ such that both $H$ and $f^*H$ are compatible with some $\eta$, and both $\bfs_H$ and $\bfs_{f^*H}$ are bounded functions even though $P_B$ is infinite dimensional.

Let $H'$ be a cylindrical Hamiltonian on $P_B$ with constant slope $a'$ such that 
\[
H' \le_{P_B} f^*H.
\]
We can define a map 
\begin{align}
H_T^*(B) \times HF_{T}^*(M; a') \to HF_{T}^*(P_B; f^*H) \label{eq:1sthalf}
\end{align}
 by composing the acceleration map from $H'$ to $f^*H$ 
with the Floer theoretic pull-back
\begin{align}
&H_T^*(B) \times  HF_{T}^*(M; a') \to HF^*(P_B  ; H') \label{eq:pull}
\end{align}
More precisely, the map \eqref{eq:pull} is the composition of the Floer theoretic pull-back map induced by 
$(B \times ET \times M \times ET)/T \to B_{borel} \times M_{borel}$ and the isomorphism induced by the pull-back along the map $(B \times ET \times M \times ET)/T \to  P_B$.

On the other hand, let $H''$ be a cylindrical equivariant Hamiltonian on $P_B$ with constant slope $a''$ such that 
\[
H \le_{P_B} H''.
\]
We can define a map
\begin{align}\label{eq:2ndhalf}
& HF^*(P_B; H) \to HF_{T}^*(M; a'')
\end{align}
by composing the acceleration map from $H$ to $H''$ and the Floer theoretic push-forward
\begin{align}
& HF^*(P_B; H'') \to HF_{T}^*(M; a'') \label{eq:push}
\end{align}
The map \eqref{eq:push} is induced by pushing forward along the composition map $P_B \to (\Omega G \times M)_{borel} \to M_{borel}$.  By composing \eqref{eq:1sthalf}, \eqref{eq:correlation} and \eqref{eq:2ndhalf}, we get
\begin{align}
H_T^*(B) \times HF_{T}^*(M; a') \to HF_{T}^*(M; a'') \label{eq:interSeidel}
\end{align}

Letting $a'$ and $a''$ go to infinity, \eqref{eq:interSeidel} becomes \eqref{eq:interSeidel2}.

\begin{rem} We emphasize that some of the maps above do not preserve gradings. 
The necessary grading shifts for these maps are discussed in the subsequent subsections.  \end{rem}


\subsection{Tautological isomorphism}\label{ss:Taut}

In this subsection, we are going to prove the isomorphism \eqref{eq:correlation}.

\subsubsection{Identifying Hamiltonian loops}\label{sss:identify}

We start with some basic properties.
The following lemma is standard. A proof can be found in e.g. \cite[Section 2.3]{OhBook1}.

\begin{lem}\label{l:compoHam}
Let $H,H' :S^1 \times M \to \mathbb{R}$ be Hamiltonian functions. Denote their time $t$ Hamiltonian flow by $\phi^t_{H}$ and $\phi^t_{H'}$ respectively. Then
\begin{enumerate}
\item  $(\phi^t_{H} \circ \phi_{H'}^t)_{t \in S^1}$ is generated by $(H\#H')_t(x):=H_t(x)+H'_t( (\phi_H^t)^{-1}(x))$
\item  $(\phi^{-t}_{H})_{t \in S^1}$ is generated by $\ol{H}_t(x):=-H_t(\phi_H^t(x))$
\item  for any $\psi \in \Symp(M)$, $\psi^{-1} \circ \phi^{t}_{H} \circ \psi$ is generated by $H_t(\psi(x))$
\end{enumerate}
\end{lem}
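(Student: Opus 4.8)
The statement to prove is Lemma~\ref{l:compoHam}, the three standard composition formulas for generating Hamiltonians. The plan is to verify each of the three identities by differentiating the relevant flow in $t$ and checking that the resulting time-dependent vector field is the Hamiltonian vector field of the claimed function, using the sign conventions for $X_H$ fixed implicitly earlier in the paper (namely $\iota_{X_H}\omega = -dH$, consistent with \eqref{eq:Hamvf}).

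First I would treat (1). Set $\psi^t := \phi^t_H \circ \phi^t_{H'}$ and compute $\frac{d}{dt}\psi^t$ by the product rule: $\frac{d}{dt}\psi^t(x) = X_{H_t}(\psi^t(x)) + (d\phi^t_H)\bigl(X_{H'_t}(\phi^t_{H'}(x))\bigr)$. The second term is the pushforward of the vector field $X_{H'_t}$ under $\phi^t_H$; since $\phi^t_H$ is a symplectomorphism, pushing forward a Hamiltonian vector field gives the Hamiltonian vector field of the pulled-back function, i.e. $(\phi^t_H)_* X_{H'_t} = X_{H'_t \circ (\phi^t_H)^{-1}}$. Hence $\frac{d}{dt}\psi^t$ is the Hamiltonian vector field of $H_t + H'_t\circ(\phi^t_H)^{-1}$ evaluated at $\psi^t(x)$, which is exactly the asserted generator $(H\# H')_t$. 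Part (3) is even shorter: differentiate $t \mapsto \psi^{-1}\circ\phi^t_H\circ\psi$ to get $(d\psi^{-1})X_{H_t}(\phi^t_H(\psi(x)))$, and again use that conjugation by the symplectomorphism $\psi$ carries $X_{H_t}$ to $X_{H_t\circ\psi}$, giving the generator $H_t\circ\psi$. Part (2) follows by applying (1) with $H' = \bar H$ and demanding that $\phi^t_H\circ\phi^t_{\bar H} = \mathrm{id}$: from (1) the composite is generated by $H_t + \bar H_t\circ(\phi^t_H)^{-1}$, and this must vanish (generates the constant loop), which forces $\bar H_t = -H_t\circ\phi^t_H$; alternatively one differentiates $\phi^{-t}_H$ directly, $\frac{d}{dt}\phi^{-t}_H(x) = -X_{H_{-t}}$ reparametrized, and matches signs.

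The computations are entirely routine; the only genuine point of care is bookkeeping the sign and time-reparametrization conventions so that the formulas come out in the stated form rather than with a spurious sign, and making sure the mean-normalization constants (which do not affect Hamiltonian vector fields) are tracked consistently with how $K_\gamma$ and $f^*\tilde H$ are later defined. Since the paper explicitly cites \cite[Section 2.3]{OhBook1} for this lemma, I would keep the proof brief: state the key identity $(\phi)_* X_H = X_{H\circ\phi^{-1}}$ for $\phi$ a symplectomorphism, then deduce all three parts in a few lines each. There is no real obstacle here; the lemma is purely a convention-fixing device for the Seidel map construction that follows.
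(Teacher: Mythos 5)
Your proof is correct and takes the standard route that the paper delegates to \cite[Section 2.3]{OhBook1}: differentiate the composite flow in $t$, use the identity $\phi_*X_H = X_{H\circ\phi^{-1}}$ for a symplectomorphism $\phi$, and read off the generator; parts (2) and (3) then follow either directly or as corollaries of (1). The paper gives no proof of its own, so there is nothing to diverge from. One small caveat: the convention you read off from \eqref{eq:Hamvf} is actually $\iota_{X_H}\omega = dH$ (since $\bfs_v = -\alpha(X_v)$ with $\iota_{X_v}\omega = -d(r\alpha(X_v))$), not $\iota_{X_H}\omega = -dH$ as you wrote; fortunately the pushforward identity and all three formulas in the lemma are independent of this sign choice, so the slip is harmless. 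Also, the phrase ``$\frac{d}{dt}\phi_H^{-t}(x) = -X_{H_{-t}}$ reparametrized'' is a bit loose, since $\phi_H^{-t}$ here means $(\phi_H^t)^{-1}$ rather than the flow evaluated at time $-t$; your first derivation of (2) from (1) is the cleaner one and suffices.
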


Let $\tilde{H}:S^1 \times B \times ET \times M \to \mathbb{R}$ be a Hamiltonian function.
We define $f^*\tilde{H}:S^1 \times B \times ET \times M \to \mathbb{R}$ by \eqref{eq:Fstar}.
Notice that if $\tilde{H}$ is an $T$-equivariant Hamiltonian function (i.e. $\tilde{H}_{t}(gb,gy,gm)=\tilde{H}_t(b,y,m)$ for all $g \in T$), then so is $f^*\tilde{H}$ because
\begin{align*}
(f^*\tilde{H})_t(gb,gy,gm)&=\tilde{H}_t(gb,gy,(f(gb)(t))gm)-K_{f(gb),t}((f(gb)(t))gm)\\
&=\tilde{H}_t(gb,gy,(gf(b)(t)g^{-1})gm)-K_{gf(b)g^{-1},t}((gf(b)(t)g^{-1})gm)\\
&=\tilde{H}_t(gb,gy,g(f(b)(t))m)-K_{f(b),t}(g^{-1}(gf(b)(t)g^{-1})gm)\\
&=(f^*\tilde{H})_t(b,y,m)
\end{align*}


\begin{lem}[cf. \cite{SeidelRep}, Lemma 2.3]\label{l:pullbackorbit}
Let $\tilde{H}:S^1 \times B \times ET \times M \to \mathbb{R}$ be a Hamiltonian function.
For any $(b,y) \in B \times ET$, if $(x(t))_{t \in S^1}$ is a Hamiltonian orbit of $\tilde{H}(b,y, \cdot)$, then 
 the loop
$(t \mapsto (f(b)(t))^{-1}x(t))_{t \in S^1}$ is a Hamiltonian orbit of
$(f^*\tilde{H})(b,y,\cdot)=\tilde{H}(b,y,f(b)(t)\cdot)-K_{f(b),t}(f(b)(t) \cdot)$.
Moreover, it defines a bijective correspondence between the Hamiltonian orbits.

\end{lem}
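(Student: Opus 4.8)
The plan is to verify the claim by a direct computation using the characterizations of Hamiltonian flows under composition and conjugation recorded in Lemma \ref{l:compoHam}, together with the fact that the based loop $f(b)$ is generated by the mean-normalized Hamiltonian $K_{f(b)}$.

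First I would fix $(b,y) \in B \times ET$ and abbreviate $\psi_t := f(b)(t) \in \Ham(M)$, with $(\psi_t)_{t \in S^1}$ a based loop (so $\psi_0 = \psi_1 = \mathrm{id}$), generated by the Hamiltonian $K_{f(b),t}$. Write $A_t := \tilde{H}(b,y,\cdot)$ for the (time-dependent) Hamiltonian on $M$ defining $x(t)$; so $\dot{x}(t) = X_{A_t}(x(t))$. The goal is to show that $z(t) := \psi_t^{-1}(x(t))$ is a Hamiltonian orbit for $(f^*\tilde{H})(b,y,\cdot)$, which by \eqref{eq:Fstar} is the Hamiltonian $G_t(m) := A_t(\psi_t m) - K_{f(b),t}(\psi_t m)$. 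I would compute $\dot{z}(t)$ via the chain rule: differentiating $z(t) = \psi_t^{-1}(x(t))$ in $t$ produces two terms, one from the $t$-dependence of $\psi_t^{-1}$ and one from $\dot{x}(t)$. The first term, by part (2) of Lemma \ref{l:compoHam}, is generated by $\overline{K_{f(b)}}_t(m) = -K_{f(b),t}(\psi_t m)$ (this is exactly the Hamiltonian generating $(\psi_t^{-1})$ as a path). The second term, pushing $X_{A_t}$ through $\psi_t^{-1}$, contributes $(\psi_t^{-1})_* X_{A_t}$ evaluated at $z(t)$, which by part (3) of Lemma \ref{l:compoHam} is the Hamiltonian vector field of $m \mapsto A_t(\psi_t m)$. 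Adding these, $z(t)$ satisfies the Hamiltonian equation for $G_t(m) = A_t(\psi_t m) - K_{f(b),t}(\psi_t m)$, which is precisely $(f^*\tilde H)(b,y,\cdot)$ up to the time-dependent constant $c:S^1 \to \mathbb{R}$ coming from mean-normalization; since constants do not affect Hamiltonian vector fields, $z(t)$ is the desired orbit. Periodicity $z(0) = z(1)$ is immediate from $\psi_0 = \psi_1 = \mathrm{id}$ and $x(0) = x(1)$.

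For the bijective correspondence, the inverse map sends a Hamiltonian orbit $z(t)$ of $(f^*\tilde{H})(b,y,\cdot)$ to $x(t) := \psi_t(z(t)) = f(b)(t) \cdot z(t)$; the same computation (now run in reverse, using that $(\psi_t)_*$ conjugates the vector field of $G_t \circ \psi_t^{-1} = A_t - K_{f(b),t}\circ\psi_t\circ\psi_t^{-1}$ appropriately, and that the loop term cancels) shows this is a Hamiltonian orbit of $\tilde{H}(b,y,\cdot)$. The two constructions are manifestly mutually inverse pointwise in $t$, so they are inverse bijections on orbit sets. I would also note that this correspondence is compatible with the $T$-action, using the equivariance of $f^*\tilde H$ established in the displayed computation above and the conjugation identity $f(gb)(t) = g\, f(b)(t)\, g^{-1}$, though strictly this is needed only later when assembling the equivariant theory.

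The main obstacle — really the only subtle point — is bookkeeping the mean-normalization constant $c:S^1\to\mathbb{R}$ and making sure it is genuinely harmless: $f^*\tilde H$ in \eqref{eq:Fstar} is defined with $K_{f(b)}$ mean-normalized, and $K_{f(b)\#f(b')}$ versus $K_{f(b)}\#K_{f(b')}$ differ by such constants, so one must be careful that the chain of identities $\psi_t^{-1}\circ\phi^t$-type manipulations are being applied to genuinely normalized generators. Since the Hamiltonian vector field, and hence the notion of Hamiltonian orbit, is insensitive to adding a function of $t$ alone, this never obstructs the bijection; it only means the identity holds ``up to a constant'' exactly as in the statement of Lemma \ref{l:compoHam}. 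Everything else is a routine application of the chain rule and the three parts of Lemma \ref{l:compoHam}, closely following \cite[Lemma 2.3]{SeidelRep}.
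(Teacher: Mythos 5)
Your proposal is correct and follows essentially the same route as the paper: both reduce to Lemma \ref{l:compoHam} and a short composition computation, the paper phrasing it as a flow identity (applying parts (1) and (2) to read off the generator of $\gamma(t)^{-1}\phi^t_{\tilde H}$), and you phrasing it as the corresponding chain-rule decomposition of $\dot z(t)$ (applying parts (2) and (3)); these are the same computation expressed at the flow level versus the vector-field level. Your remark about mean-normalization is sound but not really an obstacle, since Hamiltonian vector fields and hence orbit sets are insensitive to additive functions of $t$ alone, exactly as you observe.
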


\begin{proof}

Let $\gamma(t)=f(b)(t)$.
By Lemma \ref{l:compoHam}(2), $\gamma(t)^{-1}$ is generated by
\begin{align}
(t,m) \mapsto -K_{\gamma,t}(\gamma(t) \cdot m)
\end{align}
By applying Lemma \ref{l:compoHam}(1) to the composition of $\gamma(t)^{-1}$ and $ \phi_{\tilde{H}}^t$, we know that
$\gamma(t)^{-1}\phi_{\tilde{H}}^t$ is generated by
$\tilde{H}(b,t,f(b)(t)\cdot)-K_{f(b),t}(f(b)(t) \cdot)$.
It follows that 
\begin{align}
\phi_{\tilde{H}(b,t,f(b)(t)\cdot)-K_{f(b),t}(f(b)(t) \cdot)}^t x(0)=\gamma(t)^{-1}\phi_{\tilde{H}}^t(x(0)) = \gamma(t)^{-1} x(t)
\end{align}
and hence the loop $\gamma(t)^{-1} x(t)$ is a  Hamiltonian orbit of
$\tilde{H}(b,t,f(b)(t)\cdot)-K_{f(b),t}(f(b)(t) \cdot)$.

The other direction of the bijective correspondence can be proved analogously.
\end{proof}


Let $m \in M$, $b \in B$ and consider the loop $c=(t \mapsto (f(b)(t))^{-1}m)_{t \in S^1}$.
As discussed in the paragraph after Definition \ref{d:FloerComplex}, we have chosen a homotopy class of trivialization of  the canonical bundle of $M$.
Let $\iota(f)$ be the Conley-Zehnder index of the linearization of $(f(b)(t))^{-1}$ along the loop $c$ with respect to the homotopy class of trivialization (more precisely, since $D(f(b)(1))^{-1}$ is the identity, $\iota(f)$ is defined as $2$ times the Maslov index, see \cite[Section 2.4]{SalamonLecture}).
By continuity, $\iota(f)$ is independent of $b \in B$ and the point $m \in M$.

\begin{lem}[cf. \cite{SeidelRep}, Lemma 2.6]\label{l:gradShift}
Let $(x(t))_{t \in S^1}$ be a Hamiltonian orbit of $\tilde{H}(b,y, \cdot)$.
The grading of $(t \mapsto (f(b)(t))^{-1}x(t))_{t \in S^1}$ is the sum of the grading of $(x(t))_{t \in S^1}$ and $\iota(f)$.

\end{lem}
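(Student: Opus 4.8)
The plan is to reduce the statement to a standard fact about Conley--Zehnder indices under concatenation, following the philosophy of \cite[Lemma 2.6]{SeidelRep}. First I would fix $b \in B$ and $y \in ET$, and recall from Lemma \ref{l:pullbackorbit} that $y(t):=(f(b)(t))^{-1}x(t)$ is a Hamiltonian orbit of $(f^*\tilde{H})(b,y,\cdot)$. The grading of a Hamiltonian orbit is, by definition (see the paragraph after Definition \ref{d:FloerComplex}), the Conley--Zehnder index of the linearized flow along the orbit computed with respect to the fixed homotopy class of trivialization of the canonical bundle of $M$. So what must be shown is
\begin{align*}
\mu_{CZ}(y) = \mu_{CZ}(x) + \iota(f),
\end{align*}
where the linearized flows are taken along $y$ and $x$ respectively and $\iota(f)$ is the index (twice the Maslov index) of the linearization of the loop $t \mapsto (f(b)(t))^{-1}$ along the loop $c(t)=(f(b)(t))^{-1}m$.

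The key step is to identify the linearized flow of $f^*\tilde{H}$ along $y$ as a pointwise product of two paths of symplectic matrices: the linearization $D(f(b)(t))^{-1}$ of the loop in $\Ham(M)$ transported along $c$, and the linearization $D\phi^t_{\tilde{H}}$ of the original Hamiltonian flow along $x$. This is exactly the infinitesimal version of the identity $\phi^t_{f^*\tilde H}=(f(b)(t))^{-1}\circ \phi^t_{\tilde H}$ established in the proof of Lemma \ref{l:pullbackorbit} via Lemma \ref{l:compoHam}(1)--(2). To turn the factorization of symplectic paths into the additivity of indices I would trivialize $x^*TM$ using (a path homotopic to) the chosen trivialization of the canonical bundle, and apply the standard concatenation/product formula for the Conley--Zehnder index, e.g. \cite[Section 2.4]{SalamonLecture} or \cite{RobbinSalamon}: for a loop of symplectic matrices $\Psi_t$ with $\Psi_0=\Psi_1=\mathrm{id}$ and an arbitrary symplectic path $\Phi_t$, one has $\mu_{CZ}(\Psi\cdot\Phi)=\mu_{CZ}(\Phi)+2\,\mathrm{Maslov}(\Psi)$. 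Here $\Psi_t$ is $D(f(b)(t))^{-1}$ read in the trivialization (it is a genuine loop based at the identity because $f(b)(0)=f(b)(1)=\mathrm{id}$ in $G\subset \Ham(M)$, the constant loop, so $D(f(b)(1))^{-1}=\mathrm{id}$, as noted before the statement), and its contribution is precisely $\iota(f)$ by the definition of $\iota(f)$. The independence of $\iota(f)$ from $m$ and $b$, already noted in the excerpt by continuity, ensures the shift is a global constant.

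One technical point to treat carefully is the compatibility of trivializations: the grading of $x$ uses the trivialization of $K_M$ pulled back along $x$, while the grading of $y$ uses it pulled back along $y=(f(b))^{-1}x$; since $G$ is connected it acts trivially on the homotopy class of the trivialization, so conjugating/transporting by $f(b)(t)$ does not change the homotopy class, and the two trivializations are compatible under the pointwise action of $f(b)(t)$. This is the same mechanism used to put a $\mathbb{Z}$-grading on $CF(P)$ in the first place, so it is safe to invoke. I expect the main obstacle to be purely bookkeeping: making the product formula for Conley--Zehnder indices apply verbatim requires writing everything in one fixed symplectic trivialization and checking that the loop $\Psi_t$ obtained from $D(f(b)(t))^{-1}$ in that trivialization indeed has Maslov index equal to $\tfrac12\iota(f)$ (matching the normalization ``$2$ times the Maslov index'' in the definition of $\iota(f)$). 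Once the normalizations are aligned, the additivity is immediate and the lemma follows; I would also remark that continuity in $b$ gives that the total grading shift in the tautological isomorphism \eqref{eq:correlation} is the locally constant function $\iota(f)$, which is what is used in the subsequent subsections.
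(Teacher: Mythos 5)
Your argument is correct and is essentially the same as the paper's: both reduce the grading shift to the loop property of the Conley--Zehnder index (citing \cite[Section 2.4]{SalamonLecture}), using the fact that $D(f(b)(1))^{-1}=\mathrm{id}$ so that the relevant path of symplectic matrices is a genuine loop contributing $\iota(f)$. The only difference is presentational: you realize the loop property via the pointwise factorization of the linearized flow $D\phi^t_{f^*\tilde H}=D(f(b)(t))^{-1}\circ D\phi^t_{\tilde H}$ and the product formula for CZ indices, whereas the paper phrases it as a homotopy of the twisted orbit to the concatenation of $(x(t))_t$ with the loop $(f(b)(t))^{-1}x(0)$ followed by additivity under concatenation; these are two formulations of the same standard fact.
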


\begin{proof}
This follows from the loop property of the Conley-Zehnder index (see \cite[Section 2.4]{SalamonLecture}).
It can be proved by noticing that $(t \mapsto (f(b)(t))^{-1}x(t))_{t \in S^1}$ is homotopic to the concatenation of $(t \mapsto x(t))_{t \in S^1}$ and $(t \mapsto (f(b)(t))^{-1}x(0))$, so the result follows from the additivity of the Conley-Zehnder index for a path of symplectic matrices and a loop of symplectic matrices.
\end{proof}

\subsubsection{A good class of admissible base triples}\label{ss:GoodClass}

Let $B$ be a finite dimensional closed smooth $T$-manifold.
A continuous map $f:B \to \Omega G$ is called {\it smooth} if the map $B \times S^1 \to G$ given by $(b,t) \mapsto (f(b))(t)$ is smooth.
Let $f:B \to \Omega G$ be a $T$-equivariant smooth map.
Let $P_{B,n}:=(B \times M)_{borel,n}=(B \times ET_n \times M)/T$ and $B_n:=B_{borel,n}$.
We want to consider a good class of admissible base triple $(\eta_n,g_n, \nabla_n)$ of $P_{B,n} \to B_n$ as follows.

Let $\eta_{BT}:BT \to \mathbb{R}$ be a Morse function (i.e a sequence of Morse functions $\eta_{BT,n}:BT_n \to \mathbb{R}$ such that $\eta_{BT,n+1}|_{BT_n}=\eta_{BT,n}$
and $\critp(\eta_{BT,n+1}) \cap BT_{n} = \critp(\eta_{BT,n})$ for all $n$).
Let $g_{BT}$ be a Riemannian metric on $BT$ (i.e. a sequence of Riemannian metrics $g_{BT,n}$ of $BT_n$ that is compatible under embeddings $BT_n \to BT_{n+1}$).
We can choose $\eta_{BT}$ and $g_{BT}$ such that any gradient trajectory of $\eta_{BT,n}$ which starts in $BT_k \subset BT_n$ and tangent to $BT_k$, for some $k<n$, coincides with the gradient trajectory of $\eta_{BT,k}$ in $BT_k$ for all time.
Moreover, there is no gradient trajectory which goes from a critical point of $\eta_{BT,n}$ in $BT_k$ to a critical point
of $\eta_{BT,n}$ in $BT_n \setminus BT_k$ for all $n>k$ (cf. Example \ref{e:tautquotient}).

Let $\nabla_T$ be a connection of the principal $T$-bundle $ET \to BT$ (i.e. a sequence of connections $\nabla_{T,n}$ of $ET_n \to BT_n$ that is compatible under embeddings $ET_n \to ET_{n+1}$)
such that it is flat near critical points of $\eta_{BT}$.
It defines a decomposition of the tangent space $T(ET_n)=T^{vert}ET_n \oplus T^{hor}ET_n$, where $T^{vert}ET_n$ is the kernel of $T(ET_n) \to T(BT_n)$ and $T^{hor}ET_n$ is the horizontal subbundle determined by $\nabla_{T,n}$.
The connection $\nabla_{T,n}$ induces a connection for the associated bundle $B_n \to BT_n$ with the holonomy group in $T$, and the horizontal subbundle $T^{hor}B_n$ is given by the projection of $T^{hor}ET_n$ to $TB_n$ under $T(B \times ET_n) \to TB_n$ (it is well-defined because $g \cdot T^{hor}_xET_n=T^{hor}_{gx} ET_n$ for all $g \in T$).
We choose a Riemannian metric $g_n$ on $B_n$ such that $T^{hor}B_n$ is orthogonal to the vertical subbundle, $(B_n,g_n) \to (BT_n,g_{BT})$ is a Riemannian submersion, and over the region $U \subset BT$ where the connection is flat, $g_n$ is the product of a $T$-invariant metric $g_B$ on $B$ and the metric $g_{BT}|_U$ on $U$.

We use $\nabla_T$ again to induce a connection $\nabla_n$ of $P_{B,n} \to B_n$ by requiring that the horizontal subbundle is the projection of $TB \oplus T^{hor}ET_n$ to $TP_{B,n}$
under
\[
T(B \times ET_n \times M) \to TP_{B,n}
\]

\begin{lem}\label{l:consistent lifts}
Let $c:\mathbb{R}/\mathbb{Z} \to BT_n$ be a loop, $c':[0,1] \to B_n$ be a horizontal lift of $c$ and $c'':[0,1] \to P_{B,n}$ be a horizontal lift of $c'$ with respect to $\nabla_n$.
Let $(B \times M)_{c(0)}$ be the fibre of the natural map $P_{B,n} \to BT_n$ over $c(0)=c(1)$.
Then $c''(0), c''(1) \in (B \times M)_{c(0)}$ lie in the same $T$-orbit, where $T$ acts diagonally.
\end{lem}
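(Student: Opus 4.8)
The plan is to lift the entire configuration to the principal $T$-bundle $ET_n\to BT_n$ and to read the conclusion off the holonomy of $\nabla_{T,n}$ around $c$. First I would fix $\xi_0\in ET_n$ over $c(0)$, let $\tilde c:[0,1]\to ET_n$ be the $\nabla_{T,n}$-horizontal lift of $c$ with $\tilde c(0)=\xi_0$, and observe that, $c$ being a loop and $T$ acting freely on $ET_n$, there is a unique $g\in T$ with $\tilde c(1)=g\cdot\xi_0$. I would also fix the identification of the fibre $(B\times M)_{c(0)}$ of $P_{B,n}\to BT_n$ with $B\times M$ by $(b,m)\mapsto[b,\xi_0,m]$; this is a bijection because $T$ acts freely on $ET_n$, and replacing $\xi_0$ by $h\xi_0$ only precomposes it with the diagonal action of $h\in T$, so the final statement will not depend on the choice.

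The next step is to write $c'$ and $c''$ down explicitly. Since $T^{hor}B_n$ is the image under the differential of the quotient map $q_B:B\times ET_n\to B_n$ of $0\oplus T^{hor}ET_n$, the curve $s\mapsto[b_0,\tilde c(s)]$ covers $c$ and has velocity in $T^{hor}B_n$ for every fixed $b_0\in B$; by uniqueness of horizontal lifts and the hypothesis that $c'$ is horizontal, normalizing $c'(0)=[b_0,\xi_0]$ gives $c'(s)=[b_0,\tilde c(s)]$, hence $c'(1)=[b_0,g\xi_0]=[g^{-1}b_0,\xi_0]$. Because $\nabla_n$ was defined so that $T^{hor}P_{B,n}$ is the image under the differential of $q_P:B\times ET_n\times M\to P_{B,n}$ of $TB\oplus T^{hor}ET_n\oplus 0$, the curve $s\mapsto[b_0,\tilde c(s),m_0]$ is $\nabla_n$-horizontal and covers $c'$ for fixed $m_0\in M$; normalizing $c''(0)=[b_0,\xi_0,m_0]$ gives $c''(s)=[b_0,\tilde c(s),m_0]$, so
\[
c''(1)=[b_0,g\xi_0,m_0]=[g^{-1}b_0,\xi_0,g^{-1}m_0].
\]
Under the fibre identification above this reads $c''(0)\leftrightarrow(b_0,m_0)$ and $c''(1)\leftrightarrow g^{-1}\cdot(b_0,m_0)$ for the diagonal $T$-action, which is exactly the assertion.

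The one point requiring care — and the only place I expect any work — is the verification that $s\mapsto[b_0,\tilde c(s)]$ and $s\mapsto[b_0,\tilde c(s),m_0]$ are horizontal for the induced connections. This follows from the explicit descriptions of $T^{hor}B_n$ and $T^{hor}P_{B,n}$ recalled just before the lemma together with $\dot{\tilde c}(s)\in T^{hor}ET_n$, but it is worth stating cleanly. A tidier packaging, which I would probably use in the write-up, is to note that the composite $P_{B,n}\to B_n\to BT_n$ exhibits $P_{B,n}$ as the bundle associated to $ET_n\to BT_n$ with fibre $B\times M$ and diagonal $T$-action, that the distribution obtained as the image under $dq_P$ of $0\oplus T^{hor}ET_n\oplus 0$ is precisely the associated connection, and that a curve which is $\nabla_n$-horizontal over a $c$-horizontal curve in $B_n$ is a horizontal curve for this associated connection; then parallel transport around $c$ is the diagonal action of the holonomy $g$, and the lemma is immediate. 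There is no analytic difficulty whatsoever; the content is bookkeeping with associated bundles and quotient maps, and the only genuine thing to watch is the left/right convention in writing $\tilde c(1)=g\cdot\xi_0$.
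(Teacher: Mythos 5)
Your proposal is correct and takes essentially the same route as the paper: the paper introduces the auxiliary associated-bundle connection $\nabla$ on $P_{B,n}\to BT_n$ (horizontal distribution the image of $T^{hor}ET_n$), asserts $c''$ is $\nabla$-horizontal because both $\nabla_n$ and the connection of $B_n\to BT_n$ come from $\nabla_T$, and concludes from the holonomy of $\nabla$ being the diagonal $T$-action. Your explicit lift to $ET_n$ and computation $c''(1)=[b_0,g\xi_0,m_0]=[g^{-1}b_0,\xi_0,g^{-1}m_0]$ simply spells out the step the paper calls "easy to check," and your "tidier packaging" paragraph is exactly the paper's argument.
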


\begin{proof}
Consider the connection $\nabla$ of $P_{B,n} \to BT_n$ whose horizontal subbundle is given by the projection of $T^{hor}ET_n$ to $TP_{B_n}$ under $T(B \times ET_n \times M) \to TP_{B,n}$.
The holonomy group for this connection is $T$.
It is easy to check that $c''$ is a horizontal lift of $c$ with respect to $\nabla$ because both the connections of $B_n \to BT_n$ and $P_{B,n} \to B_n$ are induced by $\nabla_T$.
As a result, $c''(0)$ and $c''(1)$ lie in the same $T$-orbit.

\end{proof}

Let $\eta_n'$ be the pull-back of $\eta_{BT,n}$ under the Riemannian submersion $B_n \to BT_n$.
Gradient trajectories of $\eta_n'$ are horizontal lifts of gradient trajectories of $\eta_{BT,n}$.

\begin{lem}\label{l:covariant constant}
Let  $f:B \to \Omega G$ be a $T$-equivariant map and $K_{f,n}:S^1 \times P_{B,n} \to \mathbb{R}$ be the generating Hamiltonian function.
Let $\tau':\mathbb{R} \to B_n$ be a gradient trajectory of $\eta_n'$.
Then $K_{f,n}$ is covariantly constant along $\tau'$ with respect to the connection $\nabla_n$

\end{lem}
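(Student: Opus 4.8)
The plan is to work in the trivialization of $P_{B,n}$ over $\tau'$ determined by $\nabla_n$, in which both the parallel transport along $\tau'$ and the fibrewise Hamiltonian $K_{f,n}$ become manifestly independent of the trajectory parameter $s$.

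First I would analyze $\tau'$ itself. Since $\eta_n'$ is pulled back from $\eta_{BT,n}$ along the Riemannian submersion $\pi\colon B_n \to BT_n$, the gradient $\grad\eta_n'$ is the horizontal lift of $\grad\eta_{BT,n}$; hence $\tau'$ is a horizontal lift, with respect to the connection on $B_n \to BT_n$ induced by $\nabla_T$, of a gradient trajectory $\tau$ of $\eta_{BT,n}$ on $BT_n$. (If $\tau'$ is constant there is nothing to prove, so assume $\tau$ is non-constant.) Fixing a $\nabla_T$-horizontal lift $\tilde\tau\colon\mathbb{R}\to ET_n$ of $\tau$, the standard description of the connections induced on the associated bundles $B_n = ET_n\times_T B$ and $P_{B,n} = ET_n\times_T(B\times M)$ shows that there is a point $b_0\in B$ with $\tau'(s)=[\tilde\tau(s),b_0]$ for all $s$, and that $s\mapsto[\tilde\tau(s),b_0,m_0]$ is the $\nabla_n$-horizontal lift of $\tau'$ through $[\tilde\tau(0),b_0,m_0]$ for every $m_0\in M$ (this is the same observation used in the proof of Lemma \ref{l:consistent lifts}). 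Consequently, in the trivializations $(P_{B,n})_{\tau'(s)}\cong M$ given by $[\tilde\tau(s),b_0,m]\mapsto m$, the parallel transport $(P_{B,n})_{\tau'(s_1)}\to(P_{B,n})_{\tau'(s_2)}$ is the identity of $M$ for all $s_1,s_2$.

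Next I would unwind the Hamiltonian. In these same trivializations, over $[\tilde\tau(s),b_0]$ the loop $f$ acts at time $t\in S^1$ by $m\mapsto(f(b_0)(t))\cdot m$, so $(K_{f,n})_{\tau'(s)}$ is carried, for every $s$, to $K_{f(b_0)}$, the mean-normalized Hamiltonian generating $f(b_0)\in\Omega G\subset\Omega\Ham(M)$; this identification is independent of $s$ because the trivialization is a fibrewise symplectomorphism preserving the fibrewise form $\alpha\wedge\omega^{n-1}$, so mean-normalizations agree. Combining this with the previous paragraph, parallel transport along $\tau'$ carries $(K_{f,n})_{\tau'(s_1)}$ to $(K_{f,n})_{\tau'(s_2)}$ for all $s_1,s_2$, which is exactly the assertion that $K_{f,n}$ is covariantly constant along $\tau'$. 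I do not expect a genuine difficulty here; the only point demanding care is the bookkeeping of the several associated-bundle structures ($ET_n\to BT_n$, $B_n\to BT_n$, and $P_{B,n}\to B_n$ versus $P_{B,n}\to BT_n$) and the verification that a $\nabla_T$-horizontal lift genuinely sweeps out a fixed $(b_0,m_0)$-slice, which is precisely the compatibility of connections already exploited in Lemma \ref{l:consistent lifts}.
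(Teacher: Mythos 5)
Your proof is correct and essentially the same as the paper's: both rely on the observation that a $\nabla_n$-horizontal lift of $\tau'$ is also horizontal for $P_{B,n} \to BT_n$ (Lemma~\ref{l:consistent lifts}), so that in the trivialization induced by a $\nabla_T$-horizontal lift $\tilde\tau$ both the parallel transport and $K_{f,n}$ become manifestly independent of $s$. One small imprecision: by the paper's formula $K_{F,n}(t,b,x,m):=K_{f(b),t}\bigl((f(b)(t))m\bigr)$, the fibre Hamiltonian in your trivialization is $m\mapsto K_{f(b_0),t}\bigl(f(b_0)(t)\,m\bigr)$ rather than $K_{f(b_0)}$ itself, but since both are $s$-independent this does not affect the argument.
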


\begin{proof}
Let $\tau'':\mathbb{R} \to P_{B,n}$ be a horizontal lift of $\tau'$ with respect to $\nabla_n$.
We need to show that $K_{f,n}(t,\tau''(s))$ is independent of $s$.

Recall that the $T$-invariant lift $K_{F,n}:S^1 \times B \times ET_n \times M$ of $K_{f,n}$ is defined by
\[
K_{F,n}(t,b,x,m):=K_{f(b),t}((f(b)(t))m)
\]
and the $T$-invariance means
\begin{align*}
K_{F,n}(t,gb,gx,gm)&=K_{f(gb),t}((f(gb)(t))gm)\\
&=K_{gf(b)g^{-1},t}(g(f(b)(t))g^{-1}gm)=K_{f(b),t}(g^{-1}g(f(b)(t))g^{-1}gm)\\
&=K_{F,n}(t,b,x,m)
\end{align*}
Note that $K_{F,n}$ also $T$-invariant along the $ET_n$ direction (i.e. $K_{F,n}(t,b,x,m)=K_{F,n}(t,b,gx,m)$), which is an additional feature that is not due to coming from lifting from $P_{B,n}$.
Combining both, we have $K_{F,n}(t,b,x,m)=K_{F,n}(t,gb,x,gm)$ for all $g \in T$.

By Lemma \ref{l:consistent lifts}, $\tau''$ is actually a horizontal lift of a gradient trajectory $\tau:\mathbb{R} \to BT_n$ of $\eta_{BT,n}$.
Since $K_{F,n}$ is independent of $ET_n$ and invariant under the diagonal $T$ action on $B \times M$, together with the fact that the diagonal $T$ action is precisely the holonomy group of $P_{B,n} \to BT_n$, we conclude that $K_{f,n}(t,\tau''(s))$ is independent of $s$.
\end{proof}



Note that $\eta_n'$ is only a Morse-Bott function so $(\eta_n',g_n, \nabla_n)$ is not an admissible base triple.
We want to Morsify $\eta_n'$ to $\eta_n$ so that we get an admissible base triple, and at the same time still have some control on the derivative of $K_{f,n}$ along gradient trajectory of $\eta_n$ with respect to the connection $\nabla_n$.

Before we explain this, we need to introduce the notion of a good pair.

\begin{defn}
Let $f:B \to \Omega G$ be a $T$-equivariant map and $\eta_B':B \to \mathbb{R}$ be a $T$-invariant Morse-Bott function.
We call $(f,\eta_B')$ a good pair if $f$ is a constant function near each connected critical submanifold of $\eta_B'$.

\end{defn}

\begin{lem}\label{l:TeqHomotope}
Given a $T$-equivariant smooth map $f:B \to \Omega G$, we can homotope $f$ to another $T$-equivariant smooth map $f':B \to \Omega G$
such that there is a $T$-invariant Morse-Bott function $\eta_B'$ making $(f',\eta_B')$ a good pair.
\end{lem}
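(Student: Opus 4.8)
The plan is to choose, once and for all, a $T$-invariant Morse--Bott function on $B$ whose critical submanifolds are as simple as possible --- namely single $T$-orbits --- and then to push $f$ off of those submanifolds up to $T$-equivariant homotopy. Concretely, the first step is to produce, by standard equivariant Morse theory, a $T$-invariant Morse--Bott function $\eta_B':B \to \mathbb{R}$ whose critical set is a disjoint union of connected closed $T$-invariant submanifolds $Z_1,\dots,Z_k$, each of which is a \emph{single $T$-orbit} $Z_i = T\cdot x_i \cong T/H_i$ (with $H_i$ the stabiliser of $x_i$). Such a function can be built, for instance, from an equivariant handle decomposition of the compact smooth $T$-manifold $B$, with one critical orbit per equivariant handle. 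Note that each orbit $T/H_i$ is itself a torus.

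The heart of the argument is the second step: showing that $f|_{Z_i}$ is $T$-equivariantly homotopic to a constant map whose value is a $T$-fixed loop. Here I would use that $T$ acts on $\Omega G$ by conjugation, so that for a closed subgroup $H \subseteq T$ one has $(\Omega G)^{H} = \Omega\big(Z_G(H)\big)$, the based loop space of the centraliser; in particular $(\Omega G)^{T} = \Omega T$. Since $H_i$ fixes $x_i$ and $f$ is $T$-equivariant, $f(x_i)$ lies in $(\Omega G)^{H_i} = \Omega\big(Z_G(H_i)\big)$. The group $Z_G(H_i)$ is compact, connected, and has $T$ as a maximal torus, so its flag variety $Z_G(H_i)/T$ is simply connected and hence $\pi_1(T) \to \pi_1\big(Z_G(H_i)\big)$ is surjective; equivalently, $\Omega T \hookrightarrow \Omega\big(Z_G(H_i)\big)$ is onto $\pi_0$. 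Therefore $f(x_i)$ can be joined, inside $(\Omega G)^{H_i}$, by a path $\delta_i$ to some $T$-fixed loop $\gamma_i \in (\Omega G)^{T}$. The assignment $F_i(g\cdot x_i,s) := g\cdot\delta_i(s)$, well defined because $\delta_i(s)$ is $H_i$-invariant, is then a $T$-equivariant homotopy on $Z_i \times [0,1]$ from $f|_{Z_i}$ to the constant map at $\gamma_i$.

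The third step is routine equivariant homotopy theory. Choose pairwise disjoint $T$-invariant tubular neighbourhoods $N_i$ of the $Z_i$, realised as $T$-equivariant disc bundles $\pi_i:N_i \to Z_i$. Since $\coprod_i Z_i \hookrightarrow B$ is a $T$-cofibration, the equivariant homotopy extension property extends $\coprod_i F_i$ to a $T$-equivariant homotopy from $f$ to a map $f_1$ with $f_1|_{Z_i} \equiv \gamma_i$ for all $i$. Finally, post-compose $f_1$ with a $T$-equivariant radial collapse $\rho:B \to B$ that is the identity outside $\bigsqcup_i N_i$ and crushes a smaller sub-disc-bundle around each $Z_i$ onto $Z_i$; the resulting map $f' := f_1\circ\rho$ is $T$-equivariantly homotopic to $f$ and is constant (equal to $\gamma_i$) on a neighbourhood of each $Z_i$, so $(f',\eta_B')$ is a good pair. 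A standard equivariant smoothing, carried out away from the $Z_i$ where $f'$ is already locally constant, makes $f'$ (and, if desired, the homotopy) smooth.

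I expect the main obstacle to be the interplay between Steps 1 and 2: for a \emph{generic} $T$-invariant Morse--Bott function the restriction of $f$ to a critical submanifold need not be equivariantly contractible, so it is essential that the critical submanifolds be chosen to be orbits, where the $\pi_1$-surjectivity of $T \hookrightarrow Z_G(H_i)$ forces $f|_{Z_i}$ to be equivariantly null-homotopic onto a $T$-fixed loop. Securing a $T$-invariant Morse--Bott function with orbit critical set is thus the essential structural input; once it is in hand the remaining steps are standard.
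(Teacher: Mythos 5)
Your proof follows essentially the same route as the paper: choose a $T$-invariant Morse--Bott function whose critical set is a union of orbits $T/H_i$ (the paper invokes \cite[Lemma~4.8]{Wasserman} for existence and genericity), use the surjectivity of $\pi_1(T)\to\pi_1\bigl(C_G^0(H_i)\bigr)$ to equivariantly homotope $f$ on each critical orbit into $\Omega T$ (hence to a constant, by abelianness of $T$), and then spread this out by a collapse near the critical set. One minor imprecision: $C_G(H_i)$ need not be connected for a non-toral closed subgroup $H_i\subset T$ (e.g.\ $H=\mathbb{Z}/2\subset SO(2)\subset SO(3)$ has centraliser $O(2)$), so --- as the paper does --- one should pass to the identity component $C_G^0(H_i)$; since based loops only see that component, your argument goes through unchanged.
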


\begin{proof}
Let $\eta_B':B \to \mathbb{R}$ be a $T$-invariant Morse-Bott function such that every connected component of its critical Morse-Bott submanifolds is isomorphic to $T/H$ for some closed subgroup $H$ of $T$ (see \cite[Lemma 4.8]{Wasserman} for its existence and genericity).

Let $C \simeq T/H$ be one of the connected components.
Note that $f|_{T/H}$ factors through $\Omega C_G^0(H) \subset \Omega G$, where $C_G^0(H)$ is the identity component of the centralizer $C_G(H)$ of $H$ in $G$.
Note also that $C_G(T)=T \subset C_G^0(H)$ is a maximal torus in $C_G^0(H)$.
Therefore,  $\pi_1(T)$ surjects onto $\pi_1(C_G^0(H))$.
Let $p \in T/H$ so $f(p) \in \Omega C_G^0(H)$.
We can find a smooth map $F_p:[0,1] \to \Omega C_G^0(H)$ such that $F_p(0)=f(p)$ and $F_p(1) \in \Omega C_G(T)=\Omega T$ because of the surjectivity of $\pi_1(T) \to \pi_1(C_G^0(H))$.
Let $F:[0,1] \times T/H \to \Omega C_G^0(H)$ be the $T$-equivariant map given by $F(s,g \cdot p):=g F_p(s) g^{-1} \in \Omega C_G^0(H)$ for all $g \in T$ and $s \in [0,1]$.
In other words, $F$ is a $T$-equivariant homotopy from $F(0,\cdot)=f$ to the map $F(1,\cdot)$ which lands in $\Omega T$.
In particular, $F(1,g \cdot p)=F(1,p)$ for all $g \in T$.
We want to use this homotopy to homotope $f$ to another $T$-equivariant map such that it is constantly equal to $F(1,p)$ in a neighborhood of $C$.

To do that, let $N$ be a $T$-invariant neighborhood of $C$.
By choosing a $T$-invariant metric on $B$ and using the exponential map, we can identify $N$ as the total space of a $T$-equivariant normal bundle over $C$.
Let $\pi_N:N \to C$ be the projection map.
Let $r:N \to \mathbb{R}_{\ge 0}$ be the distance function from $C$.
Let $\rho_{\epsilon}: \mathbb{R}_{\ge 0} \to \mathbb{R}_{\ge 0}$ be a smooth function such that $\rho_{\epsilon}(s)=0$ near $s=0$ and $\rho_{\epsilon}(s)=s$ when $s > \epsilon>0$.
For $\epsilon>0$ being sufficiently small, we define a $T$-equivariant smooth map $c_{\epsilon}:N \to N$ by $b \mapsto \rho_{\epsilon}(r(b)) b$.
This map collapses a small neighborhood of $C$ to $C$.
We define $f_c:=f \circ c$, which is a $T$-equivariant smooth map that is $T$-equivariant homotopic to $f$.
Moreover, by definition, we know that $f_c$ factors through $\pi_N$ in a small neighborhood $N_C$ of $C$.
Let $\rho^C_{\delta}:[0,\delta] \to [0,1]$ be a smooth function such that $\rho^C_{\delta}(s)=1$ near $s=0$ and  $\rho^C_{\delta}(s)=0$ near $s=\delta$.
For $\delta$ being sufficiently small, we define $f':B \to \Omega G$ by $f'(b):=f_c(b)$ if $r(b) \ge \delta$, and $f'(b):=F(\rho^C_{\delta}(r(b)),\pi_N(b))$ if $r(b) \le \delta$.
Clearly, $f'$ is $T$-equivariant homotopic to $f$, and $f'$ is a constant near $C$.

By applying this procedure to every connected component of the critical submanifolds of $\eta_B'$, we get a good pair $(f',\eta_B')$ as desired.
\end{proof}

Let $(g_n,\nabla_n)$ be as above. We are now ready to introduce $\eta_n$, which is a special form of Morsification of $\eta_n'$ that makes use of $(f,\eta_B')$.

Since $(f,\eta_B')$ is a good pair, we can find a $T$-invariant neighborhood $N_{\eta}$ of $\critp(\eta_B')$ such that $f|_{N_{\eta}}$  is locally constant.
Let $\eta_B: B \to \mathbb{R}$ be a Morsification of $\eta_B'$ such that $\eta_B(b)=\eta_B'(b)$ if $b \notin Int(N_{\eta})$.

Let $U_{BT,n}$ be a small neighborhood of the critical points of $\eta_{BT,n}$ where $\nabla_T$ is flat.
Let $U_n \subset B_n$ be the preimage of $U_{BT,n}$ under $B_n \to BT_n$.
Since $\nabla_n$ and the connection of $B_n \to BT_n$ are both induced by $\nabla_T$, we can identify $U_n$ as $B \times U_{BT,n}$ and trivialize the bundle $P_{B,n}$ over $U_n$ as $B \times M \times U_{BT,n}$.

Let $\eta_n'$ be the Morse-Bott function on $B_n$ above.
The critical submanifolds of $\eta_n'$ are contained in $U_n=B \times U_{BT,n}$, and are of the form $B \times \critp(\eta_{BT,n})$.
Note that, there is a choice in the identification of $B$ coming from the choice of trivialization, and it is canonical only up to an element in $T$.
Our argument below works for any such choice.
We Morsify $\eta_n'$ inside $U_n$ by adding a function of the form $\epsilon \chi(u) \eta_B(b)$ for $(b,u) \in B \times U_{BT,n}$, where $0< \epsilon \ll 1$, $\chi$ is a bump function and $\eta_B$ is the Morse function constructed above.
We can choose $\chi:U_{BT,n} \to [0,1]$ such that over each connected component of $U_{BT,n}$, it ony depends on the distance from the corresponding critical point of $\eta_{BT,n} $, the only critical values are $\{0,1\}$ and $\chi^{-1}(1)=\critp(\eta_{BT,n})$.  
Denote the Morse funciton $\eta_n'+\epsilon \chi(u) \eta_B(b)$ by $\eta_n$.
In the next subsection, we will use the admissible base triple $(\eta_n,g_n,\nabla_n)$.

We end this subsection with the following observation.

\begin{lem}\label{l:equivariantS}
Let $(f,\eta_B')$ be a good pair and $\eta_B,g_B$ be defined as above.
For any constant $c \in \mathbb{R}$, there is a $T$-invariant function $\bfs \in C^{\infty}(S^1 \times B)$ such that $\bfs > c$, $\bfs(b) \notin \Spec(\partial \bar{M},\alpha)$ for any $b \in \critp(\eta_B)$, and 
for any gradient trajectory $\tau_B$ of $\eta_B$ with respect to $g_B$, we have
\begin{align}\label{eq:grad_on_B}
\frac{d}{ds} \bfs (t,\tau_B(s)) \le - \max_{m\in \partial \bar{M}} \left|\frac{d}{ds}\bfs_{K_{f(\tau_B(s))},t}(m) \right|
\end{align}
for all $t \in S^1$.
\end{lem}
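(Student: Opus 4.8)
The plan is to reduce \eqref{eq:grad_on_B} to a pointwise inequality between gradient vectors on $B$, and then to take $\bfs$ to be a large positive multiple of $\eta_B'$ plus a generic constant. Write
\begin{align*}
\sigma(t,b,m):=\bfs_{K_{f(b)},t}(m),\qquad t\in S^1,\ b\in B,\ m\in\partial\bar{M},
\end{align*}
for the time-$t$ slope of the generating Hamiltonian of the loop $f(b)$. By (the proof of) Lemma~\ref{l:cylindricalFun} this equals $-\alpha\big(X_{v_t(b)}|_{r=1}\big)$, where $v_t(b)\in\g$ is the element generating $f(b)$ at time $t$; since $f$ is a smooth map to $\Omega G$, $v_t(b)$ depends smoothly on $(t,b)$, and since $v\mapsto X_v$ is linear, $\sigma$ is smooth on $S^1\times B\times\partial\bar{M}$. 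The $T$-equivariance of $f$ (so $f(g\cdot b)(t)=g\,f(b)(t)\,g^{-1}$, hence $v_t(g\cdot b)=\operatorname{Ad}_g v_t(b)$) together with the $G$-invariance of $\theta$ yields $\sigma(t,g\cdot b,m)=\sigma(t,b,g^{-1}m)$ for all $g\in T$. Consequently the continuous function
\begin{align*}
\Psi(t,b):=\max_{m\in\partial\bar{M}}\big|d_b\sigma(t,b,m)\big|_{g_B}
\end{align*}
is $T$-invariant and vanishes on any open set on which $f$ is locally constant --- in particular on the $T$-invariant neighbourhood $N_\eta$ of $\critp(\eta_B')$ used to build $\eta_B$; since $\eta_B=\eta_B'$ off $N_\eta$, this neighbourhood contains $\critp(\eta_B)$. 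Set $C_1:=\sup_{S^1\times B}\Psi<\infty$, and pick $\delta>0$ with $|\grad_{g_B}\eta_B'|_{g_B}\ge\delta$ on the compact set $B\setminus N_\eta$, which contains no critical point of $\eta_B'=\eta_B$.

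For a gradient trajectory $\tau_B$ of $(\eta_B,g_B)$ and any $h\in C^\infty(B)$ one has $\frac{d}{ds}h(\tau_B(s))=-\big\langle\grad_{g_B}h,\grad_{g_B}\eta_B\big\rangle_{g_B}(\tau_B(s))$; applying this with $h=\sigma(t,\cdot,m)$ and using Cauchy--Schwarz gives $\big|\frac{d}{ds}\bfs_{K_{f(\tau_B(s))},t}(m)\big|\le\Psi(t,\tau_B(s))\,|\grad_{g_B}\eta_B(\tau_B(s))|_{g_B}$. Hence \eqref{eq:grad_on_B} follows once we produce a $T$-invariant $\bfs\in C^\infty(S^1\times B)$ satisfying
\begin{align*}
\big\langle\grad_{g_B}\bfs(t,\cdot),\grad_{g_B}\eta_B\big\rangle_{g_B}\ \ge\ \Psi(t,\cdot)\,|\grad_{g_B}\eta_B|_{g_B}\qquad\text{on }B,\ \text{for all }t\in S^1.
\end{align*}
I would take $\bfs(t,b):=\lambda\,\eta_B'(b)+c_0$, which is $T$-invariant (as $\eta_B'$ is) and $t$-independent, with $\grad_{g_B}\bfs=\lambda\,\grad_{g_B}\eta_B'$. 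The Morsification $\eta_B$ of $\eta_B'$ and the $T$-invariant metric $g_B$ are still at our disposal, and I would choose $g_B$ adapted to the Morse--Bott structure of $\eta_B'$ near $\critp(\eta_B')$ (so that in normal coordinates $\grad_{g_B}\eta_B'$ is purely normal up to higher order) and the Morsification $C^1$-small; a direct computation in such coordinates --- writing $\eta_B=\eta_B'+\tilde\epsilon\,\rho(|w|)\tilde\eta(z)$ with $w$ the normal coordinate and $\rho$ a cutoff that is $1$ near $w=0$ --- then shows $\big\langle\grad_{g_B}\eta_B',\grad_{g_B}\eta_B\big\rangle_{g_B}\ge 0$ everywhere on $B$, equal to $|\grad_{g_B}\eta_B'|^2$ off $N_\eta$. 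Granting this: on $N_\eta$ the displayed inequality reads $0\ge 0$, because $\Psi\equiv 0$ there; off $N_\eta$ its left-hand side equals $\lambda|\grad_{g_B}\eta_B'|^2\ge\lambda\delta\,|\grad_{g_B}\eta_B|_{g_B}\ge\Psi\,|\grad_{g_B}\eta_B|_{g_B}$ provided $\lambda\ge C_1/\delta$.

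So I fix such a $\lambda$ and then choose $c_0$: the set $\{\lambda\eta_B'(b)+c_0:b\in\critp(\eta_B)\}$ is finite, so the $c_0$ for which it misses the nowhere dense set $\Spec(\partial\bar{M},\alpha)$ form a dense open subset of $\R$; picking $c_0$ in this set with also $c_0>c-\lambda\min_B\eta_B'$ gives $\bfs>c$, $\bfs(b)\notin\Spec(\partial\bar{M},\alpha)$ for $b\in\critp(\eta_B)$, and the gradient inequality above, which is everything required. The one genuinely delicate step is the positivity $\big\langle\grad_{g_B}\eta_B',\grad_{g_B}\eta_B\big\rangle_{g_B}\ge 0$ --- equivalently, that a small Morsification of $\eta_B'$ does not reverse its gradient flow near the critical submanifolds. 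This is exactly why the Morsification must be taken small relative to the chosen metric: the cross term it contributes involves only $\rho'$, which is supported where the normal coordinate $|w|$ is bounded below, and is therefore dominated there by $|\grad_{g_B}\eta_B'|^2$ once $\tilde\epsilon$ is small enough, while away from the critical submanifolds $\eta_B=\eta_B'$ and the bracket is automatically $|\grad_{g_B}\eta_B'|^2\ge 0$.
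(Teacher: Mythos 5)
Your construction works, and the Cauchy--Schwarz reduction is correct, but it takes a different route from the paper and carries an extra hypothesis that you should make explicit. The paper first observes that the right-hand side of \eqref{eq:grad_on_B} vanishes identically on $N_\eta$ (since $f$ is locally constant there, so $K_{f(b),t}$ is locally $b$-independent), while off $N_\eta$ the trajectories of $\eta_B$ coincide with trajectories of $\eta_B'$; it therefore reduces to producing a $T$-invariant $\bfs$ that is \emph{locally constant on $N_\eta$} and satisfies the inequality along $\eta_B'$-trajectories. Local constancy makes the inequality read $0\le 0$ on $N_\eta$ for \emph{any} Morsification whatsoever --- the Morsification is completely decoupled from the problem. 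You instead take $\bfs=\lambda\eta_B'+c_0$, which is not locally constant on $N_\eta$, and on $N_\eta$ you are then forced to prove $\langle\grad_{g_B}\eta_B',\grad_{g_B}\eta_B\rangle_{g_B}\ge 0$ --- your acknowledged ``delicate step.'' This inequality is not guaranteed by the data as fixed in the paragraph before the lemma, which only asks $\eta_B$ to be some Morsification agreeing with $\eta_B'$ off $\operatorname{Int}(N_\eta)$ and $g_B$ to be $T$-invariant: an ill-chosen perturbation could well point partly against $\grad\eta_B'$ in the transition annulus of the cutoff. Your patch --- adapt $g_B$ to the Morse--Bott normal form and make the Morsification $C^1$-small relative to $g_B$ and the cutoff --- is legitimate in context (the lemma is consumed inside Proposition~\ref{p:accelerator}, where the admissible base triple is being designed), but those constraints are genuine additions to the setup and should be stated as such rather than left at ``still at our disposal.'' Net: the paper's locally-constant choice is the more robust path, since it is agnostic to the Morsification; yours trades that robustness for an explicit formula $\bfs=\lambda\eta_B'+c_0$, at the price of upgrading the hypotheses on $(\eta_B,g_B)$.
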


\begin{proof}

To see that $\bfs$ exists even though $\eta_B$ is {\it not} $T$-invariant, we argue as follows.
Recall that $(f,\eta_B')$ is a good pair and $f|_{N_{\eta}}$ is locally constant.
It implies that $K_{f(b),t}(m)$ is locally independent of $b \in N_{\eta}$.
Therefore, the RHS of \eqref{eq:grad_on_B} is $0$ when $\tau_B(s) \in N_{\eta}$.
When $\tau_B(s) \notin N_{\eta}$, $\tau_B(s)$ is also a gradient trajectory of $\eta_B'$, and  $\eta_B'$ is $T$-invariant.
Therefore, it suffices to find a $T$-invariant function $\bfs \in C^{\infty}(S^1 \times B)$ such that it is locally constant in $N_{\eta}$, $\bfs >  c$, $\bfs(b) \notin \Spec(\partial \bar{M},\alpha)$ for any $b \in \critp(\eta_B)$, and 
for any gradient trajectory $\tau_B'$ of $\eta_B'$, we have
\begin{align}\label{eq:grad_on_B'}
\frac{d}{ds} \bfs (t,\tau_B'(s)) \le - \max_{m\in \partial \bar{M}} \left|\frac{d}{ds}\bfs_{K_{f(\tau_B'(s))},t}(m) \right|
\end{align}
but this is easy because $\eta_B'$ is $T$-invariant, $N_{\eta}$ is $T$-invariant and $g_B$ is also $T$-invariant.
\end{proof}

\subsubsection{A good class of admissible Hamiltonians}\label{ss:tauto}

Let $f^*0$ be the Hamiltonian function on $P_B$ given by \eqref{eq:Fstar} with $H=0$.
It is descended from $-K_{F,n}(t,b,y,m)$ and $K_{F,n}(t,b,y,m):=K_{f(b),t}((f(b)(t))m)$ is independent of $ET_n$.
Therefore, for any $b \in B_{borel}$, there is $b'  \in B$ such that
\[
\max_{(t,m) \in S^1 \times \partial \bar{M}}|\bfs_{(f^*0)_b}(m)|=\max_{(t,m) \in S^1 \times \partial \bar{M}}|\bfs_{K_{f(b'),t}}(m)|
\]
Since $ S^1 \times B \times \partial \bar{M}$ is compact, the value 
\begin{align}\label{eq:cfK}
c_{f,K}:=\sup_{b \in B_{borel}} \max_{(t,m) \in S^1 \times \partial \bar{M}}|\bfs_{(f^*0)_b}(m)|=\sup_{b' \in B} \max_{(t,m) \in S^1 \times \partial \bar{M}}|\bfs_{K_{f(b'),t}}(m)|
\end{align}
 is finite.

Given a $T$-invariant function $\bfs \in C^{\infty}(S^1 \times B)$, we can pull it back to get a $T$-invariant function on $S^1 \times B \times ET$. It descends to a function on $S^1 \times B_{borel}$ which we denote by $\bfs_{borel}$.

\begin{prop}\label{p:accelerator}
Let $(f,\eta_B')$ be a good pair.
Then there is a constant $C_f >0$ depending only on $(f,\eta_B')$ with the following property.
There is an admissible base triple $(\eta,g,\nabla)$ of $P_{B} \to B_{borel}$ (i.e. a sequence of admissible base triples $(\eta_n,g_n, \nabla_n)$ of $P_{B,n} \to B_n$)
such that for any $c \in \mathbb{R}_{>0}$, there is a cylindrical Hamiltonian $A_{f} \in C^{\infty}(S^1 \times P_B)$ (i.e a sequence of cylindrical Hamiltonian $A_{f,n}$ on $P_{B,n}$ such that $A_{f,n+1}|_{P_{B,n}}=A_{f,n}$) 
and a $T$-invariant function $\bfs$ obtained by Lemma \ref{l:equivariantS}
such that
\begin{itemize}
\item $\bfs >c_{f,K}+c$, and
\item $\bfs_{(A_f)_b}=\bfs_{borel}(b)$ for all $b \in B_{borel}$, and
\item both  $f^*A_{f,n}$ and $A_{f,n}$ are compatible with $(\eta_n,g_n,\nabla_n)$ for all $n$, and 
\item $c \le \bfs_{A_f}, \bfs_{f^*A_f} \le c+C_f$.
\end{itemize}
\end{prop}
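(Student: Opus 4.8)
The plan is to take for $(\eta,g,\nabla)$ exactly the sequence of admissible base triples $(\eta_n,g_n,\nabla_n)$ constructed in \S\ref{ss:GoodClass}: fix, independently of $c$, a Morse function $\eta_{BT}$, a metric $g_{BT}$ and a connection $\nabla_T$ on $ET\to BT$ with the filtration-compatibility stated there; form the submersion metric $g_n$ on $B_n$ (a product over the flat locus), the induced connection $\nabla_n$ on $P_{B,n}\to B_n$, the pulled-back Morse--Bott function $\eta_n'$ on $B_n$, and its Morsification $\eta_n=\eta_n'+\epsilon\chi\eta_B$ built from a Morsification $\eta_B$ of $\eta_B'$ and a bump function $\chi$ supported in the flat locus with $\chi^{-1}(1)=\critp(\eta_{BT,n})$. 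Since this triple depends only on $(f,\eta_B')$ and finitely many auxiliary choices, it is a legitimate common choice for all $c$, and it remains to produce, for each $c>0$, the objects $\bfs$ and $A_f$. For $\bfs$, apply Lemma \ref{l:equivariantS} with the constant $c_{f,K}+c$ in place of $c$; this already gives $\bfs>c_{f,K}+c$, gives $\bfs(b)\notin\Spec(\partial\bar M,\alpha)$ at critical points of $\eta_B$, and gives the gradient-decrease inequality \eqref{eq:grad_on_B} along $(\eta_B,g_B)$-trajectories. Inspecting the proof of Lemma \ref{l:equivariantS} one sees $\bfs$ may in addition be taken with $\bfs\le c_{f,K}+c+V_f$, where $V_f$ need only bound the total drop that \eqref{eq:grad_on_B} forces, i.e.\ the supremum over $\eta_B$-gradient trajectories $\tau_B$ of $\int_{\mathbb R}\max_{m\in\partial\bar M}\big|\tfrac{d}{ds}\bfs_{K_{f(\tau_B(s))},t}(m)\big|\,ds$; this is finite and depends only on $(f,\eta_B',g_B)$ since gradient trajectories of a Morse--Smale function on the compact $B$ have uniformly bounded length and the integrand is controlled by a fixed $C^0$-bound on the $B$-derivative of $b\mapsto\bfs_{K_{f(b)},t}$. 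Set $C_f:=2c_{f,K}+V_f+1$.

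Next I would build the Hamiltonian $A_{f,n}$ on $P_{B,n}$ with $\bfs_{(A_{f,n})_b}\equiv\bfs_{borel}(b)$ for all $b$, satisfying the first, second and fourth bullets of Definition \ref{d:eqgen} for $(\eta_n,g_n,\nabla_n)$, and with $A_{f,n+1}|_{P_{B,n}}=A_{f,n}$. Every critical point $c=(q,q')\in\critp(\eta_B)\times\critp(\eta_{BT,n})$ of $\eta_n$ lies over the flat locus, where $P_{B,n}$ is a product $B\times M\times U_{BT,n}$ and $f$ is locally constant equal to $\gamma:=f(q)$ near $c$; as in Lemma \ref{l:existCom} choose near $c$ a non-degenerate cylindrical Hamiltonian $H_c$ of constant slope $\bfs_{borel}(c)$ pulled back from the $M$-factor, extend it to a cylindrical Hamiltonian of slope $\bfs_{borel}$ on all of $P_{B,n}$ (possible since cylindrical Hamiltonians of a fixed Reeb-invariant slope form a convex set), and perturb each $H_c$ generically over a compact set. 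The one point beyond Lemma \ref{l:existCom} is that the generic perturbation must simultaneously rule out the fourth bullet for $A_{f,n}$ \emph{and} for $f^*A_{f,n}$; by Lemma \ref{l:pullbackorbit} (and Lemma \ref{l:gradShift} for the degrees) the orbits of $f^*A_{f,n}$ near $c$ are the $\gamma$-twists $t\mapsto\gamma(t)^{-1}x(t)$ of the orbits $x$ of $H_c$, so both clauses are positive-codimension conditions on the finitely many $H_c$'s and can be achieved at once. The filtration-compatibility of the $\eta_{BT,n}$ and the tautological-quotient behaviour of the $BT_n$-exhaustion (cf.\ Example \ref{e:tautquotient}) then let this construction proceed inductively in $n$, producing a compatible sequence $A_f=(A_{f,n})$; and $f^*A_{f,n}$ is a cylindrical Hamiltonian on $P_{B,n}$ by the discussion after \eqref{eq:Fstar}.

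With these in hand the first two bulleted conclusions, and the first/second/fourth clauses of Definition \ref{d:eqgen}, hold by construction. On the cylindrical end the $G$-action preserves $r$, so by \eqref{eq:Fstar} the slope of $f^*A_{f,n}$ at $b$ is $\bfs_{borel}(b)$ plus the slope of $f^*0$ at $b$, whose $C^0$-norm is $\le c_{f,K}$ by \eqref{eq:cfK}; together with the bounds of the previous step this yields $c\le\bfs_{A_f}=\bfs_{borel}\le c+C_f$ and $c\le\bfs_{f^*A_f}\le c+C_f$, the fourth bulleted conclusion. What remains is the third clause of Definition \ref{d:eqgen}: that the slopes of $A_{f,n}$ and of $f^*A_{f,n}$ be non-increasing along $\eta_n$-gradient trajectories with respect to $\nabla_n$-parallel transport. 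Decomposing $\grad\eta_n$ into its $g_n$-horizontal ($BT$-direction) part and its vertical part $-\epsilon\chi\grad_{g_B}\eta_B$, the $B$-component $b(s)$ of any $\eta_n$-trajectory $\tau$ is an $(\eta_B,g_B)$-gradient trajectory reparametrised by the nonnegative factor $\epsilon\chi$, nonconstant only inside the flat locus, where $\nabla_n$-parallel transport is trivial in the product trivialisation; and outside the flat locus $\chi\equiv0$, so there $\tau$ is an $\eta_n'$-trajectory and $K_{f,n}$ is $\nabla_n$-covariantly constant along it by Lemma \ref{l:covariant constant}. For $A_{f,n}$ the claim then follows because $\bfs_{borel}$ is constant along the horizontal pieces ($T$-invariance of $\bfs$) and non-increasing along the vertical pieces ($\bfs$ is, a fortiori from \eqref{eq:grad_on_B}). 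For $f^*A_{f,n}$, whose slope is $\bfs_{borel}$ plus the slope of $f^*0$, the second summand is $\nabla_n$-parallel along the horizontal pieces by Lemma \ref{l:covariant constant}, and along the vertical pieces the slope changes — in the fixed trivialisation — by $\tfrac{d}{ds}\bfs_{borel}(\tau(s))$ minus the $s$-derivative of $x\mapsto\bfs_{K_{f(b(s))},t}(f(b(s))(t)\cdot x)$; the precise shape of \eqref{eq:grad_on_B}, read after the $\epsilon\chi$-reparametrisation, is tuned so that the first term dominates the second pointwise on $\partial\bar M$, using that $G$ preserves $\alpha$ on $\partial\bar M$ so that the motion of the evaluation point $f(b(s))(t)\cdot x$ contributes no uncontrolled term.

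I expect this last point to be the main obstacle: reconciling the two regimes — the curved region, where one leans on covariant constancy of $K_{f,n}$ (Lemma \ref{l:covariant constant}), against the flat region, where parallel transport is trivial but the slope genuinely moves and must be dominated via the reparametrised inequality \eqref{eq:grad_on_B} — and verifying that the sign in \eqref{eq:grad_on_B} survives reparametrisation by $\epsilon\chi\ge0$ and that, pointwise on $\partial\bar M$, the drop in $\bfs_{borel}$ controls the variation of the slope of $f^*0$. The remaining ingredients — the genericity argument for the fourth bullet, the convexity of the relevant spaces of Hamiltonians and homotopies, and the inverse-limit/filtration bookkeeping — are routine and parallel Lemma \ref{l:existCom} and the constructions already set up in \S\ref{ss:GoodClass}.
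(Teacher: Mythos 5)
Your proposal follows the paper's own argument essentially step for step: the admissible base triple from \S\ref{ss:GoodClass}, the choice of $\bfs$ via Lemma \ref{l:equivariantS} with shifted constant $c_{f,K}+c$, the verification of \eqref{eq:grad_on_B_borel} by splitting a gradient trajectory of $\eta_n$ into its piece outside $U_n$ (where Lemma \ref{l:covariant constant} makes both sides vanish by covariant constancy) and its piece inside $U_n$ (where the trivialization reduces everything to $\eqref{eq:grad_on_B}$ reparametrized by $\epsilon\chi\geq 0$), the construction of $A_f$ with $\bfs_{(A_f)_b}=\bfs_{borel}(b)$, and the bullet-by-bullet check of Definition \ref{d:eqgen}. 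The one genuine point of divergence is the argument for the existence of a $C_f$ independent of $c$: you bound $\max_B\bfs - c_{f,K}-c$ by a quantity $V_f$ obtained by integrating the forced drop \eqref{eq:grad_on_B} over $\eta_B$-gradient trajectories (using that such trajectories have uniformly bounded length on the compact $B$ and that the integrand is controlled by a fixed $C^0$-bound on $b\mapsto\bfs_{K_{f(b)},t}$), whereas the paper instead fixes one admissible $\bfs_0$ and argues that, for any other $c'$, the shift $\bfs_0-c_0+c'$ is admissible after an arbitrarily small $T$-equivariant perturbation to avoid the nowhere-dense action spectrum, so $\max_B\bfs-c$ may be taken uniformly bounded. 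Both are correct and yield a $C_f$ depending only on $(f,\eta_B')$; yours is more explicit about where the bound comes from, the paper's is shorter and avoids discussing trajectory lengths. Your closing worry about the twist $f(b)(t)\cdot m$ appearing in the slope of $f^*0$ is legitimate to raise, but it is resolved in the same way the paper resolves it: the relevant quantity is $\max_{m\in\partial\bar M}$ of a derivative, and since $f(b)(t)$ is a diffeomorphism of $\partial\bar M$, taking the max absorbs the twist; the paper's equation \eqref{eq:sf0} is read with this implicit identification, and Lemma \ref{l:equivariantS} furnishes a $\bfs$ decreasing fast enough to dominate either formulation.
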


\begin{proof}

Recall that $g_B$ is a $T$-invariant metric on $B$ such that $g_n|_{U_n}=g_B+g_{BT}|_{U_{BT,n}}$. 
By Lemma \ref{l:equivariantS}, we can pick a  $T$-invariant function $\bfs \in C^{\infty}(S^1 \times B)$ such that $\bfs > c_{f,K}+c$, and 
for any gradient trajectory $\tau_B$ of $\eta_B$ with respect to $g_B$, we have
\begin{align}\label{eq:grad_on_B}
\frac{d}{ds} \bfs (t,\tau_B(s)) \le - \max_{m\in \partial \bar{M}} \left|\frac{d}{ds}\bfs_{K_{f(\tau_B(s))},t}(m) \right|
\end{align}
for all $t \in S^1$. 

Since  $\bfs $ is $T$-invariant, it induces a function $\bfs_{borel} \in C^{\infty}(S^1 \times B_{borel})$.
We claim that
\begin{align}\label{eq:grad_on_B_borel}
\frac{d}{ds}\bfs_{borel}(t,\tau(s)) \le - \max_{m\in \partial \bar{M}} \left |\frac{d}{ds}\bfs_{(f^*0)_{\tau(s)}}(t,m) \right|
\end{align}
 for all gradient trajectory $\tau:\mathbb{R} \to B_n$ of $\eta_{n}$, for all $t \in S^1$, and for all $n$.
To see why, note that outside $U_n$, $\tau$ is also a gradient trajectory of $\eta_n'$ because $\epsilon \chi(u) \eta_B(b)$ is supported inside $U_n$.
Moreover, gradient trajectories of $\eta_n'$ are horizontal lifts of gradient trajectory of $\eta_{BT,n}$.
By Lemma \ref{l:covariant constant}, the RHS is $0$. The LHS is also $0$ because $\bfs_{borel}$ is also covariantly constant.
Inside $U_n$, we use the trivialization $U_n=B \times U_{BT,n}$ so for $(b,u) \in B \times U_{BT,n}$, we have 
\begin{align}\label{eq:sborel}
\bfs_{borel}(t,(b,u))=\bfs(t,b)
\end{align}
and
\begin{align}\label{eq:sf0}
\bfs_{(f^*0)_{(b,u)}}(t,m)=\bfs_{K_{f(b),t}}(m)
\end{align}
Both of them are independent of $u \in U_{BT,n}$.
On the other hand, we have
\[
d\eta_n=d\eta_n'(u)+\epsilon \chi(u) d\eta_B(b)+\epsilon d \chi(u) \eta_B(b)
\]
The gradient $grad(\eta_n)$ of $\eta_n$ is therefore lying inside $\epsilon \chi(u) grad(\eta_B(b)) + TU_{BT,n}$.
By \eqref{eq:grad_on_B}, \eqref{eq:sborel} and \eqref{eq:sf0}, we conclude that \eqref{eq:grad_on_B_borel} is true.

Now, we would like to choose $A_f$ such that $\bfs_{(A_f)_b}(m)=\bfs_{borel}(b)$ for all  $ m \in \partial \bar{M}$.
This choice of $A_f$ trivially satisfies the second bullet of the proposition.

For the last bullet, note that 
\[
c_{f,K}+c \le \min_B \bfs  \le \bfs_{A_f}  \le \max_B \bfs 
\]
Therefore, we have 
\[
c \le \bfs_{f^*A_f}  \le \max_B \bfs +c_{f,K}
\]
So  the last bullet is satisfied with $C_f=\max_B \bfs +c_{f,K}-c$.
Recall that, $c_{f,K}$ is a constant which only depends on $(f,\eta_B')$.
Therefore, in order to find a $C_f$ which depends only on $(f,\eta_B')$ we need to give a uniform upper bound for $\max_B \bfs -c$ that is independent of $c$, and only depends on $(f,\eta_B')$.
This uniform upper bound exists because, if $\bfs > c_{f,K}+c$ is obtained from Lemma \ref{l:equivariantS}, then for any other $c'>0$, 
$\bfs-c+c' >c_{f,K}+c'$ almost satisfy all the conditions in Lemma \ref{l:equivariantS} except possibly that 
$(\bfs-c+c' )(b)$ might lie in the spectrum $\Spec(\partial \bar{M},\alpha)$ for some $b \in \critp(\eta_B)$.
Therefore, one can $T$-equivariantly perturb $\bfs-c+c' $ to get a $T$-invariant function $\bfs'>c_{f,K}+c'$ which satisfy all the conditions in Lemma \ref{l:equivariantS}.
The perturbation can be as small as we want so $\max_B \bfs -c$ and $\max_B \bfs' -c'$ can be made as close to each other as we want.
This implies that there is a uniform upper bound for $\max_B \bfs -c$ that is independent of $c$, which in turn implies that we can find a $C_f$ we want which depends only on $(f,\eta_B')$.

It remains to show that it also fulfills the third bullet.
Recall the condition of compatiblity with $(\eta_n,g_n,\nabla_n)$ from Definition \ref{d:eqgen}.
Our choice of $\bfs$ is intensionally chosen such that the third bullet of Definition \ref{d:eqgen} is satisfied
for both $A_f$ and $f^*A_f$  (see \eqref{eq:grad_on_B_borel}).

The first bullet of Definition \ref{d:eqgen} can be achieved because $\bfs:S^1 \times B \to \mathbb{R}$ is chosen such that over the critical points of $\eta_{B}$, $\bfs$ does not lie in the spectrum of the Reeb flow, so a generic choice of cylindrical $A_f$ with $\bfs_{(A_f)_b}=\bfs_{borel}(b)$ will be non-degenerate over the critical points of $\eta$.
When  $\bfs_{(A_f)_b}$ is non-degenerate over  the critical points of $\eta$, so is true for $\bfs_{(f^*A_f)_b}$.

The second bullet of Definition \ref{d:eqgen} can be achieved because $\bfs_{borel}$ is locally constant in $N_{\eta} \times U_{BT,n} \subset B \times U_{BT,n}=U_n$, so we can pick $A_{f,n}$ which is locally constant over $N_{\eta} \times U_{BT,n}$, which contains all the critical points of $\eta_n$.
Since $f$ is also locally constant over $N_{\eta}$, $f^*A_{f,n}$ will also be locally constant over $N_{\eta} \times U_{BT,n}$.

The last bullet of Definition \ref{d:eqgen}  can be achieved by generic fibrewise-compactly supported perturbation of $A_f$ outside $U_n$. It will imply that so is true for $f^*A_f$.
\end{proof}


We have the following tautological isomorphism. 

\begin{prop}\label{p:correlationproof}
Let $B$ be a finite dimensional closed smooth $T$-manifold.
Let $H$ be a cylindrical Hamiltonian of $P_B$ such that both $f^*H$ and $H$ are compatible with $(\eta_n,g_n,\nabla_n)$.
Then we have an isomorphism 
\begin{align}\label{eq:correlation2}
\cC_f:HF^*(P_{B,n}, f^*H)  \simeq HF^{*-\iota(f)}(P_{B,n};H)
\end{align}
By taking inverse limit in $n$, we get the  isomorphism \eqref{eq:correlation}.
\end{prop}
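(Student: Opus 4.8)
The plan is to realise $\mathcal{C}_f$ as the operation of ``untwisting by $f$'', to check it is a chain isomorphism (up to the grading shift $\iota(f)$) at each finite level $n$, and then to pass to the inverse limit. Write $\Psi\colon S^1\times P_{B,n}\to P_{B,n}$ for the time‑dependent fibrewise diffeomorphism descended from $(t,b,y,m)\mapsto (b,y,(f(b)(t))\cdot m)$ on $B\times ET_n\times M$; it is $T$‑invariant by the computation already used for $f^*\tilde H$, it covers the identity on $B_n$ and preserves fibres, and $(\Psi_t)_{t\in S^1}$ is a based loop of fibrewise Hamiltonian symplectomorphisms generated by $K_{f,n}$. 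By the fibrewise form of Lemma~\ref{l:compoHam} (this is exactly the content of the proof of Lemma~\ref{l:pullbackorbit}), the map $\Psi_t^{-1}\circ\phi^t_{H}$ is the fibrewise flow of $f^*H$, so $\Psi$ conjugates the flow of $f^*H$ to that of $H$ on the nose.

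First I would define $\mathcal{C}_f$ on generators. Since $(f,\eta_B')$ is a good pair and $\critp(\eta_n)\subset N_\eta\times U_{BT,n}$, where $f$ is locally constant (\S\ref{ss:GoodClass}), over each $c\in\critp(\eta_n)$ the Hamiltonian $(f^*H)_c$ has the form $\gamma_c^*H_c$ for a fixed loop $\gamma_c\in\Omega G$ (well defined up to the $T$‑conjugation ambiguity of the trivialisation, which is harmless below). Lemma~\ref{l:pullbackorbit} then gives a bijection $x\mapsto(t\mapsto(\gamma_c(t))^{-1}x(t))$ between the $1$‑periodic orbits of $H_c$ and of $(f^*H)_c$, and Lemma~\ref{l:gradShift} shows it shifts the Conley--Zehnder grading by the fixed integer $\iota(f)$ (independent of $c$ by continuity and connectedness of $B$). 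As the bijection is compatible with the Morse index of $c$, setting $\mathcal{C}_f(c,(\gamma_c(t))^{-1}x):=(c,x)$ defines an isomorphism of graded $\Lambda$‑modules $CF^*(P_{B,n},f^*H)\to CF^{*-\iota(f)}(P_{B,n},H)$.

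Next I would verify that $\mathcal{C}_f$ is a chain map inducing the isomorphism \eqref{eq:correlation2}. Fix the almost complex structure $J$ defining $CF^*(P_{B,n},f^*H)$, and given a rigid solution $(\tau,u)$ contributing to $\partial_{CF}$ for $f^*H$, set $u^\flat(s,t):=\Psi_t(u(s,t))$, again a lift of the same gradient trajectory $\tau$ of $\eta_n$. Pushing the Floer equation \eqref{eq:Floer} forward by the fibrewise symplectomorphism $\Psi_t$ and using the flow identity above, one checks that $(\tau,u^\flat)$ solves the Floer equation for $H$ with respect to $(\eta_n,g_n,\nabla_n)$ and the push‑forward almost complex structure $\Psi_{t*}J$, which is again fibrewise and of contact type because each $f(b)(t)\in G$ preserves $\theta$ and preserves $r$ on the cylindrical end (the $G$‑action there being a product) --- up to a Hamiltonian perturbation term in the $\partial_s$‑direction measuring the failure of $\Psi_t$ to preserve $\nabla_n$. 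This perturbation vanishes as $s\to\pm\infty$, precisely because $f$ is locally constant near $\critp(\eta_n)$, so it leaves the Floer complex unchanged and alters the differential only by a chain homotopy equal to the identity on the ends; the perturbed moduli inherit the $C^0$ a priori bounds and Gromov compactness of \S\ref{s:functorialbasic} via the bijection $u\leftrightarrow u^\flat$ and the hypothesis that $f^*H$ is compatible with $(\eta_n,g_n,\nabla_n)$. A direct computation with \eqref{eq:topE} (using the definition \eqref{eq:Fstar} of $f^*H$ and mean‑normalisation of $K_{f,n}$) shows the $q$‑weights of $(\tau,u)$ and $(\tau,u^\flat)$ agree, and signs agree via the canonical isotopy $\Psi_t$. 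Invoking invariance of Floer cohomology under the almost complex structure (Lemma~\ref{l:Floercohomology}) and the monotone‑homotopy arguments of Lemma~\ref{l:inde_kappa}, this yields \eqref{eq:correlation2}.

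Finally, for \eqref{eq:correlation}: by Proposition~\ref{p:accelerator} the admissible base triples, Hamiltonians, the map $f$, and hence the automorphisms $\Psi$ are chosen compatibly with the inclusions $P_{B,n}\hookrightarrow P_{B,n+1}$; since $\mathcal{C}_f$ acts tautologically on generators it commutes with the pull‑back maps $CF^*(P_{B,n+1},\cdot)\to CF^*(P_{B,n},\cdot)$, which are restrictions to quotient complexes (Example~\ref{e:tautquotient}), so taking $\varprojlim_n$ of \eqref{eq:correlation2} gives \eqref{eq:correlation}. The main obstacle is the third step: making precise how ``untwisting by $f$'' interacts with the connection $\nabla_n$ --- identifying and controlling the correction term above and checking that the resulting data (equivalently, $H$ together with the push‑forward connection and almost complex structure) stays within the admissible class, so that Floer cohomology does not see the change. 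This is exactly where the elaborate construction of \S\ref{ss:GoodClass}--\S\ref{ss:tauto} is used: the hypothesis that \emph{both} $H$ and $f^*H$ are compatible with $(\eta_n,g_n,\nabla_n)$, arranged via the slope function $\bfs$ of Lemma~\ref{l:equivariantS}, is what makes this correction harmless.
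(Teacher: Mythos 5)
Your proposal follows the same route as the paper: realize $\mathcal{C}_f$ via the fibrewise diffeomorphism $\Phi_t$ (your $\Psi$), transport generators via Lemma~\ref{l:pullbackorbit}, apply Lemma~\ref{l:gradShift} for the grading shift, match moduli spaces by pushing Floer solutions forward with the pull-back almost complex structure $\Phi_t^*J$, and verify that the Novikov weights agree (the paper does this via Lemma~\ref{l:topEnergy}; you redo the computation). The one place you diverge is the moduli-matching step: the paper simply asserts that $u\mapsto \Phi_t^{-1}\circ u$ is an \emph{exact} bijection of moduli spaces, whereas you insert an additional perturbation term measuring the failure of $\Phi_t$ to preserve $\nabla_n$ and then try to argue it is harmless.

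The worry you raise is legitimate --- a priori the vertical component of $D\Phi_t^{-1}$ applied to the $\nabla_n$-horizontal lift of $\dot\tau(s)$ is an extra term in the Floer equation for $\Phi_t^{-1}\circ u$, and it vanishes exactly where $\Phi_t$ preserves the horizontal distribution of $\nabla_n$ along $\tau$. Lemma~\ref{l:covariant constant} (covariant constancy of $K_{f,n}$) guarantees this for trajectories of $\eta_n'$, but the complex is built from the Morsified $\eta_n$, whose trajectories can have a nontrivial $B$-component inside $U_n$. However, your resolution does not close the step. A perturbation that decays as $s\to\pm\infty$ produces a new differential $\partial'$ on the same graded space; the comparison of $(CF,\partial)$ with $(CF,\partial')$ is via a continuation map, which is a quasi-isomorphism, not a chain-level identity, and the phrase ``alters the differential only by a chain homotopy equal to the identity on the ends'' is not a well-formed statement. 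The paper's proof is explicitly cochain-level --- see the Remark following the proposition, which uses the identification $CF(P_{B,n},f^*H)\simeq CF(P_{B,n};H)$ to \emph{define} $HF(P_{B,n},f^*H)$ when $f^*H$ is not compatible --- and a quasi-isomorphism cannot serve that role. The claim that the perturbed moduli ``inherit the $C^0$ a priori bounds\dots\ via the bijection $u\leftrightarrow u^\flat$'' is also circular once the bijection is admitted to be inexact. To genuinely close the step one must show the correction term vanishes identically along the trajectories that enter the differential (which is what the good-pair condition and the careful choice of admissible base triple in \S\ref{ss:GoodClass} are designed to arrange), not perturb it away.
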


\begin{rem}
In fact, Proposition \ref{p:correlationproof} is true even if $H$ is compatible with $(\eta_n,g_n,\nabla_n)$ but $f^*H$ is not.
The only difference is that when $f^*H$ is not compatible with  $(\eta_n,g_n,\nabla_n)$, we cannot apply Lemma \ref{l:Floercohomology} to get the well-definedness of $HF(P_{B,n}, f^*H)$.
But as we shall see from the proof below, the identification $CF(P_{B,n}, f^*H)  \simeq CF(P_{B,n};H)$ proves that $HF(P_{B,n}, f^*H)$ is well-defined in a posterori.
\end{rem}

\begin{proof}[Proof of Proposition \ref{p:correlationproof}]


The strategy to prove \eqref{eq:correlation2} is to establish a cochain level isomorphism with respect to appropriate auxilary data.
Let $\tilde{\Phi}_t: B \times ET \times M \to B \times ET \times M$ be
$\tilde{\Phi}_t(b,y,m)=(b,y,f(b)(t)m)$.
It satisfies
\begin{align}
\tilde{\Phi}_{t}(gb,gy,gm)=(gb,gy,f(gb)(t)gm)=(gb,gy,gf(b)(t)m)
\end{align}
so it descends to a map $\Phi_t:P_B \to P_B$ for all $t \in S^1$.

By Lemma \ref{l:pullbackorbit}, $\Phi_t$ provides a bijection between the generators of 
$CF(P_{B,n}, f^*H)$ and $CF(P_{B,n}, H)$.

Let $(J_b)_{b \in B_{n}}$ be a generic $S^1$-dependent fibrewise almost complex structure on $P_{B,n}$ that is compatible with the fibrewise symplectic form and is of contact type.
The differential in the Floer complex  $CF(P_{B,n};H)$ is defined by counting solutions of coupled equations with gradient trajectories on $BT_n$ with respect to $(\eta_n,g_n)$, and Floer solution liftings of the gradient trajectories with respect to $(\nabla_n,H,J)$.

If we use $(\Phi_t^*J_b)_{b \in B_{n}}$ to define the Floer differential of $CF(P_{B,n}, f^*H)$, then we will have a bijective correspondence between the solutions contributing to the differential of $CF(P_{B,n}, f^*H)$ and solutions contributing to the differential of $CF(P_{B,n};H)$.
More precisely, if $(\tau(s),u(s,t))$ is a solution contributing to the differential of $CF(P_{B,n}, H)$, then 
$(\tau(s),(\Phi_t)^{-1} \circ u(s,t))$ will be a solution contributing to the differential of $CF(P_{B,n}, f^*H)$.

Moreover, by trivializing $P_B$ along $\tau$ and applying the Lemma \ref{l:topEnergy} below, we see that the correspondence preserves the topological energy $E(\tau, (\Phi_t)^{-1} \circ u(s,t))=E(\tau, u(s,t))$.
Therefore, we get the isomorphism (see Lemma \ref{l:gradShift} for the grading shift).
\end{proof}

\begin{lem}\label{l:topEnergy}
Let $u:\mathbb{R} \times S^1 \to M$ be a Floer solution with respect to $H=(H_{s,t})_{(s,t) \in \mathbb{R} \times S^1} \in C^{\infty}(\mathbb{R} \times S^1 \times M)$. 
Let $\gamma \in \Omega G$ and $v(s,t)=\gamma(t)^{-1}u(s,t)$.
Then $E(v)=E(u)$.
\end{lem}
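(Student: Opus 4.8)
The plan is to show that the two topological energies agree term by term by exploiting the fact that $\gamma\in\Omega G$ is a loop of Hamiltonian symplectomorphisms whose generating function $K_\gamma$ governs precisely the difference between the two Floer equations. First I would recall that $v(s,t)=\gamma(t)^{-1}u(s,t)$ is a Floer solution for the Hamiltonian $H'_{s,t}(x) := H_{s,t}(\gamma(t)\cdot x) - K_{\gamma,t}(\gamma(t)\cdot x)$, which follows exactly as in Lemma \ref{l:pullbackorbit} (the pointwise statement there, applied with the $s$-dependence carried along, since the change of variables is purely in the $M$-factor and does not see $s$). In particular the asymptotic orbits $x_0(t), x_1(t)$ of $u$ correspond to the asymptotics $\gamma(t)^{-1}x_0(t), \gamma(t)^{-1}x_1(t)$ of $v$.

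The key computation is for the area term $\int_{\mathbb{R}\times S^1}v^*\omega$ versus $\int_{\mathbb{R}\times S^1}u^*\omega$. Since $v(s,t)=\psi_t(u(s,t))$ where $\psi_t=\gamma(t)^{-1}$ is a time-dependent \emph{symplectomorphism}, I would write $v^*\omega = u^*(\psi_t^*\omega)$, but because $\psi_t$ varies in $t$ one picks up an extra term: $v^*\omega = u^*\omega + d\big(u^*(\iota_{X_t^{\psi}}\omega)\wedge dt\big) + (\text{terms involving } \partial_t)$, where $X_t^\psi$ generates the path $\psi_t$. The cleaner route is to use the generating Hamiltonian directly: by Lemma \ref{l:compoHam}(2), $\psi_t=\gamma(t)^{-1}$ is generated by $(t,m)\mapsto -K_{\gamma,t}(\gamma(t)\cdot m)$, so the flux/primitive of $\psi_t^*\omega - \omega$ is controlled by $d K_{\gamma,t}$. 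Integrating over $\mathbb{R}\times S^1$ and applying Stokes, the bulk discrepancy between the two area integrals collapses to boundary integrals over the asymptotic ends, namely $\int_0^1 K_{\gamma,t}(x_0(t))\,dt - \int_0^1 K_{\gamma,t}(x_1(t))\,dt$, plus the correction to the Hamiltonian term $\int_0^1 H_{s,t}$-contributions, which must be evaluated using $H'$ at the new orbits. Here one uses that $K_\gamma$ is mean-normalized so there is no ambiguous constant.

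Assembling: $E(v) = \int v^*\omega + \int_0^1 H'_{-\infty,t}(\gamma(t)^{-1}x_0(t))\,dt - \int_0^1 H'_{+\infty,t}(\gamma(t)^{-1}x_1(t))\,dt$. Substituting $H'_{s,t}(\gamma(t)^{-1}x) = H_{s,t}(x) - K_{\gamma,t}(x)$ at $x=x_0(t)$ and $x=x_1(t)$, the $K_\gamma$-terms coming from the Hamiltonian part exactly cancel the $K_\gamma$-boundary terms produced by the Stokes computation of $\int v^*\omega - \int u^*\omega$, leaving $E(v)=\int u^*\omega + \int_0^1 H_{-\infty,t}(x_0(t))\,dt - \int_0^1 H_{+\infty,t}(x_1(t))\,dt = E(u)$. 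The main obstacle is the first step: getting the sign and the boundary contributions right in the computation of how $\int v^*\omega$ differs from $\int u^*\omega$ under the time-dependent symplectomorphism $\gamma(t)^{-1}$; this is a standard but delicate Stokes/flux argument (it is precisely the content of \cite[Lemma 2.3]{SeidelRep} and surrounding discussion), and the cancellation with the Hamiltonian correction term is what forces the mean-normalization convention on $K_\gamma$ to be the right one. Everything else is bookkeeping once that identity is in hand.
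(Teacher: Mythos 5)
Your proposal is correct and follows essentially the same route as the paper: you compute $\int v^*\omega-\int u^*\omega$ by the chain rule for the $t$-dependent symplectomorphism $\gamma(t)^{-1}$, recognize the extra term as $d(-K_{\gamma,t}\circ u)(\partial_s u)$, integrate it out in $s$ to get boundary contributions in $K_\gamma$, and then cancel these against the $K_\gamma$-corrections coming from replacing $H$ by the pulled-back Hamiltonian $H'$ in the Hamiltonian part of the energy. The paper's proof is simply the fully explicit version of this Stokes/FTC-in-$s$ computation, so there is no meaningful divergence in method.
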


\begin{proof}
We have
\begin{align*}
\partial_sv(s,t)&=(D\gamma(t))^{-1} \partial_su(s,t) \\
\partial_tv(s,t)&=(D\gamma(t))^{-1} \partial_tu(s,t) + X_{\gamma(t)^{-1}}(v(s,t)) 
\end{align*}
where $X_{\gamma(t)^{-1}}$ is the Hamiltonian vector field generating $\gamma^{-1}$ at time $t$ (recall from Lemma \ref{l:compoHam}(2) the generating Hamiltonian of $\gamma^{-1}$).
Therefore, 
\begin{align*}
&\int_{\mathbb{R} \times S^1} v^* \omega\\
=&\int_{-\infty}^{\infty} \int_0^1 \omega((D\gamma(t))^{-1} \partial_su(s,t), (D\gamma(t))^{-1} \partial_tu(s,t) + X_{\gamma(t)^{-1}}(v(s,t) ) dt ds\\
=&\int_{\mathbb{R} \times S^1} u^* \omega + \int_{-\infty}^{\infty} \int_0^1 \omega(\partial_su(s,t), (D\gamma(t)) X_{\gamma(t)^{-1}}(v(s,t))dt ds\\
=&\int_{\mathbb{R} \times S^1} u^* \omega + \int_{-\infty}^{\infty} \int_0^1 d(-K_{\gamma,t}(u(s,t)))(\partial_su(s,t))dt ds\\
=&\int_{\mathbb{R} \times S^1} u^* \omega + \int_0^1 K_{\gamma,t}(u(-\infty,t)) dt - \int_0^1 K_{\gamma,t}(u(\infty,t)) dt
\end{align*}
where $u(\pm \infty,t):= \lim_{s \to \pm \infty} u(s,t)$.
As a result,
\begin{align*}
&E(v)\\
=&\int_{\mathbb{R} \times S^1} v^* \omega +  \int_0^1 (H_{-\infty,t}-K_{\gamma,t})(\gamma(t)v(-\infty,t)) dt - \int_0^1 (H_{\infty,t}-K_{\gamma,t})(\gamma(t)v(\infty,t)) dt\\
=& \int_{\mathbb{R} \times S^1} u^* \omega +  \int_0^1 H_{-\infty,t}(u(-\infty,t)) dt - \int_0^1 H_{\infty,t}(u(\infty,t)) dt\\
=&E(u)
\end{align*}

\end{proof}

The isomorphism \eqref{eq:correlation2} is compatible with the product structure as follows.

\begin{prop}\label{p:correlationprod}
Let $B$ be a finite dimensional closed smooth $T$-manifold.
Let $H$ be a cylindrical Hamiltonian of $P_B$ such that both $f^*H$ and $H$ are compatible with $(\eta_n,g_n,\nabla_n)$.
Let $H'$ be another cylindrical Hamiltonian of $P_B$ that is of constant slope $\bfs'$.
Let $H''$ be a cylindrical Hamiltonian of $P_B$ such that both $f^*H''$ and $H''$ are compatible with $(\eta_n,g_n,\nabla_n)$,
 $\max \bfs_H +\max \bfs_{H'} \le \min \bfs_{H''}$ and $\max \bfs_{f^*H} +\max \bfs_{H'} \le \min \bfs_{f^*H''}$.
Then the following diagram commutes
\begin{equation*}
\begin{tikzcd}
HF(P_{B,n}, f^*H) \times HF(P_{B,n},H') \arrow{r} \arrow{d}
& HF(P_{B,n}; f^*H'') \arrow{d}
\\
HF(P_{B,n}, H) \times HF(P_{B,n},H') \arrow{r} 
& HF(P_{B,n}; H'')
\end{tikzcd}
\end{equation*}
where the horizontal maps are pair-of-pants products, and vertical maps are continuation maps.
\end{prop}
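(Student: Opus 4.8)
The plan is to establish the square on the cochain level, for a judiciously matched pair of auxiliary data, by combining the conjugation trick from the proof of Proposition~\ref{p:correlationproof} with the pair-of-pants construction of Section~\ref{s:product_structure}. Write $\mu_{\mathrm{bot}}$ and $\mu_{\mathrm{top}}$ for the product operations in the bottom and top rows; by Lemma~\ref{l:inde_prod} each is well defined on cohomology independently of its auxiliary data, so it suffices to compute them with \emph{one} convenient choice for each. First I would fix a valid choice of auxiliary data computing $\mu_{\mathrm{bot}}$: an admissible base triple $(\eta_n,g_n,\nabla_n)$, a gradient flow perturbation datum $\{X_e\}$ on $T_{2,1}$, and a fibrewise perturbation one-form $\mathbf H^{\mathrm{bot}}\otimes\beta\in\Omega^1(\Sigma,C^\infty(S^1\times P_{B,n}))$ together with a domain-dependent contact-type almost complex structure $(J^{\mathrm{bot}}_z)_{z\in\Sigma}$ as in Section~\ref{s:product_structure}, with punctures $1,2,0$ of $\Sigma=\mathbb P^1\setminus\{0,1,2\}$ carrying $H,H',H''$ respectively.

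The new ingredient is a domain-dependent conjugation. Let $\tilde\Phi_t(b,y,m)=(b,y,f(b)(t)m)$ be the fibrewise diffeomorphism from the proof of Proposition~\ref{p:correlationproof}; since $f(b)$ is a based loop, $\Phi_0=\Phi_1=\mathrm{id}$, so $t\mapsto\Phi_t$ is a loop, based at the identity, of fibrewise maps arising from the $G$-action. I would choose a smooth family $(\Phi_z)_{z\in\Sigma}$ of such maps (i.e.\ $\Phi_z(b,y,m)=(b,y,g_z(b)m)$ with $g_z\colon B\to G$ a $T$-equivariant map) with $\Phi_{\epsilon_1(s,t)}=\Phi_t$ for $s\gg0$, $\Phi_{\epsilon_0(s,t)}=\Phi_t$ for $s\ll0$, and $\Phi_{\epsilon_2(s,t)}=\mathrm{id}$ for $s\gg0$. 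Such a family exists: viewing $\Sigma$ as a pair of pants with boundary circles $\partial_0,\partial_1,\partial_2$, the potential obstruction to extending these prescribed boundary values across $\Sigma$ is the boundary relation $[\Phi_\bullet|_{\partial_0}]=[\Phi_\bullet|_{\partial_1}]\,[\Phi_\bullet|_{\partial_2}]$ in the fundamental group of the relevant mapping space; here it reads $[f]=[f]\cdot 1$ and holds precisely because puncture~$2$ is assigned the trivial loop, and there is no further obstruction since $\Sigma$ deformation retracts onto a graph. Conjugating $\mathbf H^{\mathrm{bot}}\otimes\beta$ and $(J^{\mathrm{bot}}_z)$ by $(\Phi_z)$ (leaving $(\eta_n,g_n,\nabla_n)$ and $\{X_e\}$ unchanged) produces auxiliary data for $\mu_{\mathrm{top}}$: near punctures $0,1$ the Hamiltonian turns into $f^*H'',f^*H$ by exactly the computation of Proposition~\ref{p:correlationproof}, including the mean-normalized correction $-K_{f(b)}$; near puncture~$2$ it remains $H'$; and over the core region of $\Sigma$ (which $\pi_\Sigma$ collapses to the trivalent vertex) the Hamiltonian part is unchanged, because a fibrewise $G$-invariant Hamiltonian is fixed under conjugation by a fibrewise $G$-action map, so the trivalent-vertex condition of Section~\ref{s:product_structure} persists, while the Maurer--Cartan type correction from the variation of $\Phi_z$ contributes a cylindrical $\mathfrak g$-valued one-form which one arranges to be compatible with the maximum principle using the slope hypotheses. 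With these matched data, $u\mapsto(z\mapsto\Phi_z^{-1}(u(z)))$ is a bijection from the moduli space of $\mu_{\mathrm{bot}}$-solutions asymptotic to $(\vec x_0,\vec x_1,\vec x_2)$ onto that of $\mu_{\mathrm{top}}$-solutions asymptotic to $(\Phi_\bullet^{-1}\vec x_0,\Phi_\bullet^{-1}\vec x_1,\vec x_2)$, and an energy identity proved exactly as Lemma~\ref{l:topEnergy} (now integrated over $\Sigma$ rather than the strip, again using that $K_{f(b)}$ is mean-normalized) shows it preserves the topological energy, hence the $q$-weights and the signs match up.

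Consequently the cochain-level operation $\mu_{\mathrm{top}}$ computed with the conjugated data is intertwined with $\mu_{\mathrm{bot}}$ computed with $\mathbf H^{\mathrm{bot}}\otimes\beta$ via the tautological cochain isomorphisms $\mathcal C_f$ of Proposition~\ref{p:correlationproof} on the first-input and output complexes and the identity on the second-input complex --- these being exactly the vertical maps of the diagram. Passing to cohomology this is the asserted commutative square (with the degree shift by $\iota(f)$ as in Lemma~\ref{l:gradShift}), and Lemma~\ref{l:inde_prod} makes the conclusion independent of the choices made. I expect the main obstacle to be checking that the conjugated auxiliary data genuinely fall within the framework of Section~\ref{s:product_structure}: that $(\Phi_z)_{z\in\Sigma}$ can be chosen with the stated boundary behaviour (which, as noted, relies on puncture~$2$ carrying the trivial loop) and, more delicately, that the resulting perturbation one-form on $\Sigma$ --- including the correction term supported over the core and the transition regions --- still has cylindrical Hamiltonian and a slope one-form whose exterior derivative is $\le 0$, so that Proposition~\ref{p:moduliD} applies. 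Everything else is a routine transcription of the proof of Proposition~\ref{p:correlationproof}.
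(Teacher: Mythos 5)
Your proposal is correct and follows the same strategy as the paper's sketch: construct a domain-dependent conjugation $\Phi_z$ over $\Sigma$ that restricts to the strip conjugation of Proposition~\ref{p:correlationproof} over the cylindrical ends $\epsilon_0,\epsilon_1$ and to the identity over $\epsilon_2$ (using that $f(b)$ is a based loop so that no homotopical obstruction arises), and then intertwine the auxiliary data for the two pair-of-pants operations by $\Phi$, with well-definedness of the products supplied by Lemma~\ref{l:inde_prod}. The only cosmetic difference is that the paper realizes the interpolating family by the concrete ``puncture a strip-invariant conjugation'' construction, while you phrase the extension as an obstruction-theoretic statement on $\pi_1$ of the mapping space; both are routes to the same $\Phi$.
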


\begin{proof}[Sketch of proof]
Similar to the proof of Proposition \ref{p:correlationproof}, we want to construct $\Phi=(\Phi_{z,t}: P_B \to P_B)_{z \in \Sigma, t \in S^1}$ to give a chain level identification of the product operation on the first row and on the second row. 
More precisely, over the first positive cylindrical end and the negative cylindrical end of $\Sigma$, we choose 
$\tilde{\Phi}_{z,t}: B \times ET \times M \to B \times ET \times M$ to be
$\tilde{\Phi}_{\epsilon_i(s,t),t}(b,y,m)=(b,y,f(b)(t)m)$.
Over the second positive cylindrical end, we choose $\tilde{\Phi}_{\epsilon_2(s,t),t}(b,y,m)=(b,y,m)$.
There is no homotopical obstruction to extends the definition of $\tilde{\Phi}_{z,t}$ over the complement of cylindrical ends and at the same time having $\tilde{\Phi}_{z,t}$ to be $T$-equivariant for all $z,t$.
For example, we can start with a $s$-invariant $\tilde{\Phi}$ over a cylinder and then isotope it such that it is the identity in some $s$-invariant neighborhood. Adding a puncture in this  $s$-invariant neighborhood and introducing a cylindrical end $\epsilon_2$ near the puncture will give what we want.
The $T$-equivariant $\tilde{\Phi}$ descends to $\Phi$.
We can choose all the auxiliary data in the definition of the two product operations to be intertwined by $\Phi$.


\end{proof}

\begin{defn}
Let $(f,\eta_B')$ be a good pair.
For each $a \in \mathbb{R}_{>0} \setminus \Spec(\partial \bar{M},\alpha)$, we can apply Proposition \ref{p:accelerator} to find an $A_f$ with the listed properties.
Together with Proposition \ref{p:correlationproof} and the continuation maps \ref{l:inde_kappa}, we define the following composition map
\begin{align}\label{eq:CsfA}
\cC_{a,f,A_f}:&HF(P_B, a) \to HF(P_B, f^*A_f)  \simeq HF(P_B; A_{f}) \to HF(P_B; a +C_f+\epsilon_a)
\end{align}
where $\epsilon_a \ge 0$ is a small number such that $a +C_f+\epsilon_a \in \mathbb{R}_{>0} \setminus \Spec(\partial \bar{M},\alpha)$.
\end{defn}
In Equation \ref{eq:CsfA}, the first map exists because $\min f^*A_f \ge a$. The second map is the isomorphism in  Proposition \ref{p:correlationproof}. The last map exists because $\max A_f \le  a +C_{f}+\epsilon_a$.

\subsubsection{Independence of choice and functorality}

\begin{lem}\label{l:indepChoiceC}
Let $(f_0, \eta_{B,0}')$ and $(f_1, \eta_{B,1}')$ be good pairs.
Suppose that $c:\R \times B \to \Omega G$ is a $T$-equivariant smooth map such that $c(s,\cdot)=f_0$ for $s \ll 0$ and $c(s,\cdot)=f_1$ for $s \gg 0$.
For $i=0,1$, let $C_{f_i}$ be as in Proposition \ref{p:accelerator}.
For $a_1 \in \mathbb{R}_{>0} \setminus \Spec(\partial \bar{M},\alpha)$ and $a_1+C_{f_1} +\epsilon_{a_1}<a_2 \in \mathbb{R}_{>0} \setminus \Spec(\partial \bar{M},\alpha)$, we let
$A_{f_1}$ and $A_{f_0}$ be as in Proposition \ref{p:accelerator} such that
\[
a_1 \le \bfs_{A_{f_1}}, \bfs_{f_1^*A_{f_1}} \le a_1+C_{f_1}, \; a_2 \le \bfs_{A_{f_0}}, \bfs_{f_0^*A_{f_0}} \le a_2+C_{f_0}
\]
Then we have
\[
\kappa_{a_1+C_{f_1}+\epsilon_{a_1}, a_2+C_{f_0}+\epsilon_{a_2}} \circ \cC_{a_1,f_1,A_{f_1}}=  \cC_{a_2,f_0,A_{f_0}} \circ \kappa_{a_1, a_2}
\]
\end{lem}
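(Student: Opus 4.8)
Set $b_1:=a_1+C_{f_1}+\epsilon_{a_1}$ and $b_2:=a_2+C_{f_0}+\epsilon_{a_2}$. The plan is to unwind the definition \eqref{eq:CsfA} of each map $\cC_{a,f,A_f}$ as the composite of a continuation map $HF(P_B,a)\to HF(P_B,f^*A_f)$, the tautological isomorphism $\cC_f$ of Proposition \ref{p:correlationproof}, and a continuation map $HF(P_B,A_f)\to HF(P_B,a+C_f+\epsilon_a)$, and then to prove the asserted identity by checking that the outer rectangle of the ladder
\begin{equation*}
\begin{tikzcd}
HF(P_B,a_1)\arrow{r}\arrow{d}{\kappa_{a_1,a_2}} & HF(P_B,f_1^*A_{f_1})\arrow{r}{\cC_{f_1}}\arrow{d} & HF(P_B,A_{f_1})\arrow{r}\arrow{d} & HF(P_B,b_1)\arrow{d}\\
HF(P_B,a_2)\arrow{r} & HF(P_B,f_0^*A_{f_0})\arrow{r}{\cC_{f_0}} & HF(P_B,A_{f_0})\arrow{r} & HF(P_B,b_2)
\end{tikzcd}
\end{equation*}
commutes square by square. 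Every unlabelled arrow is a continuation map, and the slope bounds of Proposition \ref{p:accelerator} together with the hypothesis $b_1<a_2$ guarantee that each arrow is defined, i.e. that its source is $\le_{P_B}$ a constant-slope representative of its target. Since $\cC_{f_1}$ and $\cC_{f_0}$ each lower degree by $\iota(f_1)=\iota(f_0)$ (equal because $\iota$ depends only on the homotopy class of the loop family), both horizontal composites have the same degree, so the statement is degree-consistent; moreover the top composite is $\cC_{a_1,f_1,A_{f_1}}$ and the bottom composite is $\cC_{a_2,f_0,A_{f_0}}$, so commutativity of the rectangle is precisely the lemma.

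The first and third squares contain only continuation maps. By Corollary \ref{c:directSystem} and Lemma \ref{l:inde_kappa}, a composite of continuation maps realised by gluing the auxiliary data defining them is again \emph{the} continuation map between its endpoints; hence the two composites around the first square both equal the canonical continuation map $HF(P_B,a_1)\to HF(P_B,f_0^*A_{f_0})$, and the two composites around the third square both equal $HF(P_B,A_{f_1})\to HF(P_B,b_2)$.

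For the middle square we use the homotopy $c$. Let $\Phi^c_{s,t}\colon P_B\to P_B$ be the diffeomorphism descended from $(b,y,m)\mapsto(b,y,c(s,b)(t)\cdot m)$; the $T$-equivariance computation is identical to the one for $\Phi_t$ in the proof of Proposition \ref{p:correlationproof}, and $\Phi^c_{s,t}$ coincides with the map attached to $f_1$ (resp. $f_0$) for $s\gg0$ (resp. $s\ll0$). Choose, as in Lemma \ref{l:inde_kappa}, a monotone homotopy $(K_s)_{s\in\R}$ from $A_{f_1}$ to $A_{f_0}$, a family of admissible base triples $(\eta_s,g_s,\nabla_s)$ interpolating those supplied by Proposition \ref{p:accelerator} for $(f_1,\eta_{B,1}')$ and $(f_0,\eta_{B,0}')$, and a contact-type fibrewise almost complex structure $(J_s)$, so that this data represents the right-hand vertical arrow. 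Then the triple $\big(c(s,\cdot)^*K_s,\,(\eta_s,g_s,\nabla_s),\,(\Phi^c_s)^*J_s\big)$, with $c(s,\cdot)^*K_s$ formed fibrewise by \eqref{eq:Fstar} with $f$ replaced by $c(s,\cdot)$, specialises at $s=+\infty$ and $s=-\infty$ to $f_1^*A_{f_1}$ and $f_0^*A_{f_0}$ with the almost complex structures defining $\cC_{f_1}$ and $\cC_{f_0}$. As in the proof of Proposition \ref{p:correlationproof}, $\Phi^c$ sets up an energy-preserving (Lemma \ref{l:topEnergy}) bijection between the continuation solutions for this triple and those for $(K_s,J_s)$; provided the triple is admissible, it therefore represents the left-hand vertical arrow (by choice-independence, Lemma \ref{l:inde_kappa}) and is intertwined with the right-hand one through $\cC_{f_1}$ and $\cC_{f_0}$, which is the commutativity of the middle square.

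The main obstacle is this admissibility: one must arrange $\partial_s\bfs_{(c(s,\cdot)^*K_s)_{\tau(s)}}\le0$ along gradient trajectories $\tau$ of $\eta_s$, so that the $s$-dependence of the twist $c(s,\cdot)$ does not defeat the maximum principle. This is handled by re-running the construction of Proposition \ref{p:accelerator} over the homotopy: after first deforming $c$ rel endpoints, by an argument as in Lemma \ref{l:TeqHomotope}, so that it is locally constant near the critical submanifolds of an interpolating family of Morse-Bott functions, the finite constant $c_{f,K}$ of \eqref{eq:cfK} is replaced by $\sup_{s}\sup_{b}\max_{(t,m)}|\bfs_{K_{c(s,b),t}}(m)|$, which is again finite since $c$ is eventually constant, and a version of Lemma \ref{l:equivariantS} applied along $c$ produces a slope function for $K_s$ dominating this bound and decreasing monotonically along gradient trajectories; this makes both $K_s$ and $c(s,\cdot)^*K_s$ compatible with $(\eta_s,g_s,\nabla_s)$, and the argument is complete.
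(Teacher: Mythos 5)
Your proposal mirrors the paper's proof closely: the same $4\times 2$ ladder diagram, functoriality for the outer squares via Lemma~\ref{l:inde_kappa} and Corollary~\ref{c:directSystem}, and a $\Phi^c$-twist argument for the middle square, which is exactly what the paper's phrase ``identifying the respective moduli spaces as in the proof of Proposition~\ref{p:correlationproof}'' amounts to. Your degree-consistency remark and the closing paragraph on constructing an admissible slope function along the homotopy $c$ are useful details the paper does not spell out.

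There is, however, a gap in the central claim that ``$\Phi^c$ sets up an energy-preserving bijection between the continuation solutions for this triple and those for $(K_s,J_s)$.'' Whenever $f_0\neq f_1$ the family $\Phi^c_{s,t}$ genuinely depends on $s$; setting $v=(\Phi^c_{s,t})^{-1}(u)$ and differentiating in $s$ produces, besides $D\Phi^{-1}\,\partial_s v$, an extra term $D(\Phi^c_{s,t})^{-1}\!\left(\partial_s\Phi^c_{s,t}\right)(v)$, a Hamiltonian vector field sitting in the $\partial_s$-slot of the Floer equation. Formula~\eqref{eq:Fstar} absorbs only the $\partial_t$-derivative of the twist, because the Hamiltonian perturbation $X_{H_s}\otimes dt$ in~\eqref{eq:Floer2} lives purely in the $dt$-direction; the $\partial_s$-derivative of $\Phi^c$ has nowhere to go. So $v$ actually solves a Floer equation with a one-form-valued Hamiltonian perturbation whose $ds$-component is nonzero on a compact $s$-interval, not the continuation equation for $\left(c(s,\cdot)^*K_s,(\Phi^c_s)^*J_s\right)$, and the moduli-space bijection you invoke (and then feed into Lemma~\ref{l:inde_kappa}, which is stated only for $dt$-direction perturbations) does not literally hold. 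To close the gap one must either enlarge the class of continuation data to one-form-valued Hamiltonian perturbations, as the paper does with $\beta$ for the pair-of-pants operation, so that the $\Phi^c$-twist genuinely is a tautological bijection; or add a homotopy-invariance step switching off the $ds$-perturbation through a one-parameter family of data on which the maximum principle you discuss remains in force. The paper's own proof elides this same point, but your more explicit formulation makes the gap visible.
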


\begin{proof}
The content of the lemma is that the following diagram commutes
\begin{equation*}
\begin{tikzcd}
HF(P_B, a_1) \arrow{r} \arrow{d}
& HF(P_B, f_1^*A_{f_1}) \arrow{r} \arrow{d}
& HF(P_B; A_{f_1}) \arrow{r} \arrow{d}
& HF(P_B; a_1+C_{f_1}+\epsilon_{a_1}) \arrow{d}
\\
HF(P_B, a_2) \arrow{r} 
& HF(P_B, f_0^*A_{f_0}) \arrow{r}
& HF(P_B; A_{f_0}) \arrow{r} 
& HF(P_B; a_2+C_{f_0}+\epsilon_{a_2}) 
\end{tikzcd}
\end{equation*}
In the first row, we are using admissible base triple coming from applying Proposition \ref{p:accelerator} to $(f_1,\eta_{B,1}')$ and in the second row, we are using admissible base triple coming from applying Proposition \ref{p:accelerator} to $(f_0,\eta_{B,0}')$.

Note that, we have
\[
f_1^*A_{f_1} \le_{P_B} f_0^*A_{f_0} \text{ and } A_{f_1} \le_{P_B} A_{f_0}
\]
so the second the third vertical maps are well-defined (see Lemma \ref{l:inde_kappa}).

The left and right squares commute because of the functorality of compatible Hamiltonians (Lemma \ref{l:inde_kappa} and Corollary \ref{c:directSystem}).
The middle square commutes because the two vertical maps can be tautologically identified by identifying the respective moduli spaces as in the proof of Proposition \ref{p:correlationproof}.
\end{proof}

As a consequence of Lemma \ref{l:indepChoiceC}, we obtain the following corollary

\begin{cor}
Suppose that $(f,\eta_B')$ is a good pair.
Then the maps $\{\cC_{c,f,A_f}\}_{c \in \mathbb{R}}$ induce a well-defined map
\begin{align}
\cC_{f}: SH^*(P_B) \to SH^*(P_B). \label{eq:directLim}
\end{align}
which is independent of the choice of admissible base triples, $A_f$ and the representative in the $T$-equivariant homotopy class of $f$.
\end{cor}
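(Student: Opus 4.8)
The plan is to recognise the family $\{\cC_{c,f,A_f}\}_{c}$ as a morphism of the directed system computing $SH^*(P_B)=\varinjlim_c HF^*(P_B,c)$, and then to read off each independence assertion from Lemma~\ref{l:indepChoiceC}. Fix a good pair $(f,\eta_B')$ together with the admissible base triple and the Hamiltonians $A_f$ furnished by Proposition~\ref{p:accelerator}. For each admissible slope $c$ the map $\cC_{c,f,A_f}$ of \eqref{eq:CsfA} has target $HF^{*-\iota(f)}(P_B,c+C_f+\epsilon_c)$, so composing with the canonical map to the colimit yields $\phi_c\colon HF^*(P_B,c)\to SH^{*-\iota(f)}(P_B)$. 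To see that the $\phi_c$ are compatible with the continuation maps $\kappa_{c_1,c_2}$ --- and hence descend to the colimit --- I would invoke Lemma~\ref{l:indepChoiceC} with $f_0=f_1=f$ and the constant homotopy: it gives $\kappa\circ\cC_{c_1,f,A_f}=\cC_{c_2,f,A_f}\circ\kappa_{c_1,c_2}$ whenever $c_2>c_1+C_f+\epsilon_{c_1}$, and since such pairs are cofinal and the canonical maps into $SH^*(P_B)$ intertwine the $\kappa$'s, this forces $\phi_{c_2}\circ\kappa_{c_1,c_2}=\phi_{c_1}$ for all $c_1\le c_2$. The induced colimit map is $\cC_f\colon SH^*(P_B)\to SH^{*-\iota(f)}(P_B)$.

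For independence of the auxiliary data I would pass to a common refinement. Given two admissible base triples --- this includes the underlying Morse--Bott function $\eta_B'$, since the triple of Proposition~\ref{p:accelerator} is built from it --- and two choices $A_f^{(0)},A_f^{(1)}$ at a slope $c$, choose a large slope $c'>c+C_f+\epsilon_c$ with any associated choice $A_f^{(2)}$. Applying Lemma~\ref{l:indepChoiceC} twice (once with the $i$-th data on the bottom row and the $(2)$-data on the top row, $i=0,1$) gives $\cC_{c',f,A_f^{(2)}}\circ\kappa_{c,c'}=\kappa\circ\cC_{c,f,A_f^{(i)}}$; composing with the canonical map to $SH^*(P_B)$ and using its $\kappa$-compatibility, both $\cC_{c,f,A_f^{(0)}}$ and $\cC_{c,f,A_f^{(1)}}$ induce the same map on the colimit. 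Hence $\cC_f$ does not depend on these choices.

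For invariance under the $T$-equivariant homotopy class of $f$: if $f$ and $\tilde f$ are $T$-equivariantly homotopic, Lemma~\ref{l:TeqHomotope} lets me take representatives lying in good pairs, and a $T$-equivariant homotopy between them, reparametrised to be constant near the ends and smoothed, produces a $T$-equivariant smooth $c\colon\mathbb{R}\times B\to\Omega G$ of the type required by Lemma~\ref{l:indepChoiceC}. Since $\iota$ is a homotopy invariant (it is twice a Maslov index, cf.\ the discussion before Lemma~\ref{l:gradShift}), $\iota(f)=\iota(\tilde f)$, so the two constructions land in the same graded group; Lemma~\ref{l:indepChoiceC} then supplies, for suitable $c_1<c_2$, a commuting square relating $\cC_{c_1,\cdot,\cdot}$ for one of $f,\tilde f$ to $\cC_{c_2,\cdot,\cdot}$ for the other, and passing to the colimit identifies $\cC_f$ with $\cC_{\tilde f}$.

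The whole argument is a diagram chase over directed systems once Lemma~\ref{l:indepChoiceC} is in hand. I expect the only genuinely delicate point to be the construction of $c$ in the last step: upgrading an abstract $T$-equivariant homotopy between two good-pair representatives to a smooth map $\mathbb{R}\times B\to\Omega G$ that is \emph{literally} $s$-independent near $\pm\infty$. This is a routine equivariant smoothing, but it is the one place where one must check that neither the smoothness requirement nor the good-pair hypothesis obstructs the interpolation.
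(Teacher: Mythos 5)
Your proposal is correct and follows the same approach as the paper: use Lemma~\ref{l:indepChoiceC} as the central tool, take $f_0=f_1=f$ along a cofinal chain of slopes $a_{k+1}>a_k+C_f+\epsilon_{a_k}$ to build a morphism of directed systems, and then reapply Lemma~\ref{l:indepChoiceC} for each of the three independence claims. Your extra remarks on the homotopy invariance of $\iota(f)$ and on producing a smooth, eventually-constant $T$-equivariant homotopy $\R\times B\to\Omega G$ are correct supporting details that the paper leaves implicit.
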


\begin{proof}
Recall that $SH^*(P_B):=\varinjlim_{a} HF(P_B, a)$.
To define the map, we take $f_0=f_1=f$ in Lemma \ref{l:indepChoiceC}.
We can take a sequence $(a_k)_{k \in \mathbb{N}}$ such that $a_{k+1}> a_k +C_f+\epsilon_{a_k}$ for all $k$.
Applying Lemma \ref{l:indepChoiceC} to $a_k$ for all $k$, we get a sequence of commutative diagrams which induces a map $\cC_{f}$ on the direct limit \eqref{eq:directLim}.
To see that $\cC_f$ is independent of choices, we use the fact that Lemma \ref{l:indepChoiceC} is true for any $a_2 >a_1+C_{f}+\epsilon_{a_1}$.
Therefore, we can apply Lemma \ref{l:indepChoiceC} to compare $\cC_f$ with respect to two different choices. The commutativity of the diagrams and the functorality of continuation maps  (Lemma \ref{l:inde_kappa} and Corollary \ref{c:directSystem}) imply that $\cC_f$ is independent of choices as claimed.

\end{proof}

\begin{lem}\label{l:cfprod}
For any $z_1,z_2 \in SH^*(P_B)$, we have
$\cC_f(z_1z_2)=\cC_f(z_1)z_2$.
\end{lem}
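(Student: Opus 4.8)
The statement $\cC_f(z_1 z_2) = \cC_f(z_1) z_2$ asserts that the tautological isomorphism, once promoted to a map on symplectic cohomology via the acceleration maps in \eqref{eq:CsfA}, is a module map over $SH^*(P_B)$ in the second variable. The plan is to reduce this to the cochain-level compatibility of the tautological isomorphism with the pair-of-pants product established in Proposition \ref{p:correlationprod}, together with the compatibility of the product with continuation maps (Proposition \ref{p:prodProperty}) and the independence-of-choice statements already proved.

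\smallskip

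\textbf{Step 1: Unwind the definition.} Recall from \eqref{eq:CsfA} that $\cC_f$ on $SH^*(P_B) = \varinjlim_a HF(P_B,a)$ is built, for each $a$, from the composite $HF(P_B,a) \to HF(P_B, f^*A_f) \xrightarrow{\cC_f} HF(P_B; A_f) \to HF(P_B; a+C_f+\epsilon_a)$, where $A_f$ is the accelerator Hamiltonian of Proposition \ref{p:accelerator}. So I must represent $z_1 \in HF(P_B, a_1)$ and $z_2 \in HF(P_B, a_2)$ at finite slope, compute $\cC_f(z_1 z_2)$ via an accelerator $A_f''$ dominating $A_f$ and (the pullback under $f$ of) a product-level slope bound, and compare with $\cC_f(z_1) \cdot z_2$.

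\smallskip

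\textbf{Step 2: Apply Proposition \ref{p:correlationprod}.} Choose slopes $a_1, a_2$ with $a_0 := a_1 + a_2$ valid, and choose an accelerator $H = A_f$ for $f$ at slope (roughly) $a_1$, a Hamiltonian $H'$ of constant slope near $a_2$ (no twisting needed in the second slot — this is the key asymmetry, reflected in the fact that in the sketch of Proposition \ref{p:correlationprod} one takes $\tilde\Phi_{\epsilon_2(s,t),t}$ to be the identity on the second input), and an accelerator $H'' = A_f''$ for $f$ at slope dominating $a_1 + a_2 + C_f$. Proposition \ref{p:correlationprod} then gives a commuting square identifying the product $HF(P_{B,n}, f^*H) \times HF(P_{B,n}, H') \to HF(P_{B,n}; f^*H'')$ with the corresponding product after applying $\cC_f$ in the first input and the output. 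Passing to the inverse limit over $n$ (the squares are compatible with pullback, as in the construction of the product on $SH^*_K(P)$) and then applying Lemma \ref{l:inde_prod} to identify the product on $HF$ with finite-slope accelerators with the product on $SH^*(P_B)$, this square reads exactly as: $\cC_f(\mu^2(z_1, z_2)) = \mu^2(\cC_f(z_1), z_2)$ at the level of the direct system.

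\smallskip

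\textbf{Step 3: Independence of choices and the colimit.} I must check the identity is compatible with the maps in the directed system defining $SH^*(P_B)$, i.e. that the choices of $A_f$, $A_f''$, $H'$, and the perturbation data can be made coherently as $a_1, a_2 \to \infty$; this follows from Lemma \ref{l:indepChoiceC} (independence of $\cC_{c,f,A_f}$ of the accelerator), the compatibility of the product with continuation maps (third bullet of Proposition \ref{p:prodProperty}), and the fact that the spaces of auxiliary data are convex, hence the comparison cobordisms of Proposition \ref{p:correlationprod} admit the required maximum principle throughout. Taking the colimit then yields $\cC_f(z_1 z_2) = \cC_f(z_1) z_2$ on $SH^*(P_B)$.

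\smallskip

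\textbf{Main obstacle.} The genuinely delicate point is not the cobordism/independence bookkeeping but ensuring that one can simultaneously choose accelerators $A_f$ (dominating $a_1$) and $A_f''$ (dominating $a_0 = a_1 + a_2$ \emph{and} with $f^*A_f''$ dominating $f^*A_f$ plus the slope of $H'$) so that \emph{all four} Hamiltonians $A_f, f^*A_f, A_f'', f^*A_f''$ are simultaneously compatible with a \emph{single} admissible base triple $(\eta_n, g_n, \nabla_n)$ of the good-pair type from \S\ref{ss:GoodClass}, and so that the slope inequalities $\max \bfs_{A_f} + \max \bfs_{H'} \le \min \bfs_{A_f''}$ and $\max \bfs_{f^*A_f} + \max \bfs_{H'} \le \min \bfs_{f^*A_f''}$ of Proposition \ref{p:correlationprod} both hold. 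This is exactly what the uniform bound $C_f$ in Proposition \ref{p:accelerator} — depending only on $(f, \eta_B')$ and not on the base slope — is designed to make possible: one picks $a_0$ large enough (e.g. $a_0 > a_1 + a_2 + C_f + c_{f,K}$) that the accelerator constructed at base slope $a_0 - C_f$ dominates everything in sight. Verifying these slope arithmetic constraints carefully, using that $\bfs_{f^*H} \le \bfs_H + c_{f,K}$ pointwise, is the crux; everything else is a repackaging of Proposition \ref{p:correlationprod}.
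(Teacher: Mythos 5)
Your proof is correct and takes essentially the same approach as the paper: the paper's proof of Lemma~\ref{l:cfprod} is a one-line appeal to Proposition~\ref{p:correlationprod} together with compatibility with continuation maps, and your argument is a careful unpacking of exactly that appeal. The slope arithmetic in your ``Main obstacle'' paragraph---using that the admissible base triple in Proposition~\ref{p:accelerator} depends only on the good pair while $C_f$ is a uniform constant---is the right bookkeeping and matches what the authors implicitly assume.
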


\begin{proof}
It follows from Proposition \ref{p:correlationprod} and the compatibility with continuation maps.
\end{proof}

\subsection{Construction over a $T$-equivariant cycle}\label{ss:Gcycle}

The goal of this section is to complete the construction of \eqref{eq:interSeidel2} following the outline in Section \ref{ss:overview}.
The main ingredients are Proposition \ref{p:correlationproof} and the pull-back/push-forward maps in Section \ref{ss:pb}, \ref{ss:pf}.

\subsubsection{Pull-back}

To define the pull-back \eqref{eq:pull}, the commutative diagram to keep in mind is
\begin{equation*}
\begin{tikzcd}
P_{B,n}  \arrow[swap]{d}
& (B \times ET_n \times M \times ET)/T \arrow{l} \arrow{r} \arrow[swap]{d}
& (B \times ET_n)/T \times (M \times ET)/T =B_{borel,n} \times M_{borel} \arrow[swap]{d}
\\
B_n 
&  (B \times ET_n \times ET)/T \arrow{l} \arrow{r}
&(B \times ET_n)/T \times ET/T=B_{borel,n}  \times  BT
\end{tikzcd}
\end{equation*}

We first consider the square on the LHS.
By applying Proposition \ref{p:freeaction} to the admissible bundle $B \times ET_n \times M$ with respect to the diagonal $T$ action, we have
\begin{align}
HF(P_{B,n}, a) \simeq HF^*_{T}(B \times ET_n \times M,a) \label{eq:pullbackreal1}
\end{align}

Now we consider the square on the RHS. For each $n' \in \mathbb{N}$, we have a commutative diagram 
\begin{equation}\label{eq:compullback}
\begin{tikzcd}
 (B \times ET_n \times M \times ET_{n'})/T  \arrow{r} \arrow[swap]{d}
&B_{borel,n} \times M_{borel,n'} \arrow[swap]{d}
\\
 (B \times ET_{n} \times M \times ET_{n'+1})/T  \arrow{r} 
& B_{borel,n} \times M_{borel,n'+1}
\end{tikzcd}
\end{equation}
Let $\bH_n:=HF((B \times ET_n \times M )_{borel,n'}, a)$.
By the commutative diagram \eqref{eq:compullback} and the functorality of pullback (see Corollary \ref{c:natural}), the following diagram commutes
\begin{equation*}
\begin{tikzcd}
\bH_{n'}
& H^*(B_{borel,n}) \times HF(M_{borel,n'},a) \arrow{l} 
\\
\bH_{n'+1} \arrow[swap]{u}
& H^*(B_{borel,n}) \times HF(M_{borel,n'+1},a)  \arrow{l} \arrow[swap]{u}
\end{tikzcd}
\end{equation*}
Passing to the inverse limit over $n'$, we get a map
\begin{align}
H^*(B_{borel,n}) \otimes HF^*_{T}(M, a ) \to HF^*_{T}(B \times ET_n \times M,a) \label{eq:pullbackreal2}
\end{align}
By composing it with \eqref{eq:pullbackreal1} and passing to the inverse limit over $n$, we get a map
\begin{align}
H^*_T(B) \otimes HF^*_{T}(M, a ) \to HF(P_B, a)  \label{eq:pullbackreal3}
\end{align}
Composing it with the acceleration map, it gives the desired map in Equation \eqref{eq:pull}.

Another commutative diagram which can be easily verified is
\begin{equation}\label{eq:pullbackreal4}
\begin{tikzcd}
H^*_T(B) \otimes HF^*_{T}(M, a_0 )  \arrow{r} \arrow[swap]{d}
&HF(P_B, a_0) \arrow[swap]{d}
\\
H^*_T(B) \otimes HF^*_{T}(M, a_1 ) \arrow{r} 
& HF(P_B, a_1)
\end{tikzcd}
\end{equation}
when $a_0 \le a_1$.
As a result, we have
\begin{align}
H^*_T(B) \otimes SH^*_{T}(M) \to SH^*(P_B)  \label{eq:pullbackreal5}
\end{align}


\subsubsection{Pushforward}

On the other hand, as explained in the overview, we consider the Floer theoretic push-forward associated to the following diagram
\begin{equation*}
\begin{tikzcd}
 P_{B,n}=(B \times ET_n \times M)/T  \arrow{r} \arrow[swap]{d}
& (\Omega G \times M \times ET_n)/T  \arrow{r} \arrow[swap]{d}
&  (M \times ET_n)/T=M_{borel,n} \arrow[swap]{d}
\\
B_n \arrow{r} 
&(\Omega G \times ET_{n})/T  \arrow{r} 
& BT_n
\end{tikzcd}
\end{equation*}
Note that both the square on the left and right are fibre product squares so the outer square is also a fibre product square and Lemma \ref{l:push-forward} gives us the push-forward map
\begin{align}\label{eq:new3}
HF(P_{B,n}; a) \to HF(M_{borel,n}, a).
\end{align}

\begin{lem}\label{l:pushwforwardcommute}
The following diagram commutes
\begin{equation*}
\begin{tikzcd}
HF(P_{B,n+1}; a) \arrow{r} \arrow{d}
& HF(M_{borel,n+1}, a) \arrow{d} 
\\
HF(P_{B,n}; a) \arrow{r}
& HF(M_{borel,n}, a) 
\end{tikzcd}
\end{equation*}
\end{lem}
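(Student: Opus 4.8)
We keep the notation of \eqref{eq:new3}: the horizontal arrows are the Floer-theoretic push-forwards attached to the natural projections $g_n\colon B_n\to BT_n$ and $g_{n+1}\colon B_{n+1}\to BT_{n+1}$ (along which $P_{B,n}\to B_n$, resp.\ $P_{B,n+1}\to B_{n+1}$, is the pull-back of $M_{borel,n}\to BT_n$, resp.\ $M_{borel,n+1}\to BT_{n+1}$), and the vertical arrows are the Floer-theoretic pull-backs attached to the inclusions $j_n\colon B_n\hookrightarrow B_{n+1}$ and $\bar{j}_n\colon BT_n\hookrightarrow BT_{n+1}$ (along which $P_{B,n}\to B_n$, resp.\ $M_{borel,n}\to BT_n$, is the pull-back of $P_{B,n+1}\to B_{n+1}$, resp.\ $M_{borel,n+1}\to BT_{n+1}$). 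Write $\tilde{g}_n,\tilde{g}_{n+1},\tilde{j}_n,\tilde{\bar{j}}_n$ for the corresponding maps of total spaces. Each is a fibrewise isomorphism onto its image and, being built from the convex $G$-action (which preserves $\theta$), is trivial over the cylindrical end. A direct check from the definitions shows that
\[
\begin{tikzcd}
P_{B,n}\arrow{r}{\tilde{g}_n}\arrow[swap]{d}{\tilde{j}_n}& M_{borel,n}\arrow{d}{\tilde{\bar{j}}_n}\\
P_{B,n+1}\arrow{r}{\tilde{g}_{n+1}}& M_{borel,n+1}
\end{tikzcd}
\]
commutes and is a fibre-product square: the pair $(\tilde{j}_n,\tilde{g}_n)$ identifies $P_{B,n}$ with $\Gamma:=\{(p,q)\in P_{B,n+1}\times M_{borel,n}:\tilde{g}_{n+1}(p)=\tilde{\bar{j}}_n(q)\}$, covering the analogous identification of the bases.

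The plan is to compute both composites $(\tilde{g}_n)_\ast\circ(\tilde{j}_n)^\ast$ and $(\tilde{\bar{j}}_n)^\ast\circ(\tilde{g}_{n+1})_\ast$ by one and the same moduli space, following the pattern of Corollaries \ref{c:directSystem}, \ref{c:natural} and \ref{c:natural_pf}. Fix $a\notin\Spec(\partial\bar{M},\alpha)$ and choose admissible base triples and Floer data compatibly on the four bundles. Let $\cM$ be the space of solutions over a domain made of two half-strips $(-\infty,0]\times S^1$ and $[0,\infty)\times S^1$ joined along a single seam $\{0\}\times S^1$: the solution is valued in $P_{B,n+1}$ on the half-strip carrying the input asymptote and in $M_{borel,n}$ on the one carrying the output asymptote, it covers a pair of negative gradient trajectories (in $B_{n+1}$ and $BT_n$, matched at the seam by the induced base-level correspondence), it satisfies the Floer equation \eqref{eq:Floer2} on each half-strip, and at the seam it satisfies the fibrewise Lagrangian matching condition cut out by $\Gamma$. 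Counting rigid elements of $\cM$, with the usual signs and weights $q^{E}$, defines a chain map $CF(P_{B,n+1},a)\to CF(M_{borel,n},a)$ of degree $\dim BT_n-\dim B_n=-\dim B$, which is independent of $n$.

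A gluing-and-cobordism argument of exactly the form used in the proofs of Corollary \ref{c:directSystem}, Corollary \ref{c:natural} and Corollary \ref{c:natural_pf} now identifies the induced map on cohomology with both composites. Stretching the seam of $\cM$ apart and inserting an intermediate strip valued in $M_{borel,n+1}$ realizes $\Gamma$ as the geometric composition of $\graph(\tilde{g}_{n+1})$ with $\graph(\tilde{\bar{j}}_n)$, hence the glued count as the push-forward along $g_{n+1}$ followed by the pull-back along $\bar{j}_n$, i.e.\ $(\tilde{\bar{j}}_n)^\ast\circ(\tilde{g}_{n+1})_\ast$ (cf.\ Lemmas \ref{l:push-forward} and \ref{l:pull-back}, and the independence statements \ref{l:inde_pf} and \ref{l:inde_pb}). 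Stretching the seam apart with an intermediate strip valued in $P_{B,n}$ instead realizes $\Gamma$ as the composition of $\graph(\tilde{j}_n)$ with $\graph(\tilde{g}_n)$ — these two geometric compositions agree precisely by the fibre-product property of the square above — hence the glued count as $(\tilde{g}_n)_\ast\circ(\tilde{j}_n)^\ast$. Since both composites equal the count over $\cM$, the square commutes.

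The one point needing care is the a priori $C^0$ bound (together with the geometric energy estimate) that makes $\cM$, and the interpolating parametrized moduli used in the cobordism argument, compact. This is obtained exactly as in the proof of Lemma \ref{l:Gromovpb}: trivialize the bundles over the gradient trajectories by the connections, unfold the solution at the seam along the graph of the relevant fibrewise symplectomorphism — legitimate precisely because each seam is the graph of a fibrewise isomorphism that is trivial on the cylindrical end (the ``folding'' device of \S\ref{ss:pb}, cf.\ \cite{WWQuilt}) — and apply Ritter's maximum principle \cite[Theorem C.11]{Ritter} to the resulting Floer solution valued in a single fibre $M$, the slope being the constant $a$ along the whole tree. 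This is the main technical obstacle; transversality and the remaining gluing arguments are routine.
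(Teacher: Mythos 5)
Your proposal is correct but takes a genuinely different and more elaborate route than the paper's. The paper works at the chain level: with the Morse data chosen as in Example~\ref{e:tautquotient}, the two vertical pull-backs are realized on the nose as quotient maps with kernels $S_P\subset CF(P_{B,n+1};a)$ and $S_M\subset CF(M_{borel,n+1};a)$ spanned by the generators lying over $B_{n+1}\setminus B_n$ and $BT_{n+1}\setminus BT_n$, and the lemma reduces to two direct observations about the base gradient trajectories in the push-forward moduli: a trajectory starting over $B_{n+1}\setminus B_n$ ends over $BT_{n+1}\setminus BT_n$ (so the push-forward sends $S_P$ into $S_M$), and a trajectory between $B_n$ and $BT_n$ never leaves them (so on the surviving generators the counts for the $n$- and $(n+1)$-diagrams literally coincide). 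No comparison of composed moduli spaces is made at all. You instead introduce a new one-seam moduli space $\cM$ whose seam condition is the fibre product $\Gamma$, and identify $\cM$ with both composites by inserting and then shrinking an intermediate strip. Your fibre-product identification is correct, $\Gamma$ unfolds fibrewise to the diagonal in $M$ (so the $C^0$ estimate and the degree count $-\dim B$ go through), and the two geometric compositions do produce the same $\Gamma$. The cost is that identifying $\cM$ with each two-seam composite is a strip-shrinking step --- a Beck--Chevalley base-change comparison of $(\tilde{\bar{j}}_n)^\ast\circ(\tilde{g}_{n+1})_\ast$ against $(\tilde{g}_n)_\ast\circ(\tilde{j}_n)^\ast$ --- which is a genuinely different degeneration from the same-type, end-to-end $s$-gluing of Corollaries~\ref{c:directSystem}, \ref{c:natural}, \ref{c:natural_pf} that you cite. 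Because the seams all unfold to the diagonal the composition is embedded and the shrinking is bubbling-free, so your route does work, and it has the virtue of not depending on the particular quotient-complex realization of the vertical arrows. It is, however, heavier machinery than the paper needs here, where the inverse-limit Morse set-up makes the commutativity essentially immediate at the chain level.
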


\begin{proof}
The first vertical map is induced by a chain level short exact sequence
\[
0 \to S_P  \to CF(P_{B,n+1}; a) \to CF(P_{B,n}; a)  \to 0
\]
where $S_P$ is the subcomplex of $CF(P_{B,n+1}; a)$ generated by generators of $CF(P_{B,n+1}; a)$ that do not lie inside $P_{B,n}$.
Similarly, the other vertical map is induced by a chain level short exact sequence
\[
0 \to S_M  \to CF(M_{borel,n+1}; a) \to CF(M_{borel,n}; a)  \to 0
\]
where $S_M$ is the subcomplex of $CF(M_{borel,n+1}; a)$ generated by generators of $CF(M_{borel,n+1}; a)$ that does not lie inside $M_{borel,n}$.
Therefore, to prove the result, it suffices to show that the chain level pushforward maps sit inside the following commutative diagram
\begin{equation*}
\begin{tikzcd}
0 \arrow{r} &S_P  \arrow{r} \arrow{d} &CF(P_{B,n+1}; a)  \arrow{r} \arrow{d} &  CF(P_{B,n}; a)  \arrow{d}  \arrow{r} &0\\
0 \arrow{r} &S_M  \arrow{r}  & CF(M_{borel,n+1}; a)  \arrow{r}  & CF(M_{borel,n}; a)  \arrow{r} &0
\end{tikzcd}
\end{equation*}
In other words, we need two results.
The first result is that the image of $S_P$ is in $S_M$.
It in turn follows easily from the fact that for our choice of Morse data on the bases, if the positive asymptote (i.e. input) of the underlying gradient trajectory of a solution which contributes to the pushforward map is lying in $B_{n+1} \setminus B_{n}$, then its negative asymptote (i.e. output) is lying in $BT_{n+1} \setminus BT_n$.

The second result we need is that if the input is in $P_{B,n}$ and the output is not in $M_{borel,n}$, then the moduli space of solutions computed with respect to $P_{B,n+1} \to M_{borel,n+1}$ and with respect to $P_{B,n} \to M_{borel,n}$ are the same.
This is true because the underlying gradient trajectory of a solution with input in $B_{n}$ and output in $BT_n$ is entirely contained in $B_n$ and $BT_n$.
It allows us to choose auxiliary data such that the two moduli spaces completely coincide.

\end{proof}

By Lemma \ref{l:pushwforwardcommute}, we can take to the inverse limit with respect to $n$, then we obtain the pushforward map
\begin{align}
HF(P_{B}; a) \to  HF(M_{borel}, a) \label{eq:pfreal1}
\end{align}

Another commutative diagram which can be easily verified is
\begin{equation}\label{eq:pfreal2}
\begin{tikzcd}
HF(P_{B}; a_0 )  \arrow{r} \arrow[swap]{d}
&HF(M_{borel}, a_0) \arrow[swap]{d}
\\
HF(P_{B}; a_1 ) \arrow{r} 
&HF(M_{borel}, a_1)
\end{tikzcd}
\end{equation}
when $a_0 \le a_1$.
As a result, we have
\begin{align}
SH^*(P_B) \to SH^{*-\dim(B)}_T(M)  \label{eq:pfreal3}
\end{align}
The map $\cS_f$ (see \eqref{eq:interSeidel2}) is defined to be the composition of  \eqref{eq:pullbackreal5}, \eqref{eq:directLim} and \eqref{eq:pfreal3}.

\begin{lem}\label{l:moduleType}
Let $(f,\eta_B')$ be a good pair.
For any $a_0, a_1 \in SH^*_T(M)$ and $\alpha \in \hat{H}_T^* (B)$, we have
\begin{align}
\cS_f(\alpha,a_0) a_1=\cS_f(\alpha,a_0a_1)
\end{align}
\end{lem}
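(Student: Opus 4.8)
The statement asserts that $\cS_f(-,-)$ is a map of $SH^*_T(M)$-modules in its second argument, where the module structure is the ring structure on $SH^*_T(M)$. The plan is to trace through the three constituent maps whose composite is $\cS_f$ --- the pull-back \eqref{eq:pullbackreal5}, the tautological isomorphism $\cC_f$ from \eqref{eq:directLim}, and the push-forward \eqref{eq:pfreal3} --- and check that each is suitably compatible with products. The module-type identity will then follow by stringing together these three compatibilities, since the right-hand factor $a_1$ is simply ``carried along'' at each stage.

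First I would establish the product-compatibility of the pull-back map \eqref{eq:pullbackreal5}. Concretely, I claim that for $\alpha \in H^*_T(B)$ and $a_0, a_1 \in SH^*_T(M)$, the pull-back of $\alpha \otimes (a_0 a_1)$ to $SH^*(P_B)$ equals the product in $SH^*(P_B)$ of the pull-back of $\alpha \otimes a_0$ with the image of $a_1$ under the map $SH^*_T(M) \to SH^*(P_B)$ induced by $P_B \to M_{borel}$ composed with the appropriate identifications. This is essentially Lemma \ref{l:pullbackprod} (the pull-back map is an algebra map) applied fibrewise over $B$, combined with the observation that $H^*_T(B)$ enters only the $B$-Morse-theoretic factor, which is inert for the $M$-directed product. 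Next, $\cC_f$ satisfies $\cC_f(z_1 z_2) = \cC_f(z_1) z_2$ by Lemma \ref{l:cfprod}; taking $z_1$ to be the pull-back of $\alpha \otimes a_0$ and $z_2$ the image of $a_1$ gives the middle step. Finally, the push-forward $SH^*(P_B) \to SH^{*-\dim B}_T(M)$ satisfies the projection formula $h_*(z' \cdot h^*(z)) = h_*(z') \cdot z$ of Lemma \ref{l:pfprod} (applied to the fibration $P_B \to M_{borel}$), and since the element of $SH^*(P_B)$ labelled $a_1$ is precisely $h^*(a_1)$ for $h: P_B \to M_{borel}$, the right factor $a_1$ pops back out of the push-forward as a product in $SH^*_T(M)$.

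Stringing these together: $\cS_f(\alpha, a_0 a_1)$ is the push-forward of $\cC_f$ applied to the pull-back of $\alpha \otimes (a_0 a_1)$; using pull-back compatibility this is the push-forward of $\cC_f\bigl((\text{pull-back of } \alpha \otimes a_0) \cdot h^*(a_1)\bigr)$; using Lemma \ref{l:cfprod} this is the push-forward of $\cC_f(\text{pull-back of } \alpha \otimes a_0) \cdot h^*(a_1)$; and using Lemma \ref{l:pfprod} this is $\bigl(h_* \cC_f(\text{pull-back of }\alpha \otimes a_0)\bigr) \cdot a_1 = \cS_f(\alpha, a_0) \cdot a_1$, as required. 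One subtlety to spell out is that all three compatibility lemmas were stated at the level of a fixed admissible bundle $P$ (or a pair $P' \to B'$), whereas $\cS_f$ is assembled via inverse limits over the finite-dimensional approximations $ET_n$ and direct limits over the slope $a$; so each compatibility must be checked to be compatible with these limiting procedures. This is routine --- the relevant diagrams (e.g. \eqref{eq:pullbackreal4}, \eqref{eq:pfreal2}, \eqref{eq:commuProd}) already appear in the construction --- but it is where the bookkeeping lives.

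The main obstacle I anticipate is purely organizational rather than conceptual: one must be careful about which element of $SH^*(P_B)$ plays the role of $h^*(a_1)$ in the projection formula, since $a_1$ enters $\cS_f(\alpha, a_0 a_1)$ only after the product $a_0 a_1$ has already been formed in $SH^*_T(M)$, and one needs the pull-back map \eqref{eq:pullbackreal5} to intertwine this with multiplication by $h^*(a_1)$ in $SH^*(P_B)$. Making this precise amounts to identifying the composite $H^*_T(B) \otimes SH^*_T(M) \to SH^*(P_B)$ with ``$\alpha \otimes a \mapsto (\text{pull-back of }\alpha \otimes 1) \cdot h^*(a)$'', which in turn follows from Lemma \ref{l:pullbackprod} together with the fact that pulling back $1 \in SH^*_T(M)$ along $P_B \to M_{borel}$ is the unit of $SH^*(P_B)$. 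Once this identification is in hand, the rest is a formal diagram chase, and the grading shifts (by $\dim B$ from the push-forward and by $\iota(f)$ from $\cC_f$) are the same on both sides of the claimed identity, so they cause no trouble.
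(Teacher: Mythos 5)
Your proof is correct and follows essentially the same route as the paper: decompose $\cS_f = h_* \circ \cC_f \circ h^*$, apply Lemma \ref{l:pullbackprod} to split off the $a_1$ factor as $h^*(e_B,a_1)$, pull it out of $\cC_f$ using Lemma \ref{l:cfprod}, and then apply the projection formula Lemma \ref{l:pfprod}. The organizational subtlety you flag --- identifying $h^*(e_B,a_1)$ with the pull-back of $a_1$ along $P_B \to M_{borel}$ --- is exactly the point the paper's fourth equality rests on.
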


\begin{proof}
Denote the pull-back  \eqref{eq:pullbackreal5} and push-forward map \eqref{eq:pfreal3} by
$h^*$ and $h_*$, respectively.
We have
\begin{align*}
\cS_f(\alpha,a_0a_1) &=h_* (\cC_f (h^*(\alpha,a_0a_1)))  \\
&=h_* (\cC_f (h^*(\alpha,a_0) h^*(e_B,a_1)))\\
&=h_* (\cC_f (h^*(\alpha,a_0)) h^*(e_B,a_1))\\
&=h_* (\cC_f (h^*(\alpha,a_0))) a_1 \\
&=\cS_f(\alpha,a_0) a_1
\end{align*}
 The first and last equality follow from the definition of $\cS_f$. The second equality uses that pull-back maps are algebra maps (Lemma \ref{l:pullbackprod}). The third equality comes from Lemma \ref{l:cfprod}. The fourth equality uses Lemma \ref{l:pfprod} and the fact that $h^*(e_B,a_1)$ equals to the pull-back of $a_1$ to $SH^*(P_B)$. 
\end{proof}

\subsection{Equivariant geometric homology and Semi-infinite homology} \label{subsection:geomhom}
In this subsection,  we first construct an equivariant version of the geometric homology theory developed in \cite{BaumDouglas,  Jak98}.  Then we recall the definition of 
semi-infinite homology and prove a comparison result between these two theories.  Closely related results have been obtained in \cite{Baum, GuoMathai}.  Throughout this section we let $K$ denote a compact, connected Lie group.    \vskip 5 pt
\subsubsection{Equivariant geometric homology} 
 \vskip 5 pt

 \begin{defn} We say that a smooth,  closed manifold $B$  is $K$-oriented if it is oriented and equipped with a smooth (necessarily orientation-preserving) $K$-action.   We let $\cman$ denote the collection of closed $K$-oriented manifolds.  \end{defn} 

Let $N$ be a topological space with $K$-action.  We consider triples $(B,\alpha,f)$ such that $B \in \cman$,  $\alpha \in H_K^*(B,\mathbb{Z}),$ and $f:B \to N$ is a continuous,  $K$-equivariant map.
Two such triples $(B,\alpha,f)$ and $(B',\alpha',f')$ are called {\it equivalent} if there is an orientation preserving, $K$-equivariant diffeomorphism
$\phi:B' \to B$ such that $\phi^* \alpha=\alpha'$ and $f'=f \circ \phi$. Let $Z^{geo,K}(N)$ denote the free abelian group generated by equivalence classes of triples.

\begin{defn}\label{d:geomHo} 
  The geometric homology $H^{geo,K}_*(N)$ is the quotient of $Z^{geo,K}(N)$ by the following relations:
\begin{enumerate}
\item If $B=B_1 \sqcup B_2$, then $(B,\alpha,f)=(B_1, \alpha|_{B_1},f|_{B_1})+(B_2, \alpha|_{B_2},f|_{B_2})$
\item $(B,\alpha_1+\alpha_2,f)=(B,\alpha_1,f)+(B,\alpha_2,f).$
\item Suppose there is a $K$-equivariant map $F: B \times [0,1] \to N$ then \begin{align*} (B, \alpha,  F_{B \times \lbrace 0 \rbrace}) = (B, \alpha,  F_{B \times \lbrace1 \rbrace}). \end{align*} 
\item Let $B_0,  B_1$ be in $\cman$ and $j: B_0 \to B_1$ be a smooth, equivariant embedding.  Let $f: B_1 \to N$ be an equivariant map and $\alpha \in H_K^{i}(B_0)$.  Then
\begin{align} \label{eq:Gysinrel} (B_0, \alpha,  f_{|B_0})=(B_1, j_{!}(\alpha),  f),  \end{align} where $j_{!}$ denotes the equivariant Gysin morphism (\cite{Arabia} or  \eqref{eq:GysinPD} below).  
\end{enumerate} 
\end{defn} 

We denote the class of $(B,\alpha,f)$ by $[B,\alpha,f]$.

\begin{rem} The reader may notice that compared to the axioms of \cite{Jak98},  we have weakened Axiom (3) but strengthened Axiom (4).  This is because the product cobordisms from Axiom (3) are especially easy to incorporate into our setup (though we could also allow for general cobordisms without too much difficulty).  On the other hand,  concerning Axiom (4),  the special sphere bundle suspensions from \cite{Jak98} do not play a special role in our Floer theoretic setup.     \end{rem}

Geometric homology is covariantly functorial.  It is also homotopy invariant:

\begin{lem} \label{lem:homotopy} If $h^0,h^1: N_0 \to N_1$ are $K$-homotopic (equivariant) maps between $K$-spaces,  then  $h^0_*= h^1_*:  H^{geo,K}_*(N_0) \to H^{geo,K}_*(N_1).$ \end{lem}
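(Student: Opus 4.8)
The plan is to reduce the statement to the standard fact that $K$-equivariantly homotopic maps induce the same morphism, using only the relations in Definition~\ref{d:geomHo}. Concretely, suppose $H\colon N_0\times[0,1]\to N_1$ is a $K$-equivariant homotopy with $H_0=h^0$ and $H_1=h^1$, where $[0,1]$ carries the trivial $K$-action so that $N_0\times[0,1]$ is a $K$-space. Take a generator $(B,\alpha,f)$ of $H^{geo,K}_*(N_0)$. Then $h^i_*(B,\alpha,f)=(B,\alpha,h^i\circ f)$ for $i=0,1$, and I want to show these two triples are equal in $H^{geo,K}_*(N_1)$.

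The key step is to produce, from $f\colon B\to N_0$ and $H\colon N_0\times[0,1]\to N_1$, a single $K$-equivariant map $F\colon B\times[0,1]\to N_1$ to which relation (3) of Definition~\ref{d:geomHo} applies. This is immediate: set
\begin{equation*}
F(b,s):=H(f(b),s).
\end{equation*}
Since $f$ is $K$-equivariant and $H$ is $K$-equivariant (with $[0,1]$ fixed), $F$ is $K$-equivariant; it is continuous as a composite of continuous maps; and $F_{B\times\{0\}}=h^0\circ f$, $F_{B\times\{1\}}=h^1\circ f$. Applying relation (3) with this $F$ gives
\begin{equation*}
(B,\alpha,h^0\circ f)=(B,\alpha,h^1\circ f)
\end{equation*}
in $H^{geo,K}_*(N_1)$, i.e. $h^0_*(B,\alpha,f)=h^1_*(B,\alpha,f)$ on generators.

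Finally, since $Z^{geo,K}(N_0)$ is the free abelian group on equivalence classes of triples and $h^0_*,h^1_*$ are by definition the $\mathbb{Z}$-linear maps induced by $(B,\alpha,f)\mapsto(B,\alpha,h^i\circ f)$ (which one checks respects the equivalence relation of Definition~\ref{d:geomHo} and the relations (1)--(4), so that they descend to $H^{geo,K}_*$), the equality on generators extends by linearity to all of $H^{geo,K}_*(N_0)$. Hence $h^0_*=h^1_*$. There is essentially no obstacle here: the only thing to be slightly careful about is that relation (3) as stated already packages exactly the needed cobordism (a product cobordism), so no general bordism machinery is required; the proof is a one-line application of that axiom after forming the composite homotopy $F=H\circ(f\times\mathrm{id})$.
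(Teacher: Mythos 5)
Your proof is correct and matches the paper's own argument: both form the composite homotopy $F(b,s)=H(f(b),s)\colon B\times[0,1]\to N_1$ and apply relation (3) of Definition~\ref{d:geomHo} to conclude $(B,\alpha,h^0\circ f)=(B,\alpha,h^1\circ f)$. The extra remarks on well-definedness and linearity are fine but not part of what the paper spells out.
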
 
\begin{proof} Let $h^t: N_0 \times [0,1] \to N_1$ denote the homotopy between $h^0$ and $h^1$.  Suppose we have a tuple $(B,\alpha, f)$ representing a class in $H^{geo,K}_*(N_0)$.  We then push it forward by $h^0, h^1$ to obtain $Z_0:= (B, \alpha, h^0 \circ f),  Z_1:= (B, \alpha,  h^1 \circ f)$ representing classes in $H^{geo,K}_*(N_1).$ Then $h^t \circ f: B \times [0,1]$ gives a homotopy $h^0 \circ f$ and $h^1 \circ f$.  Thus by Axiom (3) above of geometric homology,  $Z_0=Z_1 \in H^{geo,K}_*(N_1).$   \end{proof} 

There is also an Eilenberg-Zilber map: \begin{align}\label{eq:prod} \mathcal{EZ}^{geo}:  H_*^{geo,K}(N_0) \otimes H_*^{geo,K}(N_1) \to   H_*^{geo,  K\times K}(N_0 \times N_1)\\ [B_0, \alpha_0,f_0]\otimes [B_1, \alpha_1,f_1] \to [B_0 \times B_1,  \pi_0^*(\alpha_0) \cup \pi_1^*(\alpha_1),  f_0 \times f_1] \nonumber \end{align}
where $\pi_i:B_0 \times B_1 \to B_i$ is the projection map for $i=0,1$.

Finally,  we let $T \subset K$ be a maximal torus inside $K$ and let $W=N(T)/T$ denote the Weyl group of $T.$ Given a $K$- space $N$,  we want to construct a Weyl-group action on $H_*^{geo,T}(N)$.  To this end,  let $\rho_N: T \times N \to N$ denote the induced $T$-action on $N$ and let \begin{align}  \phi_{w}: T \to T \\ t \to  wtw^{-1} \nonumber \end{align}  denote the homomorphism associated to some Weyl group element.    Then $ \rho_N \circ \phi_w^{-1}$ (shorthand for $\rho_N \circ (\phi_w^{-1} \times 1_N)$) gives a new $T$-action on $N$.  Given $(B,  \alpha,  f) $ representing a class in $H^{geo,T}(N)$,  we set \begin{align} \phi_w^*[B,  \alpha,  f]=[B_{\rho_{B} \circ \phi_{w}^{-1}},  \phi_w^*(\alpha),  f] \in H^{geo,T}(N_{\rho_{N} \circ \phi_{w}^{-1}}). \end{align} Here $B_{\rho_{B} \circ \phi_{w}^{-1}}$ denotes $B$ with the $w$-twisted group action (so that $f$ remains equivariant) and $\phi_w^*(\alpha) \in H_T^*(B_{\rho_{B} \circ \phi_{w}^{-1}}) $ is the pull-back of $\alpha$ along
\begin{align}\label{eq:pullbackalpha} (B \times ET)/T \to (B \times ET) /T \\
(b,y) \to (b,w^{-1}y). \nonumber \end{align} 
  
Next,  we note that multiplication by (a representative of) a Weyl group element $w$ induces a map \begin{align*} w_*: H^{geo,T}(N_{\rho_{N} \circ \phi_{w}^{-1}}) \to H^{geo,T}(N) \end{align*} We define the Weyl group action on $H^{geo,T}(N)$ by \begin{align} \label{eq:weylgeom1} w_* \circ \phi_w^*: H^{geo,T}(N) \to H^{geo,T}(N). \end{align}  At the level of cycles we have: \begin{align} \label{eq:weylgeom2} w \cdot (B,\alpha,f)=(B_{\rho_{B} \circ \phi_{w}^{-1}}, \phi_w^*(\alpha),w \circ f).  \end{align} 

 \vskip 5 pt

\subsubsection{Semi-infinite homology} Unless otherwise stated,  all homology groups in this subsection are taken with $\mathbb{Z}$-coefficients.  \begin{defn} A closed $K$-equivariant subspace $N$ of a finite dimensional $K$-representation $\mathbb{V}$ is said to be a $K$-ENR if there is an equivariant open set $U \subset \mathbb{V}$ which equivariantly retracts onto $N$.  We let $\enr$ denote the collection of compact $K$-ENRs.   \end{defn}  

\begin{rem} Using the main result of \cite{Gleason}, the property of being a compact $K$-ENR can be shown to be independent of the representation $\mathbb{V}$ in the sense that if $N'$ is a $K$-equivariant subspace of another $K$-representation $\mathbb{V}'$ and $N$ is $K$-equivariantly homeomorphic to $N'$, then $N'$ also admits an  an equivariant open set $U' \subset \mathbb{V}'$ which equivariantly retracts onto $N'$ (see \cite[pg 143-144]{Jaworowski} for discussion and a sharper result).  By combing the results of \cite[Theorem 2.1]{Jaworowski} and  \cite{Mostow},  any finite $K$-CW complex is a compact $K$-ENR.   \end{rem} 

For any $N \in \enr$,   we let $\hat{H}_*^{K}(N)$ denote the semi-infinite homology  \cite{MR1614555, MR1786481,Graham}.  Let us recall the definitions, which are given in terms of finite dimensional approximations to classifying spaces of compact Lie groups.  A unitary embedding $K \hookrightarrow U(n)$ gives a model $EK$ for the classifying space of $K$.   These models come with finite dimensional approximations $EK_n$ and we let $BK_n:= EK_{n}/K$ to be the corresponding finite dimensional approximations of $BK$.  

Let $N_{borel,n}:= (N \times EK_n) /K$ be the finite dimensional approximations to the Borel mixing space $N_{borel} := (N \times EK) /K.  $  The semi-infinite homology of $N$ is defined to be the limit 
\begin{align} \label{eq: inverselimit}   
  \hat{H}_{\ast}^K(N) := \varprojlim_n H_ {\operatorname{dim} BK_n+ \ast}(N_{borel,n}) 
\end{align} 
where the maps in the inverse system are defined using certain Gysin pull-back maps (\cite[\S  3]{GMP22}). 

\begin{rem} One can also consider homology groups with coefficients in a field $\mathbb{K}$.  We will denote these by $\hat{H}_*^K(N,\mathbb{K}).$ \end{rem}

 In any fixed degree,  the inverse limit \eqref{eq: inverselimit} stabilizes (see e.g.  \cite[page 79]{MR1786481} or \cite[page 601]{Graham}).  This implies that,  if $N_0, N_1 \in \enr$,  there are Eilenberg-Zilber maps: \begin{align} \label{eq:Kunneth2} \mathcal{EZ}: \hat{H}_*^K(N_0) \otimes \hat{H}_*^K(N_1) \to   \hat{H}_*^{K\times K}(N_0 \times N_1) \end{align} 
which come from their non-equivariant counterparts on the finite dimensional approximations.   

Let us briefly discuss the case of a closed,  compact $K$-oriented manifold $B$.  In this case,   $\hat{H}_{\ast}^K(B)$ carries an equivariant fundamental class $[B]_K \in  \hat{H}_{\operatorname{dim}(B)}^K(B)$ and there is a Poincar\'e duality isomorphism: 
\begin{align} PD_B:H_K^{\operatorname{dim}(B)-\ast}(B) \cong \hat{H}_ {\ast}^K(B)   .  \end{align}
 If $j: B_0 \to B_1$ is an equivariant map between closed, compact K-oriented manifolds,  the Gysin map on equivariant cohomology  $j_!: H_K^i(B^0) \to H_K^{i+r}(B^1)$ ($r=\operatorname{dim}(B^1)-\operatorname{dim}(B^0)$) can be expressed as: \begin{align} \label{eq:GysinPD} \alpha \to PD_{B_1}^{-1}( j_*(PD_{B_0}( \alpha))). \end{align} 

We will need to consider certain infinite dimensional spaces as well.  The precise definition is as follows: 
\begin{defn} A topological space $N$ will be called an infinite type $K$-ENR if $N$ is an ascending union of compact K-ENRs.  Namely,  we require that $N= \operatorname{colim}_i N_i$ where $N_i \in \enr$ and \begin{align} N_1 \subset N_2 \subset  \cdots N_i \subset \cdots \end{align} are all closed $K$-equivariant inclusions.  We let $\ienr$ denote the collection of all infinite type $K$-ENRs.  \end{defn}

For any $N \in \ienr$,  we define \begin{align} \hat{H}_*^{K}(N):= \operatorname{colim}_i \hat{H}_*^{K}(N_i).\end{align}

Note that, this definition is independent of the choice of $N_i$ because if $N_i' \in \enr$ is another sequnece such that 
 $N= \operatorname{colim}_i N_i'$ and  \begin{align} N_1' \subset N_2' \subset  \cdots N_i' \subset \cdots \end{align} are all closed $K$-equivariant inclusions, then for each $i$, by the compactness of $N_i$ we can find a $N_j'$ such that $N_i \subset N_j'$.
Therefore, we have an induced map  $\operatorname{colim}_i \hat{H}_*^{K}(N_i) \to  \operatorname{colim}_i \hat{H}_*^{K}(N_i')$ and an induced map in the opposite direction.
These two maps are inverse of each other by the naturality of induced maps by inclusions.

The Eilenberg-Zilber map \eqref{eq:Kunneth2} also exists when both spaces are infinite type $K$-ENRs. \vskip 5 pt
\subsubsection{Comparison}

There is a natural transformation  $\psi: H^{geo,K}_*(-) \to \hat{H}_*^{K}(-)$ which for any $N \in \enr$ is  defined by
\begin{align} \label{eq: mapnoneq}
\psi: H^{geo,K}_*(N) \to \hat{H}_*^{K}(N)\\
\psi: [B, \alpha,f] \mapsto f_*(PD_B^{-1}(\alpha)) \nonumber.
\end{align}

It follows easily from the definition that this is well-defined.  For example, to see that this map respects \eqref{eq:Gysinrel},  we have that \begin{align} f_*(PD_{B_1}(j_{!}(\alpha)) = f_*\circ PD_{B_1} \circ PD_{B_1}^{-1}\circ j_*\circ PD_{B_0}(\alpha)=f_{|B_0}(PD_{B_{0}}(\alpha)) \end{align} 

Also note that $H_{*}^{geo,K}(N)$ also commutes with direct limits and thus we can define a comparison map $\psi$ extending \eqref{eq: mapnoneq} for any $N \in \ienr.$ We will need the following observation in our main argument. 

\begin{lem}[Lemma 2.1 of \cite{Baum}]  \label{lem:retract} Let $N$ be in $\enr$,  then $N$ is an equivariant retract of $M \in \cman$.   \end{lem}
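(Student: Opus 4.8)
The plan is to exhibit $N$ as a retract of a thickened neighbourhood of itself sitting inside a closed manifold obtained by doubling, using only the defining property of a compact $K$-ENR and the standard ``fold'' retraction of a double onto one of its halves.

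First I would use the definition of $\enr$ to fix an equivariant embedding $N \hookrightarrow \mathbb{V}$ into a finite-dimensional unitary $K$-representation, a $K$-invariant open set $U \subset \mathbb{V}$, and an equivariant retraction $\rho : U \to N$. Letting $K$ act on $\mathbb{V} \oplus \mathbb{R}$ by the given representation on $\mathbb{V}$ and trivially on $\mathbb{R}$, the unit sphere $S := S(\mathbb{V} \oplus \mathbb{R})$ is a closed manifold with a smooth $K$-action, and inverse stereographic projection embeds $\mathbb{V}$ equivariantly onto the complement of the north pole; thus I may view $N \subset U \subset S$ with $U$ open in $S$. Since $K$ is connected, every $k \in K$ acts on $S$ by a diffeomorphism isotopic to the identity, hence orientation-preservingly, so $S$ is $K$-oriented.

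Next I would thicken. Choosing a smooth $K$-invariant function $f : S \to [0,\infty)$ with $f^{-1}(0) = S \setminus U$ --- take any smooth nonnegative function vanishing exactly on the closed set $S \setminus U$ and average over $K$, noting that $S\setminus U$ is $K$-invariant --- and a regular value $c$ with $0 < c < \min_N f$ (positive since $N$ is compact and $f>0$ on $N$, and available by Sard's theorem), I set $W := f^{-1}([c,\infty))$. Then $W$ is a compact smooth $K$-manifold with boundary $\partial W = f^{-1}(c)$, it is contained in $U$, it contains $N$ in its interior, and $\rho|_W : W \to N$ is an equivariant retraction. As a codimension-zero submanifold-with-boundary of the oriented $S$, the manifold $W$ is oriented; using a $K$-invariant collar of $\partial W$, I form the double $M := W \cup_{\partial W} W'$, a closed smooth $K$-manifold which is orientable (the double of an oriented manifold with boundary) and, $K$ being connected, $K$-oriented, so $M \in \cman$. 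Finally, the fold map $\pi : M \to W$ which is the identity on each of the two copies of $W$ is well defined, continuous (the two definitions agree on the gluing locus $\partial W$, both giving the inclusion $\partial W \hookrightarrow W$) and $K$-equivariant; composing, $r := (\rho|_W) \circ \pi : M \to N$ is equivariant, and since $N$ lies in the first copy of $W$ where both $\pi$ and $\rho$ restrict to the identity, $r|_N = \mathrm{id}_N$. Hence $N$ is an equivariant retract of $M \in \cman$.

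The content of this lemma is essentially that of the cited result of \cite{Baum}, so I do not expect a genuine mathematical obstacle; the only point requiring attention is that each auxiliary choice (the function $f$, the collar, and the smoothing of the corners of the double) be made $K$-equivariantly, which is routine by averaging over the compact group $K$ together with the $K$-invariance of $S \setminus U$ and of $\partial W$.
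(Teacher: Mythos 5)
Your argument follows essentially the same route as the paper: produce a compact smooth $K$-manifold-with-boundary sitting inside the retract neighbourhood $U$ and containing $N$, then double it and compose the fold map with the retraction $\rho$. The only cosmetic difference is that you first compactify $\mathbb{V}$ to a sphere and take a superlevel set of a function vanishing on $S\setminus U$, whereas the paper stays inside $\mathbb{V}$ and takes a sublevel set of a smoothed, $K$-invariant distance function to $N$; both produce the same kind of domain to double, so the proofs are equivalent in substance.
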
 
\begin{proof} Let $U$ be an equivariant open subset of the representation $\mathbb{V}$ which retracts on to $N$.  Fix a $K$-invariant metric on $\mathbb{V}$ and let $f: U \to \mathbb{R}$ be a $C^0$-close $K$-equivariant smoothing of the distance function to $N$.  A suitable level set of $f$ bounds a smooth compact manifold with boundary $\bar{U}$ which retracts onto $N$. By doubling $\bar{U}$,  we obtain our manifold $M$.    \end{proof} 

The main result of this section is the following: 
\begin{thm} \label{thm:maincomp} For any $N \in \ienr$,  the canonical map \begin{align} \psi: H_{*}^{geo,K}(N) \to \hat{H}_*^K(N) \end{align} is an isomorphism.  
\end{thm}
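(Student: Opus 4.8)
The plan is to reduce the infinite type case to the case of a compact $K$-ENR $N$, and then reduce that to the case of a closed $K$-oriented manifold, where the statement can be checked by a Mayer--Vietoris / induction-on-cells argument. Concretely, since both $H_*^{geo,K}(-)$ and $\hat{H}_*^K(-)$ commute with the direct limits appearing in the definition of an infinite type $K$-ENR (for geometric homology this is immediate from the definition; for semi-infinite homology it is built into the definition $\hat H_*^K(N) := \operatorname{colim}_i H_*^{geo,K}(N_i)$ — wait, that is circular, so more precisely: $\hat H_*^K$ on infinite type ENRs is \emph{defined} as $\operatorname{colim}_i H_*^{geo,K}(N_i)$, so there is literally nothing to prove once we know it for each compact $N_i$), it suffices to prove that $\psi: H_*^{geo,K}(N) \to \hat H_*^K(N)$ is an isomorphism for every $N \in \enr$.

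So fix $N \in \enr$. First I would use Lemma \ref{lem:retract}: $N$ is an equivariant retract of some $M \in \cman$, via maps $i: N \to M$ and $\pi: M \to N$ with $\pi \circ i = \mathrm{id}_N$. By naturality of $\psi$ and functoriality of both theories, $\psi_N$ is a retract of $\psi_M$ — that is, $\psi_N$ is a direct summand of $\psi_M$ in the obvious sense — so if $\psi_M$ is an isomorphism and we also know that $H_*^{geo,K}(N) \to H_*^{geo,K}(M)$ is injective (equivalently that the splitting is compatible), then it will follow that $\psi_N$ is an isomorphism. Actually the cleanest formulation: $\psi$ is a natural transformation, $i_* \circ \pi_* $-type arguments show $\psi_N$ is injective provided $\psi_M$ is, and surjectivity of $\psi_N$ follows from surjectivity of $\psi_M$ together with the retraction. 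So the whole theorem reduces to the single assertion: \emph{for $M \in \cman$ a closed $K$-oriented manifold, $\psi_M: H_*^{geo,K}(M) \to \hat H_*^K(M)$ is an isomorphism.}

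For this core case, I would argue as follows. Using the equivariant tubular neighbourhood theorem and a $K$-CW / handle decomposition of $M$ (which exists since $M$ is a compact smooth $K$-manifold), run an induction: both functors satisfy a Mayer--Vietoris property for equivariant open covers, and $\psi$ is compatible with the two Mayer--Vietoris sequences (this compatibility is the routine diagram-chase that I would relegate to a lemma). The base case is $M$ an equivariant disc bundle over an orbit $K/H$, i.e. (equivariantly) $K \times_H D$ for a linear $H$-representation $D$; here one computes both sides directly — $H_*^{geo,K}(K\times_H D) \cong H_*^{geo,H}(D) \cong H_*^{H}(\mathrm{pt})$-type identifications via the induction isomorphism, and $\hat H_*^K$ of an equivariant disc bundle over an orbit is computed from the definition \eqref{eq: inverselimit} — and checks that $\psi$ intertwines them. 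Alternatively, and perhaps more in the spirit of the paper, one can avoid Mayer--Vietoris by observing that for $M \in \cman$ the map $\psi_M$ is \emph{split surjective} essentially by construction: any class in $\hat H_*^K(M) \cong H_K^{\dim M - *}(M)$ is $PD_M$ of a cohomology class $\alpha$, and $(M, \alpha, \mathrm{id}_M)$ is a geometric cycle mapping to it; and $\psi_M$ is injective because the relations (1)--(4) defining $H_*^{geo,K}(M)$ are all visibly killed in $\hat H_*^K$, and conversely a geometric cycle $(B, \alpha, f)$ can be rewritten, using the embedding axiom \eqref{eq:Gysinrel} after equivariantly embedding $B \hookrightarrow M \times \mathbb{V}$ for a large representation $\mathbb{V}$ and then projecting (this is where the weakened Axiom (3) and strengthened Axiom (4) pay off), as $\pm (M \times \mathbb{V} \text{-type thing}, \text{Gysin}, \text{proj})$, reducing every class to one of the standard form $(M', \beta, g)$ supported on (a sphere bundle over) $M$; two such are then equal in $H_*^{geo,K}(M)$ iff their images agree, by the analogous non-equivariant result of Jakob \cite{Jak98} applied fibrewise over $BK_n$ and passing to the limit.

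The main obstacle is the injectivity of $\psi_M$, i.e. showing that the list of relations (1)--(4) is \emph{complete}: that any two geometric cycles with the same image in $\hat H_*^K(M)$ are related by a chain of moves (1)--(4). In the non-equivariant, non-infinite setting this is exactly Jakob's theorem, but here one must (a) upgrade it $K$-equivariantly, which is where the equivariant embedding/Gysin machinery and the equivariant tubular neighbourhood theorem are needed, and (b) commute the argument with the inverse limit over the finite approximations $BK_n$ — the stabilisation of \eqref{eq: inverselimit} in each fixed degree, already noted in the excerpt, is what makes this harmless, but it has to be invoked carefully so that a bounding equivariant cobordism found at finite level $n$ can be propagated. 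The ENR-to-manifold reduction via Lemma \ref{lem:retract} and the colimit reduction for infinite type spaces are, by contrast, formal.
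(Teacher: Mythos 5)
Your reduction structure matches the paper: reduce infinite type to compact $K$-ENRs (formal), reduce $K$-ENRs to $\cman$ via Lemma \ref{lem:retract}, and prove surjectivity for $M \in \cman$ by taking cycles $(M,\alpha,\mathrm{id})$ and Poincar\'e duality. Where you diverge is the injectivity step, and this is where you have a genuine gap. You frame the problem as ``showing the relations (1)--(4) are complete,'' and your proposed fix invokes Jakob's non-equivariant theorem fibrewise over $BK_n$ plus a Mayer--Vietoris/$K$-CW induction. Neither is needed, and the fibrewise-Jakob step in particular is not justified by anything you write --- propagating a bounding cobordism through the inverse system is exactly the kind of claim that requires an argument, not a flag. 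The paper's injectivity proof is shorter and closes the loop cleanly: given a geometric cycle $(B,\alpha,f)$ in the kernel of $\psi$, enlarge $\mathbb{V}$ so that $N$ sits as a retract inside a closed manifold $N'$ (as in Lemma \ref{lem:retract}) and, by Wasserman's equivariant embedding theorem \cite[Corollary~1.10]{Wasserman}, so that $f \colon B \to N'$ is $K$-homotopic to an equivariant embedding $f'$. Then Axiom~(3) gives $(B,\alpha,f)=(B,\alpha,f')$, Axiom~(4) gives $(B,\alpha,f')=(N',f'_!(\alpha),\mathrm{id})$, and the Gysin formula \eqref{eq:GysinPD} gives $f'_!(\alpha)=PD_{N'}^{-1}\bigl(f'_*(PD_B(\alpha))\bigr)=0$ precisely because $\psi(B,\alpha,f)=0$. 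So the cycle is literally zero, already at the level of a single application of each axiom; no completeness statement, no Mayer--Vietoris, no appeal to the non-equivariant case.

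Two further points. First, your observation that $\hat H_*^K(N) := \operatorname{colim}_i H_*^{geo,K}(N_i)$ for infinite type is circular is incorrect: that colimit is over $\hat{H}^K_*(N_i)$, not geometric homology, and the content is that geometric homology also commutes with the colimit. Second, your alternative Mayer--Vietoris route would require you to establish a Mayer--Vietoris property for $H_*^{geo,K}$ (not given in the paper's axiom list) and verify compatibility with the semi-infinite side, which is more work than the paper's direct argument, so I would discourage it.
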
 \begin{proof} It clearly suffices to consider the case $N \in \enr$ which we do for the rest of the argument.  \vskip 5 pt

\emph{Surjectivity:} In view of  Lemma \ref{lem:retract},  it suffices to prove surjectivity when $N \in \cman$.  In this case,  consider cycles of the form $(N, \alpha, id).$ These surject onto homology by Poincare duality.   \vskip 10 pt

\emph{Injectivity:}  Again we can suppose that $N$ is a closed oriented smooth manifold.  Now fix a $(B, \alpha,f)$ representing a class of $H^{geo,K}_*(N)$ so that $f_*(PD_B(\alpha))=0.$  By taking the representation $\mathbb{V}$ in the proof of Lemma \ref{lem:retract} sufficiently large (i.e.  to include sufficiently many irreducible representations with high enough multiplicity), \cite[Corollary 1.10]{Wasserman} implies that we may embed $N$ as a retract of a higher dimensional manifold $N'$ so that $f:B \to N'$ is $K$-homotopic to an equivariant embedding $f': B \to N'$.  In view of \eqref{eq:Gysinrel},  the fact that $f'_*(PD_B(\alpha))=0$ implies that \begin{align} [B,\alpha,f]=[B,\alpha,f']=[N',  0,  id]=0 \in H^{geo,K}(N'). \end{align} It follows that $[B,\alpha,f]=0 \in H^{geo,K}(N). $  \end{proof} 

\begin{lem} \label{lem:properties} The comparison map satisfies the following additional properties \begin{enumerate}  \item The comparison map is compatible with Eilenberg-Zilber maps.  More precisely,  the following diagram commutes \[
\begin{tikzcd}
H_{*}^{geo,K}(N_0)\otimes H_{*}^{geo,K}(N_1)  \arrow{r}{\psi \otimes \psi} \arrow[swap]{d}{\mathcal{EZ}^{geo}} & \hat{H}_{*}^{K}(N_0)\otimes \hat{H}_{*}^{K}(N_1) \arrow{d}{\mathcal{EZ}} \\
H_{*}^{geo,K \times K}(N_0 \times N_1) \arrow{r}{\psi} & \hat{H}_{*}^{K \times K}(N_0 \times N_1)
\end{tikzcd}
\] \item Let $\Delta: K \subset K \times K$ be the diagonal subgroup.  Then the following diagram commutes:\[ \begin{tikzcd}
H_{*}^{geo,K \times K}(N_0 \times N_1)  \arrow{r}{\psi} \arrow[swap]{d}{r_{\Delta}^{geo}} & \hat{H}_{*}^{K \times K}(N_0 \times N_1)  \arrow{d}{r_{\Delta}} \\
H_{*}^{geo,K}(N_0 \times N_1) \arrow{r}{\psi} & \hat{H}_{*}^{K}(N_0 \times N_1)
\end{tikzcd} \] 
where $r_{\Delta}^{geo}$ and $r_{\Delta}$ denote the restriction to the diagonal subgroup. 
\item For any $N \in \ienr$ and $T \subset K$ a maximal torus,  the comparison map $\psi$ intertwines the Weyl group action on $H_*^{geo,T}(N)$ from \eqref{eq:weylgeom1} with the natural action on $\hat{H}_*^{T}(N).$ 
\end{enumerate} \end{lem}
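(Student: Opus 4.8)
The plan is to prove all three compatibilities by passing to the finite-dimensional approximations $EK_n$ (respectively $ET_n$) of the classifying space and reducing to standard facts about closed oriented manifolds: compatibility of Poincar\'e duality with cup/cross products, compatibility of Poincar\'e duality with Gysin pullback along a fibre bundle with closed fibre, and naturality of pushforward. Since $H_*^{geo,K}$ commutes with the relevant colimits, it suffices throughout to work with a single cycle $(B,\alpha,f)$ (or a pair of cycles for part (1)) and with $N \in \enr$. The recurring subtlety is that the semi-infinite homology groups are defined in \eqref{eq: inverselimit} with a degree shift by $\dim BK_n$ and with structure maps that are Gysin pullbacks rather than honest maps, so one must arrange that the finite-level identities are stated with the right degree conventions and are themselves compatible with these structure maps, hence descend to the limit; this is where I would put the effort.

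For (1), applying $\psi \circ \mathcal{EZ}^{geo}$ to $(B_0,\alpha_0,f_0)\otimes (B_1,\alpha_1,f_1)$ yields $(f_0\times f_1)_*\bigl(PD_{B_0\times B_1}(\pi_0^*\alpha_0\cup\pi_1^*\alpha_1)\bigr)$, and I would rewrite this in three steps at each finite level: first, $PD_{B_0\times B_1}(\pi_0^*\alpha_0\cup\pi_1^*\alpha_1)=PD_{B_0}(\alpha_0)\times PD_{B_1}(\alpha_1)$, the classical compatibility of Poincar\'e duality with products applied fibrewise over $BK_n\times BK_n$; second, $(f_0\times f_1)_*(a\times b)=f_{0*}a\times f_{1*}b$, naturality of the cross product; third, the homological cross product on Borel homology is by construction the map $\mathcal{EZ}$ of \eqref{eq:Kunneth2}. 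Chaining these over $n$ and taking the (stabilizing) limit gives the square. For (2), $r_\Delta^{geo}$ restricts the $K\times K$-action on $(B,\alpha,f)$ to the diagonal, with $\alpha$ restricted via the pullback associated to the bundle with fibre $(K\times K)/K$, while $r_\Delta$ on semi-infinite homology is the corresponding Gysin pullback; here I would invoke naturality of this Gysin pullback with respect to the equivariant map $f$ (the relevant square of Borel spaces commutes since $f$ is equivariant for both groups) together with the identity $PD\circ(\text{pullback})=(\text{Gysin pullback})\circ PD$, again checked at finite level and pushed through the limit.

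For (3), I would trace \eqref{eq:weylgeom2}, so that $\psi\bigl(w\cdot(B,\alpha,f)\bigr)=(w\circ f)_*\,PD_{B_{\rho_B\circ\phi_w^{-1}}}(\phi_w^*\alpha)$. The natural $W$-action on $\hat H_*^T(N)$ is, on the $ET_n$-approximations, built from two pieces: post-composition with multiplication by a lift $\dot w\in N(T)$ on $N$, and the twist of the $BT_n$-direction coming from $y\mapsto w^{-1}y$ as in \eqref{eq:pullbackalpha}. I would verify that $\phi_w^*\alpha$ is precisely the pullback of $\alpha$ along this twist, so that $PD$ intertwines $\phi_w^*$ with the homological operation appearing in the definition of the $W$-action on $\hat H_*^T$, and that $f_*$-naturality matches the $\dot w$-multiplication with $(w\circ f)_*$. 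Once this dictionary is set up the square commutes at finite level and descends; note that parts (1) and (2) are not logically needed here, although the same finite-level inputs recur.

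The main obstacle will be the bookkeeping flagged above: keeping the degree conventions in \eqref{eq: inverselimit} straight and checking that the finite-level Poincar\'e-duality, Gysin, and cross-product identities are compatible with the Gysin structure maps of the inverse systems, so that they are legitimate statements about $\hat H_*^K$. Within this, part (3) demands the most care, since one must first pin down exactly the ``natural'' $W$-action on $\hat H_*^T(N)$ in terms of maps of Borel approximations and then recognize the two a priori different-looking recipes --- the geometric one, which twists both the $T$-action on $B$ and the class $\alpha$, and the classical $N(T)$-action on Borel homology --- as the same operation; identifying \eqref{eq:pullbackalpha} with the $BT$-twist in the classical action and checking that the choice of lift $\dot w$ is immaterial is the crux.
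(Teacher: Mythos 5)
Your proposal is correct and follows essentially the same route as the paper's proof: for (1) reduce to compatibility of Poincar\'e duality with products and naturality of the cross product on finite-dimensional approximations; for (2) observe the compatibility is immediate from the definitions; for (3) use naturality of $\psi$ to isolate the $\phi_w^*$ piece of the Weyl action and reduce to PD intertwining the pullback along the automorphism, as in \eqref{eq:pullbackalpha}. The only stylistic difference is that the paper additionally streamlines (3) by reducing to $N\in\cman$ and cycles of the form $(N,\alpha,\mathrm{id})$, which is available by the surjectivity argument from Theorem \ref{thm:maincomp}, whereas you verify the intertwining directly on a general cycle; both versions work.
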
 
\begin{proof}

\emph{Claim (1):} We again take $N \in \enr.$  We need to check that \begin{align} (f_0 \times f_1)_*(PD_{B_{0} \times B_{1}}(\alpha_0 \cup \alpha_1))=\mathcal{EZ}((f_0)_*(PD_{B_{0}}(\alpha_0)\otimes (f_1)_*(PD_{B_{1}}(\alpha_1) ). \end{align} In view of the naturality of $\mathcal{EZ}(-\otimes-)$ in the two arguments (which can be proved on finite dimensional approximations),  it suffices to prove  \begin{align} PD_{B_{0} \times B_{1}}(\alpha_0 \cup \alpha_1))=\mathcal{EZ}((PD_{B_{0}}(\alpha_0)\otimes (PD_{B_{1}}(\alpha_1) ) \end{align} which also can be checked on finite dimensional approximations.  \vskip 10 pt

\emph{Claim (2):} Immediate from the definitions.  \vskip 10 pt
\emph{Claim (3):} The comparison map $\psi$ is natural and it therefore suffices to show that it intertwines $\phi_w^*: H_*^{geo,T}(N) \to H_*^{geo,T}(N)$ with the corresponding map $\phi_w^*: \hat{H}_*^{T}(N) \to \hat{H}_*^{T}(N)$ on semi-infinite homology.  Without a loss of generality,  we can assume that $N \in \cman$ and that geometric cycles are represented by $(N, \alpha, id).$ The claim then follows from the fact that the pull-back map $\phi_w^*: \hat{H}_*^{T}(N) \to \hat{H}_*^{T}(N)$ induced by the automorphism coincides with the induced map on equivariant cohomology under Poincare duality.  
\end{proof}

\subsubsection{Based loop spaces}\label{s:basedloop}

Let $G$ be a compact connected Lie group and let $\Omega G$ denote the space of smooth loops in $G$ based at $\operatorname{id} \in G$.  This admits a natural action of $G$ given by \begin{align} \label{eq:conjug} G \times \Omega G \to \Omega
  G \\ g \cdot \gamma(t) = g\gamma(t)g^{-1}.  \nonumber
\end{align} 

The space $\Omega G$ also admits a Pontryagin product given by pointwise multiplication:
\begin{align} \label{eq:mG} m_{\Omega G}: \Omega G \times \Omega G \to \Omega G. \end{align}
This map \eqref{eq:mG} is manifestly $G$-equivariant if $\Omega G \times \Omega G$ is given the diagonal $G$-action.  Thus for any connected,  closed subgroup $K \subset G$,  it therefore induces a map \begin{align} \label{eq: Ghatmg} m_{K}: H_*^{geo, K}(\Omega G) \otimes H_*^{geo,K}(\Omega G) \to H_*^{geo, K}(\Omega G)  \end{align} 
where $m_{K}$ is the composition of $m_{\Omega G,*}$ with the restriction along the diagonal subgroup and the Eilenberg-Zilber map.  This product equips  $H_*^{geo,K}(\Omega G)$ with the structure of an associative (in fact commutative) algebra. 

Now let $\Omega_{poly}G \subset \Omega G$ denote the space of based loops $S^1 \to G$ which extend to an algebraic map $\mathbb{C}^* \to G_{\mathbb{C}}$,  where $G_\mathbb{C}$ is the complexification of $G$ (\cite[3.5]{PS86} or \cite[Definition 2.1]{Atiyah-Pressley}).  We have that $\Omega_{poly}G$ is a colimit of finite dimensional $G_\mathbb{C}$-projective varieties $\Omega_{poly}G_{\leq \lambda}$ each of which is a compact $G$-ENR by \cite[Theorem 2.1]{Jaworowski}.   The $G$-action \eqref{eq:conjug} and Pontryagin product \eqref{eq:mG} restrict to $\Omega_{poly}G.$ As a consequence,  we have a product \begin{align} \label{eq: Ghatmg2} m_{K}: \hat{H}_*^{K}(\Omega_{poly} G) \otimes \hat{H}_*^{K}(\Omega_{poly} G) \to \hat{H}_*^{K}(\Omega_{poly} G).  \end{align} 

We have the following comparison result:   

\begin{lem} \label{l:compareloops}
 For any connected,  closed subgroup $K \subseteq G$,  there is a natural isomorphism of rings $H_*^{geo,K}(\Omega G) \cong \hat{H}_*^{K}(\Omega_{poly} G).$  \end{lem}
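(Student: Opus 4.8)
The plan is to compare the two ring structures by combining the comparison isomorphism of Theorem \ref{thm:maincomp} with a homotopy-theoretic reduction from $\Omega G$ to $\Omega_{poly}G$. First I would recall that the inclusion $\Omega_{poly}G \hookrightarrow \Omega G$ is a $G$-equivariant homotopy equivalence: this is classical (\cite{PS86, Atiyah-Pressley}) non-equivariantly, and the equivariance can be arranged since the standard deformation retraction can be taken $G$-equivariantly (the $G$-action \eqref{eq:conjug} is by conjugation, which commutes with the relevant loop-space operations). Since $\Omega_{poly}G = \operatorname{colim}_\lambda \Omega_{poly}G_{\leq\lambda}$ expresses $\Omega_{poly}G$ as an infinite type $K$-ENR, Theorem \ref{thm:maincomp} gives a canonical isomorphism $\psi: H_*^{geo,K}(\Omega_{poly}G) \xrightarrow{\sim} \hat{H}_*^{K}(\Omega_{poly}G)$. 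Composing with the map on geometric homology induced by the equivariant homotopy equivalence $\Omega_{poly}G \hookrightarrow \Omega G$, which is an isomorphism by the homotopy invariance of $H_*^{geo,K}$ (Lemma \ref{lem:homotopy}), yields an isomorphism of graded groups $H_*^{geo,K}(\Omega G) \cong \hat{H}_*^{K}(\Omega_{poly}G)$.

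It then remains to check that this isomorphism intertwines the two Pontryagin products \eqref{eq: Ghatmg} and \eqref{eq: Ghatmg2}. Both products are built in the same formal way: take the Eilenberg-Zilber/Künneth map to the $K\times K$-equivariant theory of the product, restrict along the diagonal $K \subset K\times K$, and push forward along the multiplication map $m_{\Omega G}$ (respectively $m_{\Omega_{poly}G}$). The compatibility of $\psi$ with all three of these operations is exactly the content of Lemma \ref{lem:properties}: Claim (1) handles the Eilenberg-Zilber maps, Claim (2) handles restriction to the diagonal, and functoriality of both theories (together with the fact that $m_{\Omega_{poly}G}$ is the restriction of $m_{\Omega G}$ under the inclusion) handles push-forward along multiplication. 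Chasing these compatibilities through the defining composition of $m_K$ shows the isomorphism is multiplicative. Naturality in $K$ is immediate since every ingredient — the homotopy equivalence, $\psi$, and the product maps — is natural for closed subgroup inclusions.

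The main obstacle, and the one point requiring genuine care rather than formal diagram-chasing, is the equivariance of the homotopy equivalence $\Omega_{poly}G \simeq \Omega G$. The classical proofs of density/homotopy-equivalence of polynomial loops (e.g. via Fourier truncation or via Mitchell's stratification) must be seen to respect the conjugation $G$-action; since conjugation by a fixed $g\in G$ sends a loop of a given polynomial degree to a loop of the same degree and commutes with truncation, this works, but it should be stated carefully, ideally by citing an equivariant version or noting that Fourier coefficients transform linearly under conjugation. A secondary subtlety is that $\Omega_{poly}G_{\leq\lambda}$ must genuinely be a compact $K$-ENR for the colimit to land in $\ienr$; this is asserted via \cite{Jaworowski} on finite $K$-CW complexes, and one should confirm the projective varieties $\Omega_{poly}G_{\leq\lambda}$ admit finite $K$-CW structures (which follows from \cite{Mostow} as invoked earlier). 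Once these ENR/equivariance points are nailed down, the ring isomorphism is a formal consequence of Theorem \ref{thm:maincomp} and Lemma \ref{lem:properties}.
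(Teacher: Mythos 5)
Your proof follows the same route as the paper's: use the $G$-equivariant homotopy equivalence $\Omega_{poly}G \hookrightarrow \Omega G$ together with Lemma \ref{lem:homotopy} to identify the geometric-homology groups, then invoke Theorem \ref{thm:maincomp} and Lemma \ref{lem:properties}~(1)--(2) to upgrade $\psi$ to a ring isomorphism. The extra unwinding of the Pontryagin product into Eilenberg--Zilber, diagonal restriction, and pushforward, and the discussion of the equivariance and $K$-ENR subtleties, are sound elaborations of what the paper cites in a single line (e.g.\ via \cite[Lemma 2.6]{GMP22}).
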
 
\begin{proof} 
Note that the Pontryagin product also gives $H_*^{geo,K}(\Omega_{poly} G)$ the structure of an associative algebra. The inclusion $\Omega_{poly}G \to \Omega G$ is a $G$-equivariant homotopy equivalence (see e.g. \cite[Lemma 2.6]{GMP22}).  Hence,  by Lemma \ref{lem:homotopy},  $H_*^{geo,K}(\Omega_{poly} G) \cong H_*^{geo,K}(\Omega G).$ Finally,  because $\Omega_{poly}G \in \ienr$,  Theorem \ref{thm:maincomp} and Lemma  \ref{lem:properties} (1)-(2) imply that $H_*^{geo,K}(\Omega_{poly} G) \cong \hat{H}_*^{K}(\Omega_{poly} G)$ as rings as well.  \end{proof}

\subsection{Concluding the construction}



 Given $[B,\alpha,f] \in H^{geo,T}_*(\Omega G)$, we define
\begin{align}
&\cS([B,\alpha,f], \cdot): SH^*_T(M) \to SH^*_T(M) \nonumber \\
&\cS([B,\alpha,f], z):=\cS_f( \alpha, z) \label{eq:finalcheck}
\end{align}

\begin{prop}
The map  $\cS: H^{geo,T}_*(\Omega G) \times SH^*_T(M) \to SH^*_T(M)$ given by \eqref{eq:finalcheck} is well-defined.
\end{prop}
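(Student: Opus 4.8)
The plan is to verify that $\cS$ descends through each of the four defining relations of $H^{geo,T}_*(\Omega G)$ from Definition \ref{d:geomHo}, and that it is invariant under the equivalence of triples (orientation-preserving equivariant diffeomorphisms $\phi: B' \to B$ with $\phi^*\alpha = \alpha'$ and $f' = f \circ \phi$). For the equivalence and for relations (1)--(2) the argument is essentially formal: the construction of $\cS_f(\alpha, z)$ factors as $h_* \circ \cC_f \circ h^*$, where $h^*$ in \eqref{eq:pullbackreal5} is built from $\alpha$ via the Poincar\'e dual / pull-back along $P_B \to B_{borel}$, and each of $h^*$, $\cC_f$, $h_*$ is natural in the relevant data. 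A diffeomorphism $\phi$ induces a compatible isomorphism of the bundles $P_{B'} \cong P_B$ intertwining all the auxiliary data (admissible base triples, $A_f$, almost complex structures), so the two compositions agree; this uses that $\cC_f$ only depends on the $T$-equivariant homotopy class of $f$ (the Corollary after Lemma \ref{l:indepChoiceC}). Relation (2) is linearity of $\cS_f$ in $\alpha$, which is immediate since $h^*(\alpha_1 + \alpha_2, z) = h^*(\alpha_1, z) + h^*(\alpha_2, z)$ at the level of the pull-back map \eqref{eq:pullbackreal5} and everything downstream is linear. Relation (1) (disjoint union) follows because $P_{B_1 \sqcup B_2} = P_{B_1} \sqcup P_{B_2}$, so all the Floer complexes split as direct sums compatibly with $h^*$, $\cC_f$, $h_*$, and the slope/compatibility conditions can be arranged independently on each piece.

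The two substantive relations are (3) (product cobordism invariance) and (4) (the Gysin relation). For (3): given a $T$-equivariant $F: B \times [0,1] \to \Omega G$, I would use that $F$ provides a $T$-equivariant homotopy between $f_0 = F|_{B \times \{0\}}$ and $f_1 = F|_{B \times \{1\}}$, together with the fact that $H^*_T(B \times \{0\}) = H^*_T(B) = H^*_T(B \times \{1\})$ canonically. By Lemma \ref{l:indepChoiceC}, $\cC_{f_0}$ and $\cC_{f_1}$ are intertwined by the continuation maps on $SH^*(P_B)$ once we pick good pairs $(f_0, \eta'_{B,0})$ and $(f_1, \eta'_{B,1})$ and a $T$-equivariant path $c: \mathbb{R}\times B \to \Omega G$ connecting them (which $F$ supplies after reparametrization). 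Since $h^*$ and $h_*$ depend only on $B$ and not on $f$, and the continuation maps are compatible with both (by \eqref{eq:pullbackreal4}, \eqref{eq:pfreal2}, and the naturality statements in Section \ref{ss:pb}, \ref{ss:pf}), we conclude $\cS_{f_0}(\alpha, z) = \cS_{f_1}(\alpha, z)$ in the direct limit $SH^*_T(M)$.

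The Gysin relation (4) is where the real work lies, and I expect it to be the main obstacle. Given an equivariant embedding $j: B_0 \to B_1$ and $f: B_1 \to \Omega G$, I must show $\cS_{f|_{B_0}}(\alpha, z) = \cS_f(j_!(\alpha), z)$. The strategy is to compare the two pull-back maps $h_0^*: H^*_T(B_0) \otimes SH^*_T(M) \to SH^*(P_{B_0})$ and $h_1^*: H^*_T(B_1) \otimes SH^*_T(M) \to SH^*(P_{B_1})$ via the Floer-theoretic push-forward $\tilde j_*: SH^*(P_{B_0}) \to SH^{*-r}(P_{B_1})$ associated to the embedding $\tilde j: P_{B_0} \hookrightarrow P_{B_1}$ (Section \ref{ss:pf}, using Example \ref{e:tautquotient} to choose adapted Morse data so that the push-forward is the Floer analogue of the Morse-theoretic Gysin map). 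Concretely one wants: (i) $\tilde j_* \circ h_0^*(\alpha, z) = h_1^*(j_!(\alpha), z)$ — this is the Floer refinement of the identity $PD_{B_1}^{-1}(j_*(PD_{B_0}(\alpha)))=j_!(\alpha)$, and should follow by running the pull-back and push-forward constructions simultaneously over the same adapted base triples; (ii) $\cC_f \circ \tilde j_* = \tilde j_* \circ \cC_{f|_{B_0}}$ — compatibility of the tautological isomorphism with push-forward, proved by the same moduli-identification as Proposition \ref{p:correlationproof} once one checks the diffeomorphisms $\Phi_t$ for $B_0$ and $B_1$ are compatible under $\tilde j$ (here $f|_{B_0} = f \circ j$ makes this automatic); and (iii) the final push-forward $SH^*(P_{B_1}) \to SH^{*-\dim B_1}_T(M)$ precomposed with $\tilde j_*$ equals the final push-forward $SH^*(P_{B_0}) \to SH^{*-\dim B_0}_T(M)$, which is functoriality of push-forward (Corollary \ref{c:natural_pf}). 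Combining (i)--(iii) with the definitions gives $\cS_f(j_!(\alpha), z) = \cS_{f\circ j}(\alpha, z) = \cS_{f|_{B_0}}(\alpha, z)$. The delicate point throughout (iii) and especially (i) is arranging all the compatibility/slope conditions (Definition \ref{d:eqgen} and Proposition \ref{p:accelerator}) simultaneously for both bases so that the maximum principle holds uniformly; the good-pair and adapted-base-triple machinery of Section \ref{ss:GoodClass} is designed precisely to make this possible, but one must check it can be done consistently with the embedding $j$, for instance by choosing $\eta_{B_0}'$ as a restriction of $\eta_{B_1}'$ in the sense of Example \ref{e:tautquotient}.
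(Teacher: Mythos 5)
Your proposal is correct and follows essentially the same route as the paper: treat relations (1)–(2) as formal, relation (3) by the homotopy/cobordism argument via Lemma \ref{l:indepChoiceC}, and relation (4) by comparing $\cS_{f|_{B_0}}$ and $\cS_f\circ j_!$ through the Floer push-forward along $\tilde{j}:P_{B_0}\hookrightarrow P_{B_1}$, with adapted Morse data in the sense of Example \ref{e:tautquotient} so that the $B_0$-part forms a subcomplex and the restrictions of $A_f$ serve as valid data on $P_{B_0}$. The paper packages your steps (i)--(iii) into a single commutative diagram \eqref{eq:4th condition} whose vertical maps are the subcomplex inclusions (equivalently, push-forwards), and also uses Lemma \ref{l:TeqHomotope} to homotope $f$ so that $(f,\eta_{B_1}')$ is a good pair; your degree shift $*-r$ for the push-forward should read $*+k$ with $k=\dim B_1-\dim B_0$ in the paper's cohomological convention, but this is inconsequential.
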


\begin{proof}
We need to check that $\cS$ is independent of the four equivalence relations in Definition \ref{d:geomHo}.
The independence of the first two is obvious.
The independence of the third follows from a cobordism argument. 


For the last relation, let $B_0,  B_1$ be in $\cman$ and $j: B_0 \to B_1$ be a smooth, equivariant embedding. Using $j$, we identify $B_0$ as a submanifold of $B_1$. Let $f: B_1 \to N$ be an equivariant map and $\alpha \in H_T^{i}(B_0)$.  
Let $\eta_{B_1}':B_1 \to \mathbb{R}$ be a $T$-invariant Morse-Bott function and $g_{B_1}$ be a $T$-invariant Riemannian metric such that any gradient trajectory from a Morse-Bott manifold in $B_0$ to another  Morse-Bott manifold in $B_0$ stays in $B_0$ for all time, and there is no gradient trajectory which starts from a Morse-Bott manifold in $B_1 \setminus B_0$ to a Morse-Bott manifold in $B_0$. In other words, in terms of Morse-Bott complex, we want the generators from $B_0$ forms a subcomplex that can be identified with the Morse-Bott complex of $B_0$.
Moreover, we require that every connected critical submanifold of $\eta_{B_1}'$ is of the form $T/H$.
By Lemma \ref{l:TeqHomotope}, at the cost of possibly a $T$-equivariant homotopy of $f$ to another map, we can assume that $(f,\eta_{B_1}')$ is a good pair (and hence $(f|_{B_0},\eta_{B_1}'|_{B_0})$ is also a good pair).

Let $k:=\dim(B_1)-\dim(B_0)$. The commutative diagram we want is
\begin{equation}\label{eq:4th condition}
\resizebox{\displaywidth}{!}{
\xymatrix{
H^*_T(B_0) \otimes HF^*_T(M,a_0)   \ar[r]\ar[d]
& HF^*(P_{B_0}, (f)^*A_{f}|_{P_{B_0}}) \ar[d] \ar[r]
&  HF^{*-\iota(f)}(P_{B_0}, A_{f}|_{P_{B_0}}) \ar[d]\ar[r]
&  HF^{*-\iota(f)-\dim(B_0)}_T( M,a_1) \ar[d]
\\
H^{*+k}_T(B_1) \otimes HF^*_T(M,a_0)   \ar[r]
& HF^{*+k}(P_{B_1}, f^*A_{f}) \ar[r]
&  HF^{*+k-\iota(f)}(P_{B_1}, A_{f}) \ar[r]
&  HF^{*-\iota(f)-\dim(B_0)}_T( M,a_1) 
}}
\end{equation}
With our setup, the chain level groups in the first row are subcomplexes of the ones in the second row up to grading shifts.
These induce the vertical maps in the diagram. 
Equivalently, these can also be identified with the push-forward maps.
The commutativity of the diagram follows from the fact that the chain level maps send the subcomplexes to the subcomplexes.
Applying the composition of maps in the first row to $(\alpha,a)$ gives $\cS_{f|_{B_0}}(\alpha,a)$, while applying the composition of maps on the other side of the diagram gives $\cS_{f}(j_!(\alpha),a)$.
This finishes the proof.
\end{proof}

\subsection{Weyl-equivariance}

\begin{prop}\label{prop:SeidelWeyleq}
The map  $\cS(\cdot, e): H^{geo,T}_*(\Omega G)  \to SH^*_T(M)$  is Weyl-equivariant.
\end{prop}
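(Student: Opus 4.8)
The plan is to unwind the construction of $\cS$ and check that the Weyl action on $H^{geo,T}_*(\Omega G)$ defined in \eqref{eq:weylgeom1}--\eqref{eq:weylgeom2} is intertwined with the natural $W$-action on $SH^*_T(M)$ induced by the isomorphisms of the previous section ($HF^*_T(M,a)^W \simeq HF^*_{N(T)}(M,a)$ and its behavior under continuation). Concretely, fix a Weyl group element $w$ with representative $\dot w \in N(T)$, and a cycle $(B,\alpha,f) \in H^{geo,T}_*(\Omega G)$. By definition $w \cdot (B,\alpha,f)=(B_{\rho_B\circ\phi_w^{-1}},\phi_w^*(\alpha),w\circ f)$, so I must show that $\cS_{w\circ f}(\phi_w^*(\alpha),z)=w\cdot \cS_f(\alpha,w^{-1}\cdot z)$ for all $z$, where the outer $w$'s are the natural $W$-action on $SH^*_T(M)$. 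The first step is to set up a dictionary: conjugation by $\dot w$ gives a symplectomorphism of $M$ (since the $G$-action is by symplectomorphisms and $\dot w \in G$), which is $T$-equivariant for the twisted $T$-action $\rho_M\circ\phi_w^{-1}$ versus the original one, and on the level of the twisted Hamiltonian $f^*\tilde H$ from \eqref{eq:Fstar} one checks, using $K_{\dot w\gamma\dot w^{-1},t}=K_{\gamma,t}\circ \dot w^{-1}$ (Lemma \ref{l:compoHam}(3)), that $(w\circ f)^*\tilde H$ is obtained from $f^*\tilde H$ by the corresponding change of trivialization on $(B\times ET\times M)/T$.

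The second step is to trace this identification through the three pieces out of which $\cS_f$ is built in \S\ref{ss:Gcycle}: the pull-back \eqref{eq:pullbackreal5}, the tautological isomorphism $\cC_f$ \eqref{eq:directLim}, and the push-forward \eqref{eq:pfreal3}. For the push-forward map $P_B\to (\Omega G\times M)_{borel}\to M_{borel}$, the key point is that $w$ acting on $\Omega G$ by conjugation and $\dot w$ acting on $M$ are compatible, so the diagram relating $\cS$ for $f$ and for $w\circ f$ commutes up to the $W$-action on $M_{borel}$ coming from the action of $\dot w$ on the classifying space model $EG_n$; this is exactly the map inducing the $W$-action on $HF^*_T(M,a)$ via Lemma $HF^*_T(M,a)^W\simeq HF^*_{N(T)}(M,a)$. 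For the pull-back map, the analogous statement is that $\phi_w^*(\alpha)$ is defined precisely as the pull-back of $\alpha$ along $(b,y)\mapsto(b,w^{-1}y)$ in \eqref{eq:pullbackalpha}, which is the $W$-action model on $H^*_T(B)$; one then invokes naturality of Floer-theoretic pull-back (Corollary \ref{c:natural}) with respect to the equivariant diffeomorphism $\dot w$. For $\cC_f$, the identification is essentially tautological at the chain level: the map $\Phi_t$ of Proposition \ref{p:correlationproof} for $w\circ f$ is conjugate by $\dot w$ to that for $f$, and Lemma \ref{l:topEnergy} (with $\gamma$ replaced by $\dot w\gamma\dot w^{-1}$) together with the $\dot w$-invariance of the canonical trivialization shows the grading shift $\iota(f)$ and the energies match.

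The third step is to assemble these commuting squares into a single commutative diagram whose outer rectangle is the desired identity $\cS_{w\circ f}(\phi_w^*(\alpha),-)=w\circ \cS_f(\alpha,-)\circ w^{-1}$, and then specialize to $z=e$ (the unit $1\in SH^*_T(M)$), which is $W$-invariant since it comes from the image of the acceleration map on $H^0$ and the $W$-action on equivariant cohomology fixes the fundamental class. This yields $\cS(w\cdot(B,\alpha,f),e)=w\cdot\cS((B,\alpha,f),e)$, i.e. $\cS(-,e)$ is Weyl-equivariant. To make everything go through cleanly one should first reduce, via the homotopy-invariance of $H^{geo,T}_*(\Omega G)$ and Lemma \ref{l:TeqHomotope}, to the case where $(f,\eta_B')$ is a good pair and where the admissible base triples and accelerator Hamiltonians $A_f$ of Proposition \ref{p:accelerator} can be chosen $\dot w$-compatibly, i.e.\ so that the $A_{w\circ f}$ is literally $\dot w$-conjugate to $A_f$; this is possible because all the data ($\eta_{BT}$, $g_{BT}$, $\nabla_T$, the Morsification $\eta_B$, the slope function $\bfs$ from Lemma \ref{l:equivariantS}) can be chosen $N(T)$-equivariantly, or averaged over the finite group $W$.

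\medskip

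I expect the main obstacle to be \emph{consistency of auxiliary choices}: the definition of $\cS_f$ involves admissible base triples on the infinite-dimensional $P_B$, accelerator Hamiltonians $A_f$ with carefully controlled slopes, and a cofinal sequence of slopes $a_k$, and one needs all of these to be chosen in a way that is simultaneously compatible with the $\dot w$-conjugation and with the independence-of-choices statements (Lemma \ref{l:indepChoiceC} and its corollary). The cleanest route is probably to choose everything $N(T)$-equivariantly from the outset (the relevant existence/genericity statements — e.g.\ Lemma \ref{l:equivariantS}, Proposition \ref{p:accelerator}, and the transversality arguments — all have $N(T)$-equivariant refinements since $W$ is finite and we work in characteristic zero), so that the $\dot w$-conjugation literally identifies the data used to compute $\cS_{w\circ f}$ with the $\dot w$-transform of the data used to compute $\cS_f$; then every square in the diagram commutes on the nose rather than merely up to chain homotopy, and the independence lemmas only enter to say that the final answer does not depend on the (equivariant) choices made. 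A secondary, purely bookkeeping, obstacle is keeping track of the grading shifts $\iota(f)$ versus $\iota(w\circ f)=\iota(f)$ and $\dim(B)$ consistently across the twisted and untwisted sides, which is handled by the $\dot w$-invariance of the chosen homotopy class of trivialization of the canonical bundle of $M$ (the $G$-action is connected, hence acts trivially on this homotopy class).
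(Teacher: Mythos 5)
Your proposal follows the paper's proof essentially step for step: conjugate the Floer-theoretic data (accelerator Hamiltonian $A_f$, admissible base triple, the Borel-space Hamiltonian fibration) by $\dot w$, observe via the computation $((wf)^*\tilde{A}_{wf})_t(b,wy_1,wm,wy_2)=(f^*\tilde{A}_f)_t(b,y_1,m,y_2)$ that the construction of $\cS_{wf}$ with the transported data is tautologically the $\dot w$-transport of the construction of $\cS_f$, chase this through the pull-back/$\cC_f$/push-forward decomposition, and conclude from $W$-invariance of the unit $e$. The one simplification the paper makes over your proposal to take $N(T)$-equivariant auxiliary data is that it works one $w$ at a time, \emph{defining} $A_{wf}$ and the base triple on the twisted Borel space to be the $\dot w$-transports of $A_f$ and the original base triple (so no simultaneous compatibility across all of $W$ is needed), and then invokes the independence-of-choices lemmas (Lemma \ref{l:indepChoiceC}, Corollary \ref{c:directSystem}) to conclude that these particular choices compute $\cS_{wf}$.
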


\begin{proof}

Let $EG$ be a classifying space of $G$.
For any $w \in N(T)$,  we define $\phi_w: T \to T$ to be $\phi_w(g)=wgw^{-1}$.
Let $(B,\alpha,f)$ be a cycle in $H^{geo,T}_*(\Omega G)$.
At the cost of possibly homotoping $f$ (Lemma \ref{l:TeqHomotope}), we assume that there is an $\eta'_B$ making $(f,\eta_B')$ a good pair.
We denote the $T$ action on $B$, $M$ and $EG$ by $\rho_B$, $\rho_M$ and $\rho_E$, respectively.
We define $w^*\rho_{B}:= \rho_B \circ \phi_w$, and similarly for $w^*\rho_{M}$ and $w^*\rho_{E}$.
Recall that the class $w(B,\alpha,f)$ is represented by $(B,\phi_w^*\alpha, wf)$ with the $\rho_B \circ \phi_{w}^{-1}$ action \eqref{eq:weylgeom2}. 

The map $M \to M$ given by $m \mapsto w \cdot m$ is $(\rho_M,w^*\rho_M)$-equivariant (i.e. it is equivariant with respect to the $\rho_M$ action on the domain and the $w^*\rho_M$ action on the target).
Similarly, the map $EG \to EG$ given by $y \mapsto w \cdot y$ is $(\rho_E, w^*\rho_E)$-equivariant.
For a space $X$ and a $T$-action $\rho_X$ on $X$, we denote the quotient $X/T$ by $X/\rho_X$ to emphsise the action of $T$ on $X$. For two $T$ actions $\rho_{X_1}$ and $\rho_{X_2}$, we use $\rho_{X_1} \otimes \rho_{X_2}$ to denote the diagonal action by $T$.

Fix $w \in N(T)$, we consider the following commutative diagram
\begin{equation}\label{eq:wActionSpace}
\resizebox{\displaywidth}{!}{
\xymatrix{
(B \times EG)/(\rho_B \otimes \rho_E) \times (M \times EG)/(\rho_M \otimes \rho_E)   \ar[d]^{\simeq}
& (B \times EG \times M \times EG)/(\rho_B \otimes \rho_E \otimes \rho_M \otimes \rho_E) \ar[d]^{\simeq} \ar[l]
\\
(B \times EG)/(\rho_B \otimes w^*\rho_E) \times (M \times EG)/(w^*\rho_M \otimes w^*\rho_E)     
& (B \times EG \times M \times EG)/(\rho_B \otimes w^*\rho_E \otimes w^*\rho_M \otimes w^*\rho_E)  \ar[l]
}}
\end{equation}
where the first vertical map sends $((b,y_1),(m,y_2))$ to $((b,wy_1),(wm,wy_2))$, the second vertical map
sends $(b,y_1,m,y_2)$ to $(b,wy_1,wm,wy_2)$ and the horizontal maps are the natural maps.



Let $A_f \in C^{\infty}(S^1 \times (B \times EG \times M \times EG)/(\rho_B \otimes \rho_E \otimes \rho_M \otimes \rho_E))$
be a Hamiltonian given by Proposition \ref{p:accelerator}
and 
\[
\tilde{A}_f:S^1 \times B \times EG \times M \times EG \to \mathbb{R}
\] be its lift. 
We define 
\begin{align*}
\tilde{A}_{wf}: &S^1 \times B \times EG \times M \times EG \to \mathbb{R}\\
&(t,b,y_1,m,y_2) \mapsto  \tilde{A}_f(t,b,w^{-1}y_1,w^{-1}m,w^{-1}y_2) 
\end{align*}
which is $\rho_B \otimes w^*\rho_E \otimes w^*\rho_M \otimes w^*\rho_E$-invariant.
As a result, $\tilde{A}_{wf}$ descends to a function, denoted by $A_{wf}$, on $S^1 \times (B \times EG \times M \times EG)/(\rho_B \otimes w^*\rho_E \otimes w^*\rho_M \otimes w^*\rho_E)$.
It is direct to check that
\begin{align*}
&((wf)^*\tilde{A}_{wf})_t(b,wy_1,wm,wy_2) \\
=&(\tilde{A}_{wf})_t(b,wy_1,wf(b)(t)w^{-1} wm,wy_2)-K_{wf(b)w^{-1},t}(wf(b)(t)w^{-1} wm)\\
=&(f^*\tilde{A}_{f})_t(b,y_1,m,y_2).
\end{align*}
Moreover, both $A_{wf}$ and $(wf)^*A_{wf}$ are cylindrical Hamiltonians compatible with the admissible base triple on $(B \times EG \times EG)/(\rho_B \otimes w^*\rho_E \otimes  w^*\rho_E)$
obtained by pushing forward the admissible base triple on $(B \times EG \times EG)/(\rho_B \otimes \rho_E \otimes  \rho_E)$.

As a result, we can use the commutative diagram \eqref{eq:wActionSpace} to deduce the following commutative diagram
\begin{equation}\label{eq:wAction1}
\xymatrix{
H((B \times EG)/(\rho_B \otimes \rho_E)) \otimes HF^*_T(M,a_0)   \ar[r]\ar[d]^{\phi_w^* \otimes w_*}
& HF(P_{B}, (f)^*A_{f}) \ar[d] \ar[d]
\\
H((B \times EG)/(\rho_B \otimes w^*\rho_E)) \otimes HF^*_{T}(M,a_0)   \ar[r]
& HF(P_{B}^w, (wf)^*A_{wf}) 
}
\end{equation}
where $P_{B}^w:=(B \times EG \times M \times EG)/(\rho_B \otimes w^*\rho_E \otimes w^*\rho_M \otimes w^*\rho_E)$.
Note that $(M \times EG)/(w^*\rho_M \otimes w^*\rho_E)$ is canonically the same as 
$(M \times EG)/(\rho_M \otimes \rho_E) $ so the second factor in the top left and bottom left tensor products in \eqref{eq:wAction1} can be canonically identified and we just denote it by $HF^*_{T}(M,a_0) $.
Moreover, the map on $HF^*_T(M,a_0)$ is precisely the action by $w$ so we denote it by $w_*$.
On the other hand, the map between the first factor in the top left and bottom left tensor products is precisely the $\phi_w^*$ described in \eqref{eq:pullbackalpha}. 

Similarly, we also have the following commutative diagram of maps between spaces
\begin{equation}\label{eq:wActionSpace2}
\resizebox{\displaywidth}{!}{
\xymatrix{
(B \times EG \times M \times EG)/(\rho_B \otimes \rho_E \otimes \rho_M \otimes \rho_E)  \ar[d]^{\simeq} \ar[r]
& (EG \times M \times EG)/( \rho_E \otimes \rho_M \otimes \rho_E) \ar[d]^{\simeq}
\\
(B \times EG \times M \times EG)/(\rho_B \otimes w^*\rho_E \otimes w^*\rho_M \otimes w^*\rho_E)   \ar[r]
& ( EG \times M \times EG)/( w^*\rho_E \otimes w^*\rho_M \otimes w^*\rho_E)  
}}
\end{equation}
where the first vertical map sends $(b,y_1,m,y_2)$ to $(b,wy_1,wm,wy_2)$
and the second vertical map sends $(y_1,m,y_2)$ to $(wy_1,wm,wy_2)$.
The corresponding commutative diagram of maps between cohomology is
\begin{equation}\label{eq:wAction2}
\xymatrix{
 HF(P_{B}, (f)^*A_{f}) \ar[d] \ar[r]
&  HF(P_{B}, A_{f}) \ar[d]\ar[r]
&  HF_T( M,a_1) \ar[d]^{w_*}
\\
 HF(P_{B}^w, (wf)^*A_{wf}) \ar[r]
&  HF(P_{B}^w, A_{wf}) \ar[r]
&  HF_{T}( M,a_1) 
}
\end{equation}
In the last column, we use the canonical isomorphism  between 
$(EG \times M \times EG)/( \rho_E \otimes \rho_M \otimes \rho_E)$ and 
$( EG \times M \times EG)/( w^*\rho_E \otimes w^*\rho_M \otimes w^*\rho_E)  $ again.
Moreover, the last vertical map $w_*$ is precisely the action on $HF_{T}( M,a_1) $ by $w$.


By combining \eqref{eq:wAction1}, \eqref{eq:wAction2} and passing to the direct limit, we show that
$w_* \cS_f(\alpha,e)$ (the output of $(\alpha,e)$ under the composition of the maps in the first row and the last vertical map) equals to $\cS_{wf}( \phi_w^*\alpha, w_*  e)$ (the output of $(\alpha,e)$ under the composition of the first vertiical map and the maps in the second row).
Moreover, we know that $\cS_{wf}( \phi_w^*\alpha, w_* e)=\cS_{wf}( \phi_w^*\alpha, e)$ so it proves that $\cS(\cdot, e)$ is Weyl-equivariant.
\end{proof}


\section{An algebra homomorphism}\label{s:algebra}

In this section, we are going to show that $\cS(\cdot, e): H^{geo,T}_*(\Omega G) \to SH^*_T(M)$ is an algebra homomorphism as promised in \eqref{eq:moduleactintroT}.


First note that for $[B_1,\alpha_1,f_1], [B_2,\alpha_2,f_2] \in H^{geo,T}_*(\Omega G)$, their product is represented by
$(B_2 \times B_1, \pi_2^*\alpha_2 \cup \pi_1^*\alpha_1, f)$, where $f:B_2 \times B_1 \to \Omega G$ is given by $f(b_2,b_1)=f_2(b_2)f_1(b_1)$ (see \eqref{eq:prod}).

\begin{prop}\label{p:double_taut}
For $i=1,2$, let $(f_i:B_i \to \Omega G,\eta_{B_i}')$ be a good pair.
Let $B:=B_2 \times B_1$, $F_i:B \to \mathbb{R}$ be $F_i(b_2,b_1)=f_i(b_i)$ and $F:B \to \mathbb{R}$ be $F(b_2,b_1)=f_2(b_2)f_1(b_1)$.
Let $\eta':B \to \mathbb{R}$ be $\eta_{B_2 \times B_1}'(b_2,b_1)=\eta'_{B_1}(b_1)+\eta'_{B_2}(b_2)$
Then $(F_i,\eta')$ and $(F,\eta')$ are good pairs, $F_1^*F_2^*H=F^*H$ for any cylindrical Hamiltonian $H \in C^{\infty}(S^1 \times P_B)$ and
\begin{align}\label{eq:F1F2}
\cC_F \simeq \cC_{F_2} \circ \cC_{F_1} :HF(P_B,F^*H) \to HF(P_B,H)
\end{align}
\end{prop}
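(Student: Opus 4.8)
\textbf{Proof plan for Proposition \ref{p:double_taut}.}

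The plan is to verify each assertion in turn, reducing everything to the already-established tautological isomorphism of Proposition \ref{p:correlationproof} and its chain-level incarnation via the loop-translation maps $\Phi_t$. First I would check the elementary claims. That $(F_i,\eta')$ and $(F,\eta')$ are good pairs: near a connected critical submanifold of $\eta'=\eta'_{B_2}\boxplus\eta'_{B_1}$, which is a product $C_2\times C_1$ of critical submanifolds of $\eta'_{B_2}$ and $\eta'_{B_1}$, the maps $f_1,f_2$ are each constant (by the good-pair hypothesis on $(f_i,\eta'_{B_i})$), hence $F_1,F_2$ and their pointwise product $F$ are all locally constant there. Next, the identity $F_1^*F_2^*H=F^*H$: this is a direct computation using the definition \eqref{eq:Fstar}. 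Writing $\gamma_i(t)=f_i(b_i)(t)$, so $F(b_2,b_1)(t)=\gamma_2(t)\gamma_1(t)$, one has $K_{F(b),t}=K_{\gamma_2\#\gamma_1,t}$ and by Lemma \ref{l:compoHam}(1) this composes correctly; applying \eqref{eq:Fstar} once for $F_2$ and once for $F_1$ translates the $M$-variable successively by $\gamma_2(t)$ then $\gamma_1(t)$, i.e. by $\gamma_2(t)\gamma_1(t)$, and the mean-normalized generating Hamiltonians subtract off in exactly the pattern dictated by $K_{\gamma_2\#\gamma_1}$. The mean-normalization is preserved since the composition of mean-normalized loops in $\Omega G$ is mean-normalized (the generators are $G$-invariant Hamiltonians of the form $r\mathbf{s}_v$, for which normalization is automatic). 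This gives $F_1^*F_2^*H=F^*H$ on the nose at the level of Hamiltonian functions, and hence an equality (not merely isomorphism) of Floer complexes once compatible auxiliary data are chosen.

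The substantive point is \eqref{eq:F1F2}. Here I would follow the proof of Proposition \ref{p:correlationproof} verbatim but iterate it. Let $\tilde\Phi^{(i)}_t(b_2,b_1,y,m)=(b_2,b_1,y,f_i(b_i)(t)m)$ be the loop-translation maps associated to $F_i$, descending to $\Phi^{(i)}_t:P_B\to P_B$, and let $\tilde\Phi^{(F)}_t(b_2,b_1,y,m)=(b_2,b_1,y,f_2(b_2)(t)f_1(b_1)(t)m)$ be the one associated to $F$. The key observation is the cocycle relation $\Phi^{(F)}_t=\Phi^{(F_2)}_t\circ\Phi^{(F_1)}_t$ at the level of maps $P_B\to P_B$ (again because $f_2(b_2)(t)f_1(b_1)(t)$ is the composition of the two group elements). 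Consequently, if $(J_b)$ is a fibrewise almost complex structure used to define $HF(P_B,H)$, then pulling back first by $\Phi^{(F_2)}$ and then by $\Phi^{(F_1)}$ yields $(\Phi^{(F_1)})^*(\Phi^{(F_2)})^*J=(\Phi^{(F)})^*J$, which is exactly the almost complex structure used to define $HF(P_B,F^*H)$ in Proposition \ref{p:correlationproof}. The bijections on generators provided by Lemma \ref{l:pullbackorbit} compose accordingly, the topological energies match at each stage by Lemma \ref{l:topEnergy} (applied to the loop $\gamma_2$ and then to $\gamma_1$, or equivalently once to $\gamma_2\gamma_1$), and the Conley-Zehnder grading shifts add: $\iota(F)=\iota(F_2)+\iota(F_1)$, by the additivity of the Maslov index under concatenation of loops of symplectic matrices (cf. Lemma \ref{l:gradShift}). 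Assembling these identifications over the intermediate complex $HF(P_B,F_2^*H)$ gives the factorization \eqref{eq:F1F2} at the cochain level, hence on cohomology.

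The main obstacle I anticipate is purely bookkeeping rather than conceptual: one must choose a single coherent package of auxiliary data — an admissible base triple $(\eta_n,g_n,\nabla_n)$ for $P_B\to B_{borel}$ and cylindrical Hamiltonians with which $H$, $F_1^*H$, $F_2^*H$, and $F^*H$ are all simultaneously compatible — so that all three tautological identifications $\cC_{F_1}$, $\cC_{F_2}$, $\cC_F$ can be computed on the nose with respect to the \emph{same} data, with no intervening continuation maps. This requires invoking Proposition \ref{p:accelerator} for the good pair $(F,\eta')$ (and checking, using the good-pair structure of $(F_i,\eta')$, that $F_i^*A_F$ remains compatible with the resulting base triple — which follows because $f_i$ is locally constant near the critical set, so $\mathbf{s}_{F_i^*A_F}$ is locally constant there just as $\mathbf{s}_{F^*A_F}$ is), together with the independence-of-choices statements (Lemma \ref{l:indepChoiceC} and its corollary) to pass to the direct limit. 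Once the data are arranged so that the three loop-translations $\Phi^{(F_1)}$, $\Phi^{(F_2)}$, $\Phi^{(F)}$ intertwine the respective moduli problems, \eqref{eq:F1F2} is immediate from the cocycle relation $\Phi^{(F)}=\Phi^{(F_2)}\circ\Phi^{(F_1)}$ and the fact that the moduli-space identifications of Proposition \ref{p:correlationproof} are literal bijections preserving energy and sign.
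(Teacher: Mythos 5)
Your proposal is correct and takes essentially the same approach as the paper: verify the good-pair claims by the product structure of the critical locus, establish $F_1^*F_2^*H=F^*H$ by direct expansion of \eqref{eq:Fstar} together with Lemma \ref{l:compoHam}(1), and obtain \eqref{eq:F1F2} from the cocycle relation $\tilde\Phi^{(F)}_t=\tilde\Phi^{(F_2)}_t\circ\tilde\Phi^{(F_1)}_t$ for the translation maps of Proposition \ref{p:correlationproof}. Your extra paragraph on arranging a single coherent package of auxiliary data is a useful elaboration of what the paper leaves implicit; the only minor imprecision is the parenthetical claim that mean-normalization is ``automatic'' for $r\mathbf{s}_v$, which is not quite right but also plays no role in the computation.
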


\begin{proof}
Clearly, $\eta_{B_2 \times B_1}'$ is $T$-invariant (in fact, $T \times T$-invariant).
A critical submanifold of $\eta'$ is a product of critical submanifolds of $\eta'_{B_1}$ and $\eta'_{B_2}$.
Let $N_{\eta_i}$ be a $T$-invariant neighborhood of $\critp(\eta'_{B_i})$ such that $f_i$ is locally constant.
Then $F_i$ and $F$ are locally constant near the $T$-invariant set $N_{\eta_1} \times N_{\eta_2}$ so $(F_i,\eta_{B_2 \times B_1}')$ and $(F,\eta_{B_2 \times B_1}')$ are good pairs.

Let $\tilde{H}:S^1 \times B_1 \times B_2 \times ET \times M \to \mathbb{R}$ be the lift of $H$.
Now we check that
\begin{align*}
&F_1^*F_2^*\tilde{H}_t(b_1,b_2,y,m)\\
=&F_2^*\tilde{H}_t(b_1,b_2,y,(f_1(b_1)(t)) \cdot m)-K_{f_1(b_1),t}((f_1(b_1)(t)) \cdot m) \\
=&\tilde{H}_t(b_1,b_2,y,(f_2(b_2)(t))(f_1(b_1)(t)) \cdot m)-K_{f_2(b_2),t}((f_2(b_2)(t))(f_1(b_1)(t)) \cdot m)-K_{f_1(b_1),t}((f_1(b_1)(t)) \cdot m)\\
=&\tilde{H}_t(b_1,b_2,y,(f_2(b_2)(t))(f_1(b_1)(t)) \cdot m)-K_{f_2(b_2)f_1(b_1),t}((f_2(b_2)(t))(f_1(b_1)(t)) \cdot m)\\
=&F^*\tilde{H}_t(b_1,b_2,y,m)
\end{align*}
Here the third equality follows from Lemma \ref{l:compoHam}(1).

To see \eqref{eq:F1F2}, we simply observe that in the proof of Proposition \ref{p:correlationproof}, the $\tilde{\Phi}$ associated to $F$ is the composition of the  $\tilde{\Phi}$ associated to $F_2$ and $F_1$.
\end{proof}

The commutative diagram we want to establish is the following, and Proposition \ref{p:double_taut} gives the commutativity of the triangle in the middle.
\begin{equation}\label{eq:BigCom}
\resizebox{\displaywidth}{!}{%
\xymatrix{
H^*_T(B_2) \otimes H^*_T(B_1) \otimes HF^*_T(M,a_0)   \ar[r]^{id \otimes Q_1} \ar[d]^{Q_2 \otimes id} 
& H^*_T(B_2) \otimes HF^*(P_{B_1}, f_1^*A_{f_1}) \ar[d]^{Q_3}\ar[r]^{id \otimes \cC_{f_1}}
&  H^*_T(B_2) \otimes HF^*(P_{B_1}, A_{f_1}) \ar[d]^{Q_4}\ar[r]^{id \otimes \Pi_1}
& H^*_T(B_2) \otimes HF^*_T( M,a_1) \ar[d]^{Q_5}
\\
H^*_T(B_2 \times B_1) \otimes HF^*_T(M, a_0)   \ar[r]^{Q_6} 
&HF^*(P_{B_2 \times B_1},F^*A_F)   \ar[r]^{\cC_{F_1}} \ar[dr]^{\cC_F} 
& HF^*(P_{B_2 \times B_1},F_2^*A_F) \ar[r]^{\Pi_2} \ar[d]^{\cC_{F_2}} 
& HF^*(P_{B_2},f_2^*A_{f_2}) \ar[d]^{\cC_{f_2}} 
\\
&  
&HF^*(P_{B_2 \times B_1},A_F) \ar[dr]^{\Pi_5} \ar[r]^{\Pi_3}
& HF^*(P_{B_2},A_{f_2})  \ar[d]^{\Pi_4}
\\
&
&
& HF^*_T(M,a_2) 
}}
\end{equation}
The rest of the subsection is devoted to explaining all the maps in the commutative diagram and proving their commutativity.

Once the commutativity of the diagram is proved and after passing to the direct limit with respect to slopes, the composition of the maps in the first row and last column is $(\alpha_2,\alpha_1,z) \mapsto \cS_{f_2}(\alpha_2, \cS_{f_1}(\alpha_1,z))$, while the composition of the maps on `the other side' is 
$(\alpha_2,\alpha_1,z) \mapsto \cS_{F}(\pi_2^*\alpha_2 \cup \pi_1^*\alpha_1,z)$.
When $z$ is the unit $e \in SH^*_T(M)$, we obtain
\[
\cS_{F}(\pi_2^*\alpha_2 \cup \pi_1^*\alpha_1,e)= \cS_{f_2}(\alpha_2, \cS_{f_1}(\alpha_1,e))=\cS_{f_2}(\alpha_2, e) \cS_{f_1}(\alpha_1,e)
\]
so $\cS(\cdot,e)$ is an algebra map. Here, the second equality comes from Lemma \ref{l:moduleType}.

Our first task is to explain the maps $Q_3$ and $Q_4$.
To fix the ideas, the maps $Q_3$ and $Q_4$ are pull-back maps (in the sense of Section \ref{ss:pb}) with respect to the following  fibre product diagram.
\begin{equation}\label{eq:B21Total}
\begin{tikzcd}
(B_2 \times ET \times B_1 \times M \times ET)/T \arrow{r} \arrow{d}
& (B_2 \times ET)/T  \times (B_1 \times M \times ET)/T \arrow{d}
\\
(B_2 \times ET \times B_1 \times ET)/T \arrow{r} 
& (B_2 \times ET)/T  \times (B_1 \times ET)/T
\end{tikzcd}
\end{equation}
However, the slopes of the Hamiltonians are not constant functions so we have to be careful with the auxiliary choices so that $C^0$ a priori estimates can be achieved to give well-defined pull-back maps. The auxiliary choices are explained below.

For $i=1,2$, let $(\eta_i,g_i,\nabla_i)$ be the admissible base triple obtained by applying Proposition \ref{p:accelerator} to the good pairs $(f_i,\eta_{B_i}')$.
We assume that the connections on $(B_i \times ET)/T \to BT$ for both $i=1,2$ are induced by the same connection $\nabla_T$ for the bundle $ET \to BT$.
In particular, over the region $U$ where $\nabla_T$ is flat, we have $((B_i)_{borel}|_U,g_i) \simeq (B_i \times U,g_{B_i} \oplus g_{BT}|_U)$, where $g_{B_i}$ is a $T$-equivariant metric on $B_i$.
However, for regularity reason, we don't want to build $\eta_i$ using the same Morse function $\eta_{BT}: BT \to \mathbb{R}$. 
 Instead, let $\eta_{BT,i}:BT \to \mathbb{R}$ be a Morse function that is a small perturbation of $\eta_{BT}$ so that it satisfies the properties as required in Section \ref{ss:GoodClass}.
Let  $\eta_i':(B_i \times ET)/T \to \mathbb{R}$ be the pull-back of the Morse function $\eta_{BT,i}$.
 We assume that $\eta_i=\eta_i'+\epsilon_i \chi_i(u) \eta_{B_i}(b_i) $, where $\eta_{B_i}:B_i \to \mathbb{R}$ is a Morsification of $\eta_{B_i}'$ as in Section \ref{ss:GoodClass}.


It is easy to check that the following diagram is a fibre product diagram
\begin{equation*}
\begin{tikzcd}
(B_2 \times ET \times B_1 \times ET)/T \arrow{r} \arrow{d}
& (B_2 \times ET)/T  \times (B_1 \times ET)/T \arrow{d}
\\
(ET \times ET)/T \arrow{r} 
& (ET)/T  \times  (ET)/T 
\end{tikzcd}
\end{equation*}
The map from the top-left to bottom-right, the second horizontal map and the second vertical map
are $T \times T$-bundles with fibres $T \times B_2 \times B_1$, $T$ and $B_2 \times B_1$, respectively.
We equip all of them with the connection induced by $\nabla_T \oplus \nabla_T$.

We choose a Riemannian metric $g_{BBT}$ on $(ET \times ET)/T$ such that the map to $((ET)/T  \times  (ET)/T, g_{BT} \oplus g_{BT}) $ is a Riemannian submersion, and the orthogonal complement of the fibres argees with the horizontal subspace $T^{hor} \subset T((ET \times ET)/T)$ of the connection.
The second vertical map is also a Riemannian submersion, where the source is equipped with the metric $g_2 \oplus g_1$.
The metric $g_{BBT}$ and $g_2 \oplus g_1$ together induces a Riemannian metric $g_{21}$ on $(B_2 \times ET \times B_1 \times ET)/T$ such that all the four maps are Riemannian submersions.
Indeed, the tangent spaces of $(B_2 \times ET \times B_1 \times ET)/T$, $(B_2 \times ET)/T  \times (B_1 \times ET)/T$ and $(ET \times ET)/T$ are given by $T(T) \oplus T(B_2 \times B_1) \oplus T^{hor}$, $T(B_2 \times B_1) \oplus T^{hor}$ and $T(T) \oplus T^{hor}$ respectively, and the Riemannian metrics $g_{BBT}$ and $g_2 \oplus g_1$ respect the product decomposition and agree on $T^{hor}$.
Therefore, they uniquely determine a metric $g_{21}$ making all the four maps Riemannian submersions.
Moreover, if we equip the first vertical map the connection induced by $\nabla_T$, then its horizontal subspaces agree with
the $g_{21}$-orthgonal complement of the fibres.

Let $\eta_{BBT}': (ET \times ET)/T \to \mathbb{R}$ be the pull-back of $\eta_{BT}$.
Over the flat region $U$, we have $((ET \times ET)/T|_U, g_{BBT}) \simeq (T \times U, g_T \times g_{BT}|_U)$.
To Morsify $\eta_{BBT}'$, we choose a Morse function $\eta_T: T \to \mathbb{R}$ and define $\eta_{BBT}=\eta_{BBT}'+ \chi(u) \eta_T(g)$.
Let $\eta_{21}':(B_2 \times ET \times B_1 \times ET)/T \to \mathbb{R}$ be the pull-back of $\eta_{BBT}$
and define its Morsification $\eta_{21}:=\eta_{21}'+\chi(u)(\eta_{B_1}(b_1)+\eta_{B_2}(b_2))$.

Let $\nabla_{21}$ be the connection for the first vertical map in \eqref{eq:B21Total} induced by $\nabla_T$.
Then $(\eta_{21},g_{21},\nabla_{21})$ is an admissible base triple, and is of the form obtained by applying Proposition \ref{p:accelerator} to  $(f_2f_1,\pi_1^*\eta_{B_1}'+\pi_2^*\eta_{B_2}')$.

Following Proposition \ref{p:accelerator}, let $A_{f_i} \in C^{\infty}(S^1 \times P_{B_i})$ be a cylindrical Hamiltonian such that both $A_{f_i}$ and $f_i^*A_{f_i}$ are compatible with $(\eta_i,g_i,\nabla_i)$ and $A_{f_i}$ has slope $\bfs_{A_{f_i}}= (\bfs_i)_{borel}$. Here, $\bfs_i \in C^{\infty}(S^1 \times B_i)$ is a $T$-invariant function such that it is locally constant on $N_{\eta_i}$, 
$\bfs_i > c_{f_i,K}$ and \eqref{eq:grad_on_B} is satisfied ($\eta_B$, $\tau_B$ and $f$ are replaced with $\eta_{B_i}$, $\tau_{B_i}$ and $f_i$, respectively).
We assume furthermore that $A_{f_i}$ are chosen such that there are constants $a_0,a_1,a_2$ with 
\begin{align}\label{eq:SlopeBound}
0<a_0 \le \bfs_{f_1^*A_{f_1}},   \bfs_{A_{f_1}} \le a_1 \le  \bfs_{f_2^*A_{f_2}},   \bfs_{A_{f_2}} \le a_2
\end{align}
The maps in the first row and last column of \eqref{eq:BigCom} are precisely those maps introduced in Section \ref{ss:Taut} and \ref{ss:Gcycle} to define $\cS_{f_i}$.

Similarly, we are going to pick a cylindrical Hamiltonian $A_F \in C^{\infty}(S^1 \times P_{B_2 \times B_1})$  such that $A_F$, $F_2^*A_F$ and $F^*A_F$ are all compatible with $(\eta_{21},g_{21},\nabla_{21})$.
To do that, let $\bfs_2' \in  C^{\infty}(S^1 \times B_2)$ be another $T$-invariant function such that
 it is locally constant in $N_{\eta_2}$, $\bfs_2' > c_{f_2,K}$ and \eqref{eq:grad_on_B} is satisfied.
The following lemma explains the choice of cylindrical Hamiltonians we use for the target of the maps $Q_3$ and $Q_4$.

\begin{lem}\label{l:SlopeH2}
If the values of $(\pi_1^*\bfs_1+\pi_2^*\bfs_2')|_{N_{\eta_2} \times N_{\eta_1}}$ do not lie in the action spectrum of the contact boundary $\partial \bar{M}$, then there is a cylindrical Hamiltonian $A_F \in C^{\infty}(S^1 \times P_{B_2 \times B_1})$ with slope $\bfs_{A_F}=(\pi_1^*\bfs_1+\pi_2^*\bfs_2')_{borel}$ such that 
$A_F$, $F_2^*A_F$ and $F^*A_F$ are all compatible with $(\eta_{21},g_{21},\nabla_{21})$.
\end{lem}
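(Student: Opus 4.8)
The plan is to construct $A_F$ by the same recipe used in the proof of Proposition \ref{p:accelerator}, exploiting the product structure of the base triple $(\eta_{21},g_{21},\nabla_{21})$ and the fact that $F = F_2 \cdot F_1$ decomposes the relevant slope data into the two factors. First I would define the target slope function $\bfs_F := \pi_1^*\bfs_1 + \pi_2^*\bfs_2' \in C^\infty(S^1 \times (B_2 \times B_1))$, which is $T$-invariant (being a sum of pull-backs of $T$-invariant functions), hence descends to $(\bfs_F)_{borel} \in C^\infty(S^1 \times (B_2\times B_1)_{borel})$; then I would choose any cylindrical Hamiltonian $A_F$ on $P_{B_2\times B_1}$ with $\bfs_{A_F} = (\bfs_F)_{borel}$ and with the normalization that $\bfs_{(A_F)_b}(m) = (\bfs_F)_{borel}(b)$ is independent of $m \in \partial\bar M$, exactly as in Proposition \ref{p:accelerator}. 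The hypothesis that $(\pi_1^*\bfs_1+\pi_2^*\bfs_2')|_{N_{\eta_2}\times N_{\eta_1}}$ avoids the action spectrum guarantees non-degeneracy over $\critp(\eta_{21})$ can be arranged, since $\critp(\eta_{21})$ projects into $N_{\eta_2}\times N_{\eta_1}$ by construction of the Morsification. Bullets one, two, and four of Definition \ref{d:eqgen} are then handled by a generic fibrewise-compactly-supported perturbation of $A_F$ away from the flat region, exactly as in the last three paragraphs of the proof of Proposition \ref{p:accelerator}; the same perturbations simultaneously work for $F_2^*A_F$ and $F^*A_F$ because $F_2$ and $F$ are locally constant on $N_{\eta_2}\times N_{\eta_1}$ (by Proposition \ref{p:double_taut}, $(F_2,\eta_{21}')$ and $(F,\eta_{21}')$ are good pairs).

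The main work, and the expected obstacle, is verifying the third bullet of Definition \ref{d:eqgen} (the monotonicity $\partial_s \bfs \le 0$ along gradient trajectories) simultaneously for $A_F$, $F_2^*A_F$, and $F^*A_F$. For $A_F$ itself this is the slope-monotonicity computation of Proposition \ref{p:accelerator}: along a gradient trajectory $\tau$ of $\eta_{21}$, outside the flat region $\tau$ is a horizontal lift of a gradient trajectory of the base Morse function on the $BT$-factor, so $\bfs_{(A_F)_{\tau(s)}}$ is covariantly constant (Lemma \ref{l:covariant constant} applied to the $B_2\times B_1$ setting), while inside $U_{21}$ the gradient of $\eta_{21}$ decomposes along the product $T(T)\oplus T(B_2\times B_1)\oplus T^{hor}$ and the estimate \eqref{eq:grad_on_B} for each of $(\bfs_1,f_1,\eta_{B_1})$ and $(\bfs_2',f_2,\eta_{B_2})$ feeds in additively. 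The key point is that $F^*0$ (resp. $F_2^*0$) has slope perturbation governed by $K_{f_2(b_2)f_1(b_1)}$ (resp. $K_{f_2(b_2)}$), and by Lemma \ref{l:compoHam}(1) the slope of $K_{f_2(b_2)f_1(b_1),t}$ is the sum of the slopes of $K_{f_2(b_2),t}$ and $K_{f_1(b_1),t}$ (the latter conjugated by a Hamiltonian flow, which does not change the slope since slopes are Reeb-invariant); hence the bound one needs is exactly $\partial_s\bfs_F \le -\max_m(|\partial_s\bfs_{K_{f_1}}| + |\partial_s\bfs_{K_{f_2}}|)$, and this follows by summing the two instances of \eqref{eq:grad_on_B}. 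For $F^*A_F$ and $F_2^*A_F$ one repeats the argument of \eqref{eq:grad_on_B_borel}: on the flat region the trivialization $U_{21}\simeq B_2\times B_1\times U_{BBT}$ makes both $\bfs_{borel}$ and $\bfs_{(F^*A_F)}$ (resp. $\bfs_{(F_2^*A_F)}$) independent of the $BBT$-coordinate, reducing everything to the base estimate on $B_2\times B_1$.

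Concretely the step order I would follow is: (i) record that $\bfs_F$ is $T$-invariant and locally constant on $N_{\eta_2}\times N_{\eta_1}$, and that the spectrum hypothesis makes $\bfs_F|_{\critp(\eta_{21})}$ avoid $\Spec(\partial\bar M,\alpha)$; (ii) choose $\bfs_1,\bfs_2'$ via Lemma \ref{l:equivariantS} so that \eqref{eq:grad_on_B} holds for $f_1$ and $f_2$ respectively, then set $A_F$ with $\bfs_{(A_F)_b}\equiv(\bfs_F)_{borel}(b)$; (iii) establish the analogue of \eqref{eq:grad_on_B_borel} for $\bfs_{borel}$ versus $\bfs_{(F^*0)}$ and $\bfs_{(F_2^*0)}$ on $P_{B_2\times B_1}$, splitting into the flat region (covariant constancy via Lemma \ref{l:covariant constant}) and its complement (where one uses the product decomposition of $\operatorname{grad}\eta_{21}$ and the Pontryagin-product slope identity from Lemma \ref{l:compoHam}(1)); (iv) deduce bullet three of Definition \ref{d:eqgen} for all three Hamiltonians; (v) arrange bullets one, two, four by a single generic fibrewise-compactly-supported perturbation localized away from $N_{\eta_2}\times N_{\eta_1}$, noting this descends compatibly to $F_2^*A_F$ and $F^*A_F$ by local constancy of $F_2,F$ there. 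The genuinely delicate part is (iii)--(iv), specifically keeping track that the perturbation term introduced by $F$ versus by $F_2$ differs precisely by the $F_1$-contribution, so that the \emph{same} choice of $\bfs_F$ dominates both; this is where the additivity in Lemma \ref{l:compoHam}(1) and the additive structure $\eta_{21}=\eta_{21}'+\chi(\eta_{B_1}+\eta_{B_2})$ are both essential.
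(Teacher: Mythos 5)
Your proposal matches the paper's proof in essentially every step: define $A_F$ with slope $(\pi_1^*\bfs_1+\pi_2^*\bfs_2')_{borel}$, verify the third bullet of Definition \ref{d:eqgen} by summing the two instances of \eqref{eq:grad_on_B} on $B_2\times B_1$ and invoking Lemma \ref{l:compoHam}(1), lift to the Borel level exactly as in \eqref{eq:grad_on_B_borel}, and handle the remaining bullets by generic perturbation as in Proposition \ref{p:accelerator}. You correctly isolate the key point---that the summed bound $\frac{d}{ds}\bfs_F \le -\max_m\left(\left|\frac{d}{ds}\bfs_{K_{f_1}}\right| + \left|\frac{d}{ds}\bfs_{K_{f_2}}\right|\right)$ simultaneously dominates the perturbation term from $F$ (via Lemma \ref{l:compoHam}(1) and the triangle inequality) and from $F_2$ (by dropping the $f_1$-contribution)---which is precisely what the paper's chain of inequalities establishes.
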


\begin{proof}
The proof is in parallel to the proof of Proposition \ref{p:accelerator}. The key statement to check is 
$\frac{d}{ds}\bfs_{F^*A_F}(t,\tau(s)), \frac{d}{ds}\bfs_{F_2^*A_F}(t,\tau(s)) \le 0$ for any gradient trajectory $\tau: \mathbb{R} \to (B_2 \times ET_{n_2} \times B_1 \times ET_{n_1})/T$.
Note that, before taking the Borel construction, for any gradient trajectory $\tau_{B_2 \times B_1}=(\tau_{B_2},\tau_{B_1}): \mathbb{R} \to B_2 \times B_1$, we have
\begin{align*}
\frac{d}{ds} (\pi_1^*\bfs_1+\pi_2^*\bfs_2' )(t,\tau_{B_2 \times B_1}(s)) 
\le& \frac{d}{ds} \bfs_1(t,\tau_{B_1}(s))  + \frac{d}{ds} \bfs_2'(t,\tau_{B_2}(s))\\
\le &  - \max_{m\in \partial \bar{M}} \left|\frac{d}{ds}\bfs_{K_{f_1(\tau_{B_1}(s))},t}(m) \right|
- \max_{m\in \partial \bar{M}} \left|\frac{d}{ds}\bfs_{K_{f_2(\tau_{B_2}(s))},t}(m) \right| \\
\le  &- \max_{m\in \partial \bar{M}} \left|\frac{d}{ds}\bfs_{K_{f_2f_1(\tau_{B_2 \times B_1}(s))},t}(m) \right|
\end{align*}
where the last inequality comes from Lemma \ref{l:compoHam}(1).

It in turn follows that
the analog of \eqref{eq:grad_on_B_borel} is true in our case. More precisely, we have
\begin{align*}
\frac{d}{ds}(\pi_1^*\bfs_1+\pi_2^*\bfs_2')_{borel}(t,\tau(s)) \le - \max_{m\in \partial \bar{M}} \left |\frac{d}{ds}\bfs_{(F^*0)_{\tau(s)}}(t,m),  \right|, \text{ and } \\
\frac{d}{ds}(\pi_1^*\bfs_1+\pi_2^*\bfs_2')_{borel}(t,\tau(s)) \le - \max_{m\in \partial \bar{M}} \left |\frac{d}{ds}\bfs_{(F_2^*0)_{\tau(s)}}(t,m),  \right|
\end{align*}
which show that $\frac{d}{ds}\bfs_{F^*A_F}(t,\tau(s)), \frac{d}{ds}\bfs_{F_2^*A_F}(t,\tau(s)) \le 0$.
\end{proof}

For a fixed $a_2'$ as in Lemma \ref{l:SlopeH2}, by possibly choosing a larger $a_1$ and $a_2$, we can assume that 
\begin{align}\label{eq:SlopeBound2}
a_0 \le \bfs_{F^*A_F}, \bfs_{F_2^*A_F},   \bfs_{A_F},  \le a_1
\end{align}
This is not needed at the moment to define the maps $Q_3$ and $Q_4$, but it will be needed when we define $\Pi_2$ and $\Pi_3$ in \eqref{eq:BigCom}.

We are now ready to define $Q_3$ and $Q_4$.

\begin{lemdefn}\label{l:UpMidSq}
Let $A_{f_1}$ and $A_F$ be as above. Then there are well-defined pull-back maps
\begin{align}
&Q_3: H^*_T(B_2) \otimes HF^*(P_{B_1}, f_1^*A_{f_1})  \to HF^*(P_{B_2 \times B_1},F^*A_F)\\
&Q_4:  H^*_T(B_2) \otimes HF^*(P_{B_1}, A_{f_1}) \to  HF^*(P_{B_2 \times B_1},F_2^*A_F)
\end{align}
such that $\cC_{F_1} \circ Q_3 =Q_4 \circ (id \otimes \cC_{f_1})$.
\end{lemdefn}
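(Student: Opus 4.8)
The plan is to construct $Q_3$ and $Q_4$ as instances of the Floer theoretic pull-back of Section \ref{ss:pb}, applied to the fibre product square \eqref{eq:B21Total}, and then to verify the compatibility with the tautological isomorphisms. First I would set up the pull-back data. The admissible bundle on the right side of \eqref{eq:B21Total} is (a finite-dimensional approximation of) $(B_2)_{borel} \times (B_1 \times M)_{borel}$, and on the left side it is $P_{B_2 \times B_1}$; the map $h$ is the one induced by the diagonal. On the right, we take the product of the admissible base triple $(\eta_2, g_2, \nabla_2)$ coming from Proposition \ref{p:accelerator} applied to $(f_2, \eta_{B_2}')$ with the admissible base triple on $(B_1 \times M)_{borel}$ that underlies the definition of $HF^*(P_{B_1}, f_1^*A_{f_1})$ (resp. $HF^*(P_{B_1}, A_{f_1})$), and the Hamiltonian $\pi_{B_2}^* A_{f_2}^{\mathrm{slope}} \boxplus f_1^*A_{f_1}$ (resp. $\boxplus\, A_{f_1}$) — more precisely a cylindrical Hamiltonian whose restriction to each of the two factors agrees with the respective ones; on the left we take $(\eta_{21}, g_{21}, \nabla_{21})$ as constructed just above the statement, and $F^*A_F$ (resp. $F_2^*A_F$). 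The key point, which is the analogue of the verification in Proposition \ref{p:accelerator}, is that the slope inequality $H' \ge_{P'} \tilde{h}^*H$ of Lemma \ref{l:inde_pb} holds: this is exactly what the slope bounds \eqref{eq:SlopeBound} and \eqref{eq:SlopeBound2} are arranged to guarantee (with $\tilde h^* $ of the product Hamiltonian on the right pulling back to $F^* A_F$ up to adjusting constants and using that $\bfs$ is, by Lemma \ref{l:SlopeH2}, covariantly non-increasing along gradient trajectories of $\eta_{21}$). Given this, Lemma \ref{l:pull-back} and Lemma \ref{l:inde_pb} produce well-defined maps $Q_3, Q_4$ on cohomology, independent of auxiliary choices, and by Corollary \ref{c:natural} they are compatible under inverse limit over the finite-dimensional approximations.

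Next I would address the commuting square $\cC_{F_1} \circ Q_3 = Q_4 \circ (\mathrm{id} \otimes \cC_{f_1})$. Both $\cC_{F_1}$ and $\cC_{f_1}$ are the tautological cochain-level isomorphisms of Proposition \ref{p:correlationproof}, implemented by the fibrewise diffeomorphisms $\Phi_t$ coming from $f_1$ (for $\cC_{f_1}$, the $\Phi_t$ built from $f_1$ on $P_{B_1}$; for $\cC_{F_1}$, the $\Phi_t$ built from $F_1 = f_1 \circ \pi_{B_1}$ on $P_{B_2 \times B_1}$, which by construction is the ``same'' diffeomorphism pulled back along $\pi_{B_1}$). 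Since the folding/pull-back moduli spaces defining $Q_3$ and $Q_4$ are carried to one another by the fibrewise action of this $\Phi_t$ — and this action is compatible with the projection $\pi_{B_1}$ — the identification is immediate at the cochain level once one checks, as in Proposition \ref{p:correlationproof} via Lemma \ref{l:topEnergy}, that topological energy and signs are preserved. So the square commutes; I would phrase this as: choose the fibrewise almost complex structures defining the four groups so that they are intertwined by the relevant $\Phi_t$'s, run the tautological bijection on moduli, and conclude.

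The main obstacle is not the formal bookkeeping but ensuring the $C^0$ a priori estimates (the maximum principle) actually apply to the pull-back moduli spaces for $Q_3$ and $Q_4$, given that the slopes of $F^*A_F$, $F_2^*A_F$, $A_{f_1}$, $f_1^*A_{f_1}$ are all non-constant functions on infinite-dimensional bases. This is where I would spend the real effort: one must check that the slope of the Hamiltonian is non-increasing along every broken gradient trajectory appearing in the folded picture of $\cM_{pb}$, which requires combining the covariant-constancy statement (Lemma \ref{l:covariant constant}, and its two-factor analogue implicit in Lemma \ref{l:SlopeH2}) over the flat region with the inequality \eqref{eq:grad_on_B} over the Morsified region, for the \emph{product} function $\eta_{21}$ and the \emph{product} loop $F = f_2 f_1$. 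The inequality from Lemma \ref{l:SlopeH2}, which uses Lemma \ref{l:compoHam}(1) to bound the derivative of $\bfs_{K_{f_2 f_1}}$ by the sum of the derivatives of $\bfs_{K_{f_1}}$ and $\bfs_{K_{f_2}}$, is precisely the input needed, so I would cite it and then invoke the maximum principle \cite[Theorem C.11]{Ritter} (and \cite[Remark C.10]{Ritter} where a one-form $\beta$ is present) exactly as in the proofs of Lemma \ref{l:Gromovpb} and Proposition \ref{p:moduliD}. Once the estimates are in place, Lemmas \ref{l:regularpb}, \ref{l:Gromovpb}, \ref{l:pull-back}, and \ref{l:inde_pb} apply verbatim and the statement follows.
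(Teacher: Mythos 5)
There is a genuine gap in the first step, where you attempt to build $Q_3$ and $Q_4$. You invoke Lemma~\ref{l:inde_pb} and claim that the slope inequality $H' \ge_{P'} \tilde{h}^*H$ holds, citing \eqref{eq:SlopeBound} and \eqref{eq:SlopeBound2}. But $\ge_{P'}$ (see \eqref{eq:partialorderP}) requires a single constant $a$ with $\min \bfs_{F^*A_F} \ge a \ge \max \bfs_{\tilde{h}^*f_1^*A_{f_1}}$, and the bounds you cite only give $a_0 \le \bfs_{F^*A_F} \le a_1$ and $a_0 \le \bfs_{f_1^*A_{f_1}} \le a_1$: the two ranges overlap, so no such $a$ exists, and Lemma~\ref{l:inde_pb} does not apply. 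What is true is the pointwise inequality $\bfs_{F^*A_F} \ge \bfs_{\tilde{h}^*f_1^*A_{f_1}}$ (this is exactly what $\bfs_2' > c_{f_2,K}$ buys, since $\bfs_{F^*A_F}-\bfs_{\tilde{h}^*f_1^*A_{f_1}} = (\pi_2^*\bfs_2')_{borel}+\bfs_{F_2^*0}$), but a pointwise comparison of endpoints is not the hypothesis of Lemma~\ref{l:pull-back} either.

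What is actually required is the pointwise monotonicity of the slope of an explicit interpolation $(H'_s)_{s\le 0}$ from $F^*A_F$ to $\tilde{h}^*f_1^*A_{f_1}$ along gradient trajectories of $\eta_{21}$, and this is where the real work lies. The paper's proof sets $H'_s = F_1^*\bigl(\tilde{h}^*A_{f_1} + (1-\rho(s))(F_2^*A_F - \tilde{h}^*A_{f_1})\bigr)$ and decomposes $\frac{d}{ds}\bfs_{(H'_s)_{\tau^-(s)}}$ into three terms: two coming from the gradient flow (controlled by \eqref{eq:grad_on_B_borel} for $\bfs_1$ and $\bfs_2'$ respectively), and a third term proportional to $\rho'(s)$ whose sign relies on $\bfs_2' > c_{f_2,K} \ge \max_m|\bfs_{F_2^*0}|$. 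You gesture at Lemma~\ref{l:SlopeH2} as supplying "the input needed," but that lemma controls the slope of the $s$-\emph{independent} Hamiltonians $A_F, F^*A_F, F_2^*A_F$ along gradient trajectories (i.e., compatibility with the base triple); it says nothing about the $\rho'(s)$-term of the interpolation, which is a different inequality with a different source. Without the explicit construction of the interpolating family and the three-term estimate, the maximum-principle step is not justified, and $Q_3, Q_4$ are not shown to be defined. Your treatment of the commutativity $\cC_{F_1}\circ Q_3 = Q_4\circ(\mathrm{id}\otimes\cC_{f_1})$ via the fibrewise $\Phi_t$ and the shared cutoff $\rho$ is correct and matches the paper.
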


\begin{proof}
We are going to explain the definition of $Q_3$. The definition of $Q_4$ is similar.

Following Section \ref{ss:pb}, for $n_1,n_2 \in \mathbb{N}$, let $B'=(B_2 \times ET_{n_2} \times B_1 \times ET_{n_1})/T$, $B=(B_2 \times ET_{n_2})/T \times (B_1 \times ET_{n_1})/T$ and $h:B' \to B$ be the obvious map.
We also let $P'=(B_2 \times ET_{n_2} \times B_1 \times M \times ET_{n_1})/T$, $P=(B_2 \times ET_{n_2})/T \times (B_1 \times M \times ET_{n_1})/T$ and $\tilde{h}:P' \to P$ be the obvious map.
Note that $P'$ is the pull-back of $P \to B$ along $h$.

Let $\rho:(-\infty,0] \to [0,1]$ be a smooth monotone increasing function that is $0$ for $s \ll 0$ and equals to $1$ near $s=0$. 
We define $H_s:=f_1^*A_{f_1}$ for all $s \ge 0$, and $H'_s:=(1-\rho(s))F^*A_F+ \rho(s) \tilde{h}^*H_0$ for $s \le 0$.
We claim that the condition in Lemma \ref{l:pull-back} is satisfied so that the pull-back is well-defined.
To see this, note that $\tilde{h}^*H_0=\tilde{h}^*f_1^*A_{f_1}=F_1^* (\tilde{h}^*A_{f_1})$ and $\bfs_{\tilde{h}^*A_{f_1}}= (\pi_1^* \bfs_1)_{borel}$.
Recall that  $\bfs_{A_{F}}= (\pi_1^* \bfs_1+\pi_2^* \bfs_2')_{borel}$ and $F^*A_F=F_1^*F_2^*A_F$ so
\[H'_s=F_1^*((1-\rho(s))F_2^*A_F +\rho(s)\tilde{h}^*A_{f_1})=F_1^*(\tilde{h}^*A_{f_1}+(1-\rho(s))(F_2^*A_F - \tilde{h}^*A_{f_1})).\]
Let $H''_s:=\tilde{h}^*A_{f_1}+(1-\rho(s))(F_2^*A_F - \tilde{h}^*A_{f_1})$.
For a gradient trajectory $\tau^-: (-\infty,0] \to B'$, we denote its projection to $(B_i \times ET_{n_i})/T$ by $\tau_i^-$ for $i=1,2$.
Then we have
\begin{align*}
&\frac{d}{ds} (\bfs_{(H'_s)_{\tau^-(s)},t}(m))\\
\le& \max_{m} \frac{d}{ds} (\bfs_{(H''_s)_{\tau^-(s)},t}(m)) + \max_{m} \left | \frac{d}{ds} \bfs_{(F_1^*0)_{\tau^-(s)}}(t,m) \right | \\
\le &\max_{m} \frac{d}{ds} (\bfs_{(\tilde{h}^*A_{f_1})_{\tau^-(s)},t}(m)) 
+\max_{m} \frac{d}{ds} (\bfs_{((1-\rho(s))(F_2^*A_F - \tilde{h}^*A_{f_1}))_{\tau^-(s)},t}(m)) 
+ \max_{m} \left | \frac{d}{ds} \bfs_{(F_1^*0)_{\tau^-(s)}}(t,m) \right |\\
\le &\left ( \frac{d}{ds} (\bfs_1)_{borel}(t,\tau_1^-(s)) + \max_{m} \left | \frac{d}{ds} \bfs_{(F_1^*0)_{\tau^-(s)}}(t,m) \right | \right ) \\
&+(1-\rho(s)) \left ( \frac{d}{ds} (\bfs'_2)_{borel}(t,\tau_2^-(s)) + \max_{m} \left | \frac{d}{ds} \bfs_{(F_2^*0)_{\tau^-(s)}}(t,m) \right | \right ) \\
&-\rho'(s) \left ((\bfs'_2)_{borel}(t,\tau_2^-(s)) - \max_{m} \left |\bfs_{(F_2^*0)_{\tau^-(s)}}(t,m) \right | \right)
\end{align*}
On the RHS, the first term and the second term are non-positive because $\bfs_1$ and $\bfs_2$ are chosen to satisfy this property (see \eqref{eq:grad_on_B_borel}).
The third term is also non-positive because $\rho' \ge 0$ and $\bfs_2' > c_{f_2,K}=c_{F_2,K} \ge \max_{m} \left |\bfs_{F_2^*0}(t,m) \right |$ for all $t \in S^1$ (see \eqref{eq:cfK}).
As a result, we have $\frac{d}{ds} (\bfs_{(H'_s)_{\tau^-(s)},t}(m)) \le 0$ so  the condition in Lemma \ref{l:pull-back} is satisfied.


The definition of $Q_4$ is similar.
Moreover, if we use the same cutoff function $\rho(s)$ for both $Q_3$ and $Q_4$ and appropriate almost complex structures, then they are tautologically identified with each other via the map $\Phi$ in Proposition \ref{p:correlationproof}.
This gives the commutativity   $\cC_{F_1} \circ Q_3 =Q_4 \circ (id \otimes \cC_{f_1})$.
\end{proof}

Lemma \ref{l:UpMidSq} gives us the commutativity of the upper middle square in \eqref{eq:BigCom}.
Now we turn to the upper left square.

\begin{lem}\label{l:trivialCom}
The following diagram commutes
\begin{equation}\label{eq:1st-square}
\begin{tikzcd}
H^*_T(B_2) \otimes H^*_T(B_1) \otimes HF^*_T(M,a_0)   \arrow{r} \arrow[swap]{d}
&H^*_T(B_2) \otimes H^*(P_{B_1},a_0)  \arrow[swap]{d}
\\
HF^*_T(B_2 \times B_1) \otimes HF^*_T(M,a_0)  \arrow{r} 
&HF(P_{B_2 \times B_1}, a_0)
\end{tikzcd}
\end{equation}

\end{lem}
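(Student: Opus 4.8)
The plan is to reduce the commutativity of \eqref{eq:1st-square} to the compatibility of Floer-theoretic pull-back maps with the Künneth/product structures that has already been established. Both composites in the diagram send a triple $(\alpha_2,\alpha_1,z)$ into $HF(P_{B_2\times B_1},a_0)$, and the point is that the pull-back map $H^*_T(B_2)\otimes HF^*(P_{B_1},a_0)\to HF(P_{B_2\times B_1},a_0)$ appearing on the right vertical arrow is by construction built out of the same Floer-theoretic pull-backs (along the fibre-product diagrams of \S\ref{ss:pb}) that define the map $H^*_T(B_2\times B_1)\otimes SH^*_T(M)\to SH^*(P_{B_2\times B_1})$ on the bottom. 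So the content of the lemma is purely a naturality/associativity statement about iterated pull-backs.

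First I would spell out the finite-dimensional approximation. Fix $n_1,n_2$ and write $P_{B_1,n_1}=(B_1\times ET_{n_1}\times M)/T$ over $B_{1,n_1}=(B_1\times ET_{n_1})/T$, and similarly $P_{B_2\times B_1}$ over $(B_2\times ET\times B_1\times ET)/T$. The horizontal maps in \eqref{eq:1st-square} are, after unwinding \eqref{eq:pullbackreal3} and \eqref{eq:pullbackreal5}, compositions of: (i) the isomorphism of Proposition~\ref{p:freeaction}/\ref{p:contractiblefibre} identifying $HF(P_{B_i},a_0)$ with $HF^*_T(B_i\times ET\times M,a_0)$, and (ii) the Floer pull-back of Lemma~\ref{l:pull-back} along the fibre-product square $(B\times ET\times M\times ET)/T\to B_{borel}\times M_{borel}$. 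The right vertical map factors the same way with $B_1$ replaced by $B_2\times B_1$. The left vertical map is the Eilenberg--Zilber/Künneth map on $H^*_T$. Thus the square becomes an instance of: pulling back along $B_2\times B_1 \to B_2\times (\text{pt})$ and then along $B_1\to \text{pt}$ (the fibrewise direction), versus pulling back along the composite $B_2\times B_1\to\text{pt}$ directly. This is exactly the functoriality of pull-back, i.e. Corollary~\ref{c:natural}, combined with the compatibility of pull-back with the external product.

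The key steps, in order, are: (1) identify all four groups and arrows with their finite-dimensional approximations and pass to the inverse limit only at the end, using that each square at finite level is compatible with the structure maps $P_{\bullet,n}\hookrightarrow P_{\bullet,n+1}$ (this compatibility is the content of the commuting squares already verified for pull-back, cf. \eqref{eq:compullback}); (2) factor the right vertical pull-back $H^*_T(B_2)\otimes HF(P_{B_1},a_0)\to HF(P_{B_2\times B_1},a_0)$ as a Floer pull-back along $(B_2\times ET\times B_1\times M\times ET)/T\to (B_2\times ET)/T\times (B_1\times M\times ET)/T$ followed by the identification along $(B_2\times ET\times B_1\times M\times ET)/T\to P_{B_2\times B_1}$; (3) observe that the fibre-product diagram \eqref{eq:B21Total} exhibits the composite of the two horizontal pull-backs (first build in $\alpha_1$ over $B_1$, then $\alpha_2$ over $B_2$) as a single pull-back along the composite map, by Corollary~\ref{c:natural}; (4) check that the order in which $\alpha_1$ and $\alpha_2$ are incorporated does not matter, which is precisely the statement that Floer pull-back intertwines the two external products $H^*_T(B_2)\otimes H^*_T(B_1)\cong H^*_T(B_1)\otimes H^*_T(B_2)$ up to the Künneth map — a diagram chase using Lemma~\ref{l:pullbackprod} and the naturality of the Eilenberg--Zilber map. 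Since all slopes here are the \emph{constant} $a_0$ (no twisting by $f$ has yet occurred), there are no $C^0$-estimate subtleties: the maximum principle for all the relevant moduli spaces is automatic, so every pull-back map in sight is the honest, choice-independent one of Lemma~\ref{l:inde_pb} and Corollary~\ref{c:natural}.

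The main obstacle I anticipate is bookkeeping rather than geometry: one must choose the admissible base triples on the three spaces $(B_i\times ET)/T$ and on $(B_2\times ET\times B_1\times ET)/T$ compatibly — specifically so that the Morse data on the product is adapted to the product structure (as arranged in the paragraph preceding Lemma~\ref{l:SlopeH2}, using $\eta_{BBT}=\eta'_{BBT}+\chi(u)\eta_T(g)$ and $\eta_{21}=\eta'_{21}+\chi(u)(\eta_{B_1}+\eta_{B_2})$) — so that the two ways of reading off the pull-back literally count the same rigid configurations. Once the base triples are chosen so that gradient trajectories on the product project to gradient trajectories on each factor (Riemannian submersion, as set up for $g_{21}$), the chain-level maps on both sides of \eqref{eq:1st-square} are equal on the nose for a suitably chosen compatible Floer datum, and the lemma follows by passing to cohomology and then to the inverse limit over $n_1,n_2$ and the direct limit is not yet needed since the slope is fixed. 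I would present this as: choose product-adapted data, observe the chain-level equality, conclude.
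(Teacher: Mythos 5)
Your proposal takes the same route as the paper: identify both horizontal arrows as Floer-theoretic pull-backs along fibre-product squares of Borel spaces, reduce commutativity of \eqref{eq:1st-square} to commutativity of the underlying diagram of spaces via Corollary~\ref{c:natural}, and pass to inverse limits using independence of the classifying model (Lemma~\ref{l:indepClassifyingModel}). Your discussion is more verbose — and the appeal to Lemma~\ref{l:pullbackprod} for ``compatibility with the external product'' is a slight mislabel (that lemma is about the pair-of-pants product, whereas the external compatibility needed here is already contained in Corollary~\ref{c:natural} applied to the commuting square of spaces) — but the underlying argument is correct and matches the paper's proof.
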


\begin{proof}
The commutativity of 
\begin{equation*}
\resizebox{\displaywidth}{!}{
\xymatrix{
(B_2 \times ET_{n_2})/T \times (B_1 \times ET_{n_1})/T \times (M \times ET_{m})/T   
&(B_2 \times ET_{n_2})/T \times (B_1 \times ET_{n_1} \times M \times ET_{m})/T    \ar[l]
\\
(B_2 \times ET_{n_2} \times B_1 \times ET_{n_1})/T \times (M \times ET_{m})/T    \ar[u] 
&(B_2 \times ET_{n_2} \times B_1 \times ET_{n_1} \times M \times ET_{m})/T \ar[l] \ar[u]  
}}
\end{equation*}
and the naturality (Corollary \ref{c:natural}) imply a commutative diagram of the corresponding pull-back maps.
By passing to the inverse limit and using the independence of the model of the classifying space (Lemma \ref{l:indepClassifyingModel}), we obtain the result.
\end{proof}

Note that, the Hamiltonians in the domain and the target of the second vertical map of \eqref{eq:1st-square} have constant slopes so the groups and the map are independent of the model of the classifying space (cf. Lemma \ref{l:indepClassifyingModel}).
To complete the upper left square in \eqref{eq:BigCom}. We need to show the following commutativity.

\begin{lem}\label{l:1st-square}
The following diagram commutes
\begin{equation}\label{eq:1stt-square}
\begin{tikzcd}
H^*_T(B_2) \otimes HF^*(P_{B_1},a_0)   \arrow{r} \arrow[swap]{d}
&H^*_T(B_2) \otimes HF^*(P_{B_1}, f_1^*A_{f_1})  \arrow[swap]{d}{Q_3}
\\
HF(P_{B_2 \times B_1}, a_0)  \arrow{r} 
&HF^*(P_{B_2 \times B_1},F^*A_F)
\end{tikzcd}
\end{equation}

\end{lem}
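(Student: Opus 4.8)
The plan is to realize the two composites in \eqref{eq:1stt-square} as counts of rigid solutions to a single folded moduli problem for two different but connectible choices of auxiliary data, and then to run the standard cobordism argument. This is the analogue, for the pull-back maps of Section \ref{ss:pb}, of the compatibility of continuation maps with composition established in Lemma \ref{l:kappaIndchoice}, Lemma \ref{l:inde_kappa} and Corollary \ref{c:natural}; the only genuinely new point is the bookkeeping of non-constant slopes over the infinite-dimensional base, which is handled by the same estimates used to construct $Q_3$.

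First I would unwind both sides at the level of the finite-dimensional approximations and, after the folding of Section \ref{ss:pb} (cf.\ \eqref{eq:Folding}), describe each composite by a moduli of pairs $(u^-,u^+)$ of half-cylinders matched by $\tilde h$ along $\{0\}\times S^1$, the $H^*_T(B_2)$ factor being carried along via the Morse data on the $B_2$-directions exactly as in Lemma \ref{l:trivialCom}. For the composite along the left and bottom edges, gluing the bottom acceleration cylinder (a monotone homotopy raising the slope from $a_0$ to $F^*A_F$) onto the $s\to-\infty$ end of the fibre-product half-cylinder $u^-$ of the constant-slope pull-back yields configurations in which $u^-$ uses an $s$-dependent Hamiltonian equal to the slope-$a_0$ one near $s=0$ and equal to $F^*A_F$ for $s\ll 0$, while $u^+$ keeps the constant slope $a_0$. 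For the composite along the top and right edges, the construction of $Q_3$ in Lemma/Definition \ref{l:UpMidSq} together with the definition of the top acceleration map yields configurations in which $u^+$ uses an $s$-dependent Hamiltonian equal to $f_1^*A_{f_1}$ near $s=0$ and equal to the slope-$a_0$ one for $s\gg 0$, while $u^-$ uses the family $H'_s$ of Lemma/Definition \ref{l:UpMidSq} interpolating from $F^*A_F$ to $\tilde h^*f_1^*A_{f_1}$.

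Next I would choose a path in the space of admissible auxiliary data joining these two presentations --- sliding the interpolation region on $u^+$ in from $s=+\infty$ while simultaneously deforming the fibre-product profile --- and form the associated parametrized moduli space. Counting its rigid elements produces a chain homotopy between the two composite chain maps, hence their equality on cohomology, exactly as in the proofs of Lemma \ref{l:inde_kappa} and Lemma \ref{l:inde_pb}. For a fixed slope profile the space of interpolating data is convex, and the slope profiles that appear (convex combinations and cut-off interpolations of $(\bfs_1)_{borel}$, $(\bfs_2')_{borel}$, $\bfs_{F^*A_F}$ and $\bfs_{f_1^*A_{f_1}}$, all bounded below by $a_0$) can be joined by a path of profiles satisfying $\partial_s\bfs\le 0$ because the relevant continuation inequalities are affine; this is precisely why \eqref{eq:SlopeBound2} was imposed.

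The main obstacle, as throughout the paper, is the a priori $C^0$-estimate: one must check that every member of the parametrized family satisfies the monotonicity condition along gradient trajectories with respect to $\nabla_{21}$ (the analogue of \eqref{eq:grad_on_B_borel}), so that the maximum principle of \cite[Theorem C.11]{Ritter} applies fibrewise and Gromov compactness holds, and that the slopes stay uniformly bounded over $P_{B_2\times B_1}$. The choices of $\bfs_1$ and $\bfs_2'$ made via Proposition \ref{p:accelerator} and Lemma \ref{l:SlopeH2}, together with the inequality verified in the proof of Lemma/Definition \ref{l:UpMidSq} for the single cut-off $\rho$, were arranged so that this persists for the family of cut-off interpolations appearing here; granting it, the cobordism argument goes through verbatim and gives the commutativity of \eqref{eq:1stt-square}, completing the upper-left square of \eqref{eq:BigCom}.
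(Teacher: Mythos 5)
Your proposal is correct and takes essentially the same approach as the paper: both construct a one-parameter family of (folded) Floer data interpolating between the two composite maps, check that the slope monotonicity condition along gradient trajectories (and hence the maximum principle) persists throughout the family by convexity, and count rigid elements of the resulting parametrized moduli space to produce the required chain homotopy. The paper is slightly more explicit, writing out the family $(H_{s,r},H'_{s,r})$ as an affine interpolation in a second parameter $r$ between the glued data for the two composites, but this matches your convexity-based sketch.
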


\begin{proof}
We use the admissible base triple $(\eta_1,g_1,\nabla_1)$ for $H^*(P_{B_1},a_0)$ and $HF^*(P_{B_1}, f_1^*A_{f_1})$, and the admissible base triple  $(\eta_{21},g_{21},\nabla_{21})$ for $HF(P_{B_2 \times B_1}, a_0) $ and  $HF^*(P_{B_2 \times B_1},F^*A_F)$.

Similar to Lemma \ref{l:kappaIndchoice} and \ref{l:inde_kappa}, it suffices to find a homotopy between the Floer data corresponding to the two different compositions such that the maximum principle can be achieved.
The Hamiltonian $(H_s)_{s \in \mathbb{R}}$ defining 
$ H^*(P_{B_1},a_0) \to HF^*(P_{B_1}, f_1^*A_{f_1})  $ is of the form
\[
H_s=(1-\rho(s))f_1^*A_{f_1} + \rho(s) A_{a_0}
\]
for some Hamiltonian $A_{a_0} \in C^{\infty}(S^1 \times P_{B_1})$ that is of constant slope $a_0$, and some monotone increasing function $\rho:\mathbb{R} \to [0,1]$ such that $\rho(s)=0$ for $s \ll 0$
and $\rho(s)=1$ for $s \gg 0$.\footnote{More precisely, we need to use the restriction of $H_s$ to the finite approximations to define the continuation maps and then pass to direct limit.}
We can regard $(H_s)_{s \in \mathbb{R}}$ as a family of elements in $C^{\infty}(S^1 \times (B_2)_{borel} \times P_{B_1})$ which defines
$H^*_T(B_2) \otimes H^*(P_{B_1},a_0) \to H^*_T(B_2) \otimes HF^*(P_{B_1}, f_1^*A_{f_1}) $.
The Hamiltonian defining $Q_3$ is explained in Lemma/Definition \ref{l:UpMidSq}.

On the other side, the Hamiltonians $(H'_s)_{s \in \mathbb{R}_{\le 0}}$ and $(H_s)_{s \in \mathbb{R}_{\ge 0}}$ defining 
 $H^*_T(B_2) \otimes H^*(P_{B_1},a_0) \to HF(P_{B_2 \times B_1}, a_0)$
are family of Hamiltonians of constant slope $a_0$ in 
 $C^{\infty}(S^1 \times P_{B_2 \times B_1})$ and
 $C^{\infty}(S^1 \times (B_2)_{borel} \times P_{B_1})$ respectively.
Finally, the Hamiltonian  $(H_s)_{s \in \mathbb{R}}$ defining 
$ HF(P_{B_2 \times B_1}, a_0) \to HF^*(P_{B_2 \times B_1},F^*A_F) $ is of the form
\[
H_s=(1-\rho(s))F^*A_F + \rho(s) A_{a_0}
\]
for some Hamiltonian $A_{a_0} \in C^{\infty}(S^1 \times P_{B_2 \times B_1})$ that is of constant slope $a_0$, and some monotone increasing function $\rho:\mathbb{R} \to [0,1]$ such that $\rho(s)=0$ for $s \ll 0$
and $\rho(s)=1$ for $s \gg 0$.

Let $\rho^+:\mathbb{R}_{\ge 0} \to [0,1]$,
$\rho^-:\mathbb{R}_{\le 0} \to [0,1]$
and $\rho:\mathbb{R} \to [0,1]$ be monotone increasing functions such that $\rho^+(s), \rho^-(s), \rho(s)=0$ when $s$ near the left end,
and $\rho^+(s), \rho^-(s), \rho(s)=1$ when $s$ near the right end.
To construct a homotopy between the two concatenation of Floer data coming from the two compositions, we consider $(H_{s,r})_{s \in \mathbb{R}_{\ge 0}, r \in \mathbb{R}} \in C^{\infty}(S^1 \times P_{B_1})$ of the form
\[
H_{s,r}=(1-\rho(r))A_{a_0}+\rho(r) \left ((1-\rho^+(s)) f_1^*A_{f_1} +\rho^+(s)A_{a_0} \right)
\]
and $(H'_{s,r})_{s \in \mathbb{R}_{\le 0}, r \in \mathbb{R}} \in C^{\infty}(S^1 \times P_{B_2 \times B_1})$ of the form
\[
H'_{s,r}=\rho^-(s) \left( (1-\rho(r) \tilde{h}^*A_{a_0} +\rho(r)  \tilde{h}^*f_1^*A_{f_1}  \right) +(1-\rho^-(s))F^*A_F
\]
where $\tilde{h}: P_{B_2 \times B_1} \to P_{B_1}$ is the obvious map (as in Lemma/Definition \ref{l:UpMidSq}).
Notice that for any fixed $r$, $H_{s,r}=A_{a_0}$ when $s \gg 0$, $H'_{s,r}=F^*A_F$ when $s \ll 0$, and $\tilde{h}^*H_{s,r}=H'_{s,r}$ when $s$ is close to $0$.
It is straightforward to check that for all $r$, the slope of $H_{s,r}$ and $H'_{s,r}$ along any gradient trajectory is decreasing.
Therefore, the maximum principle can be applied.
Moreover, for appropriate $\rho^+, \rho^-, \rho$, the limit as $r \to \pm \infty$ can be made arbitrarily close to the glued Floer data of the two compositions respectively.
Therefore, the rigid count of the moduli space associated to the pair $(H_{s,r})_{s \in \mathbb{R}_{\ge 0}, r \in \mathbb{R}}$ and
$(H'_{s,r})_{s \in \mathbb{R}_{\le 0}, r \in \mathbb{R}}$ defines a homotopy between the two chain level compositions and hence induces the comutativity of  \eqref{eq:1stt-square}.
This finishes the proof.
\end{proof}

Our next task is to define $\Pi_2$ and $\Pi_3$. 
The pull-back diagram we use this time is
\begin{equation}
\begin{tikzcd}
P_{B_2 \times B_1}=(B_2  \times B_1 \times M \times ET)/T \arrow{r} \arrow{d}
& (B_2 \times ET \times M)/T=P_{B_2} \arrow{d}
\\
(B_2  \times B_1 \times ET)/T \arrow{r} 
& (B_2 \times ET)/T  
\end{tikzcd}
\end{equation}
Let $\tilde{h}_2: P_{B_2 \times B_1} \to P_{B_2}$ be the obvious map.
The bundle in the second column is equipped with the admissible base triple $(\eta_2,g_2,\nabla_2)$.
We can choose a Riemannian metric  on $(B_2  \times B_1 \times ET)/T$
such that the second horizontal map is a Riemannian submersion.
By doing the same Morsification procedure as above to the pull-back of $\eta_2$ on $(B_2  \times B_1 \times ET)/T$, we can obtain an admissible base triple for the bundle in the first column such that the push-forward maps
\[
HF^*(P_{B_2 \times B_1},\tilde{h}_2^*f_2^*A_{f_2}) \to HF^*(P_{B_2},f_2^*A_{f_2})
\]
and
\[
HF^*(P_{B_2 \times B_1},\tilde{h}_2^*A_{f_2}) \to HF^*(P_{B_2},A_{f_2})
\]
are well-defined.
Moreover, we have the following tautological commutative diagram (as in the proof of Proposition \ref{p:correlationproof})
\begin{equation}\label{eq:3rd-square}
\begin{tikzcd}
HF^*(P_{B_2 \times B_1},\tilde{h}_2^*f_2^*A_{f_2})   \arrow{r}{\tilde{h}_2^*} \arrow[swap]{d}
&HF^*(P_{B_2},f_2^*A_{f_2}) \arrow[swap]{d}
\\
HF^*(P_{B_2 \times B_1},\tilde{h}_2^*A_{f_2}) \arrow{r}{\tilde{h}_2^*}
&HF^*(P_{B_2},A_{f_2})
\end{tikzcd}
\end{equation}

Recall the bounds from \eqref{eq:SlopeBound} and \eqref{eq:SlopeBound2}.
By Lemma \ref{l:inde_kappa}, we have the following commutative diagram
\begin{equation}\label{eq:4th-square}
\begin{tikzcd}
HF^*(P_{B_2 \times B_1},F_2^*A_F)   \arrow{r} \arrow[swap]{d}{\cC_{F_2}}
&HF^*(P_{B_2 \times B_1},a_1)   \arrow{r} \arrow[swap]{d}
&HF^*(P_{B_2 \times B_1},\tilde{h}_2^*f_2^*A_{f_2}) \arrow[swap]{d}
\\
HF^*(P_{B_2 \times B_1},A_F)  \arrow{r} 
&HF^*(P_{B_2 \times B_1},a_1)  \arrow{r} 
&HF^*(P_{B_2 \times B_1},\tilde{h}_2^*A_{f_2})
\end{tikzcd}
\end{equation}
By Lemma \ref{l:indepClassifyingModel}, for the second vertical map, we can use either the Borel model $(B_2 \times ET \times B_1 \times M \times ET)/T$ or $(B_2 \times B_1 \times M \times ET)/T$ for $P_{B_2 \times B_1}$.
For the square on the left, we picked the former choice when we define $\cC_{F_2}$. 
However, for the square on the right, we can use the latter model to make it consistent with \eqref{eq:3rd-square}
Now, by combining \eqref{eq:3rd-square} and \eqref{eq:4th-square}, we get $\Pi_2$, $\Pi_3$ as well as the commutativity of the middle right square in \eqref{eq:BigCom}.

By Lemma \ref{l:inde_kappa}, \ref{l:inde_pb} and \ref{l:inde_pf}, we obtain the commutative diagram
\begin{equation}\label{eq:2ndLastSq}
\resizebox{\displaywidth}{!}{
\xymatrix{
 H^*_T(B_2) \otimes HF^*(P_{B_1}, A_{f_1}) \ar[r] \ar[d]
& H^*_T(B_2) \otimes HF^*(P_{B_1}, a_1) \ar[r] \ar[d]
& H^*_T(B_2) \otimes HF^*_T( M,a_1) \ar[dr] \ar[d]
\\
 HF^*(P_{B_2 \times B_1},F_2^*A_F) \ar[r]
& HF^*(P_{B_2 \times B_1},a_1) \ar[r]
& HF^*(P_{B_2},a_1) \ar[r]
& HF^*(P_{B_2},f_2^*A_{f_2})
}}
\end{equation}
which gives the commutativity of the top right square in \eqref{eq:BigCom}.

By Lemma \ref{l:inde_kappa}, \ref{l:inde_pf} and Corollary \ref{c:natural_pf}, we obtain the commutative diagram
\begin{equation}\label{eq:LastSQ}
\xymatrix{
HF^*(P_{B_2 \times B_1},A_F) \ar[d] \ar[r]
& HF^*(P_{B_2},A_{f_2})  \ar[d] \ar[dr]
\\
HF^*(P_{B_2 \times B_1},a_2) \ar[r]
& HF^*(P_{B_2},a_2)  \ar[r]
& HF^*_T(M,a_2) 
}
\end{equation}
which gives the commutativity of the bottom right triangle in \eqref{eq:BigCom}.

\begin{thm}\label{thm:Seideltequiv}
The map
\begin{align*}
\hat{H}^T_*(\Omega G) \to SH_T^*(M) \\
[B,\alpha,f] \mapsto \cS_f(\alpha, e_M)
\end{align*}
is an algebra homomorphism (cf. \eqref{eq:moduleactintroT}).
\end{thm}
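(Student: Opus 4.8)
The plan is to establish the large commutative diagram \eqref{eq:BigCom}, whose two ``boundary'' compositions (the top row followed by the rightmost column, versus the outer left-and-bottom path) compute respectively $\cS_{f_2}(\alpha_2,\cS_{f_1}(\alpha_1,z))$ and $\cS_{F}(\pi_2^*\alpha_2 \cup \pi_1^*\alpha_1, z)$ for $F(b_2,b_1)=f_2(b_2)f_1(b_1)$. Granting commutativity of this diagram and specializing to the unit $z = e_M \in SH_T^*(M)$, the module-type property of Lemma \ref{l:moduleType} gives
\[
\cS_F(\pi_2^*\alpha_2 \cup \pi_1^*\alpha_1, e_M) = \cS_{f_2}(\alpha_2, \cS_{f_1}(\alpha_1, e_M)) = \cS_{f_2}(\alpha_2, e_M)\cdot \cS_{f_1}(\alpha_1, e_M),
\]
which is exactly the statement that $[B,\alpha,f]\mapsto \cS_f(\alpha,e_M)$ is an algebra homomorphism (using the description of the Pontryagin product on $H^{geo,T}_*(\Omega G)$ via the Eilenberg--Zilber map \eqref{eq:prod} and the formula $f(b_2,b_1)=f_2(b_2)f_1(b_1)$). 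So the entire content is the commutativity of \eqref{eq:BigCom}, which I would assemble piece by piece from the lemmas already in place.

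First I would set up, exactly as in \S\ref{ss:GoodClass}, compatible admissible base triples: using a common connection $\nabla_T$ on $ET\to BT$ I build $(\eta_i,g_i,\nabla_i)$ on $P_{B_i}$ from good pairs $(f_i,\eta_{B_i}')$ (with the Morse functions $\eta_{BT,i}$ chosen as small generic perturbations of a fixed $\eta_{BT}$ to get transversality), and the induced admissible base triple $(\eta_{21},g_{21},\nabla_{21})$ on $P_{B_2\times B_1}$, which is of the form produced by Proposition \ref{p:accelerator} applied to the good pair $(f_2f_1, \pi_1^*\eta_{B_1}'+\pi_2^*\eta_{B_2}')$. Then I would choose the auxiliary Hamiltonians: $A_{f_i}$ from Proposition \ref{p:accelerator} with slopes $\bfs_{A_{f_i}} = (\bfs_i)_{borel}$ arranged so that the slope bounds \eqref{eq:SlopeBound} hold, and $A_F$ on $P_{B_2\times B_1}$ with slope $(\pi_1^*\bfs_1 + \pi_2^*\bfs_2')_{borel}$, whose compatibility with $(\eta_{21},g_{21},\nabla_{21})$ — together with that of $F_2^*A_F$ and $F^*A_F$ — is precisely Lemma \ref{l:SlopeH2}, after verifying via Lemma \ref{l:compoHam}(1) the subadditivity $\bfs_{K_{f_2f_1}}\le \bfs_{K_{f_1}}+\bfs_{K_{f_2}}$ of slopes that makes the gradient-trajectory estimates work; I also arrange \eqref{eq:SlopeBound2}. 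The triangle in the middle of \eqref{eq:BigCom}, $\cC_F \simeq \cC_{F_2}\circ\cC_{F_1}$, is Proposition \ref{p:double_taut}. The upper-middle square ($\cC_{F_1}\circ Q_3 = Q_4\circ(\mathrm{id}\otimes\cC_{f_1})$) is Lemma/Definition \ref{l:UpMidSq}; the two upper-left squares are Lemmas \ref{l:trivialCom} and \ref{l:1st-square}; the middle-right square comes from \eqref{eq:3rd-square} combined with \eqref{eq:4th-square}; the top-right square is \eqref{eq:2ndLastSq}; the bottom-right triangle is \eqref{eq:LastSQ}. Gluing all of these yields commutativity of \eqref{eq:BigCom}.

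The main obstacle — and the reason the diagram is so large — is the maximum-principle bookkeeping for the pull-back maps $Q_3,Q_4$ (and the homotopy appearing in Lemma \ref{l:1st-square}) over the \emph{infinite-dimensional} bases $P_{B_i}$: the twisted Hamiltonians $f_i^*A_{f_i}$ and $F^*A_F$ do not have constant slope, so one must show the slope is non-increasing along every gradient trajectory of the relevant admissible base triple, and that the slopes stay bounded so the continuation/pull-back targets do not escape to infinity over the finite-dimensional approximations $ET_n$. The key estimate, carried out in Lemma/Definition \ref{l:UpMidSq}, is to decompose the interpolating Hamiltonian $H'_s$ through the intermediary $\tilde h^*A_{f_1}$ and bound $\tfrac{d}{ds}\bfs_{(H'_s)_{\tau^-(s)},t}$ by a sum of three terms, each non-positive: the first two because $\bfs_1,\bfs_2'$ were chosen via Lemma \ref{l:equivariantS} to satisfy the analogue of \eqref{eq:grad_on_B_borel}, and the third because $\rho'\ge 0$ and $\bfs_2' > c_{f_2,K}=c_{F_2,K}\ge \max_m|\bfs_{F_2^*0}(t,m)|$ by \eqref{eq:cfK}. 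Once these $C^0$ a priori bounds are in hand, all the squares reduce to standard facts: tautological identifications of moduli spaces via the family of diffeomorphisms $\Phi_{z,t}(b,y,m)=(b,y,f(b)(t)m)$ (as in Propositions \ref{p:correlationproof} and \ref{p:correlationprod}), naturality/independence of continuation, pull-back and push-forward maps (Lemmas \ref{l:inde_kappa}, \ref{l:inde_pb}, \ref{l:inde_pf}, Corollaries \ref{c:natural}, \ref{c:natural_pf}), independence of the classifying-space model (Lemma \ref{l:indepClassifyingModel}), and the compatibility of pull-back/push-forward with the product (Lemmas \ref{l:pullbackprod}, \ref{l:pfprod}). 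Passing to the direct limit in slopes at the very end converts the diagram of finite-slope Floer groups into the desired identity on $SH_T^*(M)$.
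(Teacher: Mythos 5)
Your proposal reproduces the paper's own proof of Theorem~\ref{thm:Seideltequiv}: you establish the commutativity of the large diagram \eqref{eq:BigCom} piece by piece from Proposition~\ref{p:double_taut}, Lemmas~\ref{l:trivialCom}, \ref{l:1st-square}, \ref{l:UpMidSq}, and Equations \eqref{eq:3rd-square}, \eqref{eq:4th-square}, \eqref{eq:2ndLastSq}, \eqref{eq:LastSQ}, then specialize to the unit and invoke Lemma~\ref{l:moduleType} to conclude, passing to the direct limit in slope at the end. The added discussion of the maximum-principle and slope bookkeeping in Lemma/Definition~\ref{l:UpMidSq} is an accurate account of the surrounding material in Section~5 and is consistent with the paper's argument.
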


\begin{proof}
As explained in the paragraph after \eqref{eq:BigCom}, it suffices to verify the commutativity of \eqref{eq:BigCom} and the compatibility with passing to the direct limit with respect to slope.
The commutativity of \eqref{eq:BigCom} follows from Proposition \ref{p:double_taut}, Lemma \ref{l:trivialCom}, \ref{l:1st-square}, and Equations \eqref{eq:3rd-square}, \eqref{eq:4th-square}, \eqref{eq:2ndLastSq} and \eqref{eq:LastSQ}.
The compatibility of this commutative diagram with respect to increasing the slope is straightforward and left to the readers.
\end{proof}

\begin{thm}[=Theorem \ref{t:SeidelMapEqintro}]\label{t:SeidelMapEqconvex}
There is a ring homomorphism:
\begin{align} \label{eq:moduleactS5}
\mathcal{S}: \hat{H}_*^{G}(\Omega_{poly} G) \to SH_{G}^*(\bar{M}).
\end{align} 
\end{thm}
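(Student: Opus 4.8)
The plan is to deduce the $G$-equivariant statement \eqref{eq:moduleactS5} from the $T$-equivariant statement (Theorem \ref{thm:Seideltequiv}) already established, by taking Weyl invariants. Recall from Section \ref{s:functorialbasic} that $SH^*_G(\bar M)$ is defined via the Borel construction for $G$, and that the lemmas at the end of that section give $HF^*_G(P,a)\simeq HF^*_{N(T)}(P,a)\simeq HF^*_T(P,a)^W$ for each admissible slope $a$; passing to the direct limit over $a$ yields a canonical ring isomorphism $SH^*_G(\bar M)\simeq SH^*_T(\bar M)^W$. On the algebra side, by the comparison lemmas of \S\ref{subsection:geomhom} we have ring isomorphisms $\hat H_*^G(\Omega_{poly}G)\cong H_*^{geo,G}(\Omega G)$ and $\hat H_*^T(\Omega_{poly}G)\cong H_*^{geo,T}(\Omega G)$, and $H_*^{geo,G}(\Omega G)\cong H_*^{geo,T}(\Omega G)^W$ follows from Lemma \ref{l:rationallyconnectedfibre}-type reasoning on the level of geometric cycles together with the Weyl-averaging argument used in the lemmas comparing $HF^*_G$, $HF^*_{N(T)}$, $HF^*_T$. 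So the diagram to fill in is: $\hat H_*^G(\Omega_{poly}G)$ sits inside $\hat H_*^T(\Omega_{poly}G)$ as the $W$-invariants, Theorem \ref{thm:Seideltequiv} provides the algebra map $\mathcal S_T:\hat H_*^T(\Omega_{poly}G)\to SH^*_T(\bar M)$, and it remains only to check that $\mathcal S_T$ is $W$-equivariant, whereupon restricting to $W$-invariants and using the two isomorphisms above produces the desired $\mathcal S:\hat H_*^G(\Omega_{poly}G)\to SH^*_G(\bar M)$, which is automatically a ring homomorphism since $\mathcal S_T$ is.

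The main point to verify is therefore the $W$-equivariance of $\mathcal S_T$, i.e.\ that for every $w\in W$ and every geometric cycle $[B,\alpha,f]$ one has $w_*\,\mathcal S_f(\alpha,e_M)=\mathcal S_{w\cdot(B,\alpha,f)}(e_M)$, where $w\cdot(B,\alpha,f)=(B_{\rho_B\circ\phi_w^{-1}},\phi_w^*\alpha,w\circ f)$ is the Weyl action on geometric homology from \eqref{eq:weylgeom2}. This is precisely the content of Proposition \ref{prop:SeidelWeyleq}, whose proof traces the $w$-twisting of the group action through the pull-back step \eqref{eq:pullbackreal5}, the tautological isomorphism \eqref{eq:directLim}, and the push-forward step \eqref{eq:pfreal3}, using that $m\mapsto \rho_M(w)m$ intertwines $\rho_M$ with $w^*\rho_M$ and similarly for $EG$; the commutative diagrams \eqref{eq:wActionSpace}, \eqref{eq:wAction1}, \eqref{eq:wActionSpace2}, \eqref{eq:wAction2} assemble to give the claimed equality after passing to the direct limit. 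So in fact all the ingredients are in hand, and the proof of Theorem \ref{t:SeidelMapEqconvex} is essentially a bookkeeping assembly.

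Concretely I would carry out the following steps. First, invoke Theorem \ref{thm:Seideltequiv} to obtain the algebra homomorphism $\mathcal S_T:\hat H_*^T(\Omega_{poly}G)\to SH^*_T(\bar M)$. Second, invoke Proposition \ref{prop:SeidelWeyleq} together with the module property Lemma \ref{l:moduleType} to conclude that $\mathcal S_T$ is $W$-equivariant; here one should note that the $W$-action on $SH^*_T(\bar M)$ is exactly the one induced by the maps $w_*$ appearing in the lemma $HF^*_T(P,a)^W\simeq HF^*_{N(T)}(P,a)$, which is the same $w_*$ featured in Proposition \ref{prop:SeidelWeyleq}. Third, restrict $\mathcal S_T$ to $W$-invariants: since $\hat H_*^T(\Omega_{poly}G)$ has characteristic-zero coefficients, $\mathcal S_T$ restricts to an algebra map on the invariant subrings, $\hat H_*^T(\Omega_{poly}G)^W\to SH^*_T(\bar M)^W$. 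Fourth, identify $\hat H_*^T(\Omega_{poly}G)^W\cong \hat H_*^G(\Omega_{poly}G)$ (via $H^{geo}$ and Lemma \ref{l:rationallyconnectedfibre} applied to $G/N(T)$, which is $\mathbb C$-acyclic, plus the Weyl-averaging identification) and $SH^*_T(\bar M)^W\cong SH^*_G(\bar M)$ (the composite of the last two lemmas of Section \ref{s:functorialbasic}, in the direct limit over $a$). Composing these identifications with the restricted map yields \eqref{eq:moduleactS5}, and it is a ring homomorphism because each arrow in the composition is. The only mild subtlety — and the thing I would be most careful to get right — is the consistency of the various $W$-actions: the one on geometric homology from \eqref{eq:weylgeom1}, the one on $\hat H_*^T$ used in the $HF^*_{N(T)}$ comparison lemma, and the one on $SH^*_T(\bar M)$ coming from $w_*$; Lemma \ref{lem:properties}(3) is exactly what reconciles the first two, and one checks directly that the third matches the target of Proposition \ref{prop:SeidelWeyleq}.
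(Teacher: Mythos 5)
Your proposal is correct and follows essentially the same route as the paper: obtain the $T$-equivariant ring map from Theorem \ref{thm:Seideltequiv}, invoke Proposition \ref{prop:SeidelWeyleq} for Weyl-equivariance, and pass to $W$-invariants on both sides using $SH_G^*(\bar M)\cong SH_T^*(\bar M)^W$ and $\hat H_*^T(\Omega_{poly}G)^W\cong \hat H_*^G(\Omega_{poly}G)$. The only difference is cosmetic: the paper simply cites the algebraic isomorphism \eqref{eq:Weyl} from the literature (\cite[Lemma 6.2]{BFM}, \cite[Lemma 5.3]{BFN}), whereas you sketch a derivation through geometric homology, and you spell out the $SH_G\cong SH_T^W$ identification that the paper leaves implicit.
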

\begin{proof}  There is a natural isomorphism (\cite[Lemma 5.3]{BFN}) : \begin{align} \label{eq:Weyl} \hat{H}_*^{T}(\Omega_{poly} G,\mathbb{C})^{W} \cong \hat{H}_*^{G}(\Omega_{poly} G,\mathbb{C}).  \end{align} By Theorem \ref{thm:Seideltequiv},  there is a ring map
\begin{align}\label{eq:ring}
\mathcal{S}: \hat{H}_*^{T}(\Omega_{poly} G) \to SH_{T}^*(\bar{M}).
\end{align} 

Note that, by Lemmas \ref{l:normal} and \ref{l:W-inv}, we have
\[
HF^*_G(P,a) \simeq HF^*_T(P,a)^W
\]
so by the taking direct limit, we have
\[
SH^*_G(M)=SH^*_T(M)^W.
\]
By Proposition \ref{prop:SeidelWeyleq},  this map \eqref{eq:ring} is $W$-equivariant and hence taking $W$- invariants induces a map of the form \eqref{eq:moduleactS5}.  \end{proof}

\section{Coulomb branches and symplectic cohomology}\label{s:Coulomb}
\subsection{Background on Coulomb branches}

In this section,  we recall some relevant facts about Coulomb branch algebras.  As in previous sections,  we let $T \subset G$ be a maximal torus and $W$ denote the Weyl group $N(T)/T$.  \vskip 5 pt

\subsubsection{Pure Coulomb branches} 

\begin{defn} The algebra $\co$ is defined to be the vector space $\hat{H}_*^{G}(\Omega_{poly} G,\mathbb{C})$ equipped with the Pontryagin product.  \end{defn} 

The two basic geometric facts concerning $\operatorname{Spec}(\co)$ are the following (\cite{BFM}): 

\begin{enumerate} \item  $\hat{H}_*^{G}(\Omega_{poly} G,\mathbb{C})$ is  a Hopf algebra over $H^*(BG,\mathbb{C}).$ As a consequence,  $\operatorname{Spec}(\co)$ has the structure of a group scheme over $\operatorname{Spec}(H^*(BG,\mathbb{C})).$ 
\item The spectrum $\operatorname{Spec}(\co)$ is a smooth holomorphic symplectic manifold.  Moreover the ``Toda projection" \begin{align} \label{eq:Toda} \pi_{Toda}:  \operatorname{Spec}(\co) \to \operatorname{Spec}(H^*(BG,\mathbb{C})) \end{align}  defines a completely integrable system. 
\end{enumerate} 

 A root $\alpha$ is given by a homomorphism $\mathfrak{t}_\mathbb{C} \to \mathbb{C}$.  We let $D_\mathfrak{t} \subset \mathfrak{t}_{\mathbb{C}}$ denote the union of the kernels of the root homomorphisms. Similarly, for a co-root $\alpha^{\vee}$, its exponential defines a homomorphism $T_\mathbb{C}^{\vee} \to \mathbb{C}^*.$ We let $D_{T^{\vee}} \subset T_{\mathbb{C}}^{\vee}$ denote the union of the kernels of these co-root homomorphisms. There is an isomorphism: \begin{align} \hat{H}_*^{T}(\Omega_{poly} G,\mathbb{C}) \cong \widetilde{\co}, \end{align}  where $\widetilde{\co}$ is the ``affine blowup" of $T^*T_\mathbb{C}^{\vee}$ with discriminant locus $D_{T^{\vee}} \times D_\mathfrak{t} \subset T^*T_\mathbb{C}^{\vee}$ from \cite[\S 2.5]{BFM}. Combining this with the isomorphism \eqref{eq:Weyl}, we obtain that \begin{align} \label{eq:weylhatGhatT} \hat{H}_*^{G}(\Omega_{poly} G,\mathbb{C}) \cong \widetilde{\co}^{W}. \end{align} Note that the isomorphism \eqref{eq:weylhatGhatT} gives rise to a birational map \begin{align} \label{eq:resof} \pi_{T,W}: \operatorname{Spec}(\co) \to \operatorname{Spec}(\hat{H}_*^{T}(\Omega_{poly} T,\mathbb{C})^{W}) \cong T^*T_\mathbb{C}^{\vee}/W,  \end{align} which is an isomorphism away from the Weyl quotient of pre-image of the root hyperplanes under $\pi_{Toda}$.  
\vskip 5 pt

 \begin{example} \label{ex:SU2} Let $G=SU(2)$ and $T$ be its maximal torus.  Take $\mathcal{C}(T,0)$ with coordinates $z^{\pm 1},  \tau.$ Following the calculations from \cite[\S 3.2]{BFM},  we adjoin two more variables: \begin{align} u= \frac{z-1}{\tau},  v= \frac{1-1/z}{\tau} \end{align} to $\mathcal{C}(T,0)$ to obtain the ring $\widetilde{\co}$.  
By a simple calculation, we have 
\[
\widetilde{\co}=\mathbb{C}[\tau,u,v]/(u-v-\tau uv).\]
 The Weyl group $W:=\mathbb{Z}/2\mathbb{Z}$ acts on this ring by $ u \to v , \tau \to -\tau.$ The pure Coulomb branch is then isomorphic to the Weyl invariants: 
\begin{align} 
\co \cong \widetilde{\co}^{W}=\mathbb{C}[\sigma, x, y]/(x^2-\sigma y^2-2y). 
\end{align}  
where $\sigma=\tau^2$, $x=(u+v)/2$ and $y=uv/2$.
\end{example}

\subsubsection{Teleman's construction}

We next review Teleman's description (\cite{Teleman1}) of the Coulomb branch associated to a ``cotangent type" representation $E$ of $G_\mathbb{C}.$ Being of cotangent type means that it comes with a (fixed) decomposition of the form \begin{align} \label{eq:decomposition} E:=\mathbb{V}\oplus \mathbb{V}^{\vee}. \end{align}  To describe the Coulomb branches,  it is convenient to introduce an additional parameter $\mu$.   The Coulomb branch $\cE$ arises as the fiber over $\mu=0$ in a flat family $\rcE$ over $\mathbb{C}[\mu]$.   We set $\rco:=\co[\mu]$ and denote the parameterized (or ``massive") version of  \eqref{eq:Toda} by \begin{align} \label{eq:todacirc} \mathring{\pi}_{Toda}:\operatorname{Spec}(\rco) \to \operatorname{Spec}(H^*(BG,\mathbb{C})[\mu]).  \end{align} Let $\nu \in \mathfrak{t}^{\vee}$ be a weight of $\mathbb{V}$ under the action of $T$.  We let $\psi_\nu: \mathfrak{t}_\mathbb{C} \times \mathbb{C} \to \mathbb{C}$ denote the linear function \begin{align} \psi_\nu(\eta,\mu) = \mu+\langle \nu|\eta \rangle. \end{align} Let $(\mathfrak{t}_\mathbb{C} \times \mathbb{C})^o$ denote the complements of the hyperplanes cut out by $\psi_\nu(\eta,\mu)=0.$ For any $w \in \mathbb{C}^*$ and a weight $\nu$,  we let $w^\nu:=\operatorname{exp}(\nu\operatorname{log}(w)) \in T_\mathbb{C}^{\vee}$.  Consider the following map: $(\mathfrak{t}_\mathbb{C} \times \mathbb{C})^o \to T_\mathbb{C}^{\vee},$ \begin{align}\label{eq:sectiont}(\eta, \mu) \to \prod_\nu (\psi_\nu(\eta, \mu))^{\nu}. \end{align}

Over $(\mathfrak{t}_\mathbb{C} \times \mathbb{C})^o$,  this defines a $W$-equivariant section,  $\epsilon_\mathbb{V}^{\operatorname{pre}}$,  of $$\mathring{\pi}_{Toda}^{T}:\operatorname{Spec}(\mathcal{C}^{\circ}(T;0)) \to \operatorname{Spec}(H^*(BT,\mathbb{C})[\mu]).$$ It therefore descends to a rational section $\hat{\epsilon}_\mathbb{V}$ of $$T^*T_\mathbb{C}^{\vee}/W \times \mathbb{C} \to \mathfrak{t}_\mathbb{C}/W \times \mathbb{C}.$$ 

\begin{defn} \label{defn:eulerLag} We define $\epsilon_\mathbb{V}$ to be the proper transform of $\hat{\epsilon}_\mathbb{V}$ to   $\operatorname{Spec}(\rco).$  For each $\mu \in \mathbb{C}$,  $\epsilon_\mathbb{V}$ is a rational section of the Toda integrable system and thus defines a family of holomorphic Lagrangians.  We refer to $\epsilon_\mathbb{V}$ as the Euler Lagrangian. \end{defn} 

\begin{rem} \label{rem:blowupfirst} Equivalently,  we can first take the proper transform of the rational section $\epsilon_\mathbb{V}^{\operatorname{pre}}$ defined by \eqref{eq:sectiont} to the blow-up $\operatorname{Spec}(\widetilde{\rco}):= \operatorname{Spec}(\widetilde{\co}[\mu])$ and then take $W$-invariants.   \end{rem}

Let $U:= \mathring{\pi}_{Toda}^{-1}((\mathfrak{t}_\mathbb{C} \times \mathbb{C})^o).$ Translation by $\epsilon_\mathbb{V}$  using the group scheme structure on $\operatorname{Spec}(\rco)$ defines a rational fiberwise symplectomorphism \begin{align} \label{eq:eplus} \varepsilon_\mathbb{V}^{+}: \rco_{|U} \cong \rco_{|U} . \end{align}

\begin{thm} \cite[Theorem 1]{Teleman1} \label{thm:Tel}  Let $\mathcal{A}_\mathbb{V}$ denote the scheme given by gluing two copies of $\operatorname{Spec}(\rco)$ by $\varepsilon_\mathbb{V}^{+}.$ Then \begin{align} \rcE \cong \Gamma(\mathcal{A}_\mathbb{V}). \end{align} \end{thm}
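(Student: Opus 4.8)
The plan is to deduce Theorem \ref{thm:Tel} (in the form stated as the final displayed result, $\rcE \cong \Gamma(\mathcal{A}_\mathbb{V})$) directly from the cited result \cite[Theorem 1]{Teleman1}. Since the statement as written \emph{is} Teleman's theorem, the substance of the ``proof'' in this paper is to reconcile the conventions: (i) that the algebra $\rcE$ we use is the Braverman--Finkelberg--Nakajima Coulomb branch algebra for the cotangent-type representation $E = \mathbb{V}\oplus\mathbb{V}^\vee$ in its one-parameter deformation over $\mathbb{C}[\mu]$, and (ii) that the gluing section $\varepsilon_\mathbb{V}^+$ we have defined via translation by the Euler Lagrangian $\epsilon_\mathbb{V}$ agrees with the gluing datum in \cite{Teleman1}. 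So the first thing I would do is recall precisely the construction of $\mathcal{A}_\mathbb{V}$: it is the scheme over $\operatorname{Spec}(\rco)$ obtained by taking two copies of $\operatorname{Spec}(\rco)$ and gluing them along the open locus $U = \mathring{\pi}_{Toda}^{-1}((\mathfrak{t}_\mathbb{C}\times\mathbb{C})^o)$ using the fiberwise symplectomorphism $\varepsilon_\mathbb{V}^+$ from \eqref{eq:eplus}, which is translation by the rational section $\epsilon_\mathbb{V}$ using the group scheme structure on $\operatorname{Spec}(\rco) = \mathring{\mathcal{M}}(G,0)\times\mathbb{C}$.

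Second, I would verify that the rational section $\epsilon_\mathbb{V}$ we have defined coincides with the one appearing in Teleman's gluing construction. This is where I would invoke the explicit formula \eqref{eq:sectiont}: the preliminary section $\epsilon_\mathbb{V}^{\operatorname{pre}}$ over the torus is given by $(\eta,\mu)\mapsto \prod_\nu (\psi_\nu(\eta,\mu))^\nu$ where $\nu$ ranges over the weights of $\mathbb{V}$, and then $\epsilon_\mathbb{V}$ is its proper transform to $\operatorname{Spec}(\rco)$ (equivalently, as in Remark \ref{rem:blowupfirst}, first proper-transform to $\operatorname{Spec}(\widetilde{\rco})$ then take $W$-invariants, using Proposition \ref{prop:THOmega} and the isomorphism \eqref{eq:weylhatGhatT}). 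I would check that this matches the section Teleman uses to build the scheme $\mathcal{A}_\mathbb{V}$ in \cite[\S 4--6]{Teleman1}; this is essentially a matter of matching normalizations of the Toda system and of the coroot-blowup presentation of $\hat H_*^T(\Omega_{poly}G,\mathbb{C})$.

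Third, with the identification of gluing data in hand, the isomorphism $\rcE \cong \Gamma(\mathcal{A}_\mathbb{V})$ is precisely the content of \cite[Theorem 1]{Teleman1}, and I would simply cite it. It may be worth spelling out, as a sanity check, that $\Gamma(\mathcal{A}_\mathbb{V})$ is flat over $\mathbb{C}[\mu]$ with generic fiber the localized algebra $\mathring{\mathcal{C}}_3(G;E)$ and special fiber at $\mu = 0$ recovering $\mathcal{C}_3(G;E)$, since the gluing takes place over an open dense locus and the two charts are each $\operatorname{Spec}(\rco)$. The main obstacle — really the only non-formal point — is the second step: making the translation-section $\varepsilon_\mathbb{V}^+$ that we have defined match exactly, on the nose, the gluing automorphism in \cite{Teleman1}, including getting all the signs and the direction of translation right, and confirming that ``proper transform'' in our sense agrees with the resolution Teleman performs when passing from $T^*T_\mathbb{C}^\vee/W$ to $\operatorname{Spec}(\co)$. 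Once conventions are aligned, nothing further is needed; I do not anticipate any genuinely new mathematical input beyond a careful bookkeeping argument.
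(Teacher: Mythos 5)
Your proposal is correct and takes the same route as the paper: Theorem \ref{thm:Tel} is stated as a citation of \cite[Theorem 1]{Teleman1} and is given no independent proof in the paper, so reconciling conventions and invoking Teleman's result is precisely what is done (or rather, implicitly assumed). The only caveat is that you frame the convention-matching as the ``main obstacle,'' whereas the paper treats it as already settled by the setup in \S 6.1 (Proposition \ref{prop:THOmega}, Remark \ref{rem:blowupfirst}, and the definition of $\epsilon_\mathbb{V}$ via \eqref{eq:sectiont}), so no further argument is offered.
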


In purely algebraic terms,  the description of  $\rcE$ from Theorem \ref{thm:Tel} is equivalent to saying that $\rcE \subset \rco$ is the subring of $\rco$ which remain regular after applying the translation \eqref{eq:eplus}.  

\begin{example} \label{ex:weightpm1} \cite[Example 5.2]{Teleman1} Let $G=U(1)$ and $\mathbb{V}=\mathbb{C}^2$ be a rank 2 representation with weight $(1,-1).$ Let $z^{\pm 1}, \mu, \tau$ be the coordinates on $\operatorname{Spec}(\rco)$.  The rational automorphism given by translation is: \begin{align} \label{eq:translateSU2} \varepsilon_\mathbb{V}^{+}: z \to z(\frac{\mu+\tau}{\mu-\tau}). \end{align} The subring of functions which remain regular after this translation is generated by \begin{align} \mu, \tau, \quad x= z(\mu-\tau),  y= z^{-1}(\mu+\tau). \end{align} The ring $\rcE$ is therefore given by \begin{align} \rcE \cong \mathbb{C}[\mu,\tau, x,y]/(xy=\mu^2-\tau^2). \end{align}    
Note that after setting $\mu=0$, we see that $\Spec(\cE)$ has an $A_1$ singularity.
\end{example}

\begin{example} \cite[Example 5.3]{Teleman1} Let $G=SU(2)$ and $\mathbb{V}= \mathbb{C}^2$ be the standard representation.  The restriction of this representation to the maximal torus $T$ gives the previous example.  The pure Coulomb branches $\rco$ and $\widetilde{\rco}$ have been computed in Example \ref{ex:SU2} (up to adjoining the formal variable $\mu$).  On the Weyl-cover, $\widetilde{\rcE}$, the relevant automorphism is still given by \eqref{eq:translateSU2} and the subring $\widetilde{\rcE} \subset \widetilde{\rco}$ which remains regular after translation by the rational section is generated by \begin{align}x:= \mu u-z, y= \mu v-z^{-1},  w:= \frac{x-y}{\tau}.  \end{align} In this case, we again have $\rcE=\widetilde{\rcE}^{W}.$  \end{example}

 Under the gluing construction, the rational section $\epsilon_{\mathbb{V}}$ is identified with an open subset of the unit section $\mathbb{L}_{id}$ in the other copy of $\operatorname{Spec}(\rco)$ (``chart").  Note that the image of this unit section $\mathbb{L}_{id}$ in Teleman's scheme $\mathcal{A}_\mathbb{V}$ remains a section of the Toda projection \eqref{eq:todacirc} and hence its image is automatically closed inside of the affinization $\operatorname{Spec}(\rcE)$ (because functions pulled back along the Toda projection restrict surjectively and both schemes in question are affine). It follows that the closure of the subvariety $\epsilon_{\mathbb{V}}$  inside of $\operatorname{Spec}(\rcE)$ coincides with the image of $\mathbb{L}_{id}$ and hence defines a section $\bar{\epsilon}_{\mathbb{V}} \subset \operatorname{Spec}(\rcE)$ of the projection \eqref{eq:todacirc} (c.f.  \cite[Theorem 3]{Teleman1}).  Conversely, this property characterizes $\operatorname{Spec}(\rcE)$ as follows: 

\begin{cor} \label{cor:characterization} Let $\mathcal{R}$ be an $H^*(BG)[\mu]$ algebra such that $\operatorname{Spec}(\mathcal{R})$ is a $\operatorname{Spec}(\rco)$-equivariant partial compactification of $\operatorname{Spec}(\rco)$ which compactifies $\epsilon_{\mathbb{V}}$ to a section $\bar{\epsilon}_{\mathbb{V}}$ of the massive Toda projection.  Then $\mathcal{R}$ is a subalgebra of $\rcE$.  \end{cor}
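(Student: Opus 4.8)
\textbf{Proof plan for Corollary \ref{cor:characterization}.}

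The plan is to reduce the statement to an algebraic fact about the gluing construction described in Theorem \ref{thm:Tel}, namely that $\rcE$ is \emph{maximal} among $H^*(BG)[\mu]$-algebras sitting between $\rco$ and its field of fractions which remain regular after applying the translation $\varepsilon_\mathbb{V}^+$ and which, under this translation, have the unit section land in the regular locus. Concretely, suppose $\mathcal{S}$ is given as in the hypothesis. First I would observe that since $\operatorname{Spec}(\mathcal{S})$ is a $\operatorname{Spec}(\rco)$-equivariant compactification of the group scheme $\operatorname{Spec}(\rco)$, restriction of functions gives an injection $\mathcal{S} \hookrightarrow \rco$ (global functions on a variety injects into functions on a dense open), so we may regard $\mathcal{S} \subset \rco \subset \operatorname{Frac}(\rco)$. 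The content is then to show $\mathcal{S} \subseteq \rcE$, i.e.\ that every $g \in \mathcal{S}$ remains regular after the rational translation \eqref{eq:eplus}; by the algebraic reformulation of Theorem \ref{thm:Tel} recorded right after it, this is exactly the defining condition of $\rcE \subset \rco$.

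The key step is to extract the regularity-after-translation condition from the hypothesis that $\epsilon_{\mathbb{V}}$ compactifies to a section $\bar\epsilon_{\mathbb{V}} \subset \operatorname{Spec}(\mathcal{S})$ of the massive Toda projection \eqref{eq:todacirc}. Here I would argue as follows: the translation $\varepsilon_\mathbb{V}^+$ on the open set $U$ is, by construction, translation by $\epsilon_\mathbb{V}$ using the group scheme structure, so $\varepsilon_\mathbb{V}^+$ carries the unit section of $\operatorname{Spec}(\rco)_{|U}$ to $\epsilon_\mathbb{V}$. Thus for $g \in \mathcal{S}$, the function $g \circ \varepsilon_\mathbb{V}^+$ is regular at a point $p$ of the unit section iff $g$ is regular at $\varepsilon_\mathbb{V}^+(p) \in \epsilon_\mathbb{V}$. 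Since $\bar\epsilon_{\mathbb{V}}$ is a \emph{closed} section of the Toda projection inside $\operatorname{Spec}(\mathcal{S})$ — it is closed because global functions on $\operatorname{Spec}(\mathcal{S})$ restrict surjectively to it, being a section — and $\epsilon_{\mathbb{V}} \subset \bar\epsilon_{\mathbb{V}}$ is the image of (an open subset of) the unit section under the gluing, regularity of $g$ on $\epsilon_\mathbb{V}$ extends: $g$, viewed as a rational function on $\operatorname{Spec}(\rco)$, extends to a regular function on a neighborhood of $\epsilon_\mathbb{V}$ in $\operatorname{Spec}(\rco)$. Running the translation backwards, $g \circ \varepsilon_\mathbb{V}^+$ is regular near the unit section. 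Combined with the fact that $g$ is already regular on all of $\operatorname{Spec}(\rco) = \operatorname{Spec}(\rco)_{|U} \cup (\text{complement of }U)$ away from the indeterminacy locus of the translation, and that this indeterminacy locus is precisely where one needs the section-compactification hypothesis, one concludes $g \circ \varepsilon_\mathbb{V}^+$ is globally regular, i.e.\ $g \in \rcE$.

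I expect the main obstacle to be making the last step — "regularity along the section $\epsilon_\mathbb{V}$ propagates to regularity in a neighborhood, hence after translation to global regularity" — fully rigorous, since it requires a careful analysis of the indeterminacy locus of $\varepsilon_\mathbb{V}^+$ and of how the two charts of $\mathcal{A}_\mathbb{V}$ glue. The cleanest route is probably to invoke Theorem \ref{thm:Tel} directly: $\rcE = \Gamma(\mathcal{A}_\mathbb{V})$, and $\mathcal{A}_\mathbb{V}$ is \emph{constructed} as the universal object with the stated property (two copies of $\operatorname{Spec}(\rco)$ glued by $\varepsilon_\mathbb{V}^+$), so the hypothesis on $\mathcal{S}$ produces a map $\operatorname{Spec}(\mathcal{S}) \to \mathcal{A}_\mathbb{V}$ of $\operatorname{Spec}(\rco)$-equivariant compactifications — the section $\bar\epsilon_{\mathbb{V}}$ in $\operatorname{Spec}(\mathcal{S})$ is what allows one to define the map into the second chart — and pulling back functions along this map gives the inclusion $\rcE = \Gamma(\mathcal{A}_\mathbb{V}) \hookrightarrow \mathcal{S}$... wait, that is the wrong direction; rather, the map $\operatorname{Spec}(\mathcal{S}) \to \mathcal{A}_\mathbb{V}$ exhibits $\Gamma(\mathcal{A}_\mathbb{V}) = \rcE$ as mapping \emph{to} $\mathcal{S}$, but since both contain $\rco$ compatibly and are separated, the map is a birational morphism of compactifications and function pullback gives $\mathcal{S} \subseteq \rcE$ after identifying both inside $\operatorname{Frac}(\rco)$. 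Verifying that $\operatorname{Spec}(\mathcal{S}) \to \mathcal{A}_\mathbb{V}$ exists and is the identity on the common open $\operatorname{Spec}(\rco)$ is the crux, and it is exactly here that the hypothesis "$\epsilon_\mathbb{V}$ compactifies to a section" is used: it guarantees that the rational map to the second chart, which a priori is only defined on $U$, extends over the locus needed to cover $\mathcal{A}_\mathbb{V}$.
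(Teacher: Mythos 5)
Your "cleanest route" via a map $\operatorname{Spec}(\mathcal{S}) \to \mathcal{A}_\mathbb{V}$ has the scheme map going in the wrong direction, and this is a genuine gap rather than a notational slip. You notice that pullback of global functions along such a map gives $\rcE = \Gamma(\mathcal{A}_\mathbb{V}) \to \mathcal{S}$, concede that this is "the wrong direction," and then still conclude $\mathcal{S} \subseteq \rcE$ by appealing to the map being "a birational morphism of compactifications." But for a dominant birational morphism $f:X\to Y$ restricting to the identity on a common dense open, function pullback yields $\Gamma(Y) \hookrightarrow \Gamma(X)$; with $X=\operatorname{Spec}(\mathcal{S})$ and $Y=\mathcal{A}_\mathbb{V}$ this would give $\rcE \subseteq \mathcal{S}$, the reverse of the statement. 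Moreover, the existence of a morphism from the affine $\operatorname{Spec}(\mathcal{S})$ to the non-affine glued scheme $\mathcal{A}_\mathbb{V}$ is itself a nontrivial extension problem not supplied by the hypotheses.

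The paper's proof runs the map the other way, $\mathcal{A}_\mathbb{V} \to \operatorname{Spec}(\mathcal{S})$, which exists essentially for free because the target is affine: the two charts of $\mathcal{A}_\mathbb{V}$, each a copy of $\operatorname{Spec}(\rco)$, admit compatible maps to $\operatorname{Spec}(\mathcal{S})$. The first chart is the given dense orbit $\operatorname{Spec}(\rco) \subset \operatorname{Spec}(\mathcal{S})$; the second is the $\operatorname{Spec}(\rco)$-orbit of $\bar{\epsilon}_\mathbb{V}$, which is again a copy of $\operatorname{Spec}(\rco)$ precisely because $\bar{\epsilon}_\mathbb{V}$ is assumed to be a section of the massive Toda projection. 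The charts are glued by $\varepsilon_\mathbb{V}^+$ both in $\mathcal{A}_\mathbb{V}$ and inside $\operatorname{Spec}(\mathcal{S})$, so the two maps patch. Pulling back functions now gives $\mathcal{S} \to \Gamma(\mathcal{A}_\mathbb{V}) = \rcE$, and this is injective since its composite with restriction to the dense first chart is the injection $\mathcal{S} \hookrightarrow \rco$; hence $\mathcal{S} \subseteq \rcE$. Your first, direct algebraic approach does identify the right criterion (regularity after the translation $\varepsilon_\mathbb{V}^+$), but the mechanism you propose for establishing it is off: every $g\in\mathcal{S}$ is already regular on all of $\operatorname{Spec}(\rco) \supset \epsilon_\mathbb{V}$, so "regularity near $\epsilon_\mathbb{V}$" carries no new information. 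What is actually needed is the restriction of $g$ to the second orbit chart, which is precisely the extension of $g\circ\varepsilon_\mathbb{V}^+$ beyond $U$ — that is, the same content as the paper's map $\mathcal{A}_\mathbb{V}\to\operatorname{Spec}(\mathcal{S})$, stated function-by-function.
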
 
\begin{proof} The orbit of $\bar{\epsilon}_{\mathbb{V}}$ under the group defines a second chart $\operatorname{Spec}(\rco) \to \operatorname{Spec}(\mathcal{R})$.  Together our two orbits/charts give a map from Teleman's scheme to $\operatorname{Spec}(\mathcal{R})$: \begin{align} \mathcal{A}_\mathbb{V} := \operatorname{Spec}(\rco) \cup \operatorname{Spec}(\rco) \to \operatorname{Spec}(\mathcal{R}).  \end{align} 
There is a pull-back on global functions $\mathcal{R} \to \Gamma(\mathcal{A}_\mathbb{V})= \rcE$ which must be injective.  In other words,  $\mathcal{R}$ is a subalgebra of $\rcE$.  \end{proof}

We conclude this section by mentioning a few further properties of Coulomb branches which are important,  but not needed for our immediate purposes:  \begin{enumerate} \item The Coulomb branch $\cE$ (and indeed the entire family $\rcE$)  can be shown to be independent of the decomposition \eqref{eq:decomposition}.  However,  this requires further analysis (see \cite[\S 6(viii)]{BFN}).    \item Note that in Example \ref{ex:weightpm1},  the Coulomb branch at $\mu=0$ is singular as it is in general a singular (Poisson affine) variety.  Canonical (partial) resolutions of Coulomb branches have been studied in \cite{BFNlinebundles}.  \end{enumerate}  

\subsection{Proof of Theorem \ref{thm:moduleintro}} 
Let $(\bar{M}, \omega,  \theta)$ be a Liouville domain with a convex Hamiltonian $G$-action.  Recall from Remark \ref{rem:LiouvilleC} that $SH_{G}^*(\bar{M},\mathbb{C})$ denotes the version of symplectic cohomology defined over $\mathbb{C}$.  We have the following obvious variant of Theorem \ref{t:SeidelMapEqconvex}:
\begin{cor}\label{t:SeidelMapEq}
 Let $(\bar{M}, \omega,  \theta)$ be a Liouville domain with a convex Hamiltonian $G$-action.  Then there is a ring homomorphism:
\begin{align} \label{eq:moduleact}
\mathcal{S}: \hat{H}_*^{G}(\Omega_{poly} G) \to SH_{G}^*(\bar{M},\mathbb{C}).
\end{align} 
\end{cor}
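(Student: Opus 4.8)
The plan is to deduce Corollary \ref{t:SeidelMapEq} from Theorem \ref{t:SeidelMapEqconvex} with only the cosmetic change of coefficient field. First I would recall that by hypothesis $(\bar{M},\omega,\theta)$ is a Liouville domain, so $\theta$ extends over all of $\bar M$; as noted in Remark \ref{rem:LiouvilleC}, this allows one to define $SH^*_G(\bar M,\mathbb{C})$ over the ground field $\mathbb{C}$ rather than over the Novikov field $\Lambda$. The key structural point is that in the Liouville case the topological energy $E(u)$ of every Floer solution contributing to a differential, continuation map, product, pull-back, or push-forward is \emph{a priori non-negative} (indeed it controls the geometric energy, which is non-negative), so the Novikov variable $q$ is superfluous: one may set $q=1$, or equivalently work from the start with the complexes $CF^*(P,H,\eta)$ with $\Lambda$ replaced by $\mathbb{C}$, and every moduli-count formula in Section \ref{s:functorialbasic}, Section 3, and Section \ref{s:EqSeiMor} goes through verbatim.

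The steps, in order, would be: (1) observe that all the a priori $C^0$-estimates and Gromov compactness statements (Proposition \ref{p:moduliD} and its analogues, Lemmas \ref{l:Gromovpb}, \ref{l:pull-back}, \ref{l:push-forward}, Proposition \ref{p:correlationproof}, Proposition \ref{p:accelerator}) are purely geometric and do not reference the coefficient field, hence hold unchanged; (2) note that the algebraic input needed for the $\mathbb{C}$-linear theory — well-definedness of $\partial_{CF}$, $\partial^2_{CF}=0$, chain-map property of continuation/pull-back/push-forward maps, independence of auxiliary data, compatibility with products, the tautological isomorphism $\cC_f$, and the construction of $\cS_f$ — all follow formally once compactness is in place, and the energy bound guarantees the relevant sums are finite even with $q=1$; (3) re-run the construction of Sections \ref{s:EqSeiMor} and 5 verbatim over $\mathbb{C}$ to obtain a ring homomorphism $\mathcal{S}_T:\hat H_*^T(\Omega_{poly}G)\to SH^*_T(\bar M,\mathbb{C})$, then invoke Proposition \ref{prop:SeidelWeyleq} (Weyl-equivariance, which again is field-independent) together with the isomorphism \eqref{eq:Weyl} $\hat H_*^T(\Omega_{poly}G,\mathbb{C})^W\cong\hat H_*^G(\Omega_{poly}G,\mathbb{C})$ and pass to $W$-invariants, exactly as in the proof of Theorem \ref{t:SeidelMapEqconvex}. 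Since $\mathbb{C}$ has characteristic zero, taking $W$-invariants is exact and commutes with the cohomology functor, so the $W$-invariant map is still a ring homomorphism.

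Concretely, I would phrase the proof of the corollary in one or two sentences: the entire construction leading to Theorem \ref{t:SeidelMapEqconvex} applies to $SH^*_G(\bar M,\mathbb{C})$ because in the Liouville case the topological energy is non-negative on all moduli spaces under consideration (so $q$ may be specialized to $1$), and all transversality and compactness results are independent of the coefficient field; applying Proposition \ref{prop:SeidelWeyleq} and the isomorphism \eqref{eq:Weyl} and taking Weyl invariants then yields the desired ring homomorphism $\mathcal{S}:\hat H_*^G(\Omega_{poly}G)\to SH^*_G(\bar M,\mathbb{C})$.

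I expect the only genuine subtlety — the ``main obstacle,'' though it is a mild one — to be checking that nothing in the direct/inverse limits defining $SH^*_G$ and the semi-infinite homology $\hat H_*^G(\Omega_{poly}G,\mathbb{C})$ secretly uses the Novikov field: i.e. that the direct system $\{HF^*(P,a)\}$ over slopes, the inverse system $\{HF^*(P_{borel,n},a)\}$ over Borel approximations, and all the compatibility diagrams in Section \ref{s:EqSeiMor} remain well-defined $\mathbb{C}$-vector space diagrams. This amounts to observing that the continuation/pull-back/push-forward maps are still well-defined over $\mathbb{C}$ (guaranteed by the energy bound making the counts finite) and that nothing changes in the colimit/limit formalism. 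Beyond this bookkeeping, there is no new geometric or algebraic content, which is why the statement is recorded as a corollary rather than a theorem.
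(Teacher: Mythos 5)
Your proposal is correct and matches the paper's approach, which is to treat this as an immediate ``obvious variant'' of Theorem \ref{t:SeidelMapEqconvex} via Remark \ref{rem:LiouvilleC} --- the paper itself offers no further argument, so your two paragraphs simply spell out what the authors leave implicit. One small refinement worth making: the crux is not so much that $E(u)$ is \emph{non-negative} as that in the exact case $\int u^*\omega$ is determined by the asymptotics via Stokes, so the topological energy is constant (for the differential) or uniformly bounded (for continuation/products/pull-back/push-forward) on each pair of asymptotics; finiteness of the counts over $\mathbb{C}$ then follows directly from the Gromov compactness of Proposition \ref{p:moduliD} and its analogues. Non-negativity is a by-product of the energy identity, but it is the boundedness/determinacy that makes the Novikov variable redundant.
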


 Next,  let $\mathbb{V}:=\mathbb{C}^n$ be a unitary representation of $G$.  Introduce complex coordinates $z_i$ on $\mathbb{C}^n$ as well as corresponding polar coordinates $(r_i,\theta_i)$.  Equip this with the standard symplectic form \begin{align} \omega_{\mathbb{C}^n}:= \sum_i r_id r_i\wedge d\theta_i. \end{align} 
The Hamiltonian $K =\pi |z|^2$ generates the diagonal circle action $S^1 \times \mathbb{C}^n \to \mathbb{C}^n$ \begin{align} \label{eq: diagonal} e^{it} \cdot (z_1,\cdots,z_n)=  (e^{2 \pi it}z_1,\cdots, e^{2 \pi i t}z_n).  \end{align} 
The unit ball $\bar{\mathbb{V}}$ is a Liouville domain with contact boundary $S^{2n-1}$.  We consider the $G \times S^1$ equivariant symplectic cohomology $SH^*_{G\times S^1}(\bar{\mathbb{V}},\mathbb{C})$ (as well as $SH^*_{T\times S^1}(\bar{\mathbb{V}},\mathbb{C})$),  where the additional $S^1$-factor corresponds to the diagonal rotation \eqref{eq: diagonal}.  The diagonal rotation defines a Seidel operator $s_\Delta$ which acts on $H^*_{T \times S^1}(\bar{\mathbb{V}},\mathbb{C})$.  This Seidel operator determines symplectic cohomology as follows: 

\begin{lem}[Localization] \label{eq:Localization} There is an isomorphism: 
 \begin{align} SH^*_{T\times S^1}(\bar{\mathbb{V}},\mathbb{C}) \cong H^*_{T\times S^1}(\bar{\mathbb{V}},\mathbb{C})[s_\Delta^{-1}]. \end{align}   \end{lem}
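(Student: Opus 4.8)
The strategy is to compute $SH^*_{T\times S^1}(\bar{\mathbb{V}},\mathbb{C})$ directly from the definition as a direct limit of Floer cohomology groups $HF^*_{T \times S^1}(\bar{\mathbb{V}}, a)$ and to identify the continuation maps with multiplication by the Seidel operator $s_\Delta$. The key geometric input is that $\bar{\mathbb{V}}$ is a ball in a vector space, so the Reeb flow on $S^{2n-1}$ is periodic with all orbits of period $1$ (after the normalization $K = \pi|z|^2$), and the periodic orbits of a linear Hamiltonian of slope $a$ organize themselves into copies of the sphere/the fixed locus together with the critical manifold at the origin. First I would set up a standard $S^1$-equivariant (in fact $T \times S^1$-equivariant) cylindrical Hamiltonian $H_a$ on $\mathbb{V}$ whose slope $a$ lies just below an integer $N$; the $1$-periodic orbits then consist of the constant orbit at $0$ (contributing a copy of $H^*_{T\times S^1}(\bar{\mathbb{V}},\mathbb{C})$, which here is just $H^*_{T\times S^1}(\mathrm{pt})$ since $\bar{\mathbb{V}}$ retracts to the origin) together with finitely many Morse–Bott families of non-constant orbits wrapping the diagonal circle direction, one family for each rotation number $1, \dots, N-1$.

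\textbf{Main steps.} The heart of the argument is to show that the Floer differential and continuation maps can be computed, after choosing appropriate Morse–Bott perturbations, in terms of the diagonal Seidel operator. Concretely: (i) the non-constant orbit families wrapping $k$ times are, as graded $H^*_{T\times S^1}$-modules, identified with $s_\Delta^{k} \cdot H^*_{T\times S^1}(\bar{\mathbb{V}},\mathbb{C})$ — this uses that the loop $e^{kit}$ of Hamiltonian diffeomorphisms is exactly the $k$-th power of the diagonal rotation loop, so the orbit space is identified via the Seidel map construction of Section~\ref{s:EqSeiMor} (specialized to $B = \mathrm{pt}$ and the loop $\gamma(t) = e^{kit} \in \Omega(G \times S^1)$) with a grading-shifted copy of the cohomology of $\bar{\mathbb{V}}$; (ii) the Floer differential within a fixed approximation vanishes for grading reasons, or more robustly, the continuation map $HF^*_{T\times S^1}(\bar{\mathbb{V}}, a) \to HF^*_{T\times S^1}(\bar{\mathbb{V}}, a')$ which increases the slope past one more integer is, on the summand indexed by wrapping number $k$, precisely multiplication by $s_\Delta$ landing in the summand indexed by $k+1$; (iii) passing to the direct limit over $a$, the telescope of these maps yields the localization $H^*_{T\times S^1}(\bar{\mathbb{V}},\mathbb{C})[s_\Delta^{-1}]$, since inverting $s_\Delta$ is exactly the operation of taking the colimit along multiplication-by-$s_\Delta$ maps. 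One should also verify that this identification is multiplicative, i.e. an isomorphism of rings — this follows because the pair-of-pants product of Section~\ref{s:product_structure}, restricted to the orbit summands, is compatible with the module structure over $H^*_{T\times S^1}(\bar{\mathbb{V}},\mathbb{C})$ and with composition of Seidel operators, using Lemma~\ref{l:moduleType} / Lemma~\ref{l:cfprod}.

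\textbf{The main obstacle.} The hard part will be step (ii): pinning down that the continuation map is \emph{exactly} multiplication by $s_\Delta$ and not merely an isomorphism onto the next summand up to a unit or up to lower-order corrections. This requires a careful Morse–Bott analysis of the continuation moduli spaces between $H_a$ and $H_{a'}$, organized so that the only rigid solutions are the ``trivial'' ones realizing the Seidel continuation, which in turn rests on a maximum-principle/energy argument of the type appearing in Proposition~\ref{p:moduliD} together with the $S^1$-equivariant transversality bookkeeping. A clean way to handle this is to observe that both sides are free modules of rank one over $H^*_{T \times S^1}(\bar{\mathbb{V}},\mathbb{C})$ in each relevant degree and that the map is determined by where it sends the generator; one then identifies that image with $s_\Delta$ by a direct comparison of the continuation datum with the Seidel datum (as in Section~\ref{ss:Taut}, the tautological isomorphism $\mathcal{C}_f$ intertwines the relevant orbit spaces). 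A secondary technical point is to ensure that the $S^1_{\mathrm{diag}}$-action's Hamiltonian $K = \pi|z|^2$ has integer period spectrum so that the ``slopes avoiding the spectrum'' condition of the earlier sections is met and the direct system is cofinal with the integer-slope subsystem; this is immediate from the explicit formula but should be recorded.
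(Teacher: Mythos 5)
The paper does not actually prove this lemma: the proof given is a one-line citation to Ritter \cite{Ritter} for the non-equivariant version and to \cite[Section 5.1]{LJ21} for the equivariant version. Your reconstruction is therefore compared against the strategy in those references rather than against an argument in this paper.

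Your outline is essentially the ``invertible Seidel element'' argument from those papers, and the overall structure — periodic orbits organized by wrapping number, continuation maps identified with multiplication by $s_\Delta$, direct limit yielding the localization — is the correct skeleton. You also correctly flag the real crux, namely pinning down the continuation map as exactly multiplication by $s_\Delta$ via a comparison with the tautological isomorphism $\mathcal{C}_f$. However, one step as you have phrased it is misleading and would need to be reorganized: the assertion that ``the Floer differential within a fixed approximation vanishes for grading reasons'' is not something you can expect, and it is not how the cited proofs go. After equivariant Morse–Bott perturbation the chain complex for slope $a\in(k-1,k)$ has many generators (one Morse–Bott sphere per wrapping number $1,\dots,k-1$ plus the constants), and the cohomology $HF^*_{T\times S^1}(\bar{\mathbb{V}},a)$ is in fact a free rank-one module over $H^*_{T\times S^1}(\mathrm{pt})$, so the differential must kill almost everything — in particular it does not vanish. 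The cleaner route, and the one the references actually follow, is to use the Seidel/tautological isomorphism to reduce $HF^*(a)$ for $a\in(k-1,k)$ directly to the small-slope case $a\in(0,1)$, where the only orbits are the constants and the identification $HF^*(a)\cong H^*_{T\times S^1}(\bar{\mathbb{V}},\mathbb{C})$ is immediate; the continuation map $HF^*(a)\to HF^*(a+1)$ is then identified with quantum/equivariant product by $s_\Delta$ via a single comparison of Seidel and continuation data, rather than a chain-level bookkeeping of many summands. With that reorganization your step (iii) — that the colimit of a system of rank-one free modules with structure maps equal to multiplication by a fixed nonzerodivisor is the localization — goes through as you say, and the multiplicativity follows from the compatibility of the pair-of-pants product with the module structure, exactly as you indicate.
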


\begin{proof} See \cite{Ritter} for the non-equivariant version and \cite[Section 5.1]{LJ21} for the equivariant version. \end{proof}

Next,  view $\rco:=\hat{H}_{*}^{G}(\Omega_{poly} G)[\mu]$ as a subalgebra of $\hat{H}_*^{G \times S^1}(\Omega_{poly} (G\times S^1)) \cong \hat{H}_{*}^{G}(\Omega_{poly} G)[\mu,w^{\pm}]$ given by those elements which are constant in $w$.  There is therefore a ring homomorphism: \begin{align} \label{eq:moduleactS1} \rco \to SH^*_{G \times S^1}(\bar{\mathbb{V}},\mathbb{C}).  \end{align}

\begin{thm}[=Theorem \ref{thm:moduleintro}] \label{thm:module} The following hold: \begin{enumerate} \item There is a $\mathbb{C}$-algebra isomorphism $\Gamma(\mathcal{O}_{\epsilon_\mathbb{V}}) \cong SH^*_{G\times S^1}(\bar{\mathbb{V}},\mathbb{C}).$ Moreover,  the inclusion $\epsilon_{\mathbb{V}}$ corresponds to the homomorphism \eqref{eq:moduleactS1}.  \item There is a commutative diagram: \begin{equation} \label{eq:diagrams7} \begin{tikzcd}
\rcE \arrow{r}{\mathcal{S}} \arrow[swap]{d}{i} & H^*_{G \times S^1} (\bar{\mathbb{V}},\mathbb{C})  \arrow{d}{\operatorname{ac}} \\
\co[\mu] \arrow{r}{\mathcal{S}} & SH^*_{G \times S^1}(\bar{\mathbb{V}},\mathbb{C}).  \end{tikzcd} \end{equation} \end{enumerate}  \end{thm}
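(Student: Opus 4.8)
The plan is to deduce both statements from an explicit computation of $SH^*_{T\times S^1}(\bar{\mathbb{V}},\mathbb{C})$ together with a computation of the Seidel map on it, after which part (1) amounts to matching the output with Teleman's section $\epsilon_\mathbb{V}$ and part (2) follows from Theorem \ref{thm:Tel}. First I would compute $SH^*_{G\times S^1}(\bar{\mathbb{V}},\mathbb{C})$. Since $\bar{\mathbb{V}}$ is $(T\times S^1)$-equivariantly contractible onto the origin, $H^*_{T\times S^1}(\bar{\mathbb{V}},\mathbb{C}) = H^*(B(T\times S^1),\mathbb{C}) = \mathbb{C}[\mathfrak{t}_\mathbb{C}\times\mathbb{C}]$, the second factor being the line with coordinate $\mu$. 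The Seidel operator $s_\Delta$ of the diagonal rotation \eqref{eq: diagonal} is the $(T\times S^1)$-equivariant Euler class of $\mathbb{V}$, that is $\prod_{\nu}\psi_\nu$ (product over the weights $\nu$ of $\mathbb{V}$ counted with multiplicity), by the standard fixed-point computation of Seidel elements for linear Hamiltonian circle actions on a ball \cite{Ritter}. Lemma \ref{eq:Localization} then gives $SH^*_{T\times S^1}(\bar{\mathbb{V}},\mathbb{C}) = \mathbb{C}[\mathfrak{t}_\mathbb{C}\times\mathbb{C}][\prod_\nu\psi_\nu^{-1}] = \mathbb{C}[(\mathfrak{t}_\mathbb{C}\times\mathbb{C})^o]$, an integral domain, with the acceleration map corresponding to the localization inclusion. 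The Weyl group of $G\times S^1$ is $W$, and the Weyl-invariance isomorphisms for $HF^*$ from the section on equivariant Floer theory pass to the limit over slopes, giving $SH^*_{G\times S^1}(\bar{\mathbb{V}},\mathbb{C}) = \mathbb{C}[(\mathfrak{t}_\mathbb{C}\times\mathbb{C})^o/W]$ with $\operatorname{ac}$ the $W$-invariant localization. Since $\epsilon_\mathbb{V}$ is a section of $\mathring{\pi}_{Toda}$ over $(\mathfrak{t}_\mathbb{C}\times\mathbb{C})^o/W$, we also have $\Gamma(\mathcal{O}_{\epsilon_\mathbb{V}}) \cong \mathbb{C}[(\mathfrak{t}_\mathbb{C}\times\mathbb{C})^o/W]$, so the two rings are abstractly isomorphic; the content of part (1) is to identify the homomorphism \eqref{eq:moduleactS1} with restriction along $\epsilon_\mathbb{V}\hookrightarrow\operatorname{Spec}(\rco)$.

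Next I would identify $\mathcal{S}$ with $\epsilon_\mathbb{V}^*$. Working $T$-equivariantly (and passing to $W$-invariants at the end via Proposition \ref{prop:SeidelWeyleq}), the map $\mathcal{S}:\hat{H}_*^T(\Omega_{poly}G)[\mu] = \widetilde{\rco}\to SH^*_{T\times S^1}(\bar{\mathbb{V}},\mathbb{C})$ is a ring homomorphism, and it is $H^*(B(T\times S^1))$-linear (equivalently $H^*(BT)$- and $\mu$-linear) by construction. As $SH^*_{T\times S^1}(\bar{\mathbb{V}},\mathbb{C})$ is a domain and the inclusion $\mathbb{C}[T^*T_\mathbb{C}^\vee]\subset\widetilde{\co}$ induces an isomorphism of fraction fields, $\mathcal{S}$ is determined by its restriction to $\hat{H}_*^T(\Omega_{poly}T)[\mu] = \mathbb{C}[T^*T_\mathbb{C}^\vee\times\mathbb{C}]$, and, by $H^*(B(T\times S^1))$-linearity, by the images $\mathcal{S}(e^\lambda)$ of the cocharacter loops. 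The key computation is $\mathcal{S}(e^\lambda) = \prod_\nu\psi_\nu^{\langle\lambda,\nu\rangle}$, which I would extract from the linear Seidel-element computation of \cite{Ritter}, the multiplicativity of $\mathcal{S}$, and the observation that adding $d\gg 0$ units of the diagonal rotation to $\lambda$ (i.e. passing to $e^\lambda w^{d}$) makes the associated twisted Hamiltonian of positive slope, so that $\mathcal{S}(e^\lambda w^{d})$ is the acceleration of a genuine equivariant Seidel class and the $\lambda$-dependence can be isolated. Comparing with \eqref{eq:sectiont}, this says precisely that $\mathcal{S}$ on $\hat{H}_*^T(\Omega_{poly}T)[\mu]$ is pullback along $\epsilon_\mathbb{V}^{\operatorname{pre}}$; since the target is a domain, $\mathcal{S}$ on all of $\widetilde{\rco}$ (which already contains the blow-up generators by Proposition \ref{prop:THOmega}) is forced to be pullback along the proper transform $\epsilon_\mathbb{V}$ (Remark \ref{rem:blowupfirst}). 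Taking $W$-invariants, $\mathcal{S}:\rco\to SH^*_{G\times S^1}(\bar{\mathbb{V}},\mathbb{C})$ is the restriction map along $\epsilon_\mathbb{V}\hookrightarrow\operatorname{Spec}(\rco)$; together with the first paragraph this proves part (1).

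For part (2), I would invoke Theorem \ref{thm:Tel}: $\rcE = \Gamma(\mathcal{A}_\mathbb{V})$, where $\mathcal{A}_\mathbb{V}$ is glued from two copies of $\operatorname{Spec}(\rco)$ along $\varepsilon_\mathbb{V}^{+}$, the map $i$ is restriction to the first chart, and $\epsilon_\mathbb{V}$ in the first chart is identified with an open dense subset of the unit section $e$ of the second chart. Hence, for $g\in\rcE$ with chart components $(g_1,g_2)$, the element $\mathcal{S}(i(g)) = g_1|_{\epsilon_\mathbb{V}}\in\Gamma(\mathcal{O}_{\epsilon_\mathbb{V}})$ equals the restriction of $g_2$ to the corresponding open subset of $e$; since $g_2$ is regular on all of the second chart, this is the image under $\operatorname{ac}$ of the regular function $g_2|_e\in\Gamma(\mathcal{O}_e) = H^*(BG)[\mu] = H^*_{G\times S^1}(\bar{\mathbb{V}},\mathbb{C})$ (equivalently: $\bar{\epsilon}_{\mathbb{V}}\subset\operatorname{Spec}(\rcE)$ is a section of \eqref{eq:todacirc}, so restriction to it defines a ring map $\rcE\to H^*_{G\times S^1}(\bar{\mathbb{V}},\mathbb{C})$). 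Thus $\mathcal{S}\circ i$ factors through $\operatorname{ac}$, and since $\operatorname{ac}$ is injective (localization of a domain, by Lemma \ref{eq:Localization}) the lift is unique; it is the top arrow $\mathcal{S}:\rcE\to H^*_{G\times S^1}(\bar{\mathbb{V}},\mathbb{C})$, $g\mapsto g_2|_e$, and the square \eqref{eq:diagrams7} commutes by construction.

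The hard part will be the Seidel-element computation of the second paragraph: carrying out the linear-symplectic computation of \cite{Ritter} in the present convex, $(T\times S^1)$-equivariant setting, and verifying the positive-slope statement that lets one read $\mathcal{S}(e^\lambda w^{d})$ (for $d\gg 0$) off as the acceleration of a classical equivariant Seidel class, whence $\mathcal{S}(e^\lambda)=\prod_\nu\psi_\nu^{\langle\lambda,\nu\rangle}$ by multiplicativity. A secondary technical point is making precise the domain/fraction-field argument that propagates the identification from $\hat{H}_*^T(\Omega_{poly}T)[\mu]$ to the affine blow-up $\widetilde{\rco}$, and checking that $\mathcal{S}$ is genuinely $H^*(B(G\times S^1))$-linear so that the non-torus directions of the section are the evident ones.
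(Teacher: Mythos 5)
Your proposal is correct and follows essentially the same route as the paper: compute the $T\times S^1$-equivariant Seidel operators for rank-one summands to get $s_\Delta=\prod_\nu\psi_\nu$, use Lemma \ref{eq:Localization} to identify $SH^*_{T\times S^1}(\bar{\mathbb{V}},\mathbb{C})$ with $\mathbb{C}[(\mathfrak{t}_\mathbb{C}\times\mathbb{C})^o]$, match $\mathcal{S}$ on the torus piece with Teleman's formula \eqref{eq:sectiont}, extend to $\widetilde{\rco}$, take Weyl invariants, and deduce (2) from the $\bar{\epsilon}_\mathbb{V}$ story. A few cosmetic differences: for the rank-one Seidel element you cite \cite{Ritter} and propose re-running the linear computation equivariantly, whereas the paper directly invokes \cite[Theorem 5.6]{LJ21} (with a footnote noting its hypotheses are not literally met); for propagating the identification from $\hat{H}_*^T(\Omega_{poly}T)[\mu]$ to the affine blow-up $\widetilde{\rco}$ you give an explicit domain/fraction-field argument, whereas the paper argues via irreducibility of the subscheme and uniqueness of proper transforms --- these two arguments are interchangeable; for part (2), you phrase the commutativity through the two-chart gluing of Theorem \ref{thm:Tel}, whereas the paper characterizes the image of $\operatorname{ac}$ intrinsically as the free rank-one $H^*(BG)[\mu]$-submodule of $\Gamma(\mathcal{O}_{\epsilon_\mathbb{V}})$ containing the unit --- again equivalent, and your version is arguably more explicit about why $\mathcal{S}\circ i$ lands in that submodule. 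The technical points you flag at the end (the equivariant linear Seidel computation, the $H^*(B(G\times S^1))$-linearity of $\mathcal{S}$, and the positive-slope bootstrapping for $\mathcal{S}(e^\lambda)$) are precisely the points the paper offloads to \cite{LJ21} and to multiplicativity; your plan for them is sound.
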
 

\begin{proof} \emph{Proof of Claim (1)}: We first calculate the Seidel operator for the diagonal circle action \eqref{eq: diagonal} viewed as an operation on $H^*_{T \times S^1}(\bar{\mathbb{V}},\mathbb{C})$.  Suppose first that $\mathbb{V}:=\mathbb{C}_\nu$,  a rank-one representation of $T$ with weight $\nu$.  The diagonal action acts by the Hamiltonian loop $t \to e^{2 \pi it} z.$ Then by the argument of \cite[Theorem 5.6]{LJ21}\footnote{Theorem 5.6 of \cite{LJ21} assumes that the toric variety is compact and that the torus acting is the maximal torus,  however the proof carries over without change to our setting.}  the Seidel operator for this loop is given by $\psi_\nu(\eta,\mu) := \mu+\langle \nu|\eta \rangle$ viewed as an element of $H^2_{T \times S^1}(\bar{\mathbb{V}})$.
Here, we identify $H^2_{T \times S^1}(\bar{\mathbb{V}})$ with $\operatorname{Hom}(\mathfrak{t}_\mathbb{C} \times \mathbb{C}, \mathbb{C})$.
 In general,  our representation of $T$ decomposes as a direct sum of these one-dimensional representations,  and the Seidel operator $s_\Delta$ for the diagonal circle action is clearly the product of the Seidel operators for these representations.  In other words,  we have \begin{align} s_\Delta = \prod_\nu \psi_\nu(\eta,\mu). \end{align}   By Lemma \ref{eq:Localization},  it follows that \begin{align} (\mathfrak{t}_\mathbb{C} \times \mathbb{C})^o \cong SH^*_{T \times S^1}(\bar{\mathbb{V}},\mathbb{C}).  \end{align} Now trivialize $T \cong (S^1)^r.$ The $i$-th circle will act on each of the one-dimensional representations with some weight $\nu_i$ and the corresponding Seidel operator $s_i$ will be \begin{align} s_i:= \prod_\nu \psi_\nu(\eta,\mu)^{\nu_i}. \end{align} It follows that the  homomorphism  $\mathcal{C}(T,0)[\mu] \to SH^*_{T \times S^1}(\bar{\mathbb{V}},\mathbb{C})$ is given by \eqref{eq:sectiont}.  By (the $T$-equivariant version of) Corollary \ref{t:SeidelMapEq},  this homomorphism extends to \begin{align} \label{eq:lift} \widetilde{\rco}:= \hat{H}_{*}^{T}(\Omega_{poly} G)[\mu] \to SH^*_{T \times S^1}(\bar{\mathbb{V}},\mathbb{C}).  \end{align}  As the subscheme corresponding to \eqref{eq:sectiont} is irreducible and does not lie entirely in the blowup locus,  the subscheme corresponding to \eqref{eq:lift} must coincide with its  proper transform.  By Remark \ref{rem:blowupfirst},  taking Weyl invariants gives the homomorphism \eqref{eq:moduleactS1} as claimed.    \vskip 10 pt

\emph{Proof of Claim (2)}: As in  Corollary \ref{cor:characterization},  we let $\bar{\epsilon}_{\mathbb{V}}$ denote the compactified section.  The image of the pull-back of $\Gamma(\mathcal{O}_{\bar{\epsilon}_{\mathbb{V}}}) \cong H^*(BG)[\mu]$ is characterized intrinsically as the free rank-one $H^*(BG)[\mu]$ submodule of $\Gamma(\mathcal{O}_{\epsilon_{\mathbb{V}}})$ containing the unit.  Under the isomorphism $\Gamma(\mathcal{O}_{\epsilon_\mathbb{V}}) \cong SH^*_{G\times S^1}(\bar{\mathbb{V}},\mathbb{C})$ from  Theorem \ref{thm:module},  this corresponds to the inclusion $H^*_{G \times S^1}(\bar{\mathbb{V}},\mathbb{C}) \to SH^*_{G\times S^1}(\bar{\mathbb{V}},\mathbb{C}).$      \end{proof} 

\begin{cor} \label{cor:characterization2} Let $\mathcal{R}$ be an $H^*(BG)[\mu]$ algebra such that $\operatorname{Spec}(\mathcal{R})$ is a $\operatorname{Spec}(\rco)$-equivariant partial compactification of $\operatorname{Spec}(\rco)$ where $\mathcal{R}$ fits into \eqref{eq:diagrams7}.  Then  $\mathcal{R}$ is a subalgebra of $\rcE.$\end{cor}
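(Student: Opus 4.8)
The plan is to mimic the proof of Corollary \ref{cor:characterization} essentially verbatim, using the extra hypothesis that $\mathcal{S}$ fits into the commutative square \eqref{eq:diagrams7} to recover the compactified Euler section $\bar{\epsilon}_{\mathbb{V}}$ inside $\operatorname{Spec}(\mathcal{S})$. First I would unwind the hypothesis: since $\operatorname{Spec}(\mathcal{S})$ is a $\operatorname{Spec}(\rco)$-equivariant compactification of $\operatorname{Spec}(\rco)$, we already have one distinguished orbit/chart, namely the open embedding $\operatorname{Spec}(\rco) \hookrightarrow \operatorname{Spec}(\mathcal{S})$ together with its translates under the group scheme $\operatorname{Spec}(\rco)$. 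The content of \eqref{eq:diagrams7} is that the map $\mathcal{S} \to \co[\mu]$ factors the acceleration-composed Seidel map $\rcE \to SH^*_{G\times S^1}(\bar{\mathbb{V}},\mathbb{C})$ through ordinary equivariant cohomology $H^*_{G\times S^1}(\bar{\mathbb{V}},\mathbb{C})$; geometrically, by Theorem \ref{thm:module}(1), $\operatorname{Spec}(SH^*_{G\times S^1}(\bar{\mathbb{V}},\mathbb{C})) \cong \epsilon_{\mathbb{V}}$ and the acceleration map corresponds to the inclusion $\epsilon_{\mathbb{V}} \hookrightarrow \bar{\epsilon}_{\mathbb{V}} \subset \operatorname{Spec}(\rcE)$, while $H^*_{G\times S^1}(\bar{\mathbb{V}},\mathbb{C}) \cong \Gamma(\mathcal{O}_{\bar{\epsilon}_{\mathbb{V}}})$. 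Thus the square \eqref{eq:diagrams7} says precisely that the Euler section, regarded as a section of $\mathring{\pi}_{Toda}$, extends to a closed section $\bar{\epsilon}_{\mathbb{V}}$ of $\operatorname{Spec}(\mathcal{S}) \to \operatorname{Spec}(H^*(BG,\mathbb{C})[\mu])$.

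Having produced this closed section, the rest follows the template of Corollary \ref{cor:characterization}: translating $\bar{\epsilon}_{\mathbb{V}}$ by the group scheme $\operatorname{Spec}(\rco)$ produces a second chart $\operatorname{Spec}(\rco) \to \operatorname{Spec}(\mathcal{S})$, and together with the tautological first chart these glue to a morphism from Teleman's scheme $\mathcal{A}_{\mathbb{V}} = \operatorname{Spec}(\rco) \cup_{\varepsilon_{\mathbb{V}}^+} \operatorname{Spec}(\rco) \to \operatorname{Spec}(\mathcal{S})$. Here I would need to check that the gluing of the two charts in $\operatorname{Spec}(\mathcal{S})$ is by the same rational fiberwise symplectomorphism $\varepsilon_{\mathbb{V}}^+$ as in \eqref{eq:eplus} — this is forced because the identification of the overlap is determined by where the unit section of the second chart goes, which is $\bar{\epsilon}_{\mathbb{V}}$, matching the construction of $\mathcal{A}_{\mathbb{V}}$. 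Pulling back global functions gives $\mathcal{S} \to \Gamma(\mathcal{A}_{\mathbb{V}}) \cong \rcE$ by Theorem \ref{thm:Tel}, and this map is injective because global functions on $\operatorname{Spec}(\mathcal{S})$ restrict surjectively to the open dense $\operatorname{Spec}(\rco)$ (so a global function killed in $\rcE$ is already zero on this open set, hence zero). Therefore $\mathcal{S}$ is a subalgebra of $\rcE$.

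The main obstacle I anticipate is the bookkeeping needed to translate the algebraic data in \eqref{eq:diagrams7} into the geometric statement that $\bar{\epsilon}_{\mathbb{V}}$ is a \emph{closed} section inside $\operatorname{Spec}(\mathcal{S})$, as opposed to merely a rational one. The point is that the commutativity of \eqref{eq:diagrams7} gives a ring map $\mathcal{S} \to H^*_{G\times S^1}(\bar{\mathbb{V}},\mathbb{C}) \cong \Gamma(\mathcal{O}_{\bar{\epsilon}_{\mathbb{V}}})$, i.e., a map $\bar{\epsilon}_{\mathbb{V}} \to \operatorname{Spec}(\mathcal{S})$ whose composition with $\operatorname{Spec}(\mathcal{S}) \to \operatorname{Spec}(H^*(BG,\mathbb{C})[\mu])$ is an isomorphism onto the base (one should verify this last point from the fact that the outer square of \eqref{eq:diagrams7}, combined with $\rcE$ being an $H^*(BG)[\mu]$-algebra, is compatible with the Toda projections). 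Once the section is closed its image is automatically closed just as in Corollary \ref{cor:characterization}, since global functions on $\operatorname{Spec}(\mathcal{S})$ restrict surjectively onto it. Everything else is a routine transcription of the argument already given, so the proof will be short; I would essentially write ``The proof is identical to that of Corollary \ref{cor:characterization}, with the role of the compactified Euler section now played by the section extracted from \eqref{eq:diagrams7} via Theorem \ref{thm:module}.''
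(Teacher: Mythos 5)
Your proposal is correct and takes essentially the same approach as the paper: the paper's proof is the single sentence ``This is a direct reinterpretation of Corollary \ref{cor:characterization} in view of Theorem \ref{thm:module},'' and your write-up is precisely the unpacking of that sentence---using Theorem \ref{thm:module} to translate the algebraic hypothesis of \eqref{eq:diagrams7} into the geometric hypothesis of Corollary \ref{cor:characterization} that $\epsilon_{\mathbb{V}}$ extends to a section $\bar{\epsilon}_{\mathbb{V}}$ of the massive Toda projection inside $\operatorname{Spec}(\mathcal{S})$, and then invoking that corollary.
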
 
\begin{proof} This is a direct reinterpretation of Corollary \ref{cor:characterization} in view of Theorem \ref{thm:module}. \end{proof} 




\bibliographystyle{amsalpha}
\bibliography{biblio}


{\small

\medskip
\noindent Eduardo Gonz\'alez \\
\noindent University of Massachusetts, Boston, 100 William T, Morrissey Blvd, Boston, MA 02125, US\\
{\it e-mail:} Eduardo.Gonzalez@umb.edu
\medskip

\medskip
\noindent Cheuk Yu Mak\\
\noindent University of Sheffield, Hicks Building, School of Mathematical and Physical Sciences, S10 2TN, UK\\
{\it e-mail:} c.mak@sheffield.ac.uk

\medskip
 \noindent Dan Pomerleano\\
\noindent University of Massachusetts, Boston, 100 William T, Morrissey Blvd, Boston, MA 02125, US\\
 {\it e-mail:} Daniel.Pomerleano@umb.edu

}
\end{document}